\newtheorem{theorem}{Theorem}[section]
\newtheorem{lemma}[theorem]{Lemma}
\newtheorem{proposition}[theorem]{Proposition}
\theoremstyle{definition}
\newtheorem{definition}[theorem]{Definition}}
\theoremstyle{definition}
\newtheorem{conjecture}[theorem]{Conjecture}}
\theoremstyle{remark}
\newtheorem{remark}[theorem]{Remark}}
\title{KPZ and Airy limits of Hall-Littlewood random plane partitions}
\date{\today}
\author{Evgeni Dimitrov}
\begin{document}

\maketitle 

\begin{abstract}
In this paper we consider a probability distribution $\mathbb{P}^{q,t}_{HL}$ on plane partitions, which arises as a one-parameter generalization of the $q^{volume}$ measure in \cite{Ok}. This generalization is closely related to the classical multivariate Hall-Littlewood polynomials, and it was first introduced by Vuleti{\'c} in \cite{Vul}.

We prove that as the plane partitions become large ($q$ goes to $1$, while the Hall-Littlewood parameter $t$ is fixed), the scaled bottom slice of the random plane partition converges to a deterministic limit shape, and that one-point fluctuations around the limit shape are asymptotically given by the GUE Tracy-Widom distribution. On the other hand, if $t$ simultaneously converges to its own critical value of $1$, the fluctuations instead converge to the one-dimensional Kardar-Parisi-Zhang (KPZ) equation with the so-called narrow wedge initial data.

The algebraic part of our arguments is closely related to the formalism of Macdonald processes \cite{BorCor}. The analytic part consists of detailed asymptotic analysis of the arising Fredholm determinants.
\end{abstract}

\tableofcontents

\section{Introduction and main results}\label{introduction}
The main results of this paper are contained in Section \ref{mainResults}. The two sections below give background for and define the main object we study, which is a certain $2$-parameter family of probability distributions on plane partitions. 

%
\subsection{Preface}\label{intro0} \hspace{2mm}\\

Roughly $30$ years ago Kardar, Parisi and Zhang \cite{KPZ} studied the time evolution of random growing interfaces and proposed the following stochastic partial differential equation for the height function $\mathcal{H}(T,X) \in \mathbb{R}$ ($T \in \mathbb{R}_+$ is time and $X \in \mathbb{R}$ is space)
\begin{equation}\label{KPZE}
 \partial_{T} \mathcal{H}(T,X) = \frac{1}{2} \partial_X^2 \mathcal{H}(T,X) + \frac{1}{2}( \partial_X \mathcal{H}(T,X))^2 + \dot{\mathcal{W}}(T,X).
\end{equation}
The randomness $\dot{\mathcal{W}}$ models the deposition mechanism and is taken to be space-time Gaussian white noise, so that formally $\mathbb{E}\left[\dot{\mathcal{W}}(T,X) \dot{\mathcal{W}}(S,Y) \right] = \delta(T-S)\delta(X-Y)$. Drawing upon the work of Forster, Nelson and Stephen \cite{FNS}, KPZ predicted that for large time $T$, the height function $\mathcal{H}(T,X)$ exhibits fluctuations around its mean of order $T^{1/3}$ and spatial correlation length of order $T^{2/3}$.
The critical exponents $1/3$ and $2/3$ are believed to be universal for a large class of growth models, which has become known as the KPZ universality class. A growth model is believed to belong to the KPZ universality class if it satisfies the following (imprecise) conditions:
\begin{enumerate}[label = \arabic{enumi}., leftmargin=1cm]
\item there is a smoothing mechanism, disallowing deep holes and high peaks (in (\ref{KPZE}) this is reflected by the Laplacian $ \frac{1}{2} \partial_X^2 \mathcal{H}(T,X) $);
\item growth is slope-dependent, ensuring lateral growth of interfaces (captured by $\frac{1}{2}( \partial_X \mathcal{H}(T,X))^2$ in (\ref{KPZE}));
\item randomness is driven by short space-time correlated noise (the term $\dot{\mathcal{W}}(T,X)$ in (\ref{KPZE})).
\end{enumerate}
For additional background the reader is referred to \cite{CU2,Q,SS}.
 
It took a quarter of a century to prove that the KPZ equation was in the KPZ universality class itself (by demonstrating the $1/3$ and $2/3$ exponents) \cite{CorQ, BQS, BorCor, BCF, CorHamK, SS2} and it is important to note the contribution of integrable (or exactly solvable) models for this success. Historically, methods for analyzing exactly solvable discretizations of the KPZ equation such as the (partially) asymmetric simple exclusion process (ASEP), the $q$-deformed totally asymmetric simple exclusion process ($q$-TASEP), or the O'Connell-Yor semi-discrete directed random polymers were developed first (see the review \cite{CM} and references therein). Consequently, these stochastic processes were shown to converge (under special {\em weakly asymmetric} or {\em weak noise} scaling) to the KPZ equation. The exact formulas available for the processes allowed one to conclude that they belong to the KPZ universality class and after appropriate scaling the same could be concluded for the solution to the KPZ equation. We remark that the developed methods allow one to analyze the KPZ equation only within a certain class of initial conditions.\\

Since their discovery many of the discrete stochastic processes have become interesting in their own right as fundamental models for interacting particle systems, directed polymers in random media and parabolic Anderson models. These processes typically come with some enhanced algebraic structure, which makes them more amenable to detailed analysis and hence provides the most complete access to various phenomena such as phase transition, intermittency, scaling exponents, and fluctuation statistics. One particular algebraic framework, which has enjoyed substantial interest and success in analyzing various probabilistic systems in the last several years, is the theory of Macdonald processes \cite{BorCor}. Macdonald processes are defined in terms of a remarkable class of symmetric functions, called Macdonald symmetric functions, which are parametrized by two numbers $(q,t)$ - see \cite{Mac}. By leveraging some of their algebraic properties, Macdonald processes have proved useful in solving a number of problems in probability theory, including computing exact Fredholm determinant formulas and associated asymptotics for one-point marginal distributions of the O'Connel-Yor semi-discrete directed polymer \cite{BorCor, BCF}; log-gamma discrete directed polymer \cite{BorCor, BCR};  KPZ/stochastic heat equation \cite{BCF}; $q$-TASEP \cite{Bar15, BorCor, BorCor2, BCS} and $q$-PushASEP \cite{BP, CP}.

There exists a natural family of operators, called the Macdonald difference operators, which are diagonalized by the Macdonald symmetric functions. Using these operators one can express the expectation of a large class of observables for Macdonald processes in terms of contour-integrals. The approach of studying Macdonald processes through these observables was initiated in \cite{BorCor}, where it was used to analyze the $q$-Whittaker process (a special case of Macdonald processes, corresponding to setting $t = 0$). This approach has subsequently been generalized and put on much more abstract footing in \cite{BorGor}, where it was suggested that it can be used to study various other special cases of Macdonald processes, coming from degenerations of Macdonald to other symmetric functions. 

The purpose of this paper is to use the approach of Macdonald difference operators to study a different degeneration of the Macdonald process, called the Hall-Littlewood process, which corresponds to setting $q = 0$. Our motivation for studying the Hall-Littlewood process is that it arises naturally in a problem of random plane partitions. The distribution on plane partitions we consider, called $\mathbb{P}^{r,t}_{HL}$ in the text and defined in the next section, was first considered by Vuleti{\'c} in \cite{Vul}, where she discovered a generalization of the famous MacMahon formula and identified an important geometric structure of the measure. The measure $\mathbb{P}^{r,t}_{HL}$ is a one-parameter generalization of the usual $r^{vol}$ measure on plane partitions, which is recovered if one sets $t = 0$ (the volume parameter is usually denoted by $q$ in the literature, and also in the abstract above, but we reserve this letter for the $q$ in the Macdonald polynomials and use $r$ instead for the remainder of the text). 

The algebraic part of our arguments consists of developing a framework for the Macdonald difference operators in the Hall-Littlewood case. Although our discussion is parallel to the one for the $q$-Whittaker case in \cite{BorCor}, we remark that there are several technical modifications that need to be made, which require us to redo most of the work there. In the Hall-Littlewood setting the operators approach gives access to a single observable and we find a Fredholm determinant formula for its $t$-Laplace transform. This result is given in Proposition \ref{finlength} and we believe it to be of separate interest as it can be applied to generic Hall-Littlewood measures and its Fredholm determinant form makes it suitable for asymptotic analysis. For the particular model we consider, the observable is insufficient to study the $3$-dimensional diagram; however, we are able to use it to analyze the one-point marginal distribution of the bottom part of the diagram.

The main results of the paper (Theorems \ref{TW} and \ref{TCDRP} below) describe the asymptotic distribution of the bottom slice of a plane partition, distributed according to $\mathbb{P}^{r,t}_{HL}$, in two limiting regimes: when $r \rightarrow 1^-$, $t\in (0,1)$ - fixed and when $r,t \rightarrow 1^-$ in some critical fashion. In both cases one observes the same limit shape, while the fluctuations in the first limiting regime converge to the Tracy-Widom GUE distribution \cite{TWPaper}, and to the distribution of the Hopf-Cole solution to the KPZ equation with narrow wedge initial data \cite{CorQ, Ber} in the second one. The latter results suggest that our model belongs to the KPZ universality class, although some care needs to be taken. Typically, models belonging to the KPZ universality class are characterized by some dynamics (interacting particle systems, growing interfaces, random polymers etc.), so that the system evolves with time. In sharp contrast, the model we consider is {\em stationary}, i.e. there is no notion of time.

In order to prove our main results we specialize the general formula for the $t$-Laplace transform from Proposition \ref{finlength} to the particular measure we consider. Subsequently, we find two different representations of this formula that are suitable for the two limiting regimes. When $t \in (0,1)$ is fixed and $r \rightarrow 1^-$ the $t$-Laplace transform converges to an indicator function and our Fredholm determinant formula converges to the CDF of the Tracy-Widom GUE distribution. When both $r,t \rightarrow 1^-$ the $t$-Laplace transform converges to the usual Laplace transform and our Fredholm determinant formula converges to the Laplace transform of the partition function of the continuous directed random polymer \cite{AKQ, Cal}. The main difficulties in establishing the above convergence results are finding suitable contours for our Fredholm determinants and representations for the integrands. We reduce the convergence results to verifying certain exponential bounds for the integrands, which are obtained through a careful analysis on the (specially) constructed contours. This detailed asymptotic analysis of the arising Fredholm determinants forms the analytic part of our arguments.\\

Even though our methods do not allow us to verify it directly, we believe that if $t \in[0,1)$ is fixed one still obtains a $3$-dimensional limit shape in the limit $r \rightarrow 1^-$. That limit shape (if it exists) necessarily depends on $t$ as the volume of the (rescaled) diagram satisfies a law of large numbers and converges to an explicit function of $t$ (see Section \ref{Section1.4} for details). This function decreases to $0$ as $t$ increases from $0$ to $1$, which suggests that the measure $\mathbb{P}^{r,t}_{HL}$ concentrates on diagrams of smaller size as $t$ increases. In sharp contrast, the result of Theorem \ref{TW} suggests that while the volume of the plane partition decreases in $t$ the bottom slice asymptotically looks the same. The latter is quite surprising and we are not aware of this phenomenon occurring in other random tiling/plane partition models. As can be observed in simulations what happens is that the $3$-dimensional limit shape becomes flatter and concentrates on diagrams, which have a fixed base but are quite thin. We refer to Section \ref{Section1.4} for further details.

Another interesting feature of our model is that it is rich enough to produce the Tracy-Widom GUE and KPZ statistics under different scaling limits. The Tracy-Widom GUE distribution and, more generally, the Airy process \cite{Spohn} have been shown to arise as universal scaling limits of a wide variety of probabilistic systems including random matrix theory, stochastic growth processes, interacting particle systems, directed polymers in random media, random tilings and random plane partitions (see \cite{FerA} and \cite{QR} and references therein). It is believed that the Airy process also arises as the large time limit of the properly translated and scaled solution to the KPZ equation with narrow wedge initial data. The latter statement has been verified at the level of one point statistics for example in \cite{CorQ}; however, there is significant (non-rigorous) evidence supporting the multi-point convergence (see the discussion at the end of Section 1.2 in \cite{CU2}).

An important and well-studied link between the KPZ equation and Airy process is established through their mutual connection to directed random polymers in $1$ + $1$ dimension. Specifically, the free energy fluctuations of the continuous directed random polymer (a universal scaling limit of discrete directed polymer models \cite{AKQ}) are related to the narrow wedge initial data solution to the KPZ equation, while the fluctuations of certain zero-temperature degenerations of directed polymer models (like last-passage percolation) are related to the Airy process (see \cite{QR} and references therein for precise statements). The latter link can be understood as both models arising as different scaling limits of the same underlying stochastic dynamical systems. The situation is very different for stationary stochastic models. Specifically, while the Airy process has been related to interface fluctuations of random tiling and plane partition models no such connection has been established for the KPZ equation. In this sense, the appearance of the solution to the KPZ equation with narrow wedge initial data as a scaling limit of our stationary model $\mathbb{P}^{r,t}_{HL}$ is quite surprising. The distribution $\mathbb{P}^{r,t}_{HL}$ is thus the first example of a stationary model exhibiting KPZ statistics, and we view this as one of the main novel contributions of this work. 

We now turn to carefully describing the measure $\mathbb{P}^{r,t}_{HL}$ and explaining our results in detail.
%
\subsection{The measure $\mathbb{P}^{r,t}_{HL}$}\label{intro} \hspace{2mm}\\

We recommend Section \ref{partitions} for a brief overview of some concepts related to partitions and Young diagrams. A plane partition is a Young diagram filled with positive integers that form non-increasing rows and columns. A {\em connected component} of a plane partition is the set of all connected boxes of its Young diagram that are filled with the same number. The number of connected components in a plane partition $\pi$ is denoted by $k(\pi)$. Figure \ref{S1_1} shows an example of a plane partition and the 3-d Young diagram representing it. The connected components, which are separated in the Young diagram with bold lines, naturally correspond to the grey terraces in the 3-d diagram.

\begin{figure}[h]
\centering
\scalebox{0.6}{\includegraphics{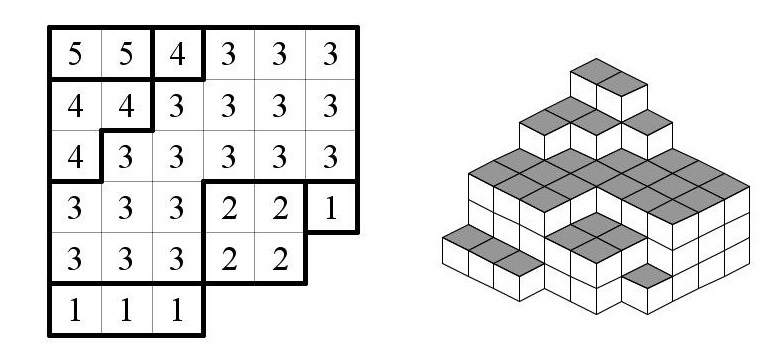}}
\caption{A plane partition and its 3-d Young diagram. In this example $k(\pi) = 7$.}
\label{S1_1}
\end{figure}

If a box $(i,j)$ belongs to a connected component $C$, we define its {\em level} $h(i,j)$ as the smallest $h \in \mathbb{N}$ such that $(i +h, j +h) \not \in C$. A {\em border component} is a connected subset of a connected component where all boxes have the same level. We also say that the border component is of this level. For the example above, the border components and their levels are illustrated in Figure \ref{S1_2}. 

\begin{figure}[h]
\centering
\scalebox{0.6}{\includegraphics{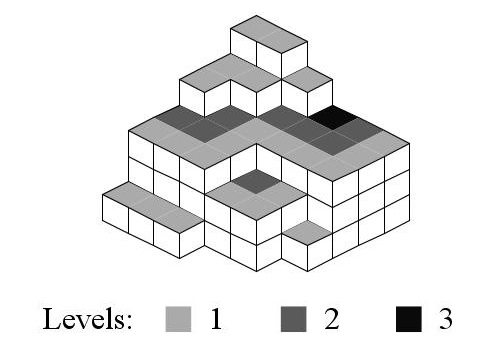}}
\caption{Border components and their levels.}
\label{S1_2}
\end{figure}

For each connected component $C$ we define a sequence $(n_1, n_2, ...)$ where $n_i$ is the number of $i$-level border components of $C$. We set 
$$P_C(t) := \prod_{i \geq 1} ( 1 - t^i)^{n_i}.$$ 
Let $C_1,C_2,...C_{k(\pi)}$ be the connected components of $\pi$. We define
\begin{equation}\label{Apoly}
A_\pi(t) := \prod_{i = 1}^{k(\pi)} P_{C_i}(t).
\end{equation}
For the example above $A_\pi(t) = (1-t)^{7}(1-t^2)^3(1-t^3).$

Given two parameters $r,t \in (0,1)$ we define $\mathbb{P}^{r,t}_{HL}$ to be the probability distribution on plane partitions such that
$$\mathbb{P}^{r,t}_{HL} (\pi) \propto r^{|\pi|}A_\pi(t), $$
where $|\pi|$ denotes the volume of $\pi$, i.e. the number of boxes in its 3-d Young diagram. In \cite{Vul} it was shown that
\begin{equation}\label{Zconst}
\sum_\pi r^{|\pi|}A_\pi(t) = \prod_{n = 1}^\infty \left( \frac{1 - tr^n}{1 - r^n}\right)^n = : Z(r,t).
\end{equation}
The above explicitly determines $\mathbb{P}^{r,t}_{HL}$ as
\begin{equation}\label{PDEF}
\mathbb{P}^{r,t}_{HL} (\pi) := Z(r,t)^{-1} r^{|\pi|}A_\pi(t),
\end{equation}
with $Z(r,t)$ as in (\ref{Zconst}).\\

\begin{remark}
 In Section \ref{HL} it will be shown that $\mathbb{P}^{r,t}_{HL}$ arises as a limit of certain Macdonald processes. These processes are defined in terms of Hall-Littlewood symmetric functions, which explains the ``HL'' in our notation.
\end{remark}

\begin{remark}
In the literature, the volume parameter is usually denoted by $q$, but we reserve this letter for a different parameter, which appears in the definition of Macdonald polynomials, and instead use the letter $r$.
\end{remark}

The distribution $\mathbb{P}^{r,t}_{HL}$ has been studied in the cases $t = 0$ and $t = -1$. When $t = 0$ we have $\mathbb{P}^{r,0}_{HL}(\pi) = Z(r,0)^{-1} r^{|\pi|}$, where $Z(r,0)$ is given by the famous MacMahon formula
\begin{equation}
Z(r,0) = \sum_{\pi} r^{|\pi|} = \prod_{n = 1}^\infty \left( \frac{1}{1 - r^n}\right)^n.
\end{equation}
We summarize a few of the known results when $t = 0$. In \cite{Kenyon} it was shown that under suitable scaling a partition $\pi$, distributed according to $\mathbb{P}^{r,0}_{HL}$, converges to a particular limit shape as $r \rightarrow 1^-$ (see also \cite{KeOk07}). In \cite{Ok} it was shown that  $\mathbb{P}^{r,0}_{HL}$ is described by a Schur process and has the structure of a determinantal point process with an explicit correlation kernel, suitable for asymptotic analysis. In \cite{FSpohn} it was shown that under suitable scaling the edge of the limit shape converges to the Airy process.

When $t = -1$ the measure $\mathbb{P}^{r,-1}_{HL}$ concentrates on strict plane partitions (these are plane partitions such that all border components have level $1$) and is described by a shifted Schur process as discussed in \cite{Vul2}. The shifted Schur process is shown to have the structure of a Pfaffian point process with an explicit correlation kernel, which can be analyzed as $r \rightarrow 1^-$. A limiting point density can be derived, which suggests a limit-shape phenomenon similar to the $t = 0$ case. To the author's knowledge there are no results regarding the edge asymptotics in this case.

The purpose of this paper is to study the distribution $\mathbb{P}^{r,t}_{HL}$ for $t \in (0,1)$. In particular, we will be interested in the behavior of a plane partition, distributed according to $\mathbb{P}^{r,t}_{HL}$, as the parameter $r$ goes to $1^-$. Part of the difficulty in dealing with the case $t \in (0,1)$ comes from the fact that a determinantal or Pfaffian point process structure is no longer availbable. Instead, we will use the formalism of Macdonald difference operators (see \cite{BorCor} and \cite{BorGor}) to obtain formulas for a certain class of observables for a plane partition $\pi$, distributed accodrding to $\mathbb{P}^{r,t}_{HL}$. These formulas can be asymptotically analyzed and imply one-point convergence results for the bottom slice of $\pi$.

%
\subsection{Main results} \label{mainResults}\hspace{2mm}\\

For a partition $\lambda$, we let $\lambda_1'$ denote its largest column (i.e. the number of non-zero parts). Given a plane partition $\pi$, we consider its diagonal slices $\lambda^t$ (alternatively $\lambda(t)$) for $t\in \mathbb{Z}$, i.e. the sequences
$$\lambda^k = \lambda(k) = (\pi_{i, i + k}) \hspace{3mm} \mbox{ for } i \geq \max(0, -k).$$
For $r \in (0,1)$, $\tau \in \mathbb{R}$ we define
\begin{equation}\label{S1scaling}
N(r) := \frac{1}{1-r} \hspace{2mm}  \mbox{ and }  \chi:= \left[ \frac{e^{-|\tau|/2}}{(1 + e^{-|\tau|/2})^2}\right]^{-1/3} = \left[ \frac{4}{\cosh^2(\tau/4)}\right]^{-1/3}.
\end{equation}
Below we analyze the large $N$ asymptotics of $\lambda_1'(\lfloor \tau N(r) \rfloor)$ of a random plane partition, distributed according to $\mathbb{P}^{r,t}_{HL}$. 

\begin{theorem}\label{TW}
Consider the measure $\mathbb{P}_{HL}^{r,t}$ on plane partitions, given in (\ref{PDEF}), with $t \in (0,1)$ fixed. Then for all $\tau \in \mathbb{R} \backslash \{0\}$ and $x \in \mathbb{R}$ we have
\begin{equation*}
\lim_{r \rightarrow 1^-} \mathbb{P}_{HL}^{r,t} \left( \frac{  \lambda'_1( \lfloor \tau N(r) \rfloor)  - 2N(r) \log (1 + e^{-|\tau|/2}) }{\chi^{-1} N(r)^{1/3}} \leq x\right) = F_{GUE}(x),
\end{equation*}
where $F_{GUE}$ is the GUE Tracy-Widom distribution \cite{TWPaper}. The coefficients $N(r)$ and $\chi$ are as in (\ref{S1scaling}).
\end{theorem}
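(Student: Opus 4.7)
The plan is to specialize Proposition \ref{finlength} to the Hall-Littlewood measure $\mathbb{P}^{r,t}_{HL}$, convert the resulting Fredholm determinant for a $t$-Laplace-type transform into a statement about the cumulative distribution of $\lambda_1'(\lfloor\tau N(r)\rfloor)$, and then carry out a steepest descent analysis as $r\to 1^-$ to identify the limit with $F_{GUE}(x)$.

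Using the identification (to be developed in Section \ref{HL}) of $\mathbb{P}^{r,t}_{HL}$ as a marginal of a two-sided Hall-Littlewood process, the diagonal slice $\lambda(\lfloor\tau N(r)\rfloor)$ corresponds to the partition at a specific level, and Proposition \ref{finlength} yields a Fredholm determinant representation
\begin{equation*}
\mathbb{E}^{r,t}_{HL}\!\left[\tfrac{1}{(\zeta\, t^{\lambda'_1(\lfloor\tau N(r)\rfloor)};\,t)_\infty}\right] \;=\; \det(I + K_{r,t,\tau,\zeta})_{L^2(\mathcal{C})},
\end{equation*}
where $K$ is a double contour integral combining Pochhammer factors $(tz/w;t)_\infty/(z/w;t)_\infty$, a Cauchy-type kernel $1/(z-w')$, and an exponential prefactor built from the generating function of the underlying specialization. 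For $t\in(0,1)$ fixed and $\zeta = -t^{-s}$, the map $X\mapsto 1/(-t^{X-s};t)_\infty = \prod_{k\geq 0}(1+t^{X-s+k})^{-1}$ is a monotone smoothing of $\mathbf{1}_{\{X>s\}}$ with transition window of order $1$. Setting $s = 2N(r)\log(1+e^{-|\tau|/2}) + \chi^{-1}N(r)^{1/3}x$ places the transition on a scale negligible compared to the $N(r)^{1/3}$ fluctuations of $\lambda_1'$, so together with a soft a priori tail bound the $t$-Laplace transform equals $\mathbb{P}^{r,t}_{HL}(\lambda_1'(\lfloor\tau N(r)\rfloor) > s) + o(1)$, and the theorem reduces to showing that the Fredholm determinant converges to $1-F_{GUE}(x)$.

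The analytical core is the saddle point analysis of $K_{r,t,\tau,\zeta}$. Writing the double integral as $\iint e^{N(r)[S(z)-S(w)]}H(z,w)\,dz\,dw$, the action $S$ is expected to have a unique real critical point $z_c=z_c(\tau)\in(0,\infty)$ at which $S'(z_c)=S''(z_c)=0$ and $S'''(z_c)\neq 0$, with the constants arising from $S$ matching the centering $2\log(1+e^{-|\tau|/2})$ and scale $\chi$ of the theorem. Deforming the $z,w$-contours near $z_c$ to steepest descent rays of opening angle $\pi/3$ and rescaling $z=z_c+\tilde z\,N(r)^{-1/3}$, $w=z_c+\tilde w\,N(r)^{-1/3}$ sends the exponential factor to $\exp\{\tfrac{1}{6}S'''(z_c)(\tilde z^3-\tilde w^3)+x(\tilde z-\tilde w)\}$, while $H$ combined with the Jacobian produces $(\tilde z-\tilde w)^{-1}$. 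The rescaled kernel then converges to the shifted Airy kernel, and $\det(I-K_{\mathrm{Ai}})_{L^2([x,\infty))}=F_{GUE}(x)$.

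The main obstacle is the global construction of contours that agree near $z_c$ with the local steepest descent rays while remaining compatible with the analyticity constraints of Proposition \ref{finlength}. Unlike the $q$-Whittaker setting of \cite{BorCor}, the factors $(z/w;t)_\infty$ possess infinitely many zeros accumulating at the origin, so the $(z,w)$-contours must be delicately nested and kept bounded away from these singularities throughout the deformation. I would construct the contours piecewise, gluing steepest descent arcs near $z_c$ to larger circular (or circular-plus-radial) arcs chosen so that $\mathrm{Re}[S(z)-S(w)]$ stays strictly negative outside a neighborhood of $z_c$ while the nestedness requirement is preserved. Verifying this global negative-real-part condition together with polynomial bounds on the prefactors yields uniform exponential decay of the kernel dominated by an integrable majorant, which then justifies termwise passage to the limit in the Fredholm expansion.
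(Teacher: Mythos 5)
Your proposal follows essentially the same route as the paper: specialize the Fredholm determinant formula for the $t$-Laplace transform (Proposition \ref{finlength}) to the Hall--Littlewood measure, use the fact that on the $N^{1/3}$ fluctuation scale the $t$-Laplace transform degenerates to an indicator so that the theorem reduces to convergence of the Fredholm determinant, and then carry out a steepest-descent analysis at a double critical point of the action (here $S_{a,r}$, whose cubic behavior produces exactly the centering $2\log(1+e^{-|\tau|/2})$ and the scale $\chi$), rescaling by $N^{1/3}$ to reach the Airy kernel and $F_{GUE}$. The only deviations are cosmetic: the paper replaces your ``soft a priori tail bound'' by the monotone-sandwich probability Lemma \ref{prob} (so no tail estimate is needed), its double critical point sits at $w=-t^{-1}$, i.e.\ at $z=0$ after the exponential change of variables, rather than on the positive real axis, and the periodicity of that substitution produces the folded sine-sum $G_{\zeta,t}$ instead of the nested-Pochhammer contour issue you anticipate.
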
 

\begin{theorem}\label{TCDRP}
Consider the measure $\mathbb{P}_{HL}^{r,t}$ on plane partitions, given in (\ref{PDEF}). Suppose $T > 0$ is fixed and $\frac{- \log t}{(1-r)^{1/3}} = \chi (T/2)^{1/3}$. Then for all $\tau \in \mathbb{R}\backslash \{0\}$ and $x \in \mathbb{R}$ we have
\begin{equation*}
\lim_{r \rightarrow 1^-} \mathbb{P}_{HL}^{r,t} \left( \frac{ \lambda'_1( \lfloor \tau N(r) \rfloor)  - 2N(r)\log (1 + e^{-|\tau|/2}) }{ \chi^{-1} N(r)^{1/3} (T/2)^{-1/3}} + \log(N(r)^{1/3}\chi^{-1}(T/2)^{-1/3}) \leq x\right) = F_{CDRP}(x)
\end{equation*}
 where $F_{CDRP}(x) = \mathbb{P}\left(\mathcal{F}(T,0) + T/24 \leq x \right)$ and $\mathcal{F}(T,X)$ is the Hopf-Cole solution to the Kardar-Parisi-Zhang equation with narrow wedge initial data \cite{CorQ, Ber}. The coefficients $N(r)$ and $\chi$ are as in (\ref{S1scaling}).
\end{theorem}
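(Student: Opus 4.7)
The plan is to specialize the Fredholm determinant formula for the $t$-Laplace transform of the observable $t^{\lambda_1'(\lfloor\tau N\rfloor)}$ from Proposition \ref{finlength} to the measure $\mathbb{P}^{r,t}_{HL}$, and then asymptotically analyze the resulting kernel under the critical scaling $\epsilon := -\log t = \chi(T/2)^{1/3}(1-r)^{1/3}$. The specialization produces an identity of the form
\begin{equation*}
\mathbb{E}^{r,t}_{HL}\!\left[\frac{1}{(-\zeta\, t^{\lambda_1'(\lfloor\tau N\rfloor)};t)_\infty}\right] = \det(I + K_\zeta)_{L^2(\mathcal{C})},
\end{equation*}
where $K_\zeta$ is a one-dimensional contour integral kernel whose integrand has a leading exponential factor $\exp(N(r)\,S(w;\tau))$ together with $t$-Pochhammer factors encoding the $\zeta$-dependence.

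The first step is to match the left-hand side to the Laplace transform of the limiting random variable in the theorem. Write
\begin{equation*}
Y := \frac{\lambda_1'(\lfloor\tau N\rfloor) - 2N\log(1+e^{-|\tau|/2})}{\chi^{-1}N^{1/3}(T/2)^{-1/3}}, \qquad Y' := Y + \log\!\bigl(N^{1/3}\chi^{-1}(T/2)^{-1/3}\bigr),
\end{equation*}
and choose $\zeta = -u\,\epsilon\,t^{-2N\log(1+e^{-|\tau|/2})}/(N^{1/3}\chi^{-1}(T/2)^{-1/3})$. A direct computation using $\epsilon\cdot\chi^{-1}N^{1/3}(T/2)^{-1/3}=1$ shows $-\zeta\,t^{\lambda_1'} = u\,\epsilon\,e^{-Y'}$. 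The standard $q$-exponential identity $(z(1-t);t)_\infty^{-1}\to e^{z}$ as $t\to 1^-$, applicable because $1-t\sim\epsilon$, then implies
\begin{equation*}
\frac{1}{(-\zeta\,t^{\lambda_1'};t)_\infty} \longrightarrow \exp\!\bigl(-u\,e^{-Y'}\bigr)
\end{equation*}
pointwise. The left-hand side therefore converges to $\mathbb{E}\!\left[\exp(-u\,e^{-Y'_\infty})\right]$, where $Y'_\infty$ is the distributional limit of $Y'$.

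On the Fredholm side, $S(w;\tau)$ has a double critical point $w_c(\tau)$---the same one that drives Theorem \ref{TW}---with $S'(w_c)=S''(w_c)=0$ and with cubic coefficient matching the $\chi^3$ of the scaling. I would deform both the spectral contour $\mathcal{C}$ and the internal $w$-contour so that each passes through $w_c$ at the steepest-descent angles $\pm 2\pi/3$, then rescale locally by $w = w_c + \epsilon\,z$. On this scale the cubic Taylor term converges to a finite cubic in $z$ whose coefficient is proportional to $T$, the $t$-Pochhammer factors in the kernel converge to the $\Gamma$-function structure characteristic of the Amir-Corwin-Quastel kernel, and the free parameter $u$ identifies with $\tilde\zeta\,e^{T/24}$. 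The kernel thus converges pointwise to the kernel whose Fredholm determinant equals $\mathbb{E}\bigl[\exp(-\tilde\zeta\,\mathcal{Z}(T,0)\,e^{T/24})\bigr]$, with $\mathcal{Z}(T,0)=e^{\mathcal{F}(T,0)}$.

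The main obstacle is establishing uniform tail bounds that justify dominated convergence both inside each kernel integral and for the Fredholm series expansion. Because $r,t\to 1^-$ simultaneously and the $t$-Pochhammer poles accumulate near the real axis on scale $\epsilon$, the descent estimates of \cite{BorCor,BCF} do not apply off the shelf. I would construct explicit contours adapted to the level sets of $\mathrm{Re}\,S(w;\tau)$---a steep teardrop-shaped contour on one side and a vertical line on the other---arranged so that $\mathrm{Re}\,S$ is strictly minimized at $w_c$ on the global contour, and derive bounds of the form $|K_\zeta(v,v')|\le C\exp(-c(|v|^3+|v'|^3))$ in the rescaled variables by splitting into a local piece handled by the cubic expansion and a global piece handled by monotonicity of $\mathrm{Re}\,S$ away from $w_c$. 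Once the Fredholm limit is in place, Laplace-transform inversion---valid since $e^{-Y'_\infty}$ is a nonnegative random variable and $F_{CDRP}$ is continuous---identifies the limiting distribution with $F_{CDRP}$.
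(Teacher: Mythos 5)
Your overall architecture is the same as the paper's: specialize the Fredholm determinant for the $t$-Laplace transform (Proposition \ref{finlength}, extended to the infinite geometric specialization as in Proposition \ref{prelimit}), choose $\zeta$ so that in the critical scaling the $t$-exponential degenerates to an ordinary exponential, do a critical-point analysis of the same function ($S_{a,r}$), pass to the limit by dominated convergence, match the limiting kernel with the CDRP kernel of \cite{BCF}, and invert the Laplace transform using positivity and continuity of the limit law (the paper's Lemmas \ref{seqfC}, \ref{probC}, Theorem \ref{mainThm2}, and the smooth-density input from \cite{MN}). However, two of your concrete steps would fail as written. First, the sign of the observable: the Macdonald-operator framework produces a formula for $t^{-\lambda_1'}$, not $t^{\lambda_1'}$, and the known identity (\ref{KCDRP}) is the Laplace transform of $\exp(\mathcal{F}(T,0)+T/24)$. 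With your choice you end up computing $\mathbb{E}\bigl[\exp(-u\,e^{-Y'})\bigr]$, i.e.\ the Laplace transform of $e^{-Y'}$; matching it to (\ref{KCDRP}) would identify $Y'_\infty$ with $-(\mathcal{F}(T,0)+T/24)$, the reflected law, which is not the statement of the theorem (and, with your Pochhammer sign, the factors $\prod_k(1-a t^k)^{-1}$ with $a>0$ actually blow up rather than give $e^{-u e^{-Y'}}$). The fix is exactly the paper's choice $\zeta_x=-t^{M(r)-x\kappa^{-1}N(r)^{1/3}}$ acting on $t^{-\lambda_1'}$, which yields the Laplace transform of $e^{\hat\xi_r}$; also note the exact centering produced by the analysis is $M(r)=S_{a,r}'(0)$, and one needs $M(r)-2N\log(1+e^{-|\tau|/2})=O(1)$ (Lemma \ref{LdelM}) plus uniformity of the $t$-exponential limit (Lemma \ref{seqfC}) to transfer the statement to the centering in the theorem.

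Second, the contour prescription. In the CDRP regime you cannot deform both contours through the double critical point along the Airy steepest-descent rays $\pm 2\pi/3$, and the bound $|K(v,v')|\le C e^{-c(|v|^3+|v'|^3)}$ is not attainable. After rescaling by $\epsilon=-\log t$ the Mellin--Barnes factor $\Gamma(-s)\Gamma(1+s)$ (equivalently the periodized cosecant $G_\zeta$) has poles on the lattice $Z-W\in\mathbb{Z}$, i.e.\ at unit spacing in the rescaled variables; contours whose real-part separation grows without bound (as it does for rays at angles $\pm\pi/3$ and $\pm2\pi/3$) sweep through infinitely many of these poles, and the cosecant only decays in the imaginary direction. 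This is precisely why the paper (following \cite{BCF}) keeps both rescaled contours as vertical lines $\gamma^t_\mp$ at real parts $\mp 1/4$ (separation $1/2<1$), where the real part of the limiting cubic gives only Gaussian decay; the required bounds are Lemma \ref{cubicEstC} (quadratic decay along the vertical lines, proved via the monotonicity Lemma \ref{monotoneC}) together with Lemma \ref{techiesC} controlling the cosecant sum, and the limiting determinant is matched to $K_{CDRP}$ via Lemma 8.8 of \cite{BCF}. Your later remark about ``a vertical line on the other side'' gestures at this, but the steepest-descent-angle plan and the cubic tail bounds should be replaced by the vertical-strip setup; Gaussian decay suffices for the dominated-convergence lemmas \ref{FDCT}--\ref{FDKCT}.
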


The definitions of $F_{GUE}(x)$ and $F_{CDRP}(x)$ are provided below in Definition \ref{TWDef}. In Sections \ref{Section4} and \ref{Section7} we will reduce the proofs of the above results to claims on certain asymptotics of Fredholm determinant formulas. Throughout the paper, we will, rather informally, refer to the limiting regime in Theorem \ref{TW} as ``the GUE case'' and to the one in Theorem \ref{TCDRP} as ``the CDRP case''.

\begin{remark}
The exclusion of the case $\tau = 0$ appears to be a technical assumption, necessary for our proofs to work. It is possible that the arguments of this paper can be modified to include this case, but we will not pursue this goal.
\end{remark}

Before we record the limiting distributions that appear in our results, we briefly discuss the definition of $\mathcal{F}(X,T)$. The {\em continuous directed random polymer} (CDRP) is a universal scaling limit for 1 + 1 dimensional directed random polymers \cite{AKQ, Cal}. Its partition function with respect to general boundary perturbations is given as follows (this is Definition 1.7 in \cite{BCF}). 

\begin{definition}\label{SHE}
The partition function for the continuum directed random polymer with boundary perturbation $\ln \mathcal{Z}_0(X)$ is given by the solution to the stochastic heat equation (SHE) with multiplicative Gaussian space-time white noise and $\mathcal{Z}_0(X)$ initial data:
\begin{equation}
\partial_T \mathcal{Z} = \frac{1}{2} \partial_X^2 \mathcal{Z} + \mathcal{Z} \dot{\mathcal{W}}, \hspace{5mm} \mathcal{Z}(0,X) = \mathcal{Z}_0(X).
\end{equation}
The initial data $\mathcal{Z}_0(X)$ may be random but is assumed to be independent of the Gaussian space-time white noise $ \dot{\mathcal{W}}$ and is assumed to be almost surely a sigma-finite positive measure. Observe that even if $\mathcal{Z}_0(X)$ is zero in some regions, the stochastic PDE makes sense and hence the partition function is well-defined.
\end{definition}
A detailed description of the SHE and the class of initial data for which it is well-posed can be found in \cite{CorQ, Ber}. Provided, $\mathcal{Z}_0$ is an almost surely sigma-finite positive measure, it follows from the work of Mueller \cite{Muller} that, almost surely, $\mathcal{Z}(T,X)$ is positive for all $T > 0$ and $X \in \mathbb{R}$ and hence its logarithm is a well-defined random space-time function. The following is Definition 1.8 in \cite{BCF}.

\begin{definition}\label{freeE}
For $\mathcal{Z}_0$ an almost surely sigma-finite positive measure define the free energy for the continuous directed random polymer with boundary perturbation $\ln \mathcal{Z}_0(X)$ as 
$$\mathcal{F}(T,X) = \ln \mathcal{Z} (T,X).$$
\end{definition}
The random space-time function $\mathcal{F}$ is also the Hopf-Cole solution to the Kardar-Parisi-Zhang equation with initial data $\mathcal{F}_0(X) = \ln \mathcal{Z}_0(X)$ \cite{CorQ, Ber}. In this paper, we will focus on the case when $\mathcal{Z}_0(X) = {\bf 1}_{\{X = 0\}}$, which is known as the {\em narrow wedge} or $0$-{\em spiked} initial data \cite{CorQ, BCF}. In Theorem 1.10 of \cite{BCF} it was shown that when $\mathcal{Z}_0(X) = {\bf 1}_{\{X = 0\}}$, one has the following formula for the Laplace tansform of $\exp (\mathcal{F}(T,0) + T/24)$.
\begin{equation}\label{KCDRP}
\mathbb{E} \left[ e^{-e^x \exp(\mathcal{F}(T,0) + T/24)}\right] = \det (I - K_{CDRP})_{L^2}(\mathbb{R}_+),
\end{equation}
where the right-hand-side (RHS) denotes the Fredholm determinant (see Section \ref{FredholmS}) of the operator $K_{CDRP}$, given in terms of its integral kernel
\begin{equation}\label{KCDRPDef}
K_{CDRP}(\eta, \eta') := \int_{\mathbb{R}} dt \frac{e^x}{e^x + e^{-t/\sigma}}Ai(t + \eta)Ai(t + \eta').
\end{equation}
In the above formula $\sigma = (2/T)^{1/3}$, $x \in \mathbb{R}$ and $Ai(\cdot)$ is the Airy function.

We now record the definitions of the limiting distributions that appear in Theorems \ref{TW} and \ref{TCDRP}. The first part of the following definition appears in Definition 1.6 in \cite{BCF}.
\begin{definition}\label{TWDef} The GUE Tracy-Widom distribution \cite{TWPaper} is defined as
$$F_{GUE}(x) := \det ( I - K_{Ai})_{L^2(x,\infty)},$$
where $K_{Ai}$ is the Airy kernel, that has the integral representation
$$K_{Ai}(\eta, \eta') = \frac{1}{(2\pi \iota)^2} \int_{e^{-2\pi \iota/3}\infty}^{e^{2\pi \iota/3}\infty} dw  \int_{e^{-\pi \iota/3}\infty}^{e^{\pi \iota/3}\infty}dz \frac{1}{z-w}\frac{e^{z^3/3 - z\eta'}}{e^{w^3/3 - w\eta}},  $$
where the contours $z$ and $w$ do not intersect.\\

Suppose $\mathcal{F}(T,X)$ is the free energy for the CDRP with boundary perturbation $\ln \mathcal{Z}_0(X)$ and $\mathcal{Z}_0(X) = {\bf 1}_{\{X = 0\}}$ as in Definition \ref{freeE}. Then we define
$$F_{CDRP}(x):= \mathbb{P}(\mathcal{F}(T,0) + T/24 \leq x).$$
\end{definition}

%
\subsection{Discussion and extensions}\label{Section1.4} \hspace{2mm}\\
In this section we discuss some of the implications of the results of the paper and some of their possible extensions. \\

We start by considering possible limit shape phenomena. In \cite{Kenyon} it was shown that if each dimension of a plane partition $\pi$, distributed according to  $\mathbb{P}^{r,t}_{HL}$ with $t = 0$, is scaled by $1-r$ then as $r \rightarrow 1^-$ the distribution concentrates on a limit shape with probability $1$. We expect that a similar phenomenon occurs for any value $t \in (0,1)$. The limit shape, if it exists, should depend on $t$, which one observes by considering the volume of the plane partition. Specifically, we have that 
$$\mathbb{E}\left[ |\pi| \right] = \frac{r\frac{d}{dr}Z(r,t)}{Z} \mbox{ and } Var(|\pi|) = \mathbb{E}\left[ |\pi|^2 \right] - \mathbb{E}\left[ |\pi|\right]^2 = r\frac{d}{dr} \mathbb{E}\left[|\pi|\right].$$
Using that $Z(r,t) = \prod_{n = 1}^\infty \left( \frac{1 - tr^n}{1 - r^n}\right)^n$ one readily verifies that 
$$\mathbb{E}\left[ |\pi| \right] = \sum_{k = 1}^\infty \frac{r^k(1 + r^k)}{(1 - r^k)^3} - \sum_{k = 1}^\infty t^k\frac{r^k(1 + r^k)}{(1 - r^k)^3}.$$ 
The latter implies that $\lim_{r \rightarrow 1^-} \mathbb{E}\left[ (1-r)^3|\pi| \right] = 2\zeta(3) - 2Li_3(t)$, where $\zeta(s) = \sum_{n = 1}^\infty \frac{1}{n^s}$ is the Riemann zeta function and $Li_3(z) = \sum_{k = 1}^\infty \frac{z^k}{k^3}$ is the polylogarithm of order $3$. In addition, one verifies that $\lim_{r \rightarrow 1^-} Var \left( (1-r)^3|\pi| \right) = 0$ and so the rescaled volume $(1-r)^3|\pi|$ converges in probability to $2\zeta(3) - 2Li_3(t)$. In particular, the volume decreases from $2\zeta(3)$ to $0$ as $t$ varies from $0$ to $1$. When $t = 1$ the measure $\mathbb{P}^{r,t}_{HL}$ is concentrated on the empty plane partition for any value of $r$ and so convergence of the volume to $0$ is expected.

In sharp contrast, the result of Theorem \ref{TW} suggests that while the volume of the plane partition decreases in $t$ the bottom slice asymptotically looks the same. The latter has been empirically verified through simulations and is presented in Figures \ref{S1_3} - \ref{S1_6}, where the red line indicates the limit shape $2\log (1 + e^{-|\tau|/2})$ in Theorem \ref{TW}.

\begin{figure}[h]
\centering
\begin{minipage}{.5\textwidth}
  \centering
  \includegraphics[width=0.9\linewidth]{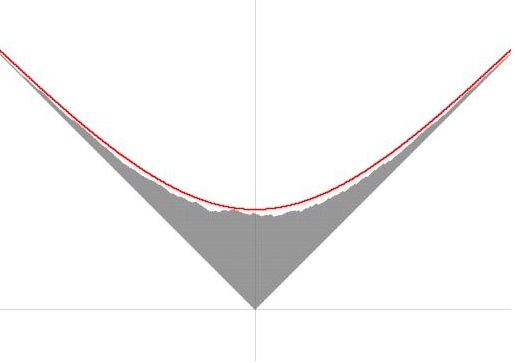}
\captionsetup{width=.9\linewidth}
  \caption{$ t= 0$.}
  \label{S1_3}
\end{minipage}%
\begin{minipage}{.5\textwidth}
  \centering
  \includegraphics[width=0.9\linewidth]{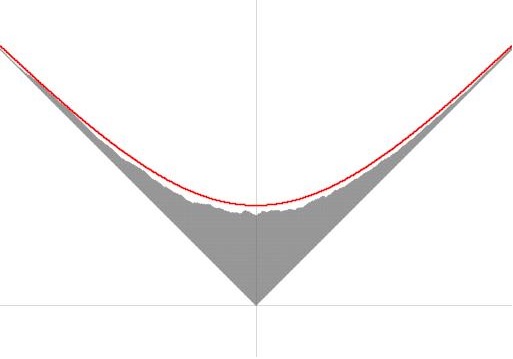}
\captionsetup{width=.9\linewidth}
  \caption{$t = 0.2$. }
  \label{S1_4}
\end{minipage}
\end{figure}

\begin{figure}[h]
\centering
\begin{minipage}{.5\textwidth}
  \centering
  \includegraphics[width=0.9\linewidth]{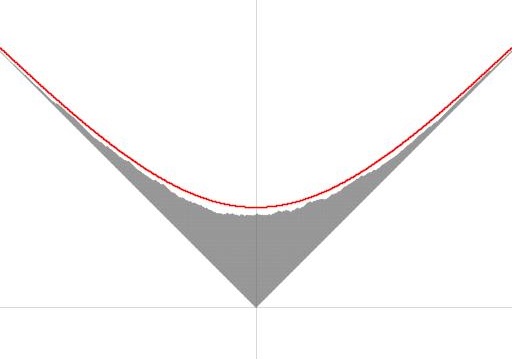}
\captionsetup{width=.9\linewidth}
  \caption{$t = 0.4$.}
  \label{S1_5}
\end{minipage}%
\begin{minipage}{.5\textwidth}
  \centering
  \includegraphics[width=0.9\linewidth]{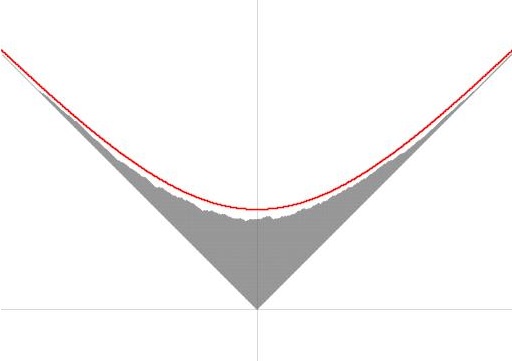}
\captionsetup{width=.9\linewidth}
  \caption{$t = 0.6$. }
  \label{S1_6}
\end{minipage}
\end{figure}
What happens as $t$ increases to $1$ is that the mass from the top part of the plane partition $\pi$ decreases (so $\pi_{i,j}$ decrease), but the base (given by the non-zero $\pi_{i,j}$) remains asymptotically the same. The latter can be observed in the left parts of Figures \ref{S1_7} and \ref{S1_8} (we will get to the right parts shortly). 
\begin{figure}[h]
\centering
\scalebox{0.53}{\includegraphics{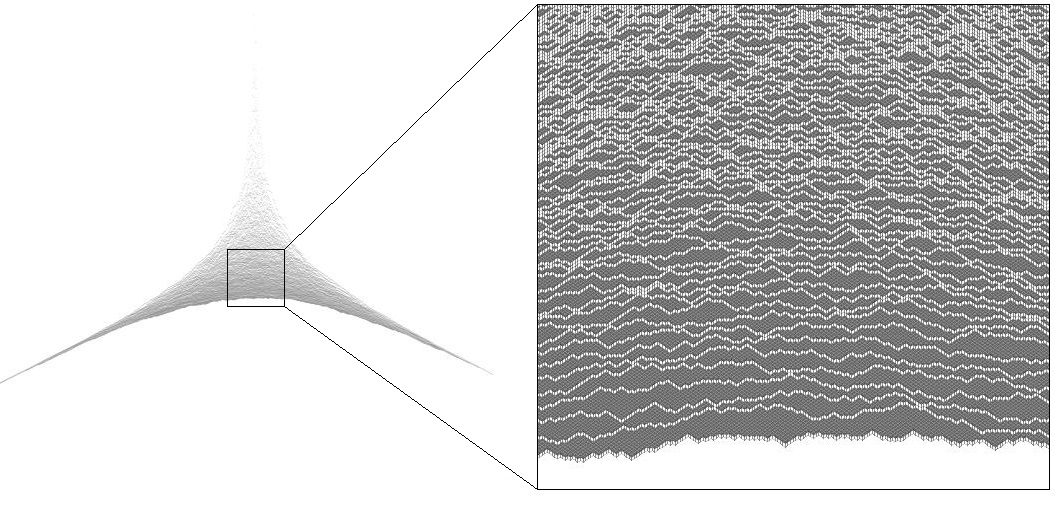}}
\caption{Simulation with $t = 0.4$.}
\label{S1_7}
\end{figure}
\begin{figure}[h]
\centering
\scalebox{0.53}{\includegraphics{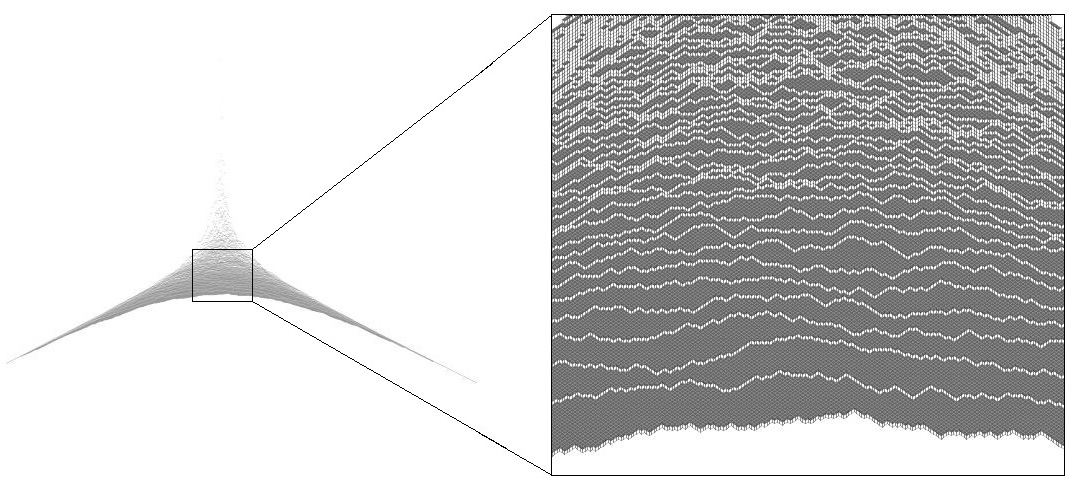}}
\caption{Simulation with $t = 0.8$.}
\label{S1_8}
\end{figure}
\FloatBarrier

We next turn to possible extensions of Theorems \ref{TW} and \ref{TCDRP}. The statement of Theorem \ref{TW} can be understood as a one-point convergence result about the fluctuations of the bottom slice of a plane partition $\pi$, distributed according to $\mathbb{P}^{r,t}_{HL}$, to $F_{GUE}$. $F_{GUE}$ is the one point marginal distribution of the Airy process and in \cite{FSpohn} it was shown that the fluctuations in the case of $t = 0$ converge as a process to the Airy process. Consequently, it is natural to suppose that the same occurs for any value of $t \in (0,1)$. We will take this idea further, using the fact that the Airy process appears as the distribution of the bottom line of the Airy line ensemble \cite{CorHamA}, and conjecture that the fluctuations of all horizontal slices of $\pi$ converge (in the sense of line ensembles - see the discussion at the beginning of Section 2.1 in \cite{CorHamA}) to the Airy line ensemble. The exact formulation is presented in Conjecture \ref{S1Conj1}. 

In a similar fashion, a natural extension of Theorem \ref{TCDRP} is to show that the fluctuations of the bottom slice converge as a process to $\mathcal{F}(T,X)$. The (shifted) Hopf-Cole solution to the KPZ equation with narrow wedge initial data $\mathcal{F}(T,X) + T/24$ is also the distribution of the top line of the KPZ line ensemble \cite{CorHamK}, and so we will conjecture that the fluctuations of all horizontal slices of $\pi$ (upon appropriate shifts and scaling) converge (in the sense of line ensembles) to the KPZ line ensemble in the sense of line ensembles. The formulation is presented in Conjecture \ref{S1Conj2}.

 For $\tau > 0$ let $f(\tau) = 2\log (1 + e^{-\tau/2})$, $f'(\tau) = -\frac{e^{-\tau/2}}{1 + e^{-\tau/2}}$ and $f''(\tau) = \frac{1}{2} \frac{e^{-\tau/2}}{(1 + e^{-\tau/2})^2}$. Also set $N(r) = \frac{1}{1-r}$. With this notation we have the following conjectures.
\begin{conjecture}\label{S1Conj1}
Consider the measure $\mathbb{P}_{HL}^{r,t}$ on plane partitions, given in (\ref{PDEF}), with $t \in (0,1)$ fixed. For $\tau \in \mathbb{R}$ define the random $\mathbb{N} \times \mathbb{R}$-indexed  line ensemble $\Lambda^\tau$ as
\begin{equation}\label{S1conj1E}
\Lambda^\tau_k(s) = \frac{\lambda_k'( \lfloor\tau N + sN^{2/3} \rfloor) - Nf(|\tau|) - sN^{2/3}f'(|\tau|) - (1/2)s^2N^{1/3}f''(|\tau|)}{\sqrt[3]{2f''(|\tau|) N}}.
\end{equation}
 Then as $r \rightarrow 1^-$ we have $\Lambda^\tau \implies \mathcal{A}^\tau$ (weak convergence in the sense of line ensembles), where $\mathcal{A}^\tau$ is defined as $\mathcal{A}^\tau_k(s) = \mathcal{A}_k(s\sqrt[3]{2f''(|\tau|)}/2)$ and $(\mathcal{A}_k)_{k \in \mathbb{N}}$ is the Airy line ensemble.
\end{conjecture}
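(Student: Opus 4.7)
My plan is to establish weak convergence in three stages: convergence of finite-dimensional marginals of $\Lambda^{\tau}$, tightness of $\Lambda^{\tau}$ on compact spatial windows, and identification of all subsequential limits with the rescaled Airy line ensemble $\mathcal{A}^{\tau}$ via the Brownian Gibbs characterization of \cite{CorHamA}. Throughout I would exploit that $\Lambda^\tau$ is automatically a line ensemble in the sense $\Lambda^\tau_1(s) \geq \Lambda^\tau_2(s) \geq \cdots$, since column lengths of a partition are non-increasing in the column index.

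The finite-dimensional step is the substantial one. A first sub-task is upgrading Theorem \ref{TW} to convergence of the top line $\Lambda^\tau_1(\cdot)$ as a spatial process; joint control at several $s$-values requires inserting multiple pure-$\alpha$ specializations at the relevant diagonal positions and combining them through two-level analogues of the Hall--Littlewood Macdonald difference operator underlying Proposition \ref{finlength}. I would follow a two-level version of the Borodin--Corwin framework to extract a Fredholm determinant kernel on a double contour, then repeat the steepest descent analysis of Section \ref{Section4} around the same critical point that governs the one-point asymptotics. Convergence of $\Lambda^\tau_k$ for $k \geq 2$ would rely on iterating the difference operator to strip off further columns, and full $k$--$s$ joint convergence would follow from merging both layers of the construction.

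For tightness and the Gibbs identification, the Hall--Littlewood structure supplies natural resampling weights: conditionally on the profiles $\Lambda^\tau_j$ for $j \notin \{k,\ldots,\ell\}$ and on the slice values outside an interval $[a,b]$, the intermediate strands are distributed as an explicit product of HL skew factors $P_{\lambda/\mu}(t)$. Under the cubic-root rescaling in (\ref{S1conj1E}), a Laplace-style analysis analogous to the one behind Theorem \ref{TW} should show that these weights concentrate on the Brownian bridges with quadratic tilt characterizing the $H_1$-Brownian Gibbs property of the Airy line ensemble. One-point tightness, which is inherited from Theorem \ref{TW} applied at each $(k,s)$, combined with the monotonicity built into the non-intersection constraint, should then give tightness of each line; the Brownian Gibbs property of subsequential limits together with matching one-point marginals forces the limit to be $\mathcal{A}^\tau$.

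The main obstacle is unambiguously the first stage. As emphasized throughout the paper, the Hall--Littlewood Macdonald difference operators yield only a single scalar observable, and the measure loses the determinantal structure enjoyed at $t=0$. Producing clean multi-point Fredholm formulas in the Hall--Littlewood setting, or alternatively executing the Brownian Gibbs route directly without explicit formulas as was done for the KPZ line ensemble in \cite{CorHamK}, is precisely what is missing and what separates Conjecture \ref{S1Conj1} from the one-point result of Theorem \ref{TW}.
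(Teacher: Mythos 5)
The statement you are addressing is Conjecture \ref{S1Conj1}; the paper does not prove it. It explicitly states (Section \ref{Section9}) that no clear proof strategy is available and offers only heuristic evidence: one-point convergence of $\Lambda^\tau_1$ deduced from Theorem \ref{TW} together with the expansion $M(r) = Nf(|\tau|) + sN^{2/3}f'(|\tau|) + (1/2)s^2N^{1/3}f''(|\tau|) + O(1)$, and an informal matching of the conditional resampling weights (entropy term $r^{P}$ times the potential term built from factors $(1-t^{m})$) with the non-intersecting Brownian bridge resampling of the Airy line ensemble. Your proposal is a reasonable roadmap and in its second half essentially parallels this heuristic, but it is not a proof, and the decisive gap is the one you yourself flag: stage one. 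In the Hall--Littlewood degeneration the Macdonald difference operator machinery of Section \ref{finiteS} yields a single scalar observable, the $t$-Laplace transform of $t^{-\lambda_1'}$ for one diagonal slice; no ``two-level analogue'' producing joint formulas in several spatial positions, let alone for the lower curves $\lambda_k'$ with $k\geq 2$, is constructed in the paper or in the literature it cites, and there is no determinantal or Pfaffian structure to fall back on for $t\in(0,1)$. Asserting that such formulas ``would follow from merging both layers of the construction'' is naming the missing ingredient, not supplying it, so the finite-dimensional step remains entirely open and the argument cannot be completed as written.

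Two further points deserve correction. First, for $t\in(0,1)$ fixed the conjectured limit $\mathcal{A}^\tau$ satisfies the plain (non-intersection) Brownian Gibbs property; the ${\bf H}_1$-Brownian Gibbs property with exponential Hamiltonian pertains to the KPZ line ensemble of Conjecture \ref{S1Conj2}, where $t\to 1$ and the potential term survives. Your phrase ``the $H_1$-Brownian Gibbs property of the Airy line ensemble'' conflates the two regimes; in the present regime the paper's heuristic is precisely that the potential factors $(1-t^{cN^{1/3}})^{CN^{2/3}}\to 1$, so the conditional law degenerates to conditioned non-intersecting bridges. Second, your closing claim that the Brownian Gibbs property of subsequential limits ``together with matching one-point marginals forces the limit to be $\mathcal{A}^\tau$'' is not justified: no characterization of the Airy line ensemble by Brownian Gibbs plus one-point data is available to the paper, and the standard identification arguments require at least finite-dimensional (multi-point) convergence of the top curve — which is exactly the unproven stage one. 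So even granting tightness and the Gibbs structure of limits, the identification step as you state it does not close.
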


\begin{conjecture}\label{S1Conj2}
Consider the measure $\mathbb{P}_{HL}^{r,t}$ on plane partitions, given in (\ref{PDEF}). Suppose $T > 0$ is fixed and $\frac{- \log t}{(1-r)^{1/3}} =\frac{(T/2)^{1/3}}{ \sqrt[3]{2f''(|\tau|)}}$. For $\tau \in \mathbb{R}$ define the random $\mathbb{N} \times \mathbb{R}$-indexed  line ensemble $\Xi^\tau$ as
\begin{equation}\label{S1conj2E}
\begin{split}
\Xi^\tau_k(s) =  \frac{ \lambda_k'( \lfloor\tau N + sN^{2/3} \rfloor)  - Nf(|\tau|) - sN^{2/3}f'(|\tau|) - (1/2)s^2N^{1/3}f''(|\tau|)}{(T/2)^{-1/3}\sqrt[3]{2f''(|\tau|) N} } - T/24 + \\
 + \log ((T/2)^{-1/3}\sqrt[3]{2f''(|\tau|) N} ) + (k-1)\log\left( \frac{ NT^{-1}(2f''(|\tau|))^{-3/2}}{2\sqrt{2}}\right) - \frac{s^2T^{1/3} (2f''(|\tau|))^{2/3}}{8} .
\end{split}
\end{equation}
Then as $r \rightarrow 1^-$ we have $\Xi^{\tau} \implies \mathcal{H}^{\tau, T}$ (weak convergence in the sense of line ensembles), where $\mathcal{H}^{\tau, T}$ is defined as $\mathcal{H}^{\tau, T}_k(s) = \mathcal{H}^T_k(sT^{2/3}\sqrt[3]{2f''(|\tau|)}/2)$ and $(\mathcal{H}_k^T)_{k \in \mathbb{N}}$ is the KPZ line ensemble.
\end{conjecture}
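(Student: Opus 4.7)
The conjecture extends the one-point convergence in Theorem \ref{TCDRP} to convergence at the level of line ensembles. I would mirror the strategy successful for related integrable models: prove tightness of the rescaled prelimit ensemble $\Xi^\tau$, identify the finite-dimensional distributions of any subsequential limit, and invoke the uniqueness of the KPZ line ensemble characterized by its one-point marginal together with the $\mathrm{H}$-Brownian Gibbs property with interaction $\mathrm{H}(x) = e^x$, in the sense of Corwin--Hammond \cite{CorHamK}.

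The first step is to upgrade Proposition \ref{finlength} from a single $t$-Laplace transform to a formula for the joint $t$-Laplace transform of $(r^{\lambda'_1(k_1)}, \ldots, r^{\lambda'_1(k_m)})$ at $k_j = \lfloor \tau N + s_j N^{2/3}\rfloor$. I would attempt this by iterating the Hall-Littlewood version of the Macdonald difference operator used to prove Proposition \ref{finlength}, together with the shift structure obtained by inserting Macdonald-style Cauchy factors at different diagonal slices. After the standard determinantal bookkeeping, one should be able to express the joint transform as an $m$-component Fredholm determinant on $L^2(\mathbb{R}_+)$ whose kernel blocks are indexed by the spatial parameters $s_j$.

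Next, I would extend the asymptotic analysis of Section \ref{Section7} to this multi-variable setting. Under the CDRP scaling of Conjecture \ref{S1Conj2}, the same steepest-descent contours, locally shifted by the quadratic corrections that match the parabolic subtraction in (\ref{S1conj2E}), should control the integrand uniformly in the $s_j$. The limit of the $m$-component Fredholm determinant should match the multi-point Laplace transform of $\mathcal{H}^{\tau,T}$, known from the construction of the KPZ line ensemble in \cite{CorHamK}, which would give convergence of finite-dimensional distributions.

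Tightness and the identification of any subsequential limit as $\mathcal{H}^{\tau,T}$ rest on a Gibbs property at the prelimit level. One should be able to read off such a property from the branching relations for Hall-Littlewood polynomials applied to the interlacing array of slices $\lambda'_k$, and then pass it to the scaling limit to obtain the $e^x$-Brownian Gibbs property. The main obstacle I foresee is precisely here: the Hall-Littlewood weights $A_\pi(t)$ depend on the border-component geometry in a way that is not a simple product of edge weights, so the conditional law of a slice given its neighbors has a genuinely nontrivial combinatorial form. Arranging this conditional law to collapse to the clean exponential interaction $\mathrm{H}(x) = e^x$ in the KPZ limit, and in parallel establishing the one-sided curve-separation estimates needed for the Gibbs machinery to close, is where I expect the bulk of the work to lie.
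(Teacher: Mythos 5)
The statement you are addressing is a conjecture: the paper offers no proof of Conjecture \ref{S1Conj2}, only supporting evidence in Section \ref{Section9} (one-point marginals deduced from Theorem \ref{TCDRP}, simulations showing the $N^{1/3}\log N$ spacing between slices, and a heuristic matching of the prelimit resampling weights with the ${\bf H}_1$-Brownian Gibbs acceptance probability for constant paths). So your proposal cannot be checked against an existing argument; it has to be judged as a research program, and as such it contains genuine gaps at each of its three pillars. First, the claim that one can obtain a joint $t$-Laplace transform for $\lambda_1'(k_1),\dots,\lambda_1'(k_m)$ by ``iterating'' the Hall--Littlewood difference operator and doing ``standard determinantal bookkeeping'' runs directly against the paper's own observation (Section \ref{intro0}) that in the Hall--Littlewood degeneration the operator approach gives access to a \emph{single} observable per measure; multi-point (in the slice index) Fredholm determinant formulas for Hall--Littlewood processes are not available by this route, and producing them is a well-known obstruction rather than bookkeeping. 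Second, your identification step assumes both that the multi-point spatial Laplace transforms of $\mathcal{H}^{\tau,T}$ are ``known from the construction'' in \cite{CorHamK} and that the KPZ line ensemble is uniquely characterized by its one-point marginal together with the ${\bf H}$-Brownian Gibbs property. Neither input exists: the paper itself records that uniqueness of the KPZ$_t$ line ensemble (as a subsequential limit of the O'Connell--Yor ensemble) is open, and no characterization theorem of the type you invoke is available, so the finite-dimensional identification cannot be closed this way.

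Third, on the Gibbs structure, your stated obstacle is partly misplaced. The conditional law of a horizontal slice given its neighbors is in fact quite explicit and local: as laid out in Section \ref{Section9}, it factors into an entropy term $r^P$ and a potential term that contributes a factor $(1-t^{|\lambda_k'(m)-\lambda_{k\pm1}'(m)|})$ each time the gap to a neighboring slice decreases, so the combinatorics of $A_\pi(t)$ do not need to be disentangled at this stage. The real difficulties are the ones the paper's heuristic sidesteps: proving that this discrete resampling converges to the ${\bf H}_1$-Brownian Gibbs property uniformly over the relevant path configurations (not just near constant profiles), establishing tightness of $\Xi^\tau$ together with the order-$N^{1/3}\log N$ separation estimates that justify dropping the non-intersection constraint, and doing all of this with only one-point asymptotic information as rigorous input. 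A proof along your lines would therefore require new exact formulas or a new characterization result, neither of which the present paper supplies.
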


Motivation about the choice of scaling as well as partial evidence supporting the validity of these conjectures is given in Section \ref{Section9}. Here we will only make the observation that in the statement of Conjecture \ref{S1Conj1}, the separation between consecutive horizontal slices of $\pi$, distributed according to $\mathbb{P}_{HL}^{r,t}$ is suggested to be of order $N^{1/3}$, which is the order of the fluctuations. On the other hand, in Conjecture \ref{S1Conj2} there is a deterministic shift of order $N^{1/3}\log N$, while fluctuations remain of order $N^{1/3}$. The latter phenomenon can be observed in simulations, as is shown in Figures \ref{S1_7} and \ref{S1_8}. Namely, the conjectures suggest that as $t$ goes to $1$, one should observe a larger spacing between the bottom slices of $\pi$, which is clearly visible.

%
\subsection{Outline and acknowledgments} \hspace{2mm}\\

The introductory section above formulated the problem statement and gave the main results of the paper. In Section \ref{Section2} we present some background on partitions, symmetric functions, Macdonald processes and Fredholm determinants. In Section \ref{finiteS} we derive a formula for the $t$-Laplace transform of a certain random variable in terms of a Fredholm determinant using the approach of Macdonald difference operators. In Sections \ref{Section4} and \ref{Section7} we extend the results of Section \ref{finiteS} to a setting suitable for asymptotic analysis in the GUE and CDRP cases respectively and prove Theorems \ref{TW} and \ref{TCDRP}. Section \ref{SSart} summarizes various technical results used in the proofs of Theorems \ref{mainThm} and \ref{mainThm2}. Section \ref{Section9} presents a sampling algorithm for random plane partitions, provides empirical evidence supporting the results of this paper and further motivates out proposed conjectures.\\

I wish to thank my advisor, Alexei Borodin, for suggesting this problem to me and for his continuous help and guidance. Also, I thank Mirjana Vuleti{\' c} for helpful discussions.

\section{General definitions}\label{Section2}
In this section we summarize some facts about symmetric functions and Macdonald processes. Macdonald processes were defined and studied in \cite{BorCor}, which is the main reference for what follows together with the book of Macdonald \cite{Mac}. We explain how the measure $\mathbb{P}^{r,t}_{HL}$ arises as a limit of a certain sequence of Macdonald processes and end with some background on Fredholm determinants, used in the text.

%
\subsection{Partitions and Young diagrams}\label{partitions}\hspace{2mm} \\

We start by fixing terminology and notation. A {\em partition} is a sequence $\lambda = (\lambda_1, \lambda_2,\cdots)$ of non-negative integers such that $\lambda_1 \geq \lambda_2 \geq \cdots$ and all but finitely many elements are zero. We denote the set of all partitions by $\mathbb{Y}$. The {\em length} $\ell (\lambda)$ is the number of non-zero $\lambda_i$ and the {\em weight} is given by $|\lambda| = \lambda_1 + \lambda_2 + \cdots$ . If $|\lambda| = n$ we say that $\lambda$ {\em partitions} $n$, also denoted by $\lambda \vdash n$. There is a single partition of $0$, which we denote by $\varnothing$. An alternative representation is given by $\lambda = 1^{m_1}2^{m_2}\cdots$, where $m_j(\lambda) = |\{i \in \mathbb{N}: \lambda_i = j\}|$ is called the {\em multiplicity} of $j$ in the partition $\lambda$. There is a natural ordering on the space of partitions, called the {\em reverse lexicographic order}, which is given by
$$\lambda > \mu \iff \exists k \in \mathbb{N} \mbox{ such that } \lambda_i = \mu_i \mbox{, whenever }i < k \mbox{ and } \lambda_k > \mu_k.$$

A {Young diagram} is a graphical representation of a partition $\lambda$, with $\lambda_1$ left justified boxes in the top row, $\lambda_2$ in the second row and so on. In general, we do not distinguish between a partition $\lambda$ and the Young diagram representing it. The {\em conjugate} of a partition $\lambda$ is the partition $\lambda'$ whose Young diagram is the transpose of the diagram $\lambda$. In particular, we have the formula $\lambda_i' = |\{j \in \mathbb{N}: \lambda_j \geq i\}|$.

Given two diagrams $\lambda$ and $\mu$ such that $\mu \subset \lambda$ (as a collection of boxes), we call the difference $\theta = \lambda - \mu$ a {\em skew Young diagram}. A skew Young diagram $\theta$ is a {\em horizontal $m$- strip} if $\theta$ contains $m$ boxes and no two lie in the same column. If $\lambda - \mu$ is a horizontal strip we write $\lambda \succ \mu$. Some of these concepts are illustrated in Figure \ref{S2_1}.
\begin{figure}[h]
\centering
\scalebox{0.45}{\includegraphics{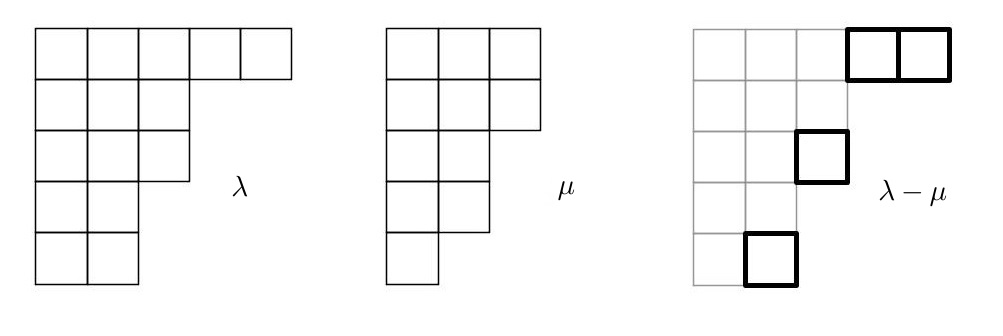}}
\caption{The Young diagram $\lambda = (5,3,3,2,2)$ and its transpose (not shown) $\lambda' = ( 5,5,3,1,1)$. The length $\ell(\lambda) = 5$ and weight $|\lambda| = 15$. The Young diagram $\mu = (3,3,2,2,1)$ is such that $\mu \subset \lambda$. The skew Young diagram $\lambda - \mu$ is shown in {\em black bold lines} and is a horizontal $4$-strip.}
\label{S2_1}
\end{figure}

A {\em plane partition} is a two-dimensional array of nonnegative integers
$$\pi = (\pi_{i,j}), \hspace{3mm} i,j = 0,1,2,...,$$
such that $\pi_{i,j} \geq \max (\pi_{i,j+1}, \pi_{i+1,j})$ for all $i,j \geq 0$ and the {\em volume} $|\pi| = \sum_{i,j \geq 0} \pi_{i,j}$ is finite. Alternatively, a plane partition is a Young diagram filled with positive integers that form non-increasing rows and columns. A graphical representation of a plane partition $\pi$ is given by a {\em $3$-dimensional Young diagram}, which can be viewed as the plot of the function 
$$(x,y) \rightarrow \pi_{\lfloor x \rfloor, \lfloor y \rfloor} \hspace{3mm} x,y > 0.$$
Given a plane partition $\pi$ we consider its diagonal slices $\lambda^t$ for $t\in \mathbb{Z}$, i.e. the sequences
$$\lambda^t = (\pi_{i, i + t}) \hspace{3mm} \mbox{ for } i \geq \max(0, -t).$$
One readily observes that $\lambda^t$ are partitions and satisfy the following interlacing property
$$\cdots \prec \lambda^{-2} \prec \lambda^{-1} \prec \lambda^0 \succ \lambda^1 \succ \lambda^2 \succ \cdots.$$
Conversely, any (finite) sequence of partitions $\lambda^{t}$, satisfying the interlacing property, defines a partition $\pi$ in the obvious way. Concepts related to plane partitions are illustrated in Figure \ref{S2_2}.

\begin{figure}[h]
\centering
\scalebox{0.45}{\includegraphics{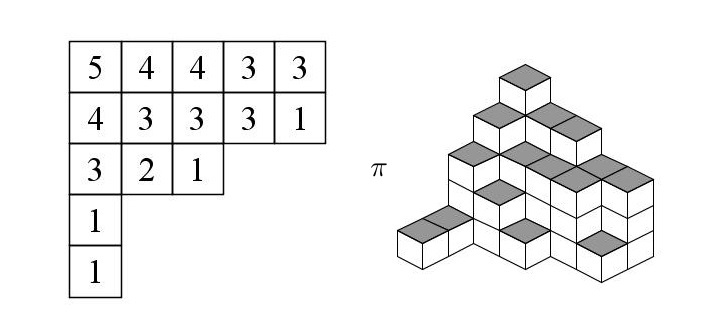}}
\caption{The plane partition $\pi = \varnothing \prec (1) \prec (1) \prec (3) \prec (4,2) \prec (5,3,1) \succ (4,3) \succ (4,3) \succ (3,1) \succ(3) \succ \varnothing$ . The volume $|\pi| = 41$.}
\label{S2_2}
\end{figure}

%
\subsection{Macdonald symmetric functions}\label{MacSym}\hspace{2mm} \\

We let $\Lambda_X$ denote the $\mathbb{Z}_{\geq 0}$ graded algebra over $\mathbb{C}$ of symmetric functions in variables $X = (x_1,x_2,...)$, which can be viewed as the algebra of symmetric polynomials in infinitely many variables with bounded degree, see e.g. Chapter I of \cite{Mac} for general information on $\Lambda_X$. One way to view $\Lambda_X$ is as an algebra of polynomials in Newton power sums
$$p_k(X) = \sum_{i = 1}^{\infty} x_i^k , \hspace{3mm} \mbox{     for          } k\geq 1.$$
For any partition $\lambda$ we define
$$p_\lambda(X) = \prod_{i = 1}^{\ell(\lambda)}p_{\lambda_i}(X),$$
and note that $p_\lambda(X)$, $\lambda \in \mathbb{Y}$ form a linear basis in $\Lambda_X$.

An alternative set of algebraically independent generators of $\Lambda_X$ is given by the elementary symmetric functions 
$$e_k(X) = \sum_{1\leq i_1 < i_2 < \cdots < i_k}x_{i_1} x_{i_2}\cdots x_{i_k}, \hspace{3mm} \mbox{  for  } k \geq 1.$$

In what follows we fix two parameters $q,t$ and assume that they are real numbers with $q,t \in (0,1)$. Unless the dependence on $q,t$ is important we will suppress them from our notation, similarly for the variable set $X$.

The Macdonald scalar product $\langle \cdot, \cdot \rangle$ on $\Lambda$ is defined via 
\begin{equation}\label{MSP}
\langle p_\lambda, p_\mu \rangle = \delta_{\lambda, \mu} \left( \prod_{i = 1}^{\ell(\lambda)} \frac{1 - q^{\lambda_i}}{1 - t^{\lambda_i}}\right)\left(\prod_{i  =1}^{\lambda_1} i^{m_i(\lambda)}m_i(\lambda)!\right).
\end{equation}
The following definition can be found in Chapter VI of \cite{Mac}.
\begin{definition}
 Macdonald symmetric functions $P_\lambda$, $\lambda \in \mathbb{Y}$, are the unique linear basis of $\Lambda$ such that
\begin{enumerate}[label = \arabic{enumi}., leftmargin=1.5cm]
\item $\langle P_\lambda, P_\mu \rangle = 0$ unless $\lambda = \mu$.
\item The leading (with respect to reverse lexicographic order) monomial in $P_\lambda$ is $\prod_{i=1}^{\ell(\lambda)}x_i^{\lambda_i}.$
\end{enumerate}
\end{definition}

\begin{remark}
 The Macdonald symmetric function $P_\lambda$ is a homogeneous symmetric function of degree $|\lambda|$.  
\end{remark}

\begin{remark} If we set $x_{N+1} = x_{N+2} = \cdots = 0$ in $P_\lambda(X)$, then we obtain the symmetric polynomials $P_\lambda(x_1,...,x_N)$ in $N$ variables, which are called the Macdonald polynomials.
\end{remark}

There is a second family of Macdonald symmetric functions $Q_\lambda$, $\lambda \in \mathbb{Y}$, which are dual to $P_\lambda$ with respect to the Macdonald scalar product:
$$Q_\lambda = \langle P_\lambda, P_\lambda \rangle^{-1}P_{\lambda}, \hspace{4mm} \langle P_\lambda,Q_\mu \rangle = \delta_{\lambda,\mu}, \hspace{2mm} \lambda, \mu \in \mathbb{Y}.$$

For two sets of variables $X = (x_1, x_2,...)$ and $Y = (y_1, y_2,...)$ define
$$\Pi(X;Y) = \sum_{\lambda \in \mathbb{Y}} P_{\lambda}(X) Q_{\lambda}(Y).$$
Then from Chapter VI (2.5) in \cite{Mac} we have
\begin{equation}\label{CauchyI}
\Pi(X;Y)  = \prod_{i,j = 1}^{\infty} \frac{(tx_iy_j;q)_\infty}{(x_iy_j;q)_\infty},
\end{equation}
where $(a;q)_\infty = (1-a)(1-aq)(1-aq^2)\cdots$ is the $q$-Pochhammer symbol. The above equality holds when both sides are viewed as formal power series in the variables $X$, $Y$ and it is known as the Cauchy identity.

We next proceed to define the skew Macdonald symmetric functions (see Chapter VI in \cite{Mac} for details). Take two sets of variables $X = (x_1,x_2, ...)$ and $Y = (y_1,y_2,...)$ and a symmetric function $f \in \Lambda$. Let $(X,Y)$ denote the union of sets of variables $X$ and $Y$. Then we can view $f(X,Y) \in \Lambda_{(X,Y)}$ as a symmetric function in $x_i$ and $y_i$ together. More precisely, let
$$f = \sum_{\lambda \in \mathbb{Y}} C_{\lambda} p_\lambda = \sum_{\lambda \in \mathbb{Y}} C_{\lambda}\prod_{i = 1}^{\ell(\lambda)}p_{\lambda_i},$$
be the expansion of $f$ into the basis $p_\lambda$ of power symmetric functions (in the above sum $C_\lambda = 0$ for all but finitely many $\lambda$). Then we have
$$f(X,Y) = \sum_{\lambda \in \mathbb{Y}} C_{\lambda} \prod_{i = 1}^{\ell(\lambda)}(p_{\lambda_i}(X) + p_{\lambda_i}(Y)).$$
In particular, we see that $f(X,Y)$ is the sum of products of symmetric functions of $x_i$ and symmetric functions of $y_i$.

Skew Macdonald symmetric functions $P_{\lambda / \mu}$, $Q_{\lambda/\mu}$ are defined as the coefficients in the expansion
\begin{equation}
P_{\lambda}(X, Y) = \sum_{\mu \in \mathbb{Y}}P_{\mu}(X) P_{\lambda/\mu}(Y) \hspace{2mm} \mbox{ and } \hspace{2mm} Q_{\lambda}(X, Y) = \sum_{\mu \in \mathbb{Y}}Q_{\mu}(X) Q_{\lambda/\mu}(Y) 
\end{equation}

\begin{remark}
The skew Macdonald symmetric function $P_{\lambda/ \mu}$ is $0$ unless $\mu \subset \lambda$, in which case it is homogeneous of degree $|\lambda| - |\mu|$.  
\end{remark}

\begin{remark}
 When $\lambda = \mu$, $P_{\lambda/ \mu} = 1$ and if $\mu = \varnothing$ (the unique partition of $0$), then $P_{\lambda / \mu} = P_\lambda.$
\end{remark}

We mention here two important special cases for the skew Macdonald symmetric functions. Suppose $x_2 = x_3 = \cdots = 0$. Then we have
$$P_{\lambda / \mu}(x_1) = \psi_{\lambda/\mu}x_1^{|\lambda| - |\mu|} \hspace{2mm} \mbox{ and }Q_{\lambda / \mu}(x_1) = \phi_{\lambda/\mu}x_1^{|\lambda| - |\mu|},$$
whenever $\lambda \succ \mu$ and zero otherwise. The coefficients $\phi_{\lambda / \mu}$ and $\psi_{\lambda /\mu}$ have exact formulas as is shown in Chapter VI (6.24) of \cite{Mac}, and we write them below. Let $f(u) = (tu;q)_{\infty}/(qu;q)_{\infty}$. If $\lambda \succ \mu$ then
\begin{equation}\label{phiForm}
\phi_{\lambda / \mu}(q,t) = \prod_{1 \leq i \leq j \leq \ell(\lambda)}\frac{f(q^{\lambda_i - \lambda_j}t^{j-i})f(q^{\mu_i - \mu_{j+1}}t^{j-i})}{f(q^{\lambda_i - \mu_j}t^{j-i})f(q^{\mu_i - \lambda_{j+1}}t^{j-i})},
\end{equation}
\begin{equation}\label{psiForm}
\psi_{\lambda / \mu}(q,t) = \prod_{1 \leq i \leq j \leq \ell(\mu)}\frac{f(q^{\mu_i - \mu_j}t^{j-i})f(q^{\lambda_i - \lambda_{j+1}}t^{j-i})}{f(q^{\lambda_i - \mu_j}t^{j-i})f(q^{\mu_i - \lambda_{j+1}}t^{j-i})},
\end{equation}
otherwise the coefficients are zero.

%
\subsection{The Macdonald process}\label{MacProc}\hspace{2mm}\\

A {\em specialization} $\rho$ of $\Lambda$ is a unital algebra homomorphism of $\Lambda$ to $\mathbb{C}$. We denote the application of $\rho$ to $f \in \Lambda$ as $f(\rho)$. One example of a specialization is the {\em trivial } specialization $\varnothing$, which takes the value $1$ at the constant function $1 \in \Lambda$ and the value $0$ at any homogeneous $f \in \Lambda$ of degree $\geq 1$. Since the power sums $p_n$ are algebraically independent generators of $\Lambda$, a specialization $\rho$ is uniquely defined by the numbers $p_n(\rho)$. Conversely, given any sequence $\alpha = a_1, a_2,...$ of complex numbers, we can define a specialization $\rho_\alpha$ by setting $p_n(\rho_\alpha) = a_n$ and linearly extending to the rest of $\Lambda$.

Given two specializations $\rho_1$ and $\rho_2$ we define their union $\rho = (\rho_1, \rho_2)$ as the specialization defined on power sum symmetric functions via
$$p_n(\rho_1, \rho_2) = p_n(\rho_1) + p_n(\rho_2), \hspace{2mm} n\geq 1.$$

One specialization that we will consider frequently is of the form $x_1 = a_1, ..., x_N = a_N$ and $x_k = 0$ for $k > N$, where $a_1,...,a_N$ are given complex numbers. That is, we set
$$p_n = \sum_{i = 1}^N a_i^n \mbox{ for all } n\in \mathbb{N}.$$
Notice that the above is well defined even if $N = \infty$, provided that $\sum_i |a_i|^n < \infty$ for each $n\geq 1$, which is ensured if $\sum_i |a_i| < \infty$. If $N < \infty$ we call the above a {\em finite length} specialization.
\begin{definition}
We say that a specialization $\rho$ of $\Lambda$ is Macdonald nonnegative (or just `nonnegative') if it takes nonnegative values on the skew Macdonald symmetric functions: $P_{\lambda/\mu}(\rho) \geq 0$ for any partitions $\lambda$ and $\mu$.
\end{definition}

One can show (see e.g. Section 2.2 in \cite{BorCor}) that if we have $a_i \geq 0$ and $\sum_i a_i < \infty$ in the specialization we considered before, then it is nonnegative. Such a specialization is called {\em Pure alpha}. We remark that finite unions of nonnegative specializations are nonnegative (see Section 2.2 in \cite{BorCor}).

Let $\rho_1$ and $\rho_2$ be two non-negative specializations, then one defines
$$\Pi(\rho_1, \rho_2) = \sum_{\lambda \in \mathbb{Y}}P_{\lambda}(\rho_1)Q_{\lambda}(\rho_2),$$
the latter being well-defined in $[1,\infty]$ (observe that $P_{\varnothing}(\rho_1) = 1 = Q_{\varnothing}(\rho_2)$, so that $\Pi(\rho_1, \rho_2) \geq 1$).

We now formulate the definition of the {\em Macdonald process}. Let $N$ be a natural number and fix nonnegative specializations $\rho_0^+,...,\rho_{N-1}^+, \rho_1^{-}, ...,\rho_{N}^{-}$, such that $\Pi(\rho_i^+, \rho_j^-) < \infty$ for all $i,j$. Consider two sequences of partitions $\lambda = (\lambda^{1}, ..., \lambda^{N})$ and $\mu = (\mu^{1},...,\mu^{N-1})$. We define their weight as
\begin{equation}\label{weight}
\mathcal{W}(\lambda, \mu) = P_{\lambda^{1}}(\rho_0^+)Q_{\lambda^{1}/\mu^{1}}(\rho_1^{-})P_{\lambda^{2}/\mu^{1}}(\rho_1^{+})  \cdots P_{\lambda^{N}/\mu^{N-1}}(\rho_{N-1}^{+})Q_{\lambda^{N}}(\rho_N^-).
\end{equation}

\begin{definition}
With the above notation, the Macdonald process ${\bf M}(\rho_0^+, ...,\rho_{N-1}^+; \rho_1^-,...,\rho_N^-)$ is the probability measure on sequences $(\lambda, \mu)$, given by
$${\bf M}(\rho_0^+, ...,\rho_{N-1}^+; \rho_1^-,...,\rho_N^-)(\lambda, \mu) = \frac{\mathcal{W}(\lambda, \mu) }{\prod_{0 \leq i < j \leq N}\Pi (\rho_i^+; \rho_j^-)}.$$
\end{definition}
Using properties of Macdonald symmetric functions one can show (see e.g. Proposition 2.4 in Section 2 of \cite{BorCor}) that the above definition indeed produces a probability measure, that is 
$$\sum_{\lambda, \mu }\mathcal{W}(\lambda, \mu) = \prod_{0 \leq i < j \leq N}\Pi (\rho_i^+; \rho_j^-).$$
The Macdonald process with $N = 1$ is called the {\em Macdonald measure} and is written as ${\bf MM}(\rho^+;\rho^-)$.

One important feature of Macdonald processes is that if we pick out subsequences of $(\lambda, \mu)$, then their distribution is also a Macdonald process (with possibly different specializations). One special case that is important for us is the distribution of $\lambda^{k}$ under projection of the above law. As shown in Section 2 of \cite{BorCor}, $\lambda^{k}$ is distributed according to the Macdonald measure ${\bf MM}(\rho^+_{[0,k-1]}; \rho^-_{[k,N]})$, where $\rho^{\pm}_{[a,b]}$ denotes the union of specializations $\rho^{\pm}_m$, $m = a,...,b$.

%
\subsection{The measure $\mathbb{P}^{r,t}_{HL}$ as a limit of Macdonald processes.}\label{HL} \hspace{2mm} \\

The main object of interest in this paper is a distribution $\mathbb{P}^{r,t}_{HL}$ on plane partitions, depending on two parameters $r,t \in (0,1)$, which satisfies $\mathbb{P}_{HL}(\pi) \propto r^{|\pi|}A_{\pi}(t)$ for a certain explicit polynomial $A_\pi$, depending on the geometry of $\pi$ (see Section \ref{intro} for the details). We explain how this measure arises as a limit of Macdonald processes, in which the parameter $q$ is set to $0$.

We begin by fixing a natural number $N$ and considering sequences of partitions $\lambda^{-N+1}, ..., \lambda^{N-1}$ such that
$$\varnothing \prec \lambda^{-N+1} \prec \cdots \prec \lambda^{-1} \prec \lambda^0 \succ \lambda^1 \succ \cdots \succ \lambda^{N-1} \succ \varnothing.$$
The latter sequences exactly represent the set of plane partitions, whose support lies in a square of size $N$, i.e. the set $\{\pi : \pi_{i,j} = 0$ if $i  >  N$ or $j > N\}$ (see Section \ref{partitions}). We next consider the collection of finite length specializations $\rho^+_n$, $\rho_n^-$ given by
\begin{equation*}
\begin{split}
&\rho_n^+ : x_1 = r^{-n -1/2}, x_2 = x_3 = \cdots = 0 \hspace{5mm} -N \leq n \leq -1, \\
&\rho_n^- : x_1 =  x_2 = x_3 = \cdots = 0 \hspace{13mm} -N + 1 \leq n \leq -1, \\
&\rho_n^- : x_1 = r^{n +1/2}, x_2 = x_3 = \cdots = 0 \hspace{14mm} 0 \leq n \leq N-1,\\
&\rho_n^+ : x_1 =  x_2 = x_3 = \cdots = 0 \hspace{26.5mm} 0 \leq n \leq N-2.
\end{split}
\end{equation*}
Consider the Macdonald process ${\bf M}(\rho^+_{-N},... \rho^+_{N-2}; \rho^-_{-N+1},... \rho^-_{N-1})$ and recall that the probability of a pair of sequences $(\lambda, \mu)$ with $\lambda = (\lambda^{-N+1}, \cdots, \lambda^{N - 1})$ and $\mu = (\mu^{-N+1}, \cdots, \mu^{N-2})$ is given by
\begin{equation}\label{measureMM}
{\bf M}(\rho^+_{-N},... \rho^+_{N-2}; \rho^-_{-N+1},... \rho^-_{N-1}) =\frac{ \prod_{n = -N + 1}^{N-1} P_{\lambda^n/\mu^{n-1}}(\rho_{n-1}^+)Q_{\lambda^n/\mu^{n}}(\rho_n^-)}{\prod_{-N+1 \leq i < j \leq N-2}\Pi (\rho_i^+; \rho_j^-)},
\end{equation}
where we set $\mu^{-N} = \mu^{N-1} = \varnothing$. Using properties of skew Macdonald polynomials we see that the above product is zero unless 
\begin{itemize}
\item $\mu^n = \lambda^n$ for $n < 0$ and $\mu^n = \lambda^{n+1}$ for $n\geq 0$,
\item $\varnothing \prec \lambda^{-N+1} \prec \cdots \prec \lambda^{-1} \prec \lambda^0 \succ \lambda^1 \succ \cdots \succ \lambda^{N-1} \succ \varnothing.$
\end{itemize}
If the two conditions above are satisfied we obtain that the numerator in (\ref{measureMM}) equals (see Section \ref{MacSym})
$$\prod_{n = -N + 1}^0 \psi_{\lambda^n/ \mu^{n-1}}(q,t)r^{(-2n + 1)(|\lambda^n| - |\mu^{n-1}|)/2}\times \prod_{n = 1}^N \phi_{\lambda^{n-1}/ \mu^{n-1}}(q,t)r^{(2n + 1)(|\lambda^{n-1}| - |\mu^{n-1}|)/2}.$$
Using that $\mu^n = \lambda^n$ for $n < 0$ and $\mu^n = \lambda^{n+1}$ for $n\geq 0$ we get
$$\sum_{n = -N+1}^0 \frac{-2n + 1}{2}(|\lambda^n| - |\mu^{n-1}|) + \sum_{n =1}^N \frac{2n + 1}{2}(|\lambda^{n-1}| - |\mu^{n-1}|) = \sum_{n = -N+1}^0 \frac{-2n + 1}{2}(|\lambda^n| - |\lambda^{n-1}|) $$ 
$$ + \sum_{n =1}^N \frac{2n + 1}{2}(|\lambda^{n-1}| - |\lambda^{n}|) = \sum_{n = - N + 1}^{-1} |\lambda^n| + \frac{1}{2}|\lambda^0| +  \frac{1}{2}|\lambda^0| + \sum_{n = 1}^{N-1}|\lambda^n| = \sum_{n = - N + 1}^{N-1} |\lambda^n| = |\pi|$$
where we set $\lambda^{-N} = \lambda^N = \varnothing$ and $\pi$ is the plane partition corresponding to the diagonal slices $\lambda^n$ (see Section \ref{partitions}). 

Letting $q \rightarrow 0$ in equations (\ref{phiForm}) and (\ref{psiForm}) we get (see $(5.8)$ and $(5.8')$ in Chapter 3 of \cite{Mac}):
$$\phi_{\lambda / \mu}(0,t) = \prod_{i \in I}(1 - t^{m_i(\lambda)}) \hspace{3mm} \mbox{ and }  \hspace{3mm} \psi_{\lambda/\mu}(0,t) = \prod_{j \in J}(1 - t^{m_j(\mu)}).$$
In the above formula we assume $\lambda \succ \mu$ otherwise both expressions equal $0$. The sets $I,J$ are:
$$I = \{ i \in \mathbb{N}: \lambda'_{i+1} = \mu'_{i+1} \mbox{ and }\lambda'_{i} > \mu'_{i}\} \mbox{ and }J = \{ j \in \mathbb{N}: \lambda'_{j+1} > \mu'_{j+1} \mbox{ and }\lambda'_{j} = \mu'_{j}\}.$$
Summarizing the above work, we see that ${\bf M}(\rho^+_{-N},... \rho^+_{N-2}; \rho^-_{-N+1},... \rho^-_{N-1})$ induces a probability measure on sequences $\varnothing \prec \lambda^{-N+1} \prec \cdots \prec \lambda^{-1} \prec \lambda^0 \succ \lambda^1 \succ \cdots \succ \lambda^{N-1} \succ \varnothing$ and hence on plane partitions $\pi$, whose support lies in the square of size $N$. Call the latter measure $\mathbb{P}^{r,t,N}_{HL}$ and observe that
$$\mathbb{P}^{r,t,N}_{HL}(\pi) =Z_N^{-1} r^{|\pi|} \prod_{n = -N + 1}^0\psi_{\lambda^n/\lambda^{n-1}}(0,t) \times \prod_{n = 1}^N \phi_{\lambda^{n-1}/ \lambda^{n}}(0,t) = Z_N^{-1}r^{|\pi|}B_{\pi}(t),$$
where $B_\pi(t)$ is an integer polynomial in $t$ and $Z_N$ is a normalizing constant. In \cite{Vul} it was shown that $B_\pi(t) = A_\pi(t)$ and the normalizing constant was evaluated to equal
$$Z_N(r,t) = \prod_{i = 1}^N\prod_{j = 1}^N \frac{1 - t r^{i + j - 1}}{1 - r^{i + j - 1}}.$$

\begin{remark}
The ``HL'' in our notation stands for Hall-Littlewood, since in the limit $q \rightarrow 0$ the Macdonald symmetric functions $P_\lambda(X;q,t)$ and $Q_{\lambda}(X;q,t)$ degenerate to the Hall-Littlewood symmetric functions $P_\lambda(X;t)$ and $Q_\lambda(X;t)$.
\end{remark}

As $N \rightarrow \infty$ the measures $\mathbb{P}^{r,t,N}_{HL}$ converge to the measure $\mathbb{P}^{r,t}_{HL}$ since $\lim_{N \rightarrow \infty} Z_N(r,t) = Z(r,t)$ - the normalizing constant in the definition of $\mathbb{P}^{r,t}_{HL}$ (see (\ref{Zconst})). Thus, we indeed see that $\mathbb{P}^{r,t}_{HL}$ arises as a limit of Macdonald processes, in which the parameter $q$ is set to $0$.\\

Our approach of studying $\mathbb{P}^{r,t}_{HL}$ goes through understanding the distribution of the diagonal slices $\lambda^k$. For $N > |k|$ we have that 
$$\mathbb{P}_{HL}^{r,t,N}(\lambda^k = \lambda) = Z_N^{-1} P_{\lambda}(r^{1/2},\cdots,r^{(2N-1)/2};t)Q_{\lambda}(r^{1/2+|k|},\cdots,r^{(2N-1)/2},\underbrace{0,0,...,0}_\text{|k|};t),
$$
where we used results in Section \ref{MacProc} and the proportionality of $P_\lambda$ and $Q_\lambda$ to combine the cases $k \geq 0$ and $k < 0$. Letting $N \rightarrow \infty$ we conclude that
$$\mathbb{P}^{r,t}_{HL}(\lambda^k = \lambda) = Z(r,t)^{-1} P_{\lambda}(r^{1/2},r^{3/2}, \cdots ;t)Q_{\lambda}(r^{1/2+|k|}, r^{3/2 + |k|}, \cdots ;t).$$
Finally, using the homogeneity of $P_\lambda$ and $Q_\lambda$, we see that 
$$\mathbb{P}^{r,t}_{HL}(\lambda^k = \lambda) = Z(r,t)^{-1}P_{\lambda}(a,ar,ar^2, \cdots ;t)Q_{\lambda}(a,ar, ar^2, \cdots ;t),$$
where $a(k) = r^{(1 + |k|)/2}$. It is this distribution, which we call the {\em Hall-Littlewood measure} with parameters $a,r,t \in (0,1)$,  that we will analyze in subsequent sections.

%
\subsection{Background on Fredholm determinants}\label{FredholmS}\hspace{2mm} \\

We present a brief background on Fredholm determinants. For a general overview of the theory of Fredholm determinants, the reader is referred to \cite{Simon} and \cite{Lax}. For our purposes the definition below is sufficient and we will not require additional properties. 

\begin{definition}
Fix a Hilbert space $L^2(X, \mu)$, where $X$ is a measure space and $\mu$ is a measure on $X$. When $X = \Gamma$, a simple (anticlockwise oriented) smooth contour in $\mathbb{C}$ we write $L^2(\Gamma)$ where for $z \in \Gamma$, $d\mu(z)$ is understood to be $\frac{dz}{2\pi \iota }$. 
\end{definition}
Let $K$ be an {\em integral operator} acting on $f(\cdot) \in L^2(X,\mu)$ by $Kf(x) = \int_XK(x,y)f(y)d\mu(y)$.  $K(x,y)$ is called the {\em kernel} of $K$ and we assume throughout $K(x,y)$ is continuous in both $x$ and $y$. If $K$ is a {\em trace-class} operator then one defines the Fredholm determinant of $I + K$, where $I$ is the identity operator, via
\begin{equation}\label{fredholmDef}
\det(I + K)_{L^2(X)} = 1 + \sum_{n = 1}^\infty \frac{1}{n!} \int_X \cdots \int_X \det \left[ K(x_i, x_j)\right]_{i,j = 1}^n \prod_{i = 1}^nd\mu(x_i),
\end{equation}
where the latter sum can be shown to be absolutely convergent (see \cite{Simon}). 

A sufficient condition for the operator $K(x,y)$ to be trace-class is the following (see \cite{Lax} page 345).
\begin{lemma}\label{traceclass}
An operator $K$ acting on $L^2(\Gamma)$ for a simple smooth contour $\Gamma$ in $\mathbb{C}$ with integral kernel $K(x,y)$ is trace-class if $K(x,y): \Gamma^2 \rightarrow \mathbb{R}$ is continuous as well as $K_2(x,y)$ is continuous in $y$. Here $K_2(x,y)$ is the derivative of $K(x,y)$ along the contour $\Gamma$ in the second entry.
\end{lemma}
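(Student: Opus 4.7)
The plan is to reduce the statement to the classical fact that integral operators with sufficiently smooth kernels on a compact interval are trace-class. First, I would parametrize $\Gamma$ by arc length via a smooth map $\gamma:[0,L]\to\Gamma$, and use this to identify $L^2(\Gamma)$ with $L^2([0,L])$. Under this identification, $K$ transfers to an integral operator $\tilde K$ with kernel $\tilde K(s,t)=K(\gamma(s),\gamma(t))|\gamma'(t)|$, which by hypothesis is continuous on $[0,L]^2$ and has a continuous partial derivative in the second variable. It therefore suffices to show trace-class on a compact interval with such a $C^1$-in-$t$ kernel.

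The key step is a fundamental-theorem-of-calculus factorization. Write
\begin{equation*}
\tilde K(s,t) = \tilde K(s,0) + \int_0^t (\partial_t \tilde K)(s,u)\, du.
\end{equation*}
This splits $K$ as the sum of two operators. The first has the rank-one kernel $\tilde K(s,0)\cdot 1$ and is therefore trace-class. The second is the composition $A\circ V$, where $A$ is the integral operator with the continuous bounded kernel $(\partial_t\tilde K)(s,u)$ on the compact square $[0,L]^2$, and $V:L^2([0,L])\to L^2([0,L])$ is the Volterra operator $(Vf)(t)=\int_0^t f(u)\,du$, whose kernel is the indicator $\mathbf{1}\{u\le t\}$. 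Since both kernels lie in $L^2([0,L]^2)$, both $A$ and $V$ are Hilbert--Schmidt, and the product of two Hilbert--Schmidt operators is trace-class. Summing, we conclude $K$ is trace-class.

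The main obstacle, if any, is that in applications elsewhere in the paper one sometimes works with non-compact contours; but the lemma as stated is essentially a local/compact statement and matches the reference to Lax page 345. For non-compact $\Gamma$ one would add decay assumptions at infinity and approximate by compact truncations, using continuity of the Fredholm determinant under trace-norm limits together with the factorization argument above on each truncation. I expect the rest of the argument to be routine, as no clever estimates beyond the Hilbert--Schmidt bounds on $A$ and $V$ are required.
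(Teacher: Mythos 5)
Your argument is essentially correct, but note that the paper does not prove this lemma at all: it is quoted from Lax's \emph{Functional Analysis} (p.\ 345), where the proof runs along different lines (expand the kernel in a Fourier series in the second variable along the closed contour, integrate by parts to transfer one derivative onto the Fourier modes, and bound the trace norm by a Cauchy--Schwarz--summable series of rank-one pieces). Your route — the fundamental-theorem-of-calculus splitting $\tilde K(s,t)=\tilde K(s,0)+\int_0^t(\partial_t\tilde K)(s,u)\,du$, giving a rank-one operator plus a product of two Hilbert--Schmidt operators — is a standard and perfectly valid alternative, and arguably more elementary since it avoids Fourier analysis; the Fourier proof, on the other hand, exploits the periodicity of the closed contour directly and gives an explicit trace-norm bound in terms of $\|\partial_yK\|_{L^2}$. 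Three small points to tidy up. First, with your conventions the operator whose kernel is $\int_0^t(\partial_t\tilde K)(s,u)\,du$ is $A\circ V^{*}$, not $A\circ V$: the composition $A\circ V$ has kernel $\int_t^L(\partial_t\tilde K)(s,u)\,du$. Either fix the ordering or instead expand from the endpoint, $\tilde K(s,t)=\tilde K(s,L)-\int_t^L(\partial_t\tilde K)(s,u)\,du$; in any case both factors are Hilbert--Schmidt, so the conclusion stands. Second, your Hilbert--Schmidt bound on $A$ uses boundedness of $(\partial_t\tilde K)(s,u)$ on the square, i.e.\ effectively joint continuity (or at least $L^2$ integrability) of $K_2$, which is slightly stronger than the literal ``continuous in $y$'' of the statement; this is harmless since in the paper's application (Lemma \ref{tracel}) the kernel and its derivative are jointly continuous, but you should state it as the hypothesis you use. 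Third, the identification of $L^2(\Gamma)$ (with the complex weight $dz/2\pi\iota$) with $L^2([0,L])$ should carry the factor $\gamma'(t)/2\pi\iota$ rather than $|\gamma'(t)|$; this smooth bounded factor changes nothing in the argument. Your remark about non-compact contours is moot here: the contour in all uses of the lemma is the circle $C_0$, which is compact.
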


The expression appearing on the RHS of (\ref{fredholmDef}) can be absolutely convergent even if $K$ is not trace-class. In particular, this is so if $X = \Gamma$ is a piecewise smooth, oriented compact contour and $K(x,y)$ is continuous on $X \times X$. Let us check the latter briefly. 

Since $K(x,y)$ is continuous on $X \times X$, which is compact, we have $|K(x,y)| \leq A$ for some constant $A > 0$, independent of $x,y \in X$. Then by Hadamard's inequality\footnote{Hadamard's inequality: the absolute value of the determinant of an $n \times n$ matrix is at most the product of the lengths of the column vectors.} we have
$$\left|  \det \left[ K(x_i, x_j)\right]_{i,j = 1}^n \right| \leq n^{n/2}A^n.$$
This implies that 
$$ \left| \frac{1}{n!} \int_X \cdots \int_X \det \left[ K(x_i, x_j)\right]_{i,j = 1}^n \prod_{i = 1}^nd\mu(x_i) \right| \leq \frac{n^{n/2}B^n}{n!},$$
where $B = A |\mu|(X)$. The latter is clearly absolutely summable because of the $n!$ in the denominator.\\

Whenever $X$ and $K$ are such that the RHS in (\ref{fredholmDef}) is absolutely convergent, we will still call it $\det (I + K)_{L^2(X)}$. The latter is no longer a Fredholm determinant, but some numeric quantity we attach to the kernel $K$. Of course, if $K$ is the kernel of a trace-class operator on $L^2(X)$ this numeric quantity agrees with the Fredholm determinant. Doing this allows us to work on the level of numbers throughout most of the paper, and avoid constantly checking if the kernels we use represent a trace-class operator.\\

The following lemmas provide a framework for proving convergence of Fredholm determinants, based on pointwise convergence of their defining kernels and estimates on those kernels.
\begin{lemma}\label{FDCT}
Suppose that $\Gamma$ is a piecewise smooth contour in $\mathbb{C}$ and $K^N(x,y)$, $N \in \mathbb{N}$ or $N = \infty$, are measurable kernels on $\Gamma \times \Gamma$ such that  $\lim_{N \rightarrow \infty} K^N(x,y) = K^\infty(x,y)$ for all $x,y\in \Gamma$. In addition, suppose that there exists a non-negative, measurable function $F(x)$ on $\Gamma$ such that
$$\sup_{N \in \mathbb{N}}\sup_{y\in \Gamma}|K^N(x,y)| \leq F(x) \mbox{ and } \int_{\Gamma}F(x) |d\mu(x)| = M < \infty.$$ 
Then for each $n \geq 1$ and $N$ one has that $\det \left[ K^N(x_i, x_j)\right]_{i,j = 1}^n$ is integrable on $\Gamma^n$, so that in particular $\int_{\Gamma} \cdots \int_{\Gamma} \det \left[ K^N(x_i, x_j)\right]_{i,j = 1}^n \prod_{i = 1}^nd\mu(x_i)$ is well defined. Moreover, for each $N$
$$\det(I + K^N)_{L^2(\Gamma)} = 1 +\sum_{n = 1}^\infty \frac{1}{n!} \int_{\Gamma} \cdots \int_{\Gamma} \det \left[ K^N(x_i, x_j)\right]_{i,j = 1}^n \prod_{i = 1}^nd\mu(x_i)$$
is absolutely convergent and $\lim_{N \rightarrow \infty} \det(I + K^N)_{L^2(\Gamma)} = \det(I + K^\infty)_{L^2(\Gamma)}.$
\end{lemma}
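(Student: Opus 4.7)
The plan is to combine Hadamard's inequality with the dominated convergence theorem, applied twice: once inside each $n$-fold integral over $\Gamma^n$ to handle the convergence at fixed expansion order, and then at the level of the series in $n$ via a uniform-in-$N$ tail estimate.

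First, I would establish the key a priori bound via Hadamard's inequality. Since $|K^N(x_i, x_j)| \le F(x_i)$ uniformly in $N$ and $x_j$, each row of the matrix $[K^N(x_i,x_j)]_{i,j=1}^n$ has $\ell^2$-norm at most $\sqrt{n}\, F(x_i)$, so applying Hadamard to the rows yields
\[
\bigl|\det[K^N(x_i,x_j)]_{i,j=1}^n\bigr| \le n^{n/2} \prod_{i=1}^n F(x_i).
\]
The right-hand side is measurable on $\Gamma^n$ and its integral (against the product measure $\prod |d\mu(x_i)|$) equals $n^{n/2} M^n$. This simultaneously shows that $\det[K^N(x_i,x_j)]$ is integrable on $\Gamma^n$ for each $N$ and $n$, and bounds the $n$-th term in the expansion (\ref{fredholmDef}) by $n^{n/2} M^n/n!$. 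By Stirling's formula $\sum_n n^{n/2} M^n/n!$ converges, so the series defining $\det(I+K^N)_{L^2(\Gamma)}$ is absolutely convergent uniformly in $N \in \mathbb{N} \cup \{\infty\}$; note that the same bound applies to $K^\infty$, since the pointwise limit inherits $|K^\infty(x,y)| \le F(x)$.

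Next, I would fix $n$ and apply Lebesgue's dominated convergence theorem on $\Gamma^n$. The integrand $\det[K^N(x_i,x_j)]$ converges pointwise to $\det[K^\infty(x_i,x_j)]$ as $N\to\infty$, since the determinant is a continuous function of its matrix entries, and it is dominated by the integrable function $n^{n/2}\prod_i F(x_i)$ independently of $N$. Therefore
\[
\lim_{N\to\infty}\frac{1}{n!}\int_{\Gamma^n}\det[K^N(x_i,x_j)]\prod_{i=1}^n d\mu(x_i) = \frac{1}{n!}\int_{\Gamma^n}\det[K^\infty(x_i,x_j)]\prod_{i=1}^n d\mu(x_i).
\]

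Finally, to interchange this limit with the summation over $n$, I would use the uniform summable bound established above. Given $\varepsilon > 0$, choose $n_0$ so that $\sum_{n > n_0} n^{n/2} M^n / n! < \varepsilon/3$; this tail estimate holds simultaneously for all $N\in \mathbb{N}$ and for $K^\infty$. A standard three-epsilon argument, using the termwise convergence from the previous paragraph for $n \le n_0$, then yields $\lim_{N\to\infty}\det(I+K^N) = \det(I+K^\infty)$. The argument is essentially routine dominated convergence; the only points requiring care are applying Hadamard to the rows rather than the columns (so that the integration variable $x_i$ appears inside $F$), and recording that $F$ dominates $K^\infty$ as well, both of which are immediate from the hypotheses.
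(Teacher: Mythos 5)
Your proposal is correct and follows essentially the same route as the paper: Hadamard's inequality gives the dominating function $n^{n/2}\prod_i F(x_i)$, dominated convergence handles each fixed $n$, and the uniform summable bound $n^{n/2}M^n/n!$ justifies interchanging the limit with the series (your three-epsilon tail argument is equivalent to the paper's second application of dominated convergence).
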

\begin{proof}
The following is similar to Lemma 8.5 in \cite{BCF}; however, it allows for infinite contours $\Gamma$ and assumes a weaker pointwise convergence of the kernels, while requiring a dominating function $F$. The idea is to use the Dominated Convergence Theorem multiple times.\\

Since $\lim_{N \rightarrow \infty} K^N(x,y) = K^\infty(x,y)$ we know that $\sup_{y\in \Gamma}|K^\infty(x,y)| \leq F(x) $ and also
$$\lim_{N \rightarrow \infty}  \det \left[ K^N(x_i, x_j)\right]_{i,j = 1}^n =  \det \left[ K^\infty(x_i, x_j)\right]_{i,j = 1}^n \mbox{ for all }x_1,...,x_n \in \Gamma.$$
By Hadamard's inequality we have
$$\left| \det \left[ K^N(x_i, x_j)\right]_{i,j = 1}^n\right| \leq n^{n/2} \prod_{i = 1}^nF(x_i),$$
which is integrable by assumption. It follows from the Dominated Convergence Theorem with dominating function $ n^{n/2} \prod_{i = 1}^nF(x_i)$ that for each $n \geq 1$ one has
$$\lim_{N \rightarrow \infty}  \int_{\Gamma} \cdots \int_{\Gamma} \det \left[ K^N(x_i, x_j)\right]_{i,j = 1}^n \prod_{i = 1}^nd\mu(x_i) = \int_{\Gamma} \cdots \int_{\Gamma} \det \left[ K^\infty(x_i, x_j)\right]_{i,j = 1}^n \prod_{i = 1}^nd\mu(x_i).$$

Next observe that 
$$\left| \int_{\Gamma} \cdots \int_{\Gamma} \det \left[ K^N(x_i, x_j)\right]_{i,j = 1}^n \prod_{i = 1}^nd\mu(x_i)\right| \leq  \int_{\Gamma} \cdots \int_{\Gamma} \left| \det \left[ K^N(x_i, x_j)\right]_{i,j = 1}^n \right| \prod_{i = 1}^n|d\mu(x_i)| \leq n^{n/2}M^n.$$
The latter shows the absolute convergence of the series, defining $\det(I + K^N)_{L^2(\Gamma)}$ for each $N$. A second application of the Dominated Convergence Theorem with dominating series $1 + \sum_{n \geq 1}\frac{n^{n/2}M^n}{n!}$ now shows the last statement of the lemma.
\end{proof}

\begin{lemma}\label{FDKCT}
Suppose that $\Gamma_1, \Gamma_2$ are piecewise smooth contours and $g^N_{x,y}(z)$ are measurable on $\Gamma_1^2 \times \Gamma_2$ for $N \in \mathbb{N}$ or $N = \infty$ and satisfy $\lim_{N \rightarrow \infty} g^N_{x,y}(z) = g_{x,y}^\infty(z)$ for all $x,y \in \Gamma_1,$ $z \in \Gamma_2$. In addition, suppose that there exist bounded non-negative measurable functions $F_1$ and $F_2$ on $\Gamma_1$ and $\Gamma_2$ respectively such that
$$\sup_{N \in \mathbb{N}}\sup_{y\in \Gamma_1}|g^N_{x,y}(z)| \leq F_1(x)F_2(z), \mbox{ and } \int_{\Gamma_i}F_i(u)|d\mu(u)| = M_i < \infty.$$ 
Then for each $N$ one has $\int_{\Gamma_2}|g^N_{x,y}(z)| |d\mu(z)|< \infty$ and in particular $K^N(x,y):= \int_{\Gamma_2}g^N_{x,y}(z) d\mu(z)$ are well-defined. Moreover, $K^N(x,y)$ satisfy the conditions of Lemma \ref{FDCT} with $\Gamma = \Gamma_1$ and $F = M_2 F_1$.
\end{lemma}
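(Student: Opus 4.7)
The proof is essentially a two-level application of the Dominated Convergence Theorem, and I would structure it around verifying the three hypotheses of Lemma \ref{FDCT} in turn.

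First I would dispose of the well-definedness. The hypothesis $|g^N_{x,y}(z)| \leq F_1(x) F_2(z)$ and the integrability of $F_2$ on $\Gamma_2$ immediately give
\[
\int_{\Gamma_2} |g^N_{x,y}(z)|\, |d\mu(z)| \;\leq\; F_1(x) \int_{\Gamma_2} F_2(z)\, |d\mu(z)| \;=\; M_2 F_1(x) \;<\; \infty,
\]
so $K^N(x,y) = \int_{\Gamma_2} g^N_{x,y}(z)\, d\mu(z)$ is a well-defined complex number for every $x,y \in \Gamma_1$ and every $N$. The same estimate at once produces the dominating function needed for Lemma \ref{FDCT}: for all $N$ and all $y \in \Gamma_1$,
\[
|K^N(x,y)| \;\leq\; M_2 F_1(x) \;=:\; F(x),
\]
and $\int_{\Gamma_1} F(x)\, |d\mu(x)| = M_1 M_2 < \infty$ by the hypothesis on $F_1$.

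It remains to verify pointwise convergence of $K^N$ to $K^\infty$ on $\Gamma_1 \times \Gamma_1$. Fix $x,y \in \Gamma_1$. Then $g^N_{x,y}(z) \to g^\infty_{x,y}(z)$ for each $z \in \Gamma_2$, and the $z$-integrable function $F_1(x) F_2(z)$ (viewed as a function of $z$ with $x$ fixed) dominates $|g^N_{x,y}(z)|$ uniformly in $N$. The Dominated Convergence Theorem on $\Gamma_2$ therefore gives
\[
\lim_{N \to \infty} K^N(x,y) \;=\; \lim_{N \to \infty} \int_{\Gamma_2} g^N_{x,y}(z)\, d\mu(z) \;=\; \int_{\Gamma_2} g^\infty_{x,y}(z)\, d\mu(z) \;=\; K^\infty(x,y),
\]
as desired. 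Combining this with the uniform bound above shows that $\{K^N\}$ satisfies the hypotheses of Lemma \ref{FDCT} with $\Gamma = \Gamma_1$ and $F = M_2 F_1$, which is the claim.

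There is no real obstacle in the argument; the only mild point worth noting is that the dominating estimate $|g^N_{x,y}(z)| \leq F_1(x) F_2(z)$ is $N$-uniform \emph{and} $y$-uniform, so the uniform-in-$y$ bound required by Lemma \ref{FDCT} is inherited automatically from the corresponding property of the $g^N_{x,y}$'s after integrating out $z$.
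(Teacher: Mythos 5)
Your proof is correct and takes essentially the same route as the paper's: the pointwise bound $|g^N_{x,y}(z)|\leq F_1(x)F_2(z)$ gives both the well-definedness of $K^N$ and the dominating function $F=M_2F_1$, and the Dominated Convergence Theorem on $\Gamma_2$ gives the pointwise convergence $K^N(x,y)\to K^\infty(x,y)$. The only details the paper includes that you omit are the measurability of $K^N(x,y)$ on $\Gamma_1^2$ (required by Lemma \ref{FDCT}), which it deduces from Fubini's theorem, and the remark that $g^\infty$ inherits the bound $F_1(x)F_2(z)$ so that $K^\infty$ is likewise well-defined.
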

\begin{proof}
Since $\lim_{N \rightarrow \infty} g^N_{x,y}(z) = g_{x,y}^\infty(z)$ for all $x,y \in \Gamma_1,$ $z \in \Gamma_2$ we know that $\left|g^\infty_{x,y}(z)\right| \leq F_1(x)F_2(z)$ as well. Observe that for each $x,y \in \Gamma_1$ and $N$ one has that 
$$\int_{\Gamma_2}|g^N_{x,y}(z)| |d\mu(z)| \leq \int_{\Gamma_2}F_1(x)F_2(z)|d\mu(z)| \leq M_2F_1(x) < \infty.$$
Setting $K^N(x,y) = \int_{\Gamma_2}g^N_{x,y}(z)d\mu(z)$, we see that $|K^N(x,y)|  \leq  M_2F_1(x) $ for each $x,y \in \Gamma_1$ and $N$. As an easy consequence of Fubini's Theorem one has that $K^N(x,y)$ is measurable on $\Gamma_1^2$ (the case of real functions and measures $\mu$ can be found in Corollary 3.4.6 of \cite{Bog}, from which the complex extension is immediate). Using the Dominated Convergence Theorem with dominating function $F_1(x)F_2(z)$ we see that $\lim_{N \rightarrow \infty} K^N(x,y) = K^\infty(x,y)$.

\end{proof}

\section{Finite length formulas}\label{finiteS}
In this section, we derive formulas for the $t$-Laplace transform of the random variable $(1-t)t^{-\lambda_1'}$, where $\lambda$ is distributed according to the finite length Hall-Littlewood measure $\mathbb{P}_{X,Y}$ (see Section \ref{HL}). The main result in this section is Proposition \ref{finlength}, which expresses the $t$-Laplace transform as a Fredholm determinant. We believe that such a formula is of separate interest as it can be applied to generic Hall-Littlewood measures and its Fredholm determinant form makes it suitable for asymptotic analysis. The derivation of Proposition \ref{finlength} goes through a sequence of steps that is very similar to the work in Sections 2.2.3, 3.1 and 3.2 of \cite{BorCor}. There are, however, several technical modifications that need to be made, which require us to redo most of the work there. In particular, the statements below do not follow from some simple limit transition from those in \cite{BorCor} and additional work is required. 

In all statements in the remainder of this paper we will be working with the principal branch of the logarithm.

%
\subsection{Observables of Hall-Littlewood measures}\label{ObsHL}\hspace{2mm}\\

In this section we describe a framework for obtaining certain observables of Macdonald measures. Our discussion will be very much in the spirit of section 2.2.3 in \cite{BorCor}; however, the results we need do not directly follow from that work and so we derive them explicitly. In this paper we will be primarily working with finite length specializations, which greatly simplifies the discussion; however, we mention that the results below can be derived in a much more general setting as is done in \cite{BorGor}. Finally, our focus will be on the case when $q= 0$ in the Macdonald measure and we call this degeneration a {\em Hall-Littlewood measure}.

In what follows we fix a natural number $N$ and consider the space of functions in $N$ variables. Inside this space lies the space of symmetric polynomials $\Lambda_X$ in $N$ variables $X = (x_1,...,x_N)$. 
\begin{definition}
For any $u \in \mathbb{R}$ and $1 \leq i \leq n$ define the shift operator $T_{u,x_i}$ by
$$(T_{u,x_i}F)(x_1,...,x_N) := F(x_1,...,ux_i,...,x_N).$$
{\raggedleft For any subset $I \subset \{1,...,N\}$ of size $r$ define}
$$A_{I}(X;t) := t^{\frac{r(r-1)}{2}}\prod_{i \in I, j \not \in I} \frac{tx_i - x_j}{x_i - x_j}.$$
Finally, for any $r = 1, 2, ..., N$ define the Macdonald difference operator
$$D_N^r := \sum_{\substack{ I \subset \{1,...,N\}\\ |I| = r} }A_I(X;t) \prod_{i \in I}T_{q,x_i}.$$
\end{definition}
A key property of the Macdonald difference operators is that they are diagonalized by the Macdonald polynomials $P_\lambda$. Specifically, as shown in Chapter VI (4.15) of \cite{Mac}, we have
\begin{proposition}
For any partition $\lambda$ with $\ell(\lambda) \leq N$ 
$$D_N^rP_\lambda(x_1,...,x_N;q,t) = e_r(q^{\lambda_1} t^{N-1}, q^{\lambda_2}t^{N-2},..., q^{\lambda_N})P_\lambda(x_1,...,x_N;q,t),$$
where $e_r$ denote the elementary symmetric functions (see Section \ref{MacSym}).
\end{proposition}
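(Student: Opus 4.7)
The plan is to follow the classical route through self-adjointness and triangularity, which is the standard strategy in Macdonald's book but worth outlining here since the proposition is the algebraic cornerstone of the whole difference-operator approach. I would work in the space of symmetric polynomials in $N$ variables and characterize $P_\lambda$ by its defining properties: $P_\lambda = m_\lambda + \sum_{\mu < \lambda} c_{\lambda\mu} m_\mu$ (triangular in dominance order) and orthogonality with respect to the Macdonald scalar product. The strategy is to show that each $D_N^r$ is self-adjoint with respect to a suitable inner product and triangular with the correct eigenvalue on the leading monomial; these two properties together force $P_\lambda$ to be an eigenfunction with the claimed eigenvalue.

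First I would show that $D_N^r$ actually preserves $\Lambda_X$. A priori $D_N^r F$ could have denominators coming from the $A_I(X;t)$; the key point is that if $F$ is symmetric then the poles along $x_i = x_j$ cancel in the sum over $I$, because swapping $i \leftrightarrow j$ in a subset $I$ containing exactly one of them exchanges the residues with opposite sign. A short algebraic check establishes that $D_N^r : \Lambda_X \to \Lambda_X$.

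Next I would bundle the operators into the generating series $D_N(u;q,t) = \sum_{r=0}^N (-u)^r\, D_N^r$, which admits the compact form
\begin{equation*}
D_N(u;q,t)\,F(x_1,\dots,x_N) = \frac{1}{\prod_{i<j}(x_i-x_j)} \det\!\Bigl[ x_i^{N-j}\bigl(1 - u\, t^{N-j} T_{q,x_i}\bigr) \Bigr]_{i,j=1}^{N} F(x_1,\dots,x_N).
\end{equation*}
With this determinantal form one can compute $D_N(u;q,t)$ on a monomial $x^\lambda = x_1^{\lambda_1}\cdots x_N^{\lambda_N}$ with $\lambda_1 \geq \cdots \geq \lambda_N$ and read off, after antisymmetrizing, that the coefficient of $x^\lambda$ in $D_N(u)\,m_\lambda$ is exactly $\prod_{i=1}^N(1 - u\, q^{\lambda_i} t^{N-i})$, while all other monomials appearing are strictly smaller in the dominance order. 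Extracting the coefficient of $(-u)^r$ yields $D_N^r\,m_\lambda = e_r(q^{\lambda_1}t^{N-1},\dots,q^{\lambda_N})\,m_\lambda + (\text{lower terms})$.

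The remaining ingredient is self-adjointness. On $N$ variables the relevant pairing is the analytic Macdonald inner product $\langle f,g\rangle'_N = \frac{1}{N!}\int_{\mathbb T^N} f\,\overline{g}\,\Delta(x;q,t)\,dx$ with the standard weight $\Delta(x;q,t)=\prod_{i\neq j}(x_i/x_j;q)_\infty / (tx_i/x_j;q)_\infty$; a direct change-of-variables argument using the invariance of $\Delta$ under simultaneous shifts $x_i\mapsto qx_i$ shows $\langle D_N^r f, g\rangle'_N = \langle f, D_N^r g\rangle'_N$. Since $P_\lambda$ is also the unique orthogonal basis with respect to $\langle\cdot,\cdot\rangle'_N$ having leading term $m_\lambda$, a standard argument (write $D_N^r P_\lambda = \sum_\mu a_{\lambda\mu} P_\mu$ and pair against $P_\nu$) combined with the triangularity above forces $a_{\lambda\mu}=0$ for $\mu\neq\lambda$ and $a_{\lambda\lambda}=e_r(q^{\lambda_1}t^{N-1},\dots,q^{\lambda_N})$.

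The step I expect to be the main obstacle is the determinantal rewriting and the accompanying bookkeeping that extracts the diagonal entries on the monomial basis: the cancellations that produce a clean product formula $\prod_i(1-u q^{\lambda_i}t^{N-i})$ require carefully tracking signs from the antisymmetrization and verifying that off-diagonal contributions are genuinely strictly dominated by $\lambda$. Once that computation is in hand, the combination of triangularity, distinct eigenvalues (generic $q,t$), and self-adjointness makes the identification $D_N^r P_\lambda = e_r(q^{\lambda_1}t^{N-1},\dots,q^{\lambda_N}) P_\lambda$ automatic, and the statement for arbitrary admissible $(q,t)$ follows by polynomial continuation in the parameters.
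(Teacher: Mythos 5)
Your outline is correct, and it is essentially the proof the paper implicitly relies on: the paper gives no argument of its own but cites Chapter VI (4.15) of \cite{Mac}, where the statement is established exactly by the route you describe --- the determinantal/antisymmetrized form of the generating operator $\sum_r (-u)^r D_N^r$, triangularity on the monomial basis with diagonal entry $\prod_i(1-u\,q^{\lambda_i}t^{N-i})$, and self-adjointness with respect to the Macdonald inner product, which together force $P_\lambda$ to be an eigenfunction with the stated $e_r$ eigenvalue. The only point worth flagging is that the characterization of $P_\lambda$ as the orthogonal basis (with leading term $m_\lambda$) for the $N$-variable torus inner product is itself part of Macdonald's theory rather than the paper's Definition via the power-sum pairing, but this is standard and does not affect the argument.
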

{\raggedleft In particular, we see that }
$$D_N^1P_\lambda(x_1,...,x_N;q,t)  = \left( \sum_{i = 1}^N q^{\lambda_i}t^{N-i}\right) P_\lambda(x_1,...,x_N;q,t).$$ 

We now let $q \rightarrow 0$, while $t \in (0,1)$ is still fixed. In this limiting regime the Macdonald polynomials $P_\lambda(X;q,t)$ degenerate to the Hall-Littlewood polynomials $P_\lambda(X;t)$. In addition, the Macdonald difference operator $D_N^1$ degenerates to (we use the same notation)
$$D_N^1 =\sum_{i = 1}^N \prod_{j \neq i}\frac{tx_i - x_j}{x_i - x_j} T_{0,x_i}, \mbox{ and also }\sum_{i = 1}^N q^{\lambda_i}t^{N-i} \rightarrow t^{N-\lambda_1' -1} + \cdots + t^0 = \frac{1 - t^{N - \lambda_1'}}{1 - t}.$$
$D_N^1$ is still an operator on the space of functions in $N$ variables and we summarize the properties that we will need:
\begin{equation*}
\begin{split}1.& \mbox{ $D_N^1$ is linear.}  \\ 2. & \mbox{ If $F_n$ converge pointwise to a function $F$ in $N$ variables, then $D_N^1F_n$ converge pointwise to}\\ & \mbox{ $D_N^1F$ away from the set $\{ (x_1,...,x_N): x_i = x_j \mbox{ for some }i \neq j\}$ }. \\  3. & \mbox{ }D_N^1P_\lambda(x_1,...,x_N;t) = \frac{1 - t^{N - \lambda_1'}}{1 - t}  P_\lambda(x_1,...,x_N;t).\end{split}
\end{equation*}

\begin{proposition}
Assume that $F(u_1,...,u_N) = f(u_1)\cdots f(u_N)$ with $f(0) = 1$. Take $x_1, ... , x_N > 0$ and assume that $f(u)$ is holomorphic and non-zero in a complex neighborhood of an interval in $\mathbb{R}$ that contains $x_1,...,x_N$. Then we have
\begin{equation}\label{babyContour}
(D^1_NF)(x_1,...,x_N) = \frac{F(x_1,...,x_N)}{2\pi \iota } \int_{C} \prod_{ j = 1}^N \frac{tz - x_j}{z - x_j} \frac{1}{f(z)} \frac{dz}{(t-1)z},
\end{equation}
where $C$ is a positively oriented contour encircling $\{x_1,...,x_N\}$ and no other singularities of  the integrand.
\end{proposition}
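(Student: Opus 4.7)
The plan is a direct residue computation. First I would apply $D_N^1$ to $F$ explicitly: since the shift operator $T_{0,x_i}$ sends $x_i$ to $0$ and $F(u_1,\ldots,u_N)=f(u_1)\cdots f(u_N)$ is a product with $f(0)=1$, one has $(T_{0,x_i}F)(x_1,\ldots,x_N) = F(x_1,\ldots,x_N)/f(x_i)$, where the division is legitimate because $f$ is nonzero on a neighborhood of the interval containing the $x_i$. Consequently
\begin{equation*}
(D_N^1 F)(x_1,\ldots,x_N) \;=\; F(x_1,\ldots,x_N)\sum_{i=1}^N \frac{1}{f(x_i)}\prod_{j\neq i}\frac{tx_i-x_j}{x_i-x_j}.
\end{equation*}
It remains to identify this sum with the contour integral on the right-hand side of (\ref{babyContour}).

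Next I would analyze the integrand
\begin{equation*}
G(z) \;:=\; \prod_{j=1}^N \frac{tz-x_j}{z-x_j}\cdot\frac{1}{f(z)}\cdot\frac{1}{(t-1)z}.
\end{equation*}
Under the (generic) assumption that $x_1,\ldots,x_N$ are distinct, the only singularities of $G$ inside the contour $C$ are the simple poles at $z=x_1,\ldots,x_N$ (the point $z=0$ and any zeros of $f$ lie outside $C$ by hypothesis, and $f$ itself contributes no singularities since it is holomorphic and nonvanishing in the relevant neighborhood). A direct residue computation at $z=x_i$ gives
\begin{equation*}
\mathrm{Res}_{z=x_i}G(z)\;=\;\frac{tx_i-x_i}{(t-1)x_i}\cdot\frac{1}{f(x_i)}\prod_{j\neq i}\frac{tx_i-x_j}{x_i-x_j}\;=\;\frac{1}{f(x_i)}\prod_{j\neq i}\frac{tx_i-x_j}{x_i-x_j},
\end{equation*}
since the prefactor $(tx_i-x_i)/((t-1)x_i)=1$. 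Summing residues via the residue theorem then produces exactly the bracketed sum appearing in the explicit expression for $(D_N^1 F)(x_1,\ldots,x_N)$, which proves (\ref{babyContour}) in the case of distinct $x_i$.

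Finally, to cover the case of coincident $x_i$, I would appeal to continuity: both sides of (\ref{babyContour}) are continuous in $(x_1,\ldots,x_N)$ on the open set where the contour $C$ can be chosen to enclose all $x_i$ and avoid zeros of $f$ (on the left, the apparent singularities of $D_N^1$ cancel because the shift $T_{0,x_i}$ is applied to a symmetric context; on the right, coalescence of $x_i$'s merely produces higher-order poles whose residues combine continuously). Hence the identity on the dense open locus of distinct $x_i$ extends by continuity. The entire argument is routine once the residue bookkeeping is set up correctly, and I do not anticipate a substantive obstacle; the only small care needed is to verify that the prefactor $(t-1)z$ in the denominator exactly cancels the $z=x_i$ factor from the product so that the residue at $x_i$ reproduces the Macdonald operator's coefficient without spurious factors.
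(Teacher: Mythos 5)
Your proposal is correct and follows essentially the same route as the paper: reduce to pairwise distinct $x_i$ by continuity of both sides, then evaluate the right-hand side by the residue theorem at the simple poles $z=x_i$, where the factor $(tx_i-x_i)/\bigl((t-1)x_i\bigr)=1$ makes each residue equal to $\frac{1}{f(x_i)}\prod_{j\neq i}\frac{tx_i-x_j}{x_i-x_j}$, matching the explicit expansion of $(D_N^1F)$ via $T_{0,x_i}F = F/f(x_i)$. The only cosmetic difference is that you expand $D_N^1F$ first and then match, whereas the paper goes directly from the integral to the sum.
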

\begin{proof}
The following proof is very similar to the proof of Proposition 2.11 in \cite{BorCor}. First observe that from $t \in (0,1)$ and our assumptions on $f$ a contour $C$ will always exist. Using continuity of both sides in the variables $x_1,...,x_N$ it suffices to prove the above when the $x_i$ are pairwise distinct. The contour encircles the simple poles at $x_1,...,x_N$ and the residue at $x_i$ equals
$$\prod_{ j \neq i}^N \frac{tx_i - x_j}{x_i - x_j}\frac{1}{f(x_i)}.$$
Using the Residue Theorem we conclude that the RHS of (\ref{babyContour}) equals
$$\sum_{i = 1}^N F(x_1,...,x_N)\prod_{ j \neq i}^N \frac{tx_i - x_j}{x_i - x_j}\frac{1}{f(x_i)} = \sum_{i = 1}^N \prod_{ j \neq i}^N \frac{tx_i - x_j}{x_i - x_j}f(x_j) = (D^1_NF)(x_1,...,x_N) .$$
\end{proof}

We next consider the operator $\mathcal{D}_N = \left[ \frac{(t-1)D_N^1 + 1}{t^{N}}\right]$. It satisfies Properties 1. and 2. above and Property 3. is replaced by
$$3.' \mbox{ }\mathcal{D}_NP_\lambda(x_1,...,x_N;t) = t^{-\lambda_1'}P_\lambda(x_1,...,x_N;t).$$

\begin{proposition}\label{babyContProp}
Assume that $F(u_1,...,u_N) = f(u_1)\cdots f(u_N)$ with $f(0) = 1$. Take $x_1, ... , x_N > 0$ and assume that $f(u)$ is holomorphic and non-zero in a complex neighborhood $D$ of an interval in $\mathbb{R}$ that contans $x_1,...,x_N$ and $0$. Then for any $k \geq 1$ we have
\begin{equation}\label{toddlerContour}
 (\mathcal{D}_N^k F)(x_1,...,x_N) = \frac{F(x_1,...,x_N)}{(2\pi \iota )^k} \hspace{-1mm}\int_{C_{0,1}} \hspace{-3mm}\cdots \int_{C_{0,k}} \hspace{-1mm}\prod_{1 \leq a < b \leq k}\hspace{-1mm}  \frac{z_a - z_b}{z_a - z_bt^{-1}}  \prod_{i =1}^k \hspace{-1mm}\left[ \prod_{ j = 1}^N\hspace{-1mm} \frac{z_i - x_jt^{-1}}{z_i - x_j} \right]\hspace{-2mm} \frac{1}{f(z_i)} \frac{dz_i}{z_i},
\end{equation}
where $C_{0,a}$ are positively oriented simple contours encircling $x_1,...,x_N$ and $0$ and no zeros of $f(z)$. In addition, $C_{0,a}$ contains $t^{-1}C_{0,b}$ for $a < b$ and $C_{0,1} \subset D$.
\end{proposition}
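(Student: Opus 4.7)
The plan is to proceed by induction on $k$ using (\ref{babyContour}) as the inductive engine. For the base case $k=1$, I would start from the identity $t^N\mathcal{D}_N F=(t-1)D_N^1 F + F$. By (\ref{babyContour}),
\begin{equation*}
(t-1)(D_N^1 F)(X)=\frac{F(X)}{2\pi\iota}\int_{C}\prod_{j=1}^N\frac{tz-x_j}{z-x_j}\,\frac{dz}{f(z)z},
\end{equation*}
with $C$ enclosing only $\{x_1,\ldots,x_N\}$. The integrand has a simple pole at $z=0$ whose residue equals $\frac{1}{f(0)}\prod_j\frac{-x_j}{-x_j}=1$, so enlarging $C$ to a curve $C_{0,1}$ that also encloses $0$ contributes an extra summand $F(X)$, precisely absorbing the ``$+F$''. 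After rewriting $\prod_j\frac{tz-x_j}{z-x_j}=t^N\prod_j\frac{z-x_jt^{-1}}{z-x_j}$, the factor $t^N$ cancels the one in $\mathcal{D}_N$ and (\ref{toddlerContour}) emerges with the empty cross product.

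For the inductive step, the key observation is that the $k$-fold integrand in (\ref{toddlerContour}), viewed as a function of $(x_1,\ldots,x_N)$ with $\vec z$ fixed, retains a product form:
\begin{equation*}
F(X)\prod_{i=1}^k\prod_{j=1}^N\frac{z_i-x_jt^{-1}}{z_i-x_j}=\prod_{j=1}^N \hat f(x_j;\vec z),\qquad \hat f(x;\vec z):=f(x)\prod_{i=1}^k\frac{z_i-xt^{-1}}{z_i-x}.
\end{equation*}
One checks that $\hat f(0;\vec z)=1$ and $\hat f(\cdot;\vec z)$ is holomorphic and non-vanishing in a complex neighborhood of $\{0,x_1,\ldots,x_N\}$ for any admissible $\vec z$, since the poles $z_i$ and zeros $tz_i$ of the rational factors lie on the contours $C_{0,i}$ and $tC_{0,i}$ respectively, which avoid that neighborhood. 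Since $\mathcal{D}_N$ acts only on the $x$-variables and the integrand is continuous in all variables on the compact contours, I would pull $\mathcal{D}_N$ inside the $k$-fold integral and apply the base case to $\hat f$ in place of $f$. The new contour $C_{0,k+1}$ produced by this step must enclose $\{0,x_1,\ldots,x_N\}$ while avoiding the zeros $\{tz_1,\ldots,tz_k\}$ of $\hat f$; the stated nesting $C_{0,a}\supset t^{-1}C_{0,b}$ for $a<b$ rewrites as $C_{0,k+1}\subset tC_{0,i}$ for $i\le k$, which places each $tz_i$ strictly outside $C_{0,k+1}$.

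To close the induction I would compute $1/\hat f(z_{k+1};\vec z)=f(z_{k+1})^{-1}\prod_{i=1}^k\frac{z_i-z_{k+1}}{z_i-z_{k+1}t^{-1}}$, so the $k$ new cross factors merge with the existing $\prod_{a<b\le k}\frac{z_a-z_b}{z_a-z_bt^{-1}}$ to yield the full $\prod_{a<b\le k+1}\frac{z_a-z_b}{z_a-z_bt^{-1}}$, completing the step. The main technical obstacle I anticipate is the simultaneous existence of the nested family of contours: $D$ must be large enough so that $C_{0,1}\supset t^{-1}C_{0,2}\supset\cdots\supset t^{-(k-1)}C_{0,k}$ all fit inside while every $C_{0,i}$ still encircles $\{0,x_1,\ldots,x_N\}$ and every $tC_{0,i}$ avoids these points. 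Taking $C_{0,i}$ to be circles centered at $0$ with radii $R_i$ satisfying $tR_i>R_{i+1}$ and $t^{k-1}R_1>\max_j x_j$ realizes this, provided $D$ contains a disk of radius $R_1$ about $0$; justifying the interchange of $\mathcal{D}_N$ with the contour integrals at each stage is then a routine uniform continuity argument on the compact contours.
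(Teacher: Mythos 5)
Your proposal is correct and follows essentially the same route as the paper: induction on $k$, a base case handled by a contour deformation accounting for the simple pole at $0$ (you add the pole to the small contour, the paper removes it from $C_{0,1}$ — the same computation), and an inductive step that pulls $\mathcal{D}_N$ inside the $k$-fold integral and applies the base case to the modified single-variable function $\hat f(x;\vec z)=f(x)\prod_i\frac{z_i-xt^{-1}}{z_i-x}$, exactly the paper's $g$, with the nesting condition $tC_{0,i}\supset C_{0,k+1}$ keeping its zeros $tz_i$ outside the new contour. The only difference is your explicit circle construction for the nested contours, which the paper deliberately leaves to later applications, consistent with its remark that contour existence depends on $f$.
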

\begin{proof}
The proof is similar to the proof of Proposition 2.14 in \cite{BorCor}. In this proposition the existence of the contours $C_{0,a}$ depends on the properties of the function $f$. In what follows we will assume that they exist and whenever we use this result in the future with a particular function $f$ we will provide explicit contours satisfying the conditions in the proposition.\\

Using the continuity of both sides in $x_1,...,x_N$ it suffices to show the result when the $x_i$ are pairwise distinct. We now proceed by induction on $k \in \mathbb{N}$.\\

{\raggedleft {\em Base case: }$ k = 1$}. The RHS of (\ref{toddlerContour}) equals
$$\frac{F(x_1,...,x_N)}{2\pi \iota} \int_{C_{0,1}}  \left[ \prod_{ j = 1}^N \frac{z_1 - x_jt^{-1}}{z_1 - x_j} \right] \frac{1}{f(z_1)} \frac{dz_1}{z_1}.$$
The contour $C_{0,1}$ encircles the simple poles of the integrand at $x_1,...,x_N$ and $0$ and the residue at $0$ equals $t^{-N}$ (using $f(0) = 1$). If we now deform $C_{0,1}$ to a contour $C$, which no longer encircles $0$ but does encirlce $x_1,...,x_N$ we see, using the Residue Theorem, that the RHS of (\ref{toddlerContour}) equals
$$ t^{-N}F(x_1,...,x_N) +  \frac{F(x_1,...,x_N)}{2\pi \iota } \int_{C}  \left[ \prod_{ j = 1}^N \frac{z_1 - x_jt^{-1}}{z_1 - x_j} \right] \frac{1}{f(z_1)} \frac{dz_1}{z_1} = t^{-N}F(x_1,...,x_N) + $$
$$  (t-1)t^{-N}\frac{F(x_1,...,x_N)}{2\pi  \iota }\int_{C}  \left[ \prod_{ j = 1}^N \frac{tz_1 - x_j}{z_1 - x_j} \right] \frac{1}{f(z_1)} \frac{dz_1}{z_1(t-1)} = (\mathcal{D}_NF)(x_1,...,x_N).$$
In the last equality we used Proposition \ref{babyContProp} and the definition of $\mathcal{D}_N$. This proves the base case.\\

We next suppose that the result holds for $k \geq 1$ and wish to prove it for $k+1$. In particular, we have
\begin{equation*}
\begin{split} (\mathcal{D}_N^k F)(x_1,...,x_N) = &\frac{1}{(2\pi  \iota)^k} \int_{C_{0,1}} \cdots \int_{C_{0,k}} \prod_{j =1}^N g(x_j;z_1,...,z_k) \prod_{1 \leq a < b \leq k}  \frac{z_a - z_b}{z_a - z_bt^{-1}} \prod_{i =1}^k  \frac{1}{f(z_i)} \frac{dz_i}{z_i},\end{split}
\end{equation*}
where $g(u;z_1,...,z_k) = f(u) \prod_{ i = 1}^k \frac{z_i - ut^{-1}}{z_i - u}.$

We apply $\mathcal{D}_N$ to both sides in the above expression and observe we may switch the order of $\mathcal{D}_N$ and the integrals on the RHS. To see the latter, one may approximate the integrals by Riemann sums and use Property $1.$ of $\mathcal{D}_N$ to switch the order of the sums and the operator. Subsequently, one may use Property $2.$ to show that the change of the order also holds in the limit. We thus obtain
\begin{equation*}
\begin{split} (\mathcal{D}_N^{k + 1} F)(x_1,...,x_N) = &\frac{1}{(2\pi \iota )^k} \int_{C_{0,1}}\hspace{-4mm} \cdots \int_{C_{0,k}} \hspace{-2mm} (\mathcal{D}_NG)(x_1,...,x_N; z_1,...,z_k ) \prod_{1 \leq a < b \leq k}  \frac{z_a - z_b}{z_a - z_bt^{-1}} \prod_{i =1}^k  \frac{1}{f(z_i)} \frac{dz_i}{z_i},\end{split}
\end{equation*}
where $G(x_1,...,x_N; z_1,...,z_k ) = \prod_{j = 1}^N g(x_j;z_1,...,z_k).$ We now wish to apply the base case to the function $G$. Notice that $g(0) = 1$ and the zeros of $g(u)$ coincide with those of $f(u)$ except that it has additional zeros at $tz_i$ for $i = 1,...,k$. By assumption $tC_{0,i}$ contain $C_{0,k+1}$ for all $i = 1,...,k$ so the additional zeros of $g(u)$ are not contained in $C_{0,k+1}$, while $x_1,...,x_N$ and $0$ are. Thus the Base case is applicable and we conclude that 
\begin{equation*}
\begin{split} (\mathcal{D}_N^{k + 1} F)(x_1,...,x_N) = &\frac{1}{(2\pi \iota )^{k+1}} \int_{C_{0,1}} \cdots \int_{C_{0,k}} \int_{C_{0,k+1}} G(x_1,...,x_N; z_1,...,z_k ) \left[ \prod_{ j = 1}^N \frac{z_{k+1} - x_jt^{-1}}{z_{k+1} - x_j} \right] \times \\ & \frac{1}{g(z_{k+1};z_1,...,z_k)} \prod_{1 \leq a < b \leq k} \frac{z_a - z_b}{z_a - z_bt^{-1}} \frac{dz_{k+1}}{z_{k+1}}\prod_{i =1}^k  \frac{1}{f(z_i)} \frac{dz_i}{z_i},\end{split}
\end{equation*}
Expressing $g(z_{k+1};z_1,...,z_k)$ and $G(x_1,...,x_N; z_1,...,z_k )$ in terms of $f(z_i)$ and $F(x_1,...,x_N)$ we arrive at
\begin{equation*}
 (\mathcal{D}_N^{k+1} F)(x_1,...,x_N) = \frac{F(x_1,...,x_N)}{(2\pi \iota)^{k+1}}\hspace{-2mm} \int_{C_{0,1}}\hspace{-4mm} \cdots \int_{C_{0,k+1}}  \prod_{1 \leq a < b \leq k+1}  \frac{z_a - z_b}{z_a - z_bt^{-1}}\prod_{i =1}^{k+1} \hspace{-1mm}\left[ \prod_{ j = 1}^N \hspace{-1mm}\frac{z_i - x_jt^{-1}}{z_i - x_j} \right] \hspace{-2mm}\frac{1}{f(z_i)} \frac{dz_i}{z_i}.
\end{equation*}
This concludes the proof of the case $k+1$. The general result now proceeds by induction.
\end{proof}

Let $\rho_X$ and $\rho_Y$ be the nonnegative finite length specializations in $N$ variables $X = (x_1,...,x_N)$ and $Y = (y_1,...,y_N)$ respectively, with $x_i, y_i \in (0,1)$ for $i = 1, ... , N$. We consider the Macdonald measure ${\bf MM}(\rho_X; \rho_Y)$ with parameter $q = 0$ and denote the probability distribution and expectation with respect to this measure by $\mathbb{P}_{X,Y}$ and $\mathbb{E}_{X,Y}$. Using the Cauchy identity (see equation (\ref{CauchyI})) with $q = 0$ we get
\begin{equation}\label{preOp}
\sum_{\lambda \in \mathbb{Y}} P_\lambda(x_1,...,x_N; t) Q_\lambda(y_1,...,y_N;t) =\hspace{-1mm} \prod_{i,j = 1}^N\hspace{-2mm}\frac{1 - tx_iy_j}{1 -x_iy_j} = \prod_{i = 1}^N f_Y(x_i) \mbox{ with $f_Y(u) = \prod_{j = 1}^N\frac{1 - tuy_j}{1 -uy_j}$.}
\end{equation}

We want to apply $\mathcal{D}^k_N$ in the $X$ variable to both sides of (\ref{preOp}). We observe that the sum on the LHS is absolutely convergent so from Properties $1.$ and $2.$ we see that
\begin{equation}\label{LHS}
\mathcal{D}^k_N\sum_{\lambda \in \mathbb{Y}} P_\lambda(X; t) Q_\lambda(Y;t)  = \sum_{\lambda \in \mathbb{Y}} \mathcal{D}^k_NP_\lambda(X; t) Q_\lambda(Y;t)  = \sum_{\lambda \in \mathbb{Y}} t^{-k\lambda_1'}P_\lambda(X; t) Q_\lambda(Y;t),
\end{equation}
where in the last equality we used Property $3.'$ $k$ times. We remark that the latter sum is absolutely convergent as well, since $\lambda_1' \leq N$ on the support of $\mathbb{P}_{X,Y}$.

On the other hand, the RHS of (\ref{preOp}) satisfies the conditions of Proposition \ref{babyContProp} and in order to apply it we need to find suitable contours. The contours will exist provided $y_i$ are sufficiently small. So suppose $y_i < \epsilon \leq t^k$ for all $i$ and observe that the zeros of $f_Y(u)$, which are at $t^{-1}y_i^{-1}$, lie outside the circle of radius $\epsilon^{-1}t^{-1}$ around the origin. Let $C_{0,k}$ be the positively oriented circle around the origin of radius $1$ and let $C_{0,a}$ be positively oriented circles of radius slightly bigger than $t^{a-k}$, so that $C_{0,a}$ contains $t^{-1}C_{0,b}$ for all $a < b$ and $C_{0,1}$ has radius less than $\epsilon^{-1}$. Clearly such contours exist and satisfy the conditions of Proposition \ref{babyContProp}. Consequently, we obtain
\begin{equation}\label{RHS}
\mathcal{D}^k \prod_{i = 1}^N f_Y(x_i) = \frac{\prod_{i = 1}^N f_Y(x_i)}{(2\pi\iota )^k} \int_{C_{0,1}} \cdots \int_{C_{0,k}} \prod_{1 \leq a < b \leq k}  \frac{z_a - z_b}{z_a - z_bt^{-1}} \prod_{i =1}^k \left[ \prod_{ j = 1}^N \frac{z_i - x_jt^{-1}}{z_i - x_j} \right] \frac{dz_i}{f_Y(z_i)z_i}
\end{equation}

Equating the expressions in (\ref{LHS}) and (\ref{RHS}) and dividing by $\prod_{i = 1}^N f_Y(x_i)$ we arrive at
$$\sum_{\lambda \in \mathbb{Y}} t^{-k\lambda_1'}\frac{P_\lambda(X; t) Q_\lambda(Y;t)}{\Pi(X;Y)} =  \frac{1}{(2\pi \iota )^k} \int_{C_{0,1}} \hspace{-3mm}\cdots \int_{C_{0,k}} \prod_{1 \leq a < b \leq k}  \frac{z_a - z_b}{z_a - z_bt^{-1}} \prod_{i =1}^k \left[ \prod_{ j = 1}^N \frac{z_i - x_jt^{-1}}{z_i - x_j} \right]\hspace{-2mm} \frac{dz_i}{f_Y(z_i)z_i},$$
in which we recognize the LHS as $\mathbb{E}_{X,Y}\left[ t^{-k\lambda_1'}\right]$. We isolate the above result in a proposition.

\begin{proposition}\label{milestone1}
Fix positive integers $k$ and $N$ and a parameter $t\in(0,1)$. Let $\rho_X$ and $\rho_Y$ be the nonnegative finite length specializations in $N$ variables $X = (x_1,...,x_N)$ and $Y = (y_1,...,y_N)$ respectively, with $x_i, y_i \in (0,1)$ for $i = 1, ... , N$. In addition, suppose $y_i < \epsilon$ for all $i$. Then we have
$$\mathbb{E}_{X,Y}\left[ t^{-k\lambda_1'}\right] = \frac{1}{(2\pi \iota )^k} \int_{C_{0,1}} \cdots \int_{C_{0,k}} \prod_{1 \leq a < b \leq k}  \frac{z_a - z_b}{z_a - z_bt^{-1}} \prod_{i =1}^k \left[ \prod_{ j = 1}^N \frac{(z_i - x_jt^{-1})(1 - z_iy_j)}{(z_i - x_j)(1 - tz_iy_j)} \right] \frac{dz_i}{z_i},$$
where $C_{0,a}$ are positively oriented simple contours encircling $x_1,...,x_N$ and $0$ and contained in a disk of radius $\epsilon^{-1}$ around $0$. In addition, $C_{0,a}$ contains $t^{-1}C_{0,b}$ for $a < b$. Such contours will exist provided $\epsilon \leq t^k$.
\end{proposition}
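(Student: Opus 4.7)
The plan is to obtain the formula by applying the $k$-th power of the difference operator $\mathcal{D}_N$ to both sides of the Cauchy identity $\sum_{\lambda \in \mathbb{Y}} P_\lambda(X;t)Q_\lambda(Y;t) = \prod_{i=1}^N f_Y(x_i)$, which is already recorded in (\ref{preOp}), and then dividing out the normalization $\prod_{i=1}^N f_Y(x_i) = \Pi(X;Y)$ so that the left-hand side becomes the desired expectation.

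For the left-hand side, I would exploit the three abstract properties of $\mathcal{D}_N$ listed in the text. Since the Cauchy sum is absolutely convergent, approximating by partial sums and using linearity (Property~1) together with the pointwise continuity property (Property~2) lets one interchange $\mathcal{D}_N^k$ with the sum; then the Hall-Littlewood eigenvalue relation (Property~3$'$) applied $k$ times produces the factor $t^{-k\lambda_1'}$. One should note that the support of $\mathbb{P}_{X,Y}$ is concentrated on partitions with $\lambda_1' \leq N$, which keeps the resulting sum $\sum_{\lambda} t^{-k\lambda_1'} P_\lambda(X;t)Q_\lambda(Y;t)$ absolutely convergent, so dividing by $\prod_i f_Y(x_i)$ yields exactly $\mathbb{E}_{X,Y}[t^{-k\lambda_1'}]$.

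For the right-hand side, I would apply Proposition~\ref{babyContProp} to the function $f = f_Y$. The only real content here is verifying that the required nested contours exist under the assumption $\epsilon \leq t^k$: the zeros of $f_Y(u) = \prod_j (1-tuy_j)/(1-uy_j)$ sit at $u = t^{-1}y_j^{-1}$, which under $y_j < \epsilon$ all lie in $\{|u| > t^{-1}\epsilon^{-1}\}$; since $x_j, 0 \in (-1,1)$, one may take $C_{0,k}$ to be the unit circle and $C_{0,a}$ to be a circle of radius slightly greater than $t^{a-k}$ for $a=1,\dots,k-1$. The assumption $\epsilon \leq t^k$ guarantees that even the outermost contour $C_{0,1}$, of radius just above $t^{1-k}$, stays strictly inside $|u| = \epsilon^{-1}$, so it does not enclose any zero of $f_Y$; the nesting $t^{-1}C_{0,b} \subset C_{0,a}$ for $a < b$ is automatic. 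Plugging the explicit form of $f_Y$ into (\ref{toddlerContour}) and simplifying $1/f_Y(z_i) = \prod_j (1-z_iy_j)/(1-tz_iy_j)$ yields the stated formula. The only delicate point in the whole argument is this contour construction, which is precisely why the hypothesis $\epsilon \leq t^k$ has to be imposed; everything else is a routine combination of Proposition~\ref{babyContProp} with the eigenproperty of $\mathcal{D}_N$.
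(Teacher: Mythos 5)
Your proposal is correct and follows essentially the same route as the paper: apply $\mathcal{D}_N^k$ to the Cauchy identity (\ref{preOp}), use Properties 1, 2 and 3$'$ to pull the operator through the sum and extract $t^{-k\lambda_1'}$, and evaluate the right-hand side via Proposition \ref{babyContProp} with $f = f_Y$, checking that the nested circles of radii $1$ and slightly above $t^{a-k}$ exist under $y_i < \epsilon \leq t^k$. This matches the paper's argument in both structure and detail, including the contour construction.
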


Proposition \ref{milestone1} is an important milestone in our discussion as it provides an integral representation for a class of observables for $\mathbb{P}_{X,Y}$. In subsequent sections, we will combine the above formulas for different values of $k$, similarly to the moment problem for random variables, in order to better understand the distribution $\mathbb{P}_{X,Y}$.

%
\subsection{An alternative formula for $\mathbb{E}_{X,Y}\left[t^{-k\lambda_1'}\right]$}\hspace{2mm} \\

There are two difficulties in using Proposition \ref{milestone1}. The first is that the contours that we use are all different and depend implicitly on the value $k$. The second issue is that the formula for $\mathbb{E}_{X,Y}\left[t^{-k\lambda_1'}\right]$ that we obtain holds only when $y_i$ are sufficiently small (again depending on $k$). We would like to get rid of this restriction by finding an alternative formula for $\mathbb{E}_{X,Y}\left[t^{-k\lambda_1'}\right]$. This is achieved in Proposition \ref{finmoment}, whose proof relies on the following technical lemma. The following result is very similar to Proposition 7.2 in \cite{BBC}.

\begin{lemma}\label{nestedContours}
Fix $k \geq 1$ and $q \in (1,\infty)$. Assume that we are given a set of positively oriented closed contours $\gamma_1, ...,\gamma_k$, containing $0$, and a function $F(z_1,...,z_k)$, satisfying the following properties:
\begin{enumerate}[label = \arabic{enumi}., leftmargin=1.5cm]
\item $F(z_1,...,z_k) = \prod_{i = 1}^k f(z_i)$;
\item For all $1 \leq A < B \leq k$, the interior of $\gamma_A$ contains the image of $\gamma_B$ multiplied by $q$;
\item For all $1 \leq j \leq k$ there exists a deformation $D_j$ of $\gamma_j$ to $\gamma_k$ so that for all $z_1, ...,z_{j-1},z_j,...,z_k$ with $z_i \in \gamma_i$ for $1 \leq i < j$ and $z_i \in \gamma_k$ for $j < i \leq k$, the function $z_j \rightarrow F(z_1,...,z_j,...,z_k)$ is analytic in a neighborhood of the area swept out by the deformation $D_j$. 
\end{enumerate}
Then we have the following residue expansion identity:
\begin{equation}\label{nestedEq}
\begin{split}
&\int_{\gamma_1} \cdots \int_{\gamma_k} \prod_{1 \leq A < B \leq k} \frac{z_A - z_B}{z_A - qz_B}F(z_1, \cdots, z_k) \prod_{i = 1}^k\frac{dz_i}{ 2\pi \iota z_i} =  \ \sum_{\lambda \vdash k} \frac{(1-q)^k(-1)^kq^{\frac{-k(k-1)}{2}}k_{q }!}{m_1(\lambda)!m_2(\lambda)!\cdots} \\& \int_{\gamma_k}\cdots \int_{\gamma_k} 
\det \left[ \frac{1}{w_iq^{\lambda_i} - w_j}\right]_{i,j = 1}^{\ell(\lambda)}\prod_{j = 1}^{\ell(\lambda)}f(w_j)f(w_jq)\cdots f(w_jq^{ \lambda_j - 1}) \frac{dw_j}{2\pi \iota},\end{split}
\end{equation}
where $k_t! = \frac{(1 - t)(1-t^2) \cdots (1-t^k)}{(1-t)^k}$.
\end{lemma}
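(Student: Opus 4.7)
My plan is to prove the identity by iteratively deforming the contours $\gamma_1,\gamma_2,\ldots,\gamma_{k-1}$ down to the innermost contour $\gamma_k$, in that order, and carefully tracking the residues that are picked up along the way. Condition 3 guarantees that the deformations are permissible away from poles of the integrand, and condition 2 identifies exactly which poles of $\prod_{A<B}\frac{z_A-z_B}{z_A-qz_B}$ lie between $\gamma_A$ and $\gamma_k$. Specifically, when I deform $z_1$ from $\gamma_1$ to $\gamma_k$, I cross simple poles of the form $z_1=qz_j$ for $j=2,\ldots,k$ (these sit inside $\gamma_1$ by condition 2, but are outside $\gamma_k$), and each such residue ``merges'' the variable $z_1$ into the cluster of $z_j$: the factor $f(z_1)$ becomes $f(qz_j)$, the zero $(z_1-z_j)$ in the numerator supplies a $(q-1)z_j$, and the remaining cross-terms $\frac{z_1-z_m}{z_1-qz_m}$ simplify to $\frac{qz_j-z_m}{q(z_j-z_m)}$.

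Iterating this procedure variable by variable, one organizes the resulting terms by the sizes of the final clusters. A cluster of size $\ell$ rooted at a variable $w$ contributes the product $f(w)f(qw)\cdots f(q^{\ell-1}w)$ together with accumulated factors $(1-q)$, powers of $q$, and $1/w$ coming from the residue calculus. The cluster sizes form a composition of $k$, and after symmetrization one obtains a sum over partitions $\lambda\vdash k$, where the multinomial factor $k!/(m_1(\lambda)!m_2(\lambda)!\cdots)$ counts the number of ordered compositions producing a given partition shape, which combines with the $q$-factors to produce $k_q!/(m_1(\lambda)!m_2(\lambda)!\cdots)$. The remaining cross-cluster interactions, which now depend only on the ``roots'' $w_1,\ldots,w_{\ell(\lambda)}$ of the clusters, assemble into the Cauchy-like determinant
\[
\det\Bigl[\tfrac{1}{w_iq^{\lambda_i}-w_j}\Bigr]_{i,j=1}^{\ell(\lambda)},
\]
by virtue of a standard $q$-Cauchy determinant identity applied to the simplified product $\prod_{i\neq j}\frac{q^{\lambda_i}w_i-w_j}{w_i-w_j}$ that remains after the residue collapse.

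The cleanest way I would carry this out is by induction on $k$. The base case $k=1$ is trivial since no residues are crossed. For the inductive step I would deform $z_1$ from $\gamma_1$ to $\gamma_k$, separate the result into the ``no residue'' term (where $z_1$ is moved freely to $\gamma_k$) plus the sum of residues at $z_1=qz_j$ for $j=2,\ldots,k$, and then apply the inductive hypothesis to each of the resulting $(k-1)$-fold integrals, where the function $f$ has been replaced in one slot by $f(\,\cdot\,)f(q\,\cdot\,)$. The final step is to sum the resulting contributions over all composition structures, reindex by partition shape, and identify the combinatorial constants.

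The main obstacle I expect is not any single step but the bookkeeping in the last paragraph: matching the cumulative factors of $(q-1)$, $q^{-\binom{\ell}{2}}$, and the symmetry factors $k!/(m_1!m_2!\cdots)$ that emerge from the iterated residues against the stated coefficient $(1-q)^k(-1)^k q^{-k(k-1)/2}k_q!/\prod_i m_i(\lambda)!$, and then recognizing the residual symmetric sum as the Cauchy-type determinant. This is the place where a careful induction together with the identity $\prod_{i\neq j}\tfrac{q^{\lambda_i}w_i-w_j}{w_i-w_j}=\det\bigl[\tfrac{1}{w_iq^{\lambda_i}-w_j}\bigr]\cdot\prod_i(q^{\lambda_i}-1)w_i^{-1}\cdot(\text{sign})$, applied cluster by cluster, does the decisive work.
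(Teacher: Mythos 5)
Your high-level strategy is the same one the paper (following Proposition 7.2 of \cite{BBC}) uses: collapse the contours onto $\gamma_k$ one at a time, record the residues at $z_A=qz_B$, which build up geometric strings $w,qw,\dots,q^{\lambda_j-1}w$, reindex compositions by partitions to get the $m_1(\lambda)!m_2(\lambda)!\cdots$ symmetry factors, and evaluate a final symmetrized sum to produce $k_q!$ (in the paper this is the identity $\sum_{\sigma\in S_k}\prod_{B<A}\frac{z_{\sigma(A)}-qz_{\sigma(B)}}{z_{\sigma(A)}-z_{\sigma(B)}}=k_q!$ from Chapter III of \cite{Mac}; the paper also deforms in the order $\gamma_{k-1},\gamma_{k-2},\dots,\gamma_1$, which is the order hypothesis 3 is phrased for, though since $F=\prod_i f(z_i)$ your outermost-first order is only a cosmetic difference). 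However, the specific inductive mechanism you describe does not close. After the residue at $z_1=qz_j$, the evaluated cross factors $\frac{qz_j-z_m}{q(z_j-z_m)}$ combine with the remaining factors asymmetrically: for $m<j$ the pair factor collapses to the constant $1/q$, while for $m>j$ it becomes $\frac{1}{q}\cdot\frac{qz_j-z_m}{z_j-qz_m}$. So the resulting $(k-1)$-fold integral is \emph{not} of the lemma's form with $f$ replaced in one slot by $f(\cdot)f(q\cdot)$ — it carries genuine string-interaction kernels, and the lemma moreover requires a single common $f$ in all slots. Likewise the ``no residue'' term still has $k$ integration variables (with $z_1$ moved to $\gamma_k$), so the $(k-1)$-variable hypothesis cannot be applied to it either. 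To make an induction work you would have to prove a substantially stronger statement (different functions per slot, and clusters/strings as elementary variables with the correct interaction kernels), which is essentially equivalent to carrying out the global bookkeeping of \cite{BBC}: perform all deformations, organize the terms by residue subspaces, and symmetrize only at the very end.

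A second concrete problem is the ``$q$-Cauchy identity'' you invoke in the last paragraph: it is false as written. By the Cauchy determinant formula,
\begin{equation*}
\det\left[\frac{1}{w_iq^{\lambda_i}-w_j}\right]_{i,j=1}^{\ell}
=\prod_{i=1}^{\ell}\frac{1}{(q^{\lambda_i}-1)w_i}\,
\prod_{i<j}\frac{(q^{\lambda_i}w_i-q^{\lambda_j}w_j)(w_j-w_i)}{(q^{\lambda_i}w_i-w_j)(q^{\lambda_j}w_j-w_i)},
\end{equation*}
which contains the factors $q^{\lambda_i}w_i-q^{\lambda_j}w_j$ that are absent from your product $\prod_{i\neq j}\frac{q^{\lambda_i}w_i-w_j}{w_i-w_j}$; the two sides agree only in degenerate cases (for instance when all $\lambda_i$ coincide). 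In the actual argument the determinant is not obtained by such a cluster-by-cluster evaluation: it appears directly when the residue terms are recombined (this is equation (38) quoted from \cite{BBC} in the paper's proof), and what remains to be evaluated is the symmetric sum $E^q$, handled by the Macdonald identity above. So the plan identifies the right skeleton, but the two decisive steps — the reduction used in the induction and the identity producing the determinant — would both fail as currently stated.
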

\begin{proof}
The proof of the lemma closely follows the proof of Proposition 7.2 in \cite{BBC}, and we will thus only sketch the main idea. We remark that in \cite{BBC} the considered contours do not contain $0$ and $q \in(0,1)$. Nevertheless, all the arguments remain the same and the result of that proposition hold in the setting of the lemma.

The strategy is to sequentially deform each of the contours $\gamma_{k-1}, \gamma_{k-2}, ..., \gamma_1$ to $\gamma_k$ through the deformations $D_i$ afforded from the hypothesis of the lemma. During the deformations one passes through simple poles, coming from $z_A - qz_B$ in the denominator of (\ref{nestedEq}), which by the Residue Theorem produce additional integrals of possibly fewer variables. Once all the contours are expanded to $\gamma_k$ one obtains a big sum of multivariate integrals over various residue subspaces, which can be recombined into the following form (see equation (38) in \cite{BBC}): 
$$ \sum_{\lambda \vdash k} \frac{(1-q)^k(-1)^kq^{\frac{-k(k-1)}{2}}}{m_1(\lambda)!m_2(\lambda)!\cdots} \int_{\gamma_k}\cdots \int_{\gamma_k} 
 \det \left[ \frac{1}{w_iq^{\lambda_i} - w_j}\right]_{i,j = 1}^{\ell(\lambda)} \times$$
$$E^q\left(w_1,qw_1,...,q^{\lambda_1 - 1}w_1,...,w_{\ell(\lambda)},qw_{\ell(\lambda)},...,q^{\lambda_{\ell(\lambda)} - 1}w_{\ell(\lambda)}\right)\prod_{j = 1}^{\ell(\lambda)} w_j^{\lambda_j}q^{\frac{\lambda_j(\lambda_j - 1)}{2}}\frac{d w_j}{2\pi \iota},$$
where
$$E^q(z_1,...,z_k) = \sum_{\sigma \in S_k} \prod_{1 \leq B < A \leq k} \frac{z_{\sigma(A)} - qz_{\sigma(B)}}{z_{\sigma(A)} - z_{\sigma(B)}}\frac{F(z_{\sigma(1)},...,z_{\sigma(n)})}{ \prod_{i = 1}^k z_{\sigma(i)}}.$$
By assumption $\frac{F(z_{1},...,z_{n})}{ \prod_{i = 1}^k z_{i}}$ is a symmetric function of $z_1,...,z_k$ and thus can be taken out of the sum, while the remaining expression evaluates to $k_q!$ as is shown in equation (1.4) in Chapter III of \cite{Mac}. Substituting this back and performing some cancellation we arrive at (\ref{nestedEq}).

\end{proof}

\begin{proposition}\label{finmoment}
Fix positive integers $k$ and $N$ and a parameter $t\in(0,1)$. Let $\rho_X$ and $\rho_Y$ be the nonnegative finite length specializations in $N$ variables $X = (x_1,...,x_N)$ and $Y = (y_1,...,y_N)$ respectively, with $x_i, y_i \in (0,1)$ for $i = 1, ... , N$. Let $C_0$ be a simple positively oriented contour, which is contained in the closed disk of radius $t^{-1}$ around the origin, such that $C_0$ encircles $x_1,...,x_N$ and $0$. Then we have
\begin{equation}\label{fm0}
\begin{split}
\mathbb{E}_{X,Y}\left[ t^{-k\lambda_1'}\right] =\sum_{\lambda \vdash k} \frac{ (t^{-1} - 1)^k k_{t}!}{m_1(\lambda)!m_2(\lambda)! \cdots} \int_{C_{0}}\cdots \int_{C_{0}}\det \left[ \frac{1}{w_it^{-\lambda_i} - w_j}\right]_{i,j = 1}^{\ell(\lambda)}\times\\
\prod_{j = 1}^{\ell(\lambda)}\prod_{i = 1}^N\frac{1 - x_i(w_jt)^{-1}}{1  - x_i(w_jt)^{-1}t^{\lambda_j}} \frac{1 - y_i(w_jt)t^{-\lambda_j}}{1 - y_i(w_jt) }\frac{d w_j}{2\pi \iota} , 
\mbox{ where $k_t! = \frac{(1 - t)(1-t^2) \cdots (1-t^k)}{(1-t)^k}$.}
\end{split}
\end{equation}
\end{proposition}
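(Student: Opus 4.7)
The plan is to apply Lemma \ref{nestedContours} to the nested integral of Proposition \ref{milestone1}, collapse the resulting products via telescoping, then deform the common contour to $C_0$ and analytically continue in $y_i$.

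First, I would impose the extra assumption $y_i < \epsilon \leq t^k$ so that Proposition \ref{milestone1} applies, producing the nested contours $C_{0,1},\ldots,C_{0,k}$. I would then invoke Lemma \ref{nestedContours} with $q = t^{-1}$, $\gamma_i = C_{0,i}$, and
$$f(z) = \prod_{j=1}^N \frac{(z - x_j t^{-1})(1 - z y_j)}{(z - x_j)(1 - tz y_j)}.$$
The poles of $f$ at $z = x_j$ lie inside every $\gamma_i$ and those at $z = (ty_j)^{-1}$ lie outside every $\gamma_i$, so $f$ is analytic on the annular regions swept by the deformations $D_j$, verifying the hypotheses. Using $k_{t^{-1}}! = t^{-k(k-1)/2} k_t!$ and $(-1)^k(1 - t^{-1})^k = (t^{-1}-1)^k$, the lemma's prefactor $(1-q)^k(-1)^k q^{-k(k-1)/2} k_q!$ collapses to $(t^{-1}-1)^k k_t!$, matching (\ref{fm0}).

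Second, I would simplify $f(w_j)f(w_j t^{-1})\cdots f(w_j t^{-(\lambda_j-1)})$ by telescoping each $i$-th factor. The $y$-part gives
$$\prod_{m=0}^{\lambda_j-1}\frac{1-w_j t^{-m}y_i}{1-w_j t^{1-m}y_i} \;=\; \frac{1-w_j t^{1-\lambda_j}y_i}{1-w_j t y_i},$$
reproducing $\frac{1-y_i(w_j t)t^{-\lambda_j}}{1-y_i(w_j t)}$; while the $x$-part gives
$$\prod_{m=0}^{\lambda_j-1}\frac{w_j t^{-m}-x_i t^{-1}}{w_j t^{-m}-x_i} \;=\; t^{-\lambda_j}\frac{w_j t-x_i}{w_j t^{1-\lambda_j}-x_i},$$
reproducing $\frac{1-x_i(w_j t)^{-1}}{1-x_i(w_j t)^{-1}t^{\lambda_j}}$. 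Taking the product over $i = 1,\ldots,N$ yields the integrand of (\ref{fm0}), with all variables initially on the common contour $\gamma_k$.

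Third, I would deform $\gamma_k$ simultaneously for all $w_j$ to the target $C_0$. The singularities of the integrand in $w_j$ that lie in the closed disk of radius $t^{-1}$ are only at $0$ and at $x_i t^{\lambda_j-1}$, both encircled by every admissible $C_0$; the poles at $(y_i t)^{-1}$ lie outside that disk since $y_i < 1$; and the off-diagonal apparent poles $w_j = w_i t^{-\lambda_i}$ from the Cauchy-type determinant are handled by performing the homotopy through circular contours of increasing radius, along which $|w_i t^{-\lambda_i}| = t^{-\lambda_i}|w_i|$ always exceeds the current radius, so these poles stay exterior. A general $C_0$ can then be reached from a circle by a further homotopy that crosses no enclosed singularities. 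Finally, both sides of (\ref{fm0}) are rational functions of $y_1,\ldots,y_N$ with no poles on the polydisc $|y_i| < 1$, so the identity established for $y_i < t^k$ extends by analytic continuation to the full range $y_i \in (0,1)$.

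The main obstacle will be the contour-deformation step: one must carefully track the Cauchy-type poles $w_j = w_i t^{-\lambda_i}$ during the simultaneous homotopy from $\gamma_k$ to $C_0$, exploiting either the scale invariance of circular contours or the antisymmetry of the determinant to show that no additional residues are picked up when $C_0$ is not a circle.
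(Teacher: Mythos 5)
Your overall route is the same as the paper's: apply Proposition \ref{milestone1} for $y_i < \epsilon \leq t^k$, feed the nested integral into Lemma \ref{nestedContours} with $q = t^{-1}$, simplify the prefactor via $(-1)^k t^{k(k-1)/2}k_{t^{-1}}!(1-t^{-1})^k = (t^{-1}-1)^k k_t!$, telescope $f(w_j)f(w_jt^{-1})\cdots f(w_jt^{-(\lambda_j-1)})$, and finish by analytic continuation in the $y_i$. Those parts are correct. The genuine gap is your third step. After the residue expansion all variables sit on $\gamma_k$, and you then try to move them to a general $C_0$ by a \emph{simultaneous} homotopy. Cauchy's theorem only lets you deform one variable at a time, and the dangerous poles $w_j = t^{-\lambda_i}w_i$ move with the other integration variables: your dilation argument controls them only when every contour is an origin-centered circle. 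The closing claim that ``a general $C_0$ can then be reached from a circle by a further homotopy that crosses no enclosed singularities'' is unsupported --- for a non-circular $C_0$ inside the closed disk of radius $t^{-1}$, the dilate $t^{-\lambda_i}C_0$ can intersect $C_0$ itself, so while $w_i$ sits on (or is being moved to) $C_0$, the homotopy of $w_j$ from the circle to $C_0$ can cross points of $t^{-\lambda_i}\tilde\gamma$ for the current contour $\tilde\gamma$ of $w_i$. As written, this step does not go through, and you correctly identified it as the main obstacle without resolving it.

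The fix is to eliminate the deformation entirely, which is exactly what the paper does: Lemma \ref{nestedContours} constrains $\gamma_k$ only through the nesting conditions with the \emph{outer} contours, so set $C_{0,k} = C_0$ from the outset, choose outer contours $C_{0,a}$ with $C_{0,a}\supset t^{-1}C_{0,b}$ for $a<b$, and then shrink $\epsilon$ (hence the allowed $y_i$) so that $C_{0,1}$ lies inside the disk of radius $\epsilon^{-1}$ required by Proposition \ref{milestone1}. The residue expansion then lands directly on $C_0$ and your prefactor and telescoping computations apply verbatim. One smaller point: in the continuation step, the left-hand side of (\ref{fm0}) is not obviously a \emph{rational} function of $y_1,\ldots,y_N$ --- it is $\Pi(X;Y)^{-1}$ times an infinite sum over partitions. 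What you need (and what suffices) is analyticity on the polydisc, which follows by dominating $\sum_\lambda |P_\lambda(X)Q_\lambda(y_1,\ldots,y_N)|$ with the Cauchy identity evaluated at $(|y_1|,\ldots,|y_N|)$; the right-hand side is analytic because it is a finite sum of integrals over compact contours with $|w_j|\leq t^{-1}$, so the factors $1-y_i w_j t$ never vanish for $|y_i|<1$.
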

\begin{proof}
Let $C_{0,k} = C_0$ and let $C_{0,a}$ be such that $C_{0,a}$ contains $t^{-1}C_{0,b}$  \hspace{1mm}for all $a < b$, $a,b \in \{1,...,k\}$. Suppose $0 < \epsilon < t^k$ is sufficiently small so that $C_{0,1}$ is contained in the disk of radius $\epsilon^{-1}$ and suppose $y_i < \epsilon$ for $i = 1,...,N$. Then we may apply Proposition \ref{milestone1} to get
$$\mathbb{E}_{X,Y}\left[ t^{-k\lambda_1'}\right]=  \frac{1}{(2\pi\iota )^k} \int_{C_{0,1}} \cdots \int_{C_{0,k}} \prod_{1 \leq a < b \leq k}  \frac{z_a - z_b}{z_a - z_bt^{-1}} \prod_{i =1}^k \left[ \prod_{ j = 1}^N \frac{(z_i - x_jt^{-1})(1 - z_iy_j)}{(z_i - x_j)(1 - tz_iy_j)} \right] \frac{dz_i}{z_i}.$$
We may now apply Lemma \ref{nestedContours} (with $q = t^{-1}$)  to the RHS of the above and get
\begin{equation}\label{fm1}
\begin{split} \mathbb{E}_{X,Y}\left[ t^{-k\lambda_1'}\right] =\sum_{\lambda \vdash k} \frac{ (1-t^{-1})^k(-1)^kt^{\frac{k(k-1)}{2}}k_{t^{-1}}!}{m_1(\lambda)!m_2(\lambda)!\cdots} \int_{C_{0,k}}\cdots \int_{C_{0,k}}\det \left[ \frac{1}{w_it^{-\lambda_i} - w_j}\right]_{i,j = 1}^{\ell(\lambda)}\times\\
 \prod_{j = 1}^{\ell(\lambda)}G(w_j)G(w_jt^{-1})\cdots G(w_jt^{1 - \lambda_j})\frac{d w_j}{2\pi \iota}, \mbox{ where }G(w) = \prod_{j = 1}^N\frac{w - x_jt^{-1}}{w - x_j}\frac{1 - y_jw}{1 - ty_jw}.\end{split}
\end{equation}
Observe that $(-1)^k t^{\frac{k(k-1)}{2}} k_{t^{-1}}! (1 -t^{-1})^k = (t^{-1} - 1)^kk_t!$ and also
$$\prod_{j = 1}^{\ell(\lambda)}G(w_j)G(w_jt^{-1})\cdots G(w_jt^{1 - \lambda_j})  = \prod_{i = 1}^N  \prod_{j = 1}^{\ell(\lambda)}\frac{1 - x_i(tw_j)^{-1}}{1 - x_i(tw_j)^{-1}t^{\lambda_j}}\frac{1 - y_i(w_jt)t^{-\lambda_j}}{1 - y_i(w_jt)}.$$
Substituting these expressions into (\ref{fm1}) and recalling that $C_{0,k} = C_0$ we arrive at (\ref{fm0}). What remains is to extend the result to arbitrary $y_1,...,y_N \in (0,1)$ by analyticity. In particular, if we can show that both sides of (\ref{fm0}) define analytic functions on $\mathbb{D}^N$ ($\mathbb{D}$ is the unit complex disk), then because they are equal on $(0,\epsilon)^N$ it would follow they are equal on $\mathbb{D}^N$. This would imply the full statement of the proposition.\\

We start with the RHS of (\ref{fm0}). Observe that it is a finite sum of integrals over compact contours. Thus it suffices to show analyticity of the integrands in $y_i \in \mathbb{D}$. The integrand's dependence on $y_i$ is through $ \prod_{j = 1}^{\ell(\lambda)}\prod_{i = 1}^N\frac{1 - y_i(w_jt)t^{-\lambda_j}}{1 - y_i(w_jt) },$ which is clearly analytic on $\mathbb{D}^N$ as $|w_j| \leq t^{-1}$.

For the LHS of (\ref{fm0}) we have:
$$\mathbb{E}_{x,y}\left[ t^{-k\lambda_1'}\right] = \Pi(X;Y)^{-1}\sum_{\lambda \in \mathbb{Y}} P_\lambda(X)Q_\lambda(y_1,...,y_N),$$
where $\Pi(X;Y) = \prod_{i,j = 1}^{N} \frac{1 - tx_iy_j}{1 - x_iy_j}$. Clearly $\Pi(X;Y)$ is analytic and non-zero on $\mathbb{D}^N$ (as $x_i \in (0,1)$) and then so is $\Pi(X;Y)^{-1}$. In addition, the sum is absolutely convergent on $\mathbb{D}^N$, since by the Cauchy identity
$$\sum_{\lambda \mathbb{Y}} |P_\lambda(X)Q_\lambda(y_1,...,y_N)| \leq \sum_{\lambda \in \mathbb{Y}} P_\lambda(X)Q_\lambda(|y_1|,...,|y_N|) = \prod_{i,j = 1}^{N} \frac{1 - tx_i|y_j|}{1 - x_i|y_j|} < \infty.$$
As the absolutely converging sum of analytic functions is analytic and the product of two analytic functions is analytic we conclude that the LHS of (\ref{fm0}) is analytic on $\mathbb{D}^N$.
\end{proof}

%
\subsection{Fredhold determinant formula for $\mathbb{E}_{X,Y} \left[ \frac{1}{((1-t)ut^{-\lambda_1'}; t)_\infty}\right]$}\hspace{2mm}\\

In this section we will combine Proposition \ref{finmoment} with different values of $k$ to obtain a formula for the {\em $t$-Laplace transform} of $(1-t)t^{-\lambda_1'}$, which is defined by $\mathbb{E}_{X,Y} \left[ \frac{1}{((1-t)ut^{-\lambda_1'}; t)_\infty}\right]$. We recall that $(a;t)_{\infty} = (1 -a)(1 -at) (1-at^2) \cdots$ is the $t$-Pochhammer symbol.

The arguments we use to prove the following results are very similar to those in Section 3.2 in \cite{BorCor}.
\begin{proposition}\label{explimp}
Fix $N \in \mathbb{N}$ and $t \in (0,1)$. Let $\rho_X$ and $\rho_Y$ be the nonnegative finite length specializations in $N$ variables $X = (x_1,...,x_N)$ and $Y = (y_1,...,y_N)$ respectively, with $x_i, y_i \in (0,1)$ for $i = 1, ... , N$.  Suppose $|u| < t^{N + 1}$ is a complex number. Then we have
\begin{equation}\label{explim}
\lim_{M \rightarrow \infty} \sum_{k = 0}^M \frac{ u^k \mathbb{E}_{X,Y}[t^{-\lambda_1'k}]}{k_t!} = \mathbb{E}_{X,Y} \left[ \frac{1}{((1-t)ut^{-\lambda_1'};t)_\infty}\right].
\end{equation}
\end{proposition}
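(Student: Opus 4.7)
The plan is to recognize the partial sum on the left-hand side as the truncation of a $q$-exponential series, then pass to the limit $M\to\infty$ inside $\mathbb{E}_{X,Y}$ using dominated convergence. The algebraic input is the classical $q$-binomial (aka $q$-exponential) identity
$$\frac{1}{(z;t)_\infty} = \sum_{k=0}^\infty \frac{z^k}{(t;t)_k}, \qquad |z|<1,$$
which can be found in Chapter I of \cite{Mac}. Rewriting $k_t! = (t;t)_k/(1-t)^k$, the summand in (\ref{explim}) becomes $((1-t)u)^k t^{-\lambda_1' k}/(t;t)_k$, so that for a fixed partition $\lambda$ one has, whenever $|(1-t)ut^{-\lambda_1'}|<1$, the pointwise identity
$$\sum_{k=0}^\infty \frac{u^k t^{-\lambda_1' k}}{k_t!} = \frac{1}{((1-t)ut^{-\lambda_1'};t)_\infty}.$$

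The main step is then to verify that the hypothesis $|u|<t^{N+1}$ is strong enough to allow interchanging the sum with the expectation. Because $\rho_X$ and $\rho_Y$ are specializations into $N$ variables, every partition $\lambda$ in the support of $\mathbb{P}_{X,Y}$ satisfies $\lambda_1' = \ell(\lambda) \leq N$. Consequently $|(1-t)u t^{-\lambda_1'}| \leq (1-t)|u|t^{-N} < (1-t)t < 1$, which both makes the $q$-exponential series valid at every point of the support and gives a $\lambda$-independent majorant: using $(t;t)_k \geq (t;t)_\infty > 0$,
$$\left|\frac{((1-t)u)^k t^{-\lambda_1' k}}{(t;t)_k}\right| \leq \frac{((1-t)|u|t^{-N})^k}{(t;t)_\infty}, \qquad \sum_{k=0}^\infty \frac{((1-t)|u|t^{-N})^k}{(t;t)_\infty} < \infty.$$

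With this bound in hand, Fubini (for the product of $\mathbb{P}_{X,Y}$ with counting measure on $\mathbb{Z}_{\geq 0}$), or equivalently dominated convergence applied to the partial sums, permits the interchange and yields
$$\lim_{M\to\infty}\sum_{k=0}^M\frac{u^k\,\mathbb{E}_{X,Y}[t^{-\lambda_1' k}]}{k_t!} = \mathbb{E}_{X,Y}\sum_{k=0}^\infty \frac{((1-t)u t^{-\lambda_1'})^k}{(t;t)_k} = \mathbb{E}_{X,Y}\left[\frac{1}{((1-t)u t^{-\lambda_1'};t)_\infty}\right],$$
which is (\ref{explim}).

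I do not expect any substantive obstacle; the argument is essentially bookkeeping. The one point that merits care is ensuring the strict bound $|u|<t^{N+1}$ (rather than something weaker like $|u|<t^N/(1-t)$) is genuinely needed: it is the margin $(1-t)t<1$ produced by this hypothesis that provides the uniform geometric majorant across all $\lambda$ with $\lambda_1'\leq N$. No delicate analysis is required because the finite length specializations automatically cap $\lambda_1'$ by $N$, so the $q$-exponential converges at the worst point of the support.
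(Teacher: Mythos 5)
Your argument is correct and takes essentially the same route as the paper: both rest on the $q$-exponential identity (the paper cites Corollary 10.2.2a of Andrews rather than Macdonald) combined with the observation that $\lambda_1'\leq N$ on the support of $\mathbb{P}_{X,Y}$, which makes $|(1-t)ut^{-\lambda_1'}|<1$ under the hypothesis $|u|<t^{N+1}$. The only difference is in the interchange step, where the paper simply writes the expectation as the finite sum $\sum_{c=0}^{N}\mathbb{P}_{X,Y}(\lambda_1'=c)$ and passes to the limit termwise, whereas you supply a uniform geometric majorant and invoke dominated convergence; both are valid.
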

\begin{proof}
We have that 
$$\sum_{k = 0}^M \frac{ u^k \mathbb{E}_{X,Y}[t^{-\lambda_1'k}]}{k_t!} = \sum_{c = 0}^{N} \mathbb{P}_{X,Y}(\lambda_1' = c)\sum_{k = 0}^M \frac{ u^kt^{-ck}}{k_t!}$$
By our assumption on $u$ and Corollary 10.2.2a in \cite{Andrews} we have that the inner sum over $k$ converges to $\frac{1}{((1-t)ut^{-c};t)_\infty}$, as $M \rightarrow \infty$. Thus
$$\lim_{M \rightarrow \infty}\sum_{c = 0}^{N} \mathbb{P}_{X,Y}(\lambda_1' = c)\sum_{k = 0}^M \frac{ u^kt^{-ck}}{k_t!} = \sum_{c = 0}^{N}  \frac{\mathbb{P}_{X,Y}(\lambda_1' = c)}{((1-t)ut^{-c};t)_\infty} = \mathbb{E}_{X,Y} \left[ \frac{1}{((1-t)ut^{-\lambda_1'};t)_\infty}\right].$$
\end{proof}

\begin{proposition}\label{detbig}
Fix $N \in \mathbb{N}$, $t \in (0,1)$ and $x_i, y_i \in (0,1)$ for $i = 1, ... , N$. Then there exists $\epsilon > 0$ such that for $|u| < \epsilon$ and $u \not \in \mathbb{R}^+$ we have
\begin{equation}\label{detlim}
\begin{split} 1 + \lim_{M \rightarrow \infty} \sum_{k = 1}^M (t^{-1} - 1)^ku^k\sum_{\lambda \vdash k} \frac{1}{m_1(\lambda)!m_2(\lambda)!\cdots} \int_{C_{0}}\cdots \int_{C_{0}} \det \left[ \frac{1}{w_it^{-\lambda_i} - w_j}\right]_{i,j = 1}^{\ell(\lambda)} \times \\  \prod_{j = 1}^{\ell(\lambda)}\prod_{i = 1}^N\frac{1 - x_i(w_jt)^{-1}}{1  - x_i(w_jt)^{-1}t^{\lambda_j}}\frac{1 -y_i(w_jt)t^{-\lambda_j}}{1 - y_i(w_jt) }\frac{d w_j}{2\pi \iota}= \det(I + K^N_u)_{L^2(C_{0})}.
\end{split}
\end{equation}
In the above $C_0$ is the positively oriented circle of radius $t^{-1}$ around $0$. $K^N_u$ is defined in terms of its integral kernel
$$K^N_u(w; w') = \frac{1}{2\pi \iota} \int_{1/2 -\iota\infty}^{1/2 + \iota\infty} ds \Gamma(-s)\Gamma(1 + s)(-u(t^{-1} - 1))^s g^N_{w,w'}(t^s),$$
where 
$$g^N_{w,w'}(t^s) = \frac{1}{wt^{-s} - w'}\prod_{j = 1}^N \frac{(1 - x_j(wt)^{-1})(1 - y_j(wt)t^{-s})}{(1 - x_j(wt)^{-1}t^s)(1 - y_j(wt))}.$$
\end{proposition}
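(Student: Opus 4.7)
The strategy is to take the contour-integral formula for $\mathbb{E}_{X,Y}[t^{-k\lambda_1'}]$ from Proposition \ref{finmoment}, substitute it into the left-hand side of (\ref{detlim}), and carry out three successive reorganizations: first, convert the sum $\sum_{k\geq 1}\sum_{\lambda\vdash k}$ into an unordered sum $\sum_{L\geq 1}\tfrac{1}{L!}\sum_{\lambda_1,\ldots,\lambda_L\geq 1}$; second, absorb all $\lambda_i$-dependent prefactors into the rows of the determinant and push the $\lambda_i$-summations inside the determinant; third, apply a Mellin--Barnes identity to convert each matrix entry into the defining contour integral of $K^N_u$. The resulting series then matches the Fredholm expansion of $\det(I+K^N_u)_{L^2(C_0)}$. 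The argument parallels Section 3.2 of \cite{BorCor}, with modifications due to the Hall--Littlewood specialization $q=0$.

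The first step uses that the integrand in Proposition \ref{finmoment} is invariant under simultaneous permutations of the pairs $(w_i,\lambda_i)$, so the symmetrization identity
\[
\sum_{k\geq 1}\sum_{\lambda\vdash k}\frac{f(\lambda_1,\ldots,\lambda_{\ell(\lambda)})}{m_1(\lambda)!\,m_2(\lambda)!\cdots}=\sum_{L\geq 1}\frac{1}{L!}\sum_{\lambda_1,\ldots,\lambda_L\geq 1}f(\lambda_1,\ldots,\lambda_L)
\]
recasts the left-hand side as $1+\sum_{L\geq 1}\tfrac{1}{L!}\sum_{\lambda_1,\ldots,\lambda_L\geq 1}\int_{C_0^L}(\cdots)\prod_i\tfrac{dw_i}{2\pi\iota}$. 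For the second step, multiplying row $i$ of the determinant by $(u(t^{-1}-1))^{\lambda_i}\prod_k\frac{(1-x_k(w_it)^{-1})(1-y_k(w_it)t^{-\lambda_i})}{(1-x_k(w_it)^{-1}t^{\lambda_i})(1-y_k(w_it))}$ produces an integrand $\det\bigl[(u(t^{-1}-1))^{\lambda_i}g^N_{w_i,w_j}(t^{\lambda_i})\bigr]_{i,j=1}^L$; since row $i$ now depends only on $\lambda_i$, the independent sums $\sum_{\lambda_i\geq 1}$ commute with the determinant row by row, yielding $\det\bigl[\sum_{\lambda\geq 1}(u(t^{-1}-1))^\lambda g^N_{w_i,w_j}(t^\lambda)\bigr]$. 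Finally, the Mellin--Barnes identity
\[
\sum_{\lambda\geq 1}z^\lambda h(\lambda)=\frac{1}{2\pi\iota}\int_{1/2-\iota\infty}^{1/2+\iota\infty}\Gamma(-s)\Gamma(1+s)(-z)^s h(s)\,ds,
\]
applied with $z=u(t^{-1}-1)$ and $h(s)=g^N_{w_i,w_j}(t^s)$, rewrites each matrix entry as $K^N_u(w_i,w_j)$, so the whole expression becomes $1+\sum_{L\geq 1}\tfrac{1}{L!}\int_{C_0^L}\det[K^N_u(w_i,w_j)]\prod_i\tfrac{dw_i}{2\pi\iota}=\det(I+K^N_u)_{L^2(C_0)}$ by the definition (\ref{fredholmDef}).

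The main obstacle is verifying the Mellin--Barnes identity, which requires careful contour analysis. I would close the vertical contour $\mathrm{Re}(s)=1/2$ to the right, collecting residues at the simple poles $s=1,2,\ldots$ of $\Gamma(-s)$; using $\Gamma(-s)\Gamma(1+s)=-\pi/\sin(\pi s)$, the residue at $s=k$ evaluates to $-z^k h(k)$, giving the desired $\sum_{k\geq 1}z^k h(k)$ after accounting for the clockwise orientation of the closed contour. The decay of the integrand on vertical lines is $|\Gamma(-s)\Gamma(1+s)(-z)^s|\lesssim |z|^{\mathrm{Re}(s)}\exp\bigl(-(\pi-|\alpha|)|\mathrm{Im}(s)|\bigr)$ with $\alpha=\arg(-z)\in(-\pi,\pi)$ (guaranteed by the assumption $u\notin\mathbb{R}^+$), so the arc at $\mathrm{Re}(s)=R$ vanishes as $R\to\infty$ provided $|z|=|u(t^{-1}-1)|<1$. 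One must also check that $g^N_{w,w'}(t^s)$ is analytic in the closed right half-plane $\mathrm{Re}(s)\geq 1/2$: for $w,w'\in C_0$, the condition $|w|=|w'|=t^{-1}$ forces $wt^{-s}=w'$ only on $\mathrm{Re}(s)=0$, and $|x_j|<1=|wt|$ forces $x_j(wt)^{-1}t^s=1$ only on $\mathrm{Re}(s)<0$, so both families of poles lie strictly to the left of the contour. Choosing $\epsilon$ small enough that $|u|<\min\bigl(t/(1-t),1/B\bigr)$, where $B$ dominates the $L^\infty$-norms of the integrand factors, ensures absolute convergence of the partition expansion and thereby justifies all the interchanges via Fubini and dominated convergence; the Fredholm series itself converges for any $|u|$ by Hadamard's inequality and the $L^{L/2}/L!$ decay.
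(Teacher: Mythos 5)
Your overall route is essentially the paper's own proof run in the opposite direction: the paper starts from the Fredholm side, expands each kernel entry via its Lemma \ref{gammal} (which is exactly your Mellin--Barnes identity, proved there by pushing a rectangular contour to the right and collecting residues at $s=1,2,\dots$), and then matches the resulting series with the left-hand side using the same symmetrization identity (the factor $\frac{(m_1(\lambda)+m_2(\lambda)+\cdots)!}{m_1(\lambda)!m_2(\lambda)!\cdots}$ counting rearrangements of the parts) and Fubini for $|u|$ small; your version absorbs the prefactors into the determinant rows and pushes the geometric sums inside by multilinearity instead of expanding over permutations, which is an equivalent manipulation. So the skeleton is sound and matches the paper.

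There is, however, one concrete error in your justification of the Mellin--Barnes step. You claim the arc at $\mathrm{Re}(s)=R$ vanishes ``provided $|z|=|u(t^{-1}-1)|<1$,'' and later that an $L^\infty$ bound $B$ on ``the integrand factors'' suffices. Neither is right: for $w,w'\in C_0$ the factors $(1-y_j(wt)t^{-s})$ in the numerator of $g^N_{w,w'}(t^s)$ make $|g^N_{w,w'}(t^s)|$ grow like $t^{-(N-1)\mathrm{Re}(s)}$ as $\mathrm{Re}(s)\to+\infty$ (the single decaying factor $1/(wt^{-s}-w')$ only cancels one power of $t^{-\mathrm{Re}(s)}$), so there is no finite uniform bound in $\lambda$ or in $\mathrm{Re}(s)$, and for $t^{N-1}<|z|<1$ the series $\sum_{\lambda\ge1}z^\lambda g^N_{w,w'}(t^\lambda)$ diverges while your contour integral converges -- the identity is simply false in that range. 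The fix is the one the paper uses in Lemma \ref{gammal}: bound $|g^N_{w,w'}(t^s)|\le Ce^{c\,\mathrm{Re}(s)}$ with $c$ depending on $N$, $t$, $y_i$, and then choose $\epsilon$ (hence $|z|$) small enough that $|z|e^{c}<1$, so that both the arc estimate and the absolute convergence needed for Fubini and for pulling the sums through the determinant hold. Since the proposition only asserts existence of some $\epsilon>0$ depending on $N,t,x_i,y_i$, this is a local repair rather than a change of strategy, but as written your sufficient condition would not carry the argument for $N\ge2$.
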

The proof of Proposition \ref{detbig} depends on two lemmas: Lemma \ref{gammal} and Lemma \ref{tracel}, whose proof is postponed to Section \ref{lemmasep}. Our choice for $C_0$ is made in order to simplify the proof.

\begin{proof}
From Lemma \ref{tracel} we know that $K^N_u$ is trace-class for $u \not \in \mathbb{R}^+$. Consequently we have that 
$$\det(I + K^N_u)_{L^2(C_{0})} = 1 + \sum_{n = 1}^\infty \frac{1}{n!}\int_{C_{0}}\cdots \int_{C_{0}} \det \left[ K^N_u(w_i, w_j)\right] _{i,j = 1}^n \prod_{i = 1}^{n}  \frac{dw_i}{2 \pi \iota } = $$
$$1 + \sum_{n = 1}^\infty \frac{1}{n!}\int_{C_{0}}\hspace{-3mm}\cdots \int_{C_{0}}\sum_{\sigma \in S_n} sign(\sigma) \prod_{i = 1}^{n} \left[ \frac{1}{2\pi i} \int_{1/2 - \iota\infty}^{1/2 + \iota\infty}  \hspace{-6mm}\Gamma(-s)\Gamma(1 + s)(-u(t^{-1} - 1))^s g_{w_i, w_{\sigma(i)}}(t^s)ds \right] \prod_{i = 1}^{n} \frac{dw_i}{2 \pi \iota }.$$
Using Lemma \ref{gammal} and the above formula we can find an $\epsilon > 0$ such that for $|u| < \epsilon$ and $u \not \in \mathbb{R}^+$ one has 
\begin{equation}\label{RHSdetbig}
\det(I + K^N_u) = 1 + \sum_{n = 1}^\infty \frac{1}{n!}\int_{C_{0}}\cdots \int_{C_{0}}\sum_{\sigma \in S_n} sign(\sigma) \prod_{i = 1}^{n} \left[ \sum_{j = 1}^{\infty} u^j(t^{-1} - 1)^{j} g_{w_i, w_{\sigma(i)}}(t^j) \right] \prod_{i = 1}^{n} \frac{dw_i}{2 \pi \iota }.
\end{equation}

Let us introduce the following short-hand notation
$$B(c_1,...,c_n) :=  \int_{C_{0}}\cdots \int_{C_{0}} \det \left[ \frac{1}{w_it^{-c_i} - w_j}\right]_{i,j = 1}^{n} \prod_{j = 1}^{n}\prod_{i = 1}^N\frac{1 - x_i(w_jt)^{-1}}{1  - x_i(w_jt)^{-1}t^{c_j}} \frac{1 - y_i(w_jt)t^{-c_j}}{1 - y_i(w_jt) }\frac{dw_j}{2\pi \iota}.$$
Notice that $B(c_1,...,c_n)$ is invariant under permutation of its arguments and that $\frac{(m_1(\lambda) + m_2(\lambda) + \cdots )!}{m_1(\lambda)!m_2(\lambda)!\cdots}$ is the number of distinct permutations of the parts of $\lambda$. The latter suggests that
$$ \sum_{\lambda \vdash k} \frac{(t^{-1} - 1)^ku^k}{m_1(\lambda)!m_2(\lambda)!\cdots}B(\lambda_1,...,\lambda_{\ell(\lambda)})=  \sum_{n \geq 1}  \sum_{\substack{c_1,c_2,...,c_n \geq 1 \\ \sum c_i = k }} \frac{(t^{-1} - 1)^ku^k}{n!} B(c_1,...,c_n).$$

Observe that for some positive constant $C$ we have
$$\left| \prod_{j = 1}^{n}\prod_{i = 1}^N\frac{1 - x_i(w_jt)^{-1}}{1  - x_i(w_jt)^{-1}t^{c_j}} \frac{1 -y_i(w_jt)t^{-c_j}}{1 - y_i(w_jt) }\right| \leq C^{Nn}t^{-Nk}\prod_{i = 1}^N\frac{1}{(1-x_i)^n(1-y_i)^n}.$$
The above together with Hadamard's inequality and the compactness of $C_0$ implies that for some positive constants $P,Q$ (independent of $k$ and $n$) we have $|B(c_1,...,c_n)| \leq n^{n/2}P^nQ^k$. The latter implies that for $|u| < \epsilon$ and $\epsilon$ sufficiently small the sum
$$\sum_{k = 1}^\infty\sum_{n \geq 1} \hspace{2mm} \sum_{\substack{c_1,c_2,...,c_n \geq 1 \\ \sum c_i = k }} \frac{(t^{-1} - 1)^ku^k}{n!} B(c_1,...,c_n)$$
is absolutely convergent. In particular, the limit on the LHS of equation (\ref{detlim}) exists and equals 
\begin{equation*}
1 + \sum_{n = 1}^{\infty}\frac{1}{n!} \hspace{2mm} \sum_{\substack{c_1,c_2,...,c_n \geq 1}}  [(t^{-1} - 1)u]^{c_1+\cdots+c_n} B(c_1,...,c_n).
\end{equation*}
Expanding the determinant inside the integral in the definition of $B(c_1,...,c_n)$ we see that the integrand equals $\sum_{\sigma \in S_n} sign(\sigma) \prod_{i = 1}^{n}g_{w_i,w_{\sigma(i)}}(t^{c_i})$. Consequently the LHS of equation (\ref{detlim}) equals
\begin{equation}\label{LHSdetbig}
1 + \sum_{n = 1}^{\infty}\frac{1}{n!} \hspace{2mm} \sum_{\substack{c_1,c_2,...,c_n \geq 1}}  [(t^{-1} - 1)u]^{c_1+\cdots+c_n}\int_{C_{0}}\cdots \int_{C_{0}} \sum_{\sigma \in S_n} sign(\sigma) \prod_{i = 1}^{n}g_{w_i,w_{\sigma(i)}}(t^{c_i}) \frac{dw_i}{2 \pi \iota}.
\end{equation}

What remains is to check that the two expressions in (\ref{LHSdetbig}) and (\ref{RHSdetbig}) agree. Since both are absolutely converging sums over $n$, it suffices to show equality of the corresponding summands. I.e. we wish to show that
\begin{equation}\label{laststretch}
\sum_{\substack{c_1,c_2,...,c_n \geq 1}} \hspace{-5mm} [(t^{-1} - 1)u]^{c_1+\cdots+c_n}\int_{C_{0}}\cdots \int_{C_{0}} \sum_{\sigma \in S_n} sign(\sigma) \prod_{i = 1}^{n}g_{w_i,w_{\sigma(i)}}(t^{c_i}) \frac{dw_i}{2 \pi \iota}= $$
$$= \int_{C_{0}}\cdots \int_{C_{0}}\sum_{\sigma \in S_n} sign(\sigma) \prod_{i = 1}^{n} \left[ \sum_{j = 1}^{\infty} u^j(t^{-1} - 1)^{j} g_{w_i, w_{\sigma(i)}}(t^j) \right]  \frac{dw_i}{2 \pi \iota }.
\end{equation}
By Fubini's Theorem (provided $|u|$ is sufficiently small) we may interchange the order of the sum and the integrals and the LHS of equation (\ref{laststretch}) becomes
$$\int_{C_{0}}\cdots \int_{C_{0}} \sum_{\substack{c_1,c_2,...,c_n \geq 1}}  [(t^{-1} - 1)u]^{c_1+...+c_n}\sum_{\sigma \in S_n} sign(\sigma) \prod_{i = 1}^{n}g_{w_i,w_{\sigma(i)}}(t^{c_i}) \frac{dw_i}{2 \pi \iota } = $$
$$ \int_{C_{0}}\cdots \int_{C_{0}}\sum_{\sigma \in S_n}sign(\sigma)\prod_{i = 1}^{n}\left[\sum_{c_i \geq 1}[(t^{-1} - 1)u]^{c_i} g_{w_i,w_{\sigma(i)}}(t^{c_i}) \right]  \frac{dw_i}{2 \pi \iota }.$$
From the above equation (\ref{laststretch}) is obvious. This concludes the proof.
\end{proof}

\begin{proposition}\label{finlength}
Fix $N \in \mathbb{N}$ and a parameter $t\in(0,1)$. Let $\rho_X$ and $\rho_Y$ be the nonnegative finite length specializations in $N$ variables $X = (x_1,...,x_N)$ and $Y = (y_1,...,y_N)$ respectively, with $x_i, y_i \in (0,1)$ for $i = 1, ... , N$. Then for $u \not \in \mathbb{R}^+$ one has that
\begin{equation}\label{finlengthform}
\mathbb{E}_{X,Y} \left[ \frac{1}{((1-t)ut^{-\lambda_1'}; t)_\infty} \right] = \det (I + K^N_u)_{L^2(C_0)}.
\end{equation}
The contour $C_0$ is the positively oriented circle of radius $t^{-1}$, centered at $0$, and the operator $K^N_u$ is defined in terms of its integral kernel
$$K^N_u(w,w') = \frac{1}{2\pi \iota} \int_{1/2 - \iota\infty}^{1/2 + \iota\infty} ds \Gamma(-s)\Gamma(1 + s)(-u(t^{-1}-1))^s g^N_{w,w'}(t^s),$$
where 
$$g^N_{w,w'}(t^s) = \frac{1}{wt^{-s} - w'}\prod_{j = 1}^N \frac{(1 - x_j(wt)^{-1})(1 - y_j(wt)t^{-s})}{(1 - x_j(wt)^{-1}t^s)(1 - y_j(wt))}.$$
\end{proposition}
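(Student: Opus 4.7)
The plan is to observe that Propositions \ref{explimp}, \ref{finmoment}, and \ref{detbig} fit together to prove (\ref{finlengthform}) on a small open set, and then extend by analyticity to all of $\mathbb{C} \setminus \mathbb{R}^+$. To carry out the first step, I note that Proposition \ref{finmoment} gives
$$\frac{u^k\mathbb{E}_{X,Y}[t^{-k\lambda_1'}]}{k_t!} = (t^{-1}-1)^k u^k \sum_{\lambda \vdash k} \frac{1}{m_1(\lambda)! m_2(\lambda)!\cdots} \int_{C_0}\cdots \int_{C_0} \det\left[\frac{1}{w_i t^{-\lambda_i}-w_j}\right]_{i,j=1}^{\ell(\lambda)} \prod_j \cdots \frac{dw_j}{2\pi\iota},$$
which is exactly the $k$-th summand appearing on the LHS of (\ref{detlim}). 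For $u \not\in \mathbb{R}^+$ with $|u|<\min(t^{N+1},\epsilon)$, applying Proposition \ref{explimp} on the LHS and Proposition \ref{detbig} on the RHS of the equality
$$\sum_{k=0}^M \frac{u^k\mathbb{E}_{X,Y}[t^{-k\lambda_1'}]}{k_t!} = 1 + \sum_{k=1}^M (t^{-1}-1)^k u^k \sum_{\lambda \vdash k} \frac{1}{m_1(\lambda)! m_2(\lambda)!\cdots} \int_{C_0}\cdots \int_{C_0} \cdots \frac{dw_j}{2\pi\iota}$$
and passing to the limit $M \to \infty$ yields (\ref{finlengthform}) on this open subset of $\mathbb{C}\setminus\mathbb{R}^+$.

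To extend to all of $\mathbb{C}\setminus\mathbb{R}^+$, I would verify that both sides are analytic in $u$ on this connected domain, so that the identity theorem applies. For the LHS, since $\lambda_1'$ takes only values in $\{0,1,\dots,N\}$, one has
$$\mathbb{E}_{X,Y}\left[\frac{1}{((1-t)ut^{-\lambda_1'};t)_\infty}\right] = \sum_{c=0}^N \mathbb{P}_{X,Y}(\lambda_1'=c) \prod_{k\geq 0} \frac{1}{1-(1-t)ut^{k-c}},$$
a finite sum of meromorphic functions whose only poles occur at $u = t^{c-k}/(1-t) \in \mathbb{R}^+$, so the LHS is analytic on $\mathbb{C}\setminus\mathbb{R}^+$.

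For the RHS, the kernel $K^N_u(w,w')$ depends on $u$ only through the factor $(-u(t^{-1}-1))^s$ in the Mellin-Barnes integrand. Using the principal branch of the logarithm, $u \mapsto (-u(t^{-1}-1))^s$ is analytic on $\mathbb{C}\setminus\mathbb{R}^+$ (since $-u(t^{-1}-1)$ is a negative real number precisely when $u$ is a positive real number), and the $s$-integral converges uniformly on compact subsets of $\mathbb{C}\setminus\mathbb{R}^+$ thanks to the Gamma-function decay along the critical line. Combined with the trace-class bounds of Lemma \ref{tracel}, this shows that $u \mapsto K^N_u$ is an analytic map from $\mathbb{C}\setminus\mathbb{R}^+$ into the trace-class operators on $L^2(C_0)$. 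By the standard fact that Fredholm determinants of analytic trace-class operator-valued functions are analytic, the RHS is also analytic on $\mathbb{C}\setminus\mathbb{R}^+$, completing the proof.

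The main obstacle I anticipate is the analyticity of the Fredholm determinant in $u$ across the full domain $\mathbb{C}\setminus\mathbb{R}^+$; this requires establishing trace-norm bounds for $K^N_u$ that are uniform on compact subsets away from the positive real axis, which in turn depends on the estimates supplied by Lemmas \ref{gammal} and \ref{tracel} together with uniform control of the Mellin-Barnes integrand.
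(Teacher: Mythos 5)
Your proposal is correct and follows essentially the same route as the paper: combine Propositions \ref{finmoment}, \ref{explimp} and \ref{detbig} to get the identity for $|u|$ small with $u\not\in\mathbb{R}^+$, then extend to all of $\mathbb{C}\setminus\mathbb{R}^+$ by analyticity, with the left side handled as a finite sum over the values of $\lambda_1'$ and the right side handled by Lemma \ref{tracel} (which in fact already asserts the analyticity of $\det(I+K^N_u)$ directly, so your proposed detour through trace-norm analyticity of $u\mapsto K^N_u$ is not needed).
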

\begin{proof}
Using Propositions \ref{finmoment}, \ref{explimp} and \ref{detbig} we have the statement of the proposition for $|u| < \epsilon$ and $u \not \in \mathbb{R}^{+}$ for some sufficiently small $\epsilon > 0$. To conclude the proof it suffices to show that both sides of (\ref{finlengthform}) are analytic functions of $u$ in $\mathbb{C} \backslash \mathbb{R}^{+}$. 

The RHS is analytic by Lemma \ref{tracel}, while the LHS of (\ref{finlengthform}) equals $\sum_{n = 0}^{N} \mathbb{P}_{x,y}(\lambda_1' = n) \frac{1}{(ut^{-n}; t)_\infty}$, and is thus a finite sum of analytic functions and so also analytic on $\mathbb{C} \backslash \mathbb{R}^{+}$.
\end{proof}

%
\subsection{Proof of  Lemmas \ref{gammal} and \ref{tracel}}\hspace{2mm}\label{lemmasep}\\

Versions of the following two lemmas appear in Section 3.2 of \cite{BorCor}.
\begin{lemma}\label{gammal}
Fix $N \in \mathbb{N}$, $t \in (0,1)$ and $x_i, y_i \in (0,1)$ for $i = 1, ... , N$. Let $w,w' \in \mathbb{C}$ be such that $|w| = |w'| = t^{-1}$ and let
$$g^N_{w,w'}(t^s) = \frac{1}{wt^{-s} - w'}\prod_{j = 1}^N \frac{(1 - x_j(wt)^{-1})(1 - y_j(wt)t^{-s})}{(1 - x_j(wt)^{-1}t^s)(1 - y_j(wt))}.$$
Then there exists $\epsilon > 0$ such that if $\zeta \in \{\zeta: |\zeta| < \epsilon, \zeta \not \in \mathbb{R}^+\}$, we have
\begin{equation}\label{gResidues}
\sum_{n = 1}^{\infty} g^N_{w,w'}(t^n) \zeta^n = \frac{1}{2\pi \iota} \int_{1/2 -\iota\infty}^{1/2 + \iota\infty}  \Gamma(-s)\Gamma(1 + s)(-\zeta)^s g^N_{w,w'}(t^s)ds.
\end{equation}
\end{lemma}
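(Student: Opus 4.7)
The identity is of Mellin--Barnes type, and the natural strategy is a residue/contour-shift argument in which the series on the left is produced by summing the residues of the integrand at $s=1,2,3,\dots$. Concretely, the plan is to view the integrand
\[
h(s) := \Gamma(-s)\Gamma(1+s)(-\zeta)^{s}\,g^{N}_{w,w'}(t^{s})
\]
as a meromorphic function in a right half-plane, shift the contour from $\mathrm{Re}(s)=1/2$ to $\mathrm{Re}(s)=M+\tfrac12$, collect the residues that are swept through, and then send $M\to\infty$. Since $\zeta\notin\mathbb{R}^{+}$, the function $(-\zeta)^{s}=\exp(s\log(-\zeta))$ is entire in $s$ for the principal branch of $\log$, so the only poles come from $\Gamma(-s)$ (simple, at $s=0,1,2,\dots$) together with potential singularities of $g^{N}_{w,w'}(t^{s})$. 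I would first check that the latter are absent in the closed right half-plane: the zeros of $wt^{-s}-w'$ lie on $\{\mathrm{Re}(s)=0\}$ because $|w/w'|=1$, and the zeros of $1-x_{j}(wt)^{-1}t^{s}$ lie on $\{\mathrm{Re}(s)=\log(1/x_{j})/\log t<0\}$ because $|wt|=1$ and $x_{j}\in(0,1)$; in particular none interfere with the shift.

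A standard computation gives $\mathrm{Res}_{s=n}\Gamma(-s)\Gamma(1+s)=(-1)^{n+1}$, so
\[
\mathrm{Res}_{s=n}h(s)=(-1)^{n+1}(-\zeta)^{n}g^{N}_{w,w'}(t^{n})=-\zeta^{n}g^{N}_{w,w'}(t^{n}).
\]
Shifting from $\mathrm{Re}(s)=\tfrac12$ to $\mathrm{Re}(s)=M+\tfrac12$ and applying the residue theorem (the poles at $s=1,\dots,M$ are enclosed in the clockwise sense) yields
\[
\frac{1}{2\pi\iota}\int_{1/2-\iota\infty}^{1/2+\iota\infty}h(s)\,ds
=\sum_{n=1}^{M}\zeta^{n}g^{N}_{w,w'}(t^{n})
+\frac{1}{2\pi\iota}\int_{M+1/2-\iota\infty}^{M+1/2+\iota\infty}h(s)\,ds,
\]
provided the horizontal connecting segments vanish. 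For the latter, I would let the imaginary part tend to $\infty$ along a sequence $T_{k}\to\infty$ staying uniformly away from the (discrete) set where $|wt^{-s}-w'|$ is small; the factor $\Gamma(-s)\Gamma(1+s)=-\pi/\sin(\pi s)$ contributes the decay $O(e^{-\pi|T|})$, which dominates the polynomial-in-$|T|$ behavior of the remaining factors for $\mathrm{Re}(s)\in[1/2,M+1/2]$ bounded, and $|(-\zeta)^{s}|=|\zeta|^{\sigma}e^{-T\arg(-\zeta)}$ with $|\arg(-\zeta)|<\pi$, so the combined decay is $e^{-(\pi-|\arg(-\zeta)|)|T|}$.

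The main obstacle, and the step that dictates the size of $\epsilon$, is the estimate on the far-right vertical line $\mathrm{Re}(s)=M+\tfrac12$. Using $|w|=|w'|=t^{-1}$ and $|wt|=1$, one has on this line:
\[
\bigl|wt^{-s}-w'\bigr|^{-1}=O(t^{M+1}),\qquad \prod_{j=1}^{N}\bigl|1-x_{j}(wt)^{-1}t^{s}\bigr|^{-1}=O(1),
\]
while the ``dangerous'' factor grows as
\[
\prod_{j=1}^{N}\bigl|1-y_{j}(wt)t^{-s}\bigr|=O(t^{-NM}).
\]
Combining with $|(-\zeta)^{s}|=|\zeta|^{M+1/2}e^{-T\arg(-\zeta)}$ and the Gamma-factor decay, the integrand along $\mathrm{Re}(s)=M+\tfrac12$ is bounded uniformly in $T$ (after multiplying by the $L^{1}$ envelope $e^{-(\pi-|\arg(-\zeta)|)|T|}$) by a constant multiple of $(|\zeta|\,t^{1-N})^{M}$. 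Hence the remainder integral tends to $0$ as $M\to\infty$ as soon as $|\zeta|<t^{N-1}$, which furnishes the required $\epsilon$. Letting $M\to\infty$ then gives
\[
\frac{1}{2\pi\iota}\int_{1/2-\iota\infty}^{1/2+\iota\infty}h(s)\,ds=\sum_{n=1}^{\infty}\zeta^{n}g^{N}_{w,w'}(t^{n}),
\]
which is the claimed identity, with the series converging absolutely because of the same geometric bound $|g^{N}_{w,w'}(t^{n})|=O(t^{n(1-N)})$ derived en route.
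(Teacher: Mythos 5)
Your proposal is correct and follows essentially the same route as the paper's proof: both expand the Mellin--Barnes integral over the residues of $\Gamma(-s)\Gamma(1+s)$ at $s=1,2,\dots$ by closing/shifting the contour to the right, using Euler's reflection formula together with the bounds $|(-\zeta)^s|=|\zeta|^{\mathrm{Re}(s)}e^{-\mathrm{Im}(s)\arg(-\zeta)}$ and $|g^N_{w,w'}(t^s)|=O\bigl(t^{-N\,\mathrm{Re}(s)}\bigr)$ (with the denominator $|wt^{-s}-w'|$ uniformly bounded below for $\mathrm{Re}(s)\geq 1/2$) to kill the horizontal and far-right contributions once $|\zeta|$ is small enough. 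The only difference is organizational: the paper closes with expanding square contours $\gamma_M$ and takes a single limit $M\to\infty$, whereas you shift full vertical lines (sending the vertical truncation to infinity first for fixed $M$) and quantify the threshold as $|\zeta|<t^{N-1}$ rather than an unspecified small $\epsilon$ --- the underlying estimates are the same.
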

\begin{proof}
For simplicity we suppress $N$ from our notation. Let $R_M = M + 1/2$ ($M \in \mathbb{N})$ and set $A^1_M = 1/2 - \iota R_M$, $A^2_M =  1/2 + \iota R_M$, $A^3_M = R_M + \iota R_M$ and $A^4_M = R_M - \iota R_M$. Denote by $\gamma^1_M$ the contour, which goes from $A_M^1$ vertically up to $A_M^2$, by $\gamma^2_M$ the contour, which goes from $A_M^2$ horizontally to $A_M^3$, by $\gamma^3_M$ the contour, which goes from $A_M^3$ vertically down to $A_M^4$, and by $\gamma^4_M$ the contour, which goes from $A_M^4$ horizontally to $A_M^1$. Also let $\gamma_M = \cup_i \gamma_M^i$ traversed in order (see Figure \ref{S3_1}).

\begin{figure}[h]
\centering
\scalebox{0.6}{\includegraphics{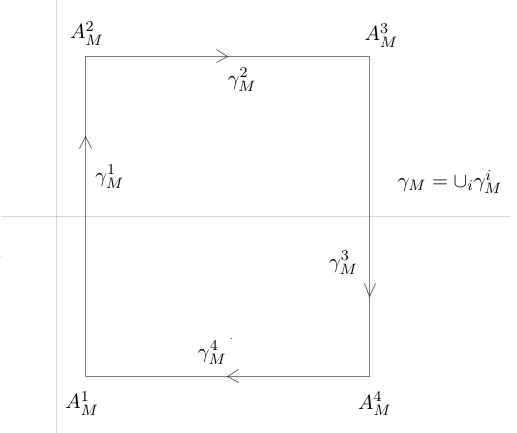}}
\caption{The contours $\gamma_M^i$ for $i = 1,...,4$.}
\label{S3_1}
\end{figure}

We make the following observations:
\begin{enumerate}[label = \arabic{enumi}., leftmargin=1.5cm]
\item $\gamma_M$ is negatively oriented.
\item The function $g_{w,w'}(t^s)$ is well-defined and analytic in a neighborhood of the closure of the region enclosed by $\gamma_M$. This follows from $|t^s| < 1$ for $Re(s) > 0$, which prevents any of the poles of $g_{w,w'}(t^s)$ from entering the region $Re(s) > 0$.
\item  If dist$(s, \mathbb{Z}) > c$ for some fixed constant $c > 0$, then $\left| \frac{\pi}{\sin(\pi s)} \right| \leq c'e^{-\pi|Im(s)|}$ for some fixed constant $c'$, depending on $c$. In particular, this estimate holds for all $s \in \gamma_M$ since dist$(\gamma_M, \mathbb{Z}) = 1/2$ for all $M$ by construction.\item If $-\zeta = re^{\iota \theta}$ with $|\theta| < \pi$ and $s = x+ \iota y$ then 
$$(-\zeta)^s = \exp\left( (\log(r) + \iota\theta)(x + \iota y)\right) = \exp\left( \log(r)x - y\theta + \iota(\log(r)y + x\theta)\right),$$
 since we took the principal branch. In particular, $|(-\zeta)^s| = r^xe^{-y\theta}$.
\end{enumerate}
We also recall Euler's Gamma reflection formula
\begin{equation}\label{Euler}
\Gamma(-s)\Gamma(1 + s) = \frac{\pi}{\sin(-\pi s)}.
\end{equation}

We observe for $s = x+ \iota y,$ with $x \geq 1/2$ that
$$|g_{w,w'}(t^s)| = \left| \frac{1}{wt^{-s} - w'}\prod_{j = 1}^N \frac{(1 - x_j(wt)^{-1})(1 - y_j(wt)t^{-s})}{(1 - x_j(wt)^{-1}t^s)(1 - y_j(wt))} \right| \leq \frac{\prod_{j = 1}^N \left| 1 - y_j(wt)t^{-s} \right|}{t^{-3/2} - t^{-1}}\prod_{i = 1}^N\frac{2}{(1-y_i)(1-x_i)}.$$
In addition, we have$\prod_{j = 1}^N \left| 1 - y_j(wt)t^{-s} \right| \leq Ce^{cx}$ for some positive constants $C,c > 0$, depending on $N$, $t$ and $y_i$.
Consequently, we see that if $\epsilon$ is chosen sufficiently small and $\zeta =re^{\iota\theta}$ with $r < \epsilon$ then
$$|g_{w,w'}(t^s)(- \zeta)^s| \leq Ce^{cx} \epsilon^x e^{|y\theta|} \leq Ce^{-cx} e^{|y\theta|},$$
with some new constant $C > 0$. In particular, the LHS in (\ref{gResidues}) is absolutely convergent, and we have
$$\sum_{n = 1}^{\infty} g_{w,w'}(t^n) \zeta^n = \lim_{M \rightarrow \infty}\sum_{n = 1}^{M} g_{w,w'}(t^n) \zeta^n.$$

From the Residue Theorem we have 
$$\sum_{n = 1}^{M} g_{w,w'}(t^n) \zeta^n = \frac{1}{2\pi\iota} \int_{\gamma_M}  \Gamma(-s)\Gamma(1 + s)(-\zeta)^s g_{w,w'}(t^s)ds.$$ 
The last formula used $Res_{s = k}\Gamma(-s)\Gamma(1 + s) = (-1)^{k+1}$ and observations $1.$ and $2.$ above. What remains to be shown is that
\begin{equation}\label{gammaConv}
\lim_{M \rightarrow \infty} \frac{1}{2\pi \iota} \int_{\gamma_M}  \Gamma(-s)\Gamma(1 + s)(-\zeta)^s g_{w,w'}(t^s)ds = \frac{1}{2\pi \iota} \int_{1/2 - \iota\infty}^{1/2 + \iota\infty}  \Gamma(-s)\Gamma(1 + s)(-\zeta)^s g_{w,w'}(t^s)ds.
\end{equation}

Observe that on $Re(s) = 1/2$ we have that $|g_{w,w'}(t^s)|$ is bounded, while from (\ref{Euler}) and observations 3. and 4. we have
\begin{equation}\label{gzest}
\left|  \Gamma(-s)\Gamma(1 + s)(-\zeta)^s \right| = \left| \frac{\pi}{\sin(-\pi s)}(-\zeta)^s \right| \leq c'\exp((|\theta|-\pi)|Im(s)|) r^{1/2} ,
\end{equation}
which decays exponentially in $|Im(s)|$ since $|\theta| < \pi$. Thus the integrand on the RHS of (\ref{gammaConv}) is exponentially decaying near $\pm \iota \infty$ and so the integral is well-defined. Moreover, from the Dominated Convergence Theorem we have that
$$\lim_{M \rightarrow \infty} \frac{1}{2\pi \iota} \int_{\gamma^1_M}  \Gamma(-s)\Gamma(1 + s)(-\zeta)^s g_{w,w'}(t^s)ds = \frac{1}{2\pi \iota} \int_{1/2 - \iota\infty}^{1/2 + \iota\infty}  \Gamma(-s)\Gamma(1 + s)(-\zeta)^s g_{w,w'}(t^s)ds.$$
We now consider the integrals 
$$\frac{1}{2\pi \iota} \int_{\gamma^i_M}  \Gamma(-s)\Gamma(1 + s)(-\zeta)^s g_{w,w'}(t^s),$$
when $i \neq 1$ and show they go to $0$ in the limit. If true, (\ref{gammaConv}) will follow. \\

Suppose that $i = 2$ or $i = 4$. Let $s = x+ \iota y \in \gamma_M^i$, so $|y| = R_M$ and we get
$$\left|  \Gamma(-s)\Gamma(1 + s)(-\zeta)^s g_{w,w'}(t^s) \right| \leq Ce^{-cx}e^{|\theta y|} c' e^{-\pi |y|} \leq Ce^{(|\theta| - \pi )R_M},$$
for some new constant $C > 0$. Since $|\theta| - \pi < 0$ we see that
$$\left| \frac{1}{2\pi \iota} \int_{\gamma^i_M}  \Gamma(-s)\Gamma(1 + s)(-\zeta)^s g_{w,w'}(t^s) \right| \leq C R_M e^{(|\theta| - \pi)R_M} \rightarrow 0 \mbox{ as } M \rightarrow \infty.$$

Finally, let $i = 3$. Let  $s = x+ \iota y \in \gamma_M^3$, so $x= R_M$ and we get
$$\left|  \Gamma(-s)\Gamma(1 + s)(-\zeta)^s g_{w,w'}(t^s) \right| \leq Ce^{-cx}e^{|\theta y|} c' e^{-\pi |y|} \leq Cc'e^{-cR_M}.$$
Consequently, we obtain
$$\left| \frac{1}{2\pi \iota} \int_{\gamma^3_M}  \Gamma(-s)\Gamma(1 + s)(-\zeta)^s g_{w,w'}(t^s) \right| \leq 2 R_M Cc'e^{-cR_M} \rightarrow 0  \mbox{ as } M \rightarrow \infty.$$
This concludes the proof of (\ref{gammaConv}) and hence the lemma.
\end{proof}

\begin{lemma}\label{tracel}
Fix $N \in \mathbb{N}$ or $N = \infty$, $t \in (0,1)$ and $x_i, y_i \in (0,1)$ for $i = 1, ... , N$ such that $\sum_i x_i < \infty$, $\sum_i y_i < \infty$. Suppose $u \in \mathbb{C}\backslash\mathbb{R}^+$. Consider the operator $K^N_u$ on $L^2(C_0)$ (here $C_0$ is the positive circle of radius $t^{-1}$), which is defined in terms of its integral kernel
$$K^N_u(w,w') = \frac{1}{2\pi \iota} \int_{1/2 - \iota\infty}^{1/2 + \iota\infty} ds \Gamma(-s)\Gamma(1 + s)(-u(t^{-1}-1))^s g^N_{w,w'}(t^s),$$
where 
$$g^N_{w,w'}(t^s) = \frac{1}{wt^{-s} - w'}\prod_{j = 1}^N \frac{(1 - x_j(wt)^{-1})(1 - y_j(wt)t^{-s})}{(1 - x_j(wt)^{-1}t^s)(1 - y_j(wt))}.$$
Then $K^N_u$ is trace-class. Moreover, as a function of $u$ we have that $\det(I + K_u^N)$ is an analytic function on $\mathbb{C}\backslash\mathbb{R}^+$.
\end{lemma}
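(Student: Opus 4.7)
The plan is to verify the hypotheses of Lemma \ref{traceclass} directly: show that $K^N_u(w, w')$ is continuous on $C_0 \times C_0$ and that its derivative in $w'$ along $C_0$ is continuous as well. Once the trace-class property is in hand, the standard power-series expansion of the Fredholm determinant, combined with pointwise analyticity of $u \mapsto K^N_u(w, w')$ and a Hadamard-type bound, will deliver analyticity of $u \mapsto \det(I + K^N_u)_{L^2(C_0)}$ on $\mathbb{C} \setminus \mathbb{R}^+$. All three points will be reduced to producing a single integrable dominating function for the $s$-integrand on the line $\mathrm{Re}(s) = 1/2$, uniform in $(w, w') \in C_0^2$ and in $u$ ranging over a prescribed compact subset of $\mathbb{C} \setminus \mathbb{R}^+$.

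The key pointwise bounds come from writing $s = 1/2 + \iota y$ and using $|w| = |w'| = t^{-1}$. First, $|wt^{-s}| = t^{-3/2} > t^{-1} = |w'|$, so $|wt^{-s} - w'| \geq t^{-3/2} - t^{-1} > 0$ uniformly, which also shows the $s$-contour avoids the moving pole of $g^N_{w, w'}$. Next, $|x_j (wt)^{-1}| = x_j < 1$, $|x_j (wt)^{-1} t^s| = x_j t^{1/2} < 1$, $|y_j (wt)| = y_j < 1$, and $|y_j (wt) t^{-s}| = y_j t^{-1/2}$, so $|g^N_{w, w'}(t^s)|$ is uniformly bounded on the contour, with the infinite products converging absolutely when $N = \infty$ by $\sum_j x_j, \sum_j y_j < \infty$. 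Combining this with the Euler reflection bound $|\Gamma(-s)\Gamma(1 + s)| = |\pi/\sin(\pi s)| \leq c' e^{-\pi |y|}$ on $\mathrm{Re}(s) = 1/2$ (exactly as in Lemma \ref{gammal}) and $|(-u(t^{-1} - 1))^s| = |u(t^{-1} - 1)|^{1/2} e^{-\theta y}$, where $\theta := \arg(-u(t^{-1} - 1)) \in (-\pi, \pi)$ because $u \notin \mathbb{R}^+$, produces an integrable dominating function of the form $C e^{-(\pi - |\theta|)|y|}$. Crucially, for $u$ in any compact $K \subset \mathbb{C} \setminus \mathbb{R}^+$, $|\theta|$ is bounded strictly below $\pi$, giving uniformity in $u \in K$.

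Continuity of $K^N_u(w, w')$ then follows by Dominated Convergence from the pointwise continuity of the integrand. Continuity of the derivative in $w'$ along $C_0$ comes from differentiating under the $s$-integral: the extra factor $(wt^{-s} - w')^{-2}$ introduced by differentiation is still uniformly bounded by the same lower estimate $t^{-3/2} - t^{-1}$, so the dominating function above (up to a constant) still controls the differentiated integrand, and Lemma \ref{traceclass} applies. For analyticity of $\det(I + K^N_u)_{L^2(C_0)}$ in $u$, I would differentiate under the $s$-integral in $u$, where the factor $(-u(t^{-1}-1))^s$ contributes only an extra $s$ (absorbed into the exponential bound) and analyticity of the integrand in $u$ on $\mathbb{C} \setminus \mathbb{R}^+$ is immediate from the principal branch; this yields analyticity of $u \mapsto K^N_u(w, w')$ pointwise. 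Inserting this into the series
$$\det(I + K^N_u)_{L^2(C_0)} = 1 + \sum_{n \geq 1} \frac{1}{n!} \int_{C_0} \!\cdots\! \int_{C_0} \det\!\left[ K^N_u(w_i, w_j) \right]_{i,j = 1}^n \prod_{i = 1}^n \frac{dw_i}{2\pi \iota},$$
and using Hadamard's inequality $|\det[K^N_u(w_i, w_j)]| \leq n^{n/2} M(K)^n$ with $M(K)$ a uniform bound on $|K^N_u|$ over $K$, gives a sum of analytic functions converging uniformly on $K$, hence an analytic limit. The main technical obstacle, around which all the uniform estimates pivot, is keeping the Mellin-Barnes contour $\mathrm{Re}(s) = 1/2$ disjoint from the moving pole at $wt^{-s} = w'$; the choice $|w| = |w'| = t^{-1}$ forces that pole onto $\mathrm{Re}(s) = 0$, separated from the contour by a strip of width $1/2$, which is precisely what makes the dominating function exist.
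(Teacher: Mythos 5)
Your proposal is correct and follows essentially the same route as the paper: a uniform integrable dominating function on $\mathrm{Re}(s)=1/2$ (Euler reflection bound, $|(-u(t^{-1}-1))^s| = r^{1/2}e^{-\theta y}$ with $|\theta|<\pi$, and the uniform bounds $|wt^{-s}-w'|\geq t^{-3/2}-t^{-1}$, $|g^N_{w,w'}|\leq M$) gives continuity of the kernel and of its $w'$-derivative so that Lemma \ref{traceclass} applies, and then analyticity in $u$ of the kernel plus Hadamard bounds and uniform-on-compacts convergence of the Fredholm series give analyticity of the determinant. The only cosmetic difference is that the paper justifies analyticity under the integral (both for $K^N_u(w,w')$ and for each term of the Fredholm series) via truncation and a theorem from Stein--Shakarchi, whereas you invoke differentiation under the integral/Morera; these are interchangeable given your dominating function.
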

\begin{proof}
We begin with the first statement of the lemma and suppress the dependence on $N$ and $u$ from the notation. From Lemma \ref{traceclass} it suffices to show that $K(w,w')$ is continuous on $C_0\times C_0$ and that $K_2(w,w')$ is continuous as well, where we recall that $K_2(w,w')$ is the derivative of $K(x,y)$ along the contour $C_0$ in the second entry.\\

 In equation (\ref{gzest}) we showed that if $-u(t^{-1} - 1) = re^{\iota\theta}$ with $|\theta| < \pi$ and $s = 1/2 + \iota y$, then
$$\left|  \Gamma(-s)\Gamma(1 + s)(-\zeta)^s \right| \leq C\exp((|\theta|-\pi)|y|) r^{1/2}$$
We observe that $g_{w,w'}(t^s)$ is continuous in $w,w'$ and moreover on $Re(s) = 1/2$ we have 
$$|g_{w,w'}(t^s)| \leq M = \frac{1}{t^{-3/2} - t^{-1}} \prod_{j = 1}^N \frac{(1 + x_j)(1 + y_j t^{-1/2})}{(1 -x_jt^{1/2})(1-y_j)} < \infty$$
 independently of $w,w'$.
So if $(w_n, w_n') \rightarrow (w,w')$ we have that $g_{w_n,w'_n}(t^s) \rightarrow g_{w.w'}(t^s)$ and by the Dominated Convergence Theorem, we conclude that $K(w_n,w_n') \rightarrow K(w,w')$ so that $K(w,w')$ is continuous on $C_0 \times C_0$. 

We next observe that 
$$K_2(w,w') = \iota w' \frac{d}{dw'}K(w,w') = \iota w'\frac{1}{2\pi \iota} \int_{1/2 - \iota\infty}^{1/2 + \iota\infty} ds \Gamma(-s)\Gamma(1 + s)(-u(t^{-1}-1))^s \frac{d}{dw'}g_{w,w'}(t^s),$$
where the change of the order of integration and differentiation is allowed by the exponential decay of the integrand. We have that $\frac{d}{dw'}g_{w,w'}(t^s) = -\frac{1}{wt^{-s} -w'} g_{w,w'}(t^s)$ so a similar argument as above now shows that $K_2(w,w')$ is continuous on $C_0 \times C_0$. We conclude that $K^N_u$ is indeed trace-class.\\

Since $K^N_u$ is trace-class we know that
$$\det(I + K_u^N) = 1 + \sum_{n \geq 1} \frac{1}{n!} \int_{C_0}\cdots \int_{C_0} \det\left[ K_u^N(w_i, w_j)\right]_{i,j = 1}^n \prod_{i = 1}^n\frac{dw_i}{2\pi \iota}.$$
We wish to show that the above sum is analytic in $u \in \mathbb{C}\backslash\mathbb{R}^+$. 

We begin by showing that $K_u^N(w,w')$ is analytic in $u$ for each $(w,w') \in C_0\times C_0$. Observe that on $(\mathbb{C}\backslash\mathbb{R}^+) \times (1/2 + \iota \mathbb{R})$,  $\Gamma(-s)\Gamma(1 + s)(-u(t^{-1}-1))^s g^N_{w,w'}(t^s)$ is jointly continuous in $(u,s)$ and analytic in $u$ for each $s$. From Theorem 5.4 in Chapter 2 of \cite{Stein} we know that for any $A\geq 0$ 
 $$h_A(u) := \int_{1/2 - \iota A}^{1/2 + \iota A} \Gamma(-s)\Gamma(1 + s)(-u(t^{-1}-1))^s g^N_{w,w'}(t^s) ds$$
is an analytic function of $u \in \mathbb{C}\backslash\mathbb{R}^+$. In addition, using our earlier estimates we see that 
$$|h_A(u) -K_u^N(w,w')| \leq 2|u|^{1/2}MC\int_A^\infty \exp((|\theta|-\pi)y) dy =\frac{2|u|^{1/2}MC}{\pi - |\theta|} \exp((|\theta|-\pi)A).$$
The latter shows that $h_A(u)$ converges uniformly on compact subsets of $\mathbb{C}\backslash\mathbb{R}^+$ to $K_u^N(w,w')$ as $A \rightarrow \infty$, which implies that $K_u^N(w,w')$ is analytic in $u$. Notice that when $A = 0$ the above shows that if $K'$ is a compact subset of $\mathbb{C}\backslash\mathbb{R}^+$ and $u \in K'$, we have $|K_u^N(w,w')| \leq C(K')$ for some contant $C > 0$ independent of $w,w'$. 

We next observe that $K^N_u(w,w')$ is jointly continuous in $u$ and $(w,w')$ and analytic in $u$ for each $w,w'$ from our proof above. The latter implies that $\det\left[K_u^N(w_i,w_j)\right]_{i,j = 1}^n$ is continuous on $C_0^n \times \mathbb{C}\backslash\mathbb{R}^+$ and analytic in $u$ for each $(w_1,...,w_n) \in C_0^n$. It follows from Theorem 5.4 in Chapter 2 of \cite{Stein} that 
$$H_n(u) = \frac{1}{n!} \int_{C_0}\cdots \int_{C_0} \det\left[ K_u^N(w_i, w_j)\right]_{i,j = 1}^n \prod_{i = 1}^n\frac{dw_i}{2\pi \iota},$$
is analytic in $u$. 

Finally, suppose $K' \subset \mathbb{C}\backslash\mathbb{R}^+$ is compact and $u \in K'$. Then from Hadamard's inequality and our earlier estimate on $|K_u^N(w,w')|$ we know that 
$$|H_n(u)| = \frac{1}{n!} \left|  \int_{C_0}\cdots \int_{C_0} \det\left[ K_u^N(w_i, w_j)\right]_{i,j = 1}^n \frac{dw_i}{2\pi \iota}\right| \leq  \frac{1}{n!} ( t^{-1})^n n^{n/2}C(K')^n = B^n \frac{n^{n/2}}{n!}.$$
The latter is absolutely summable, and since the absolutely convergent sum of analytic functions is analytic and $K'$ was arbitrary,  we conclude that $1 + \sum_{n = 1}^\infty H_n(u) = \det(I + K_u^N)_{L^2(C^0)}$ is analytic in $u$ on $\mathbb{C}\backslash\mathbb{R}^+$. This suffices for the proof.
\end{proof}

\section{GUE asymptotics}\label{Section4}
In this section, we use the results from Section \ref{finiteS} to get formulas for the $t$-Laplace transform of $t^{1-\lambda_1'}$, with $\lambda$ distributed according to the Hall-Littlewood measure with parameters $a,r,t \in (0,1)$ (see Section \ref{HL}). Subsequently, we analyze the formulas that we get in the limiting regime $r \rightarrow 1^-$, $t \in(0,1)$ - fixed and obtain convergence to the Tracy-Widom GUE distribution. In what follows, we will denote by $\mathbb{P}_{a,r,t}$ and $\mathbb{E}_{a,r,t}$ the probability distribution and expectation with respect to the Hall-Littlewood measure with parameters $a,r,t \in (0,1)$.

%
\subsection{Fredholm determinant formula for $\mathbb{E}_{a,r,t}\left[  \frac{1}{((1-t)ut^{-\lambda_1'}; t)_\infty}\right]$}\hspace{2mm} \\

In the following results, unless otherwise specified, $\det (I + K)_{L^2(C)}$ dentotes the absolutely convergent sum on the RHS of (\ref{fredholmDef}) - see the discussion in Section \ref{FredholmS}.

\begin{proposition}\label{prelimit}
Suppose $a,r,t\in(0,1)$ and let $\delta > 0$ be such that $a < ( 1 - \delta)$. Then for $u \in \mathbb{C}\backslash  \mathbb{R}^+$ one has that
\begin{equation}\label{prelimform}
\mathbb{E}_{a,r,t} \left[ \frac{1}{((1-t)ut^{-\lambda_1'}; t)_\infty} \right] = \det (I + K_u)_{L^2(C_0)}.
\end{equation}
The contour $C_0$ is a positively oriented piecewise smooth simple curve, contained in the closed annulus $A_{\delta, t}$ between the $0$-centered circles of radius $t^{-1}$ and $ \max \left( t^{-1}(1 - \delta/2), t^{-3/4}\right)$. The kernel $K_u(w, w')$ is defined as
\begin{equation}\label{Ku}
K_u(w,w') = \frac{1}{2\pi \iota} \int_{1/2 - \iota\infty}^{1/2 + \iota\infty} ds \Gamma(-s)\Gamma(1 + s)(-u(t^{-1}-1))^s g_{w,w'}(t^s),
\end{equation}
where 
$$g_{w,w'}(t^s) = \frac{1}{wt^{-s} - w'}\prod_{j = 0}^\infty \frac{(1 - ar^{j}(wt)^{-1})(1 - ar^{j}(wt)t^{-s})}{(1 - ar^{j}(wt)^{-1}t^s)(1 - ar^{j}(wt))}.$$
\end{proposition}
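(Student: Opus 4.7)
The plan is to obtain Proposition \ref{prelimit} as a limit of Proposition \ref{finlength} applied to the truncated specializations $X_N = Y_N = (a, ar, ar^2, \ldots, ar^{N-1})$. First I would apply Proposition \ref{finlength} to obtain, for each $N \geq 1$, the identity $\mathbb{E}_{X_N, Y_N}[1/((1-t) u t^{-\lambda_1'}; t)_\infty] = \det(I + K_u^N)_{L^2(\tilde{C}_0)}$, where $\tilde{C}_0$ is the circle $|w| = t^{-1}$ and $K_u^N$ is the kernel from Proposition \ref{finlength} with these specializations. The discussion at the end of Section \ref{HL} identifies $\mathbb{P}_{X_N, Y_N}$ with the truncation of $\mathbb{P}_{a,r,t}$ to partitions fitting in an $N \times N$ square, so $\mathbb{P}_{X_N, Y_N}(\lambda_1' = c) \to \mathbb{P}_{a,r,t}(\lambda_1' = c)$ pointwise in $c$, and Scheff\'e's theorem upgrades this to total-variation convergence. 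Combined with the uniform boundedness of $c \mapsto 1/((1-t) u t^{-c}; t)_\infty$ for $u \notin \mathbb{R}^+$ (which in fact decays to zero as $c \to \infty$, since $(1-t) u t^{-c}$ tends to infinity away from the positive real axis, so $|1 - (1-t) u t^{-c+k}|$ is bounded away from zero and grows for small $k$), this produces the LHS of (\ref{prelimform}) in the limit.

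Next I would deform the contour $\tilde{C}_0$ to the target $C_0 \subset A_{\delta, t}$. Term-by-term in the Fredholm expansion of $\det(I + K_u^N)$, the deformation is valid provided no pole of the integrand in $w_i$ crosses. The singularities of $g_{w, w'}(t^s)$ in $w$ are of three types: $w = ar^{j-1}/t$, $w = ar^{j-1} t^{s-1}$ (for $j = 1, \ldots, N$, $s \in \tfrac{1}{2} + \iota \mathbb{R}$), and $w = w' t^s$. The assumption $a < 1 - \delta$ places the first family at $|w| \leq a/t < (1-\delta)/t$ and the second at $|w| \leq a t^{-1/2} < (1-\delta) t^{-1/2}$; a short case analysis (the case $t \leq (1-\delta/2)^4$, where $\rho_0 := \max(t^{-1}(1-\delta/2), t^{-3/4}) = t^{-1}(1-\delta/2)$, versus the case $t > (1-\delta/2)^4$, where $\rho_0 = t^{-3/4}$) shows that both families satisfy $|w| < \rho_0$, so they sit strictly inside the annulus. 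For the third family, $|w| = |w'|/\sqrt{t}$, and the role of the $t^{-3/4}$ clause in the definition of $\rho_0$ is precisely to guarantee $\rho_0 > t^{-1/2}$, so that $|w| \geq \rho_0/\sqrt{t} > t^{-1}$ lies outside $A_{\delta, t}$. Hence no singularity crosses the deformation, and the identity extends to the contour $C_0$.

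Finally I would take $N \to \infty$ on the right-hand side using Lemmas \ref{FDCT} and \ref{FDKCT}. Pointwise convergence $K_u^N(w, w') \to K_u(w, w')$ on $C_0 \times C_0$ follows from absolute convergence of the infinite product $\prod_{j \geq 0}$ (each factor is $1 + O(r^{j-1})$ and $r < 1$). For the uniform majorants required by the lemmas, I would combine (i) the exponential decay of $|\Gamma(-s)\Gamma(1+s)(-u(t^{-1}-1))^s|$ on $\{\mathrm{Re}(s) = 1/2\}$ of the form $\exp((|\theta| - \pi)|\mathrm{Im}(s)|)$ (as in the proof of Lemma \ref{gammal}), with (ii) a uniform bound on the product part of $g_{w, w'}(t^s)$. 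The latter uses that on the compact annulus each factor $|1 - ar^{j-1}(wt)^{\pm 1} t^{\pm s}|$ is bounded above and bounded away from zero independently of $j, N, s$ and $w, w' \in C_0$, and $|\log(1 + O(r^{j-1}))| = O(r^{j-1})$ is summable. Lemma \ref{FDKCT} then delivers pointwise convergence of $K_u^N$ to $K_u$ together with an integrable $N$-uniform majorant on $C_0$, and Lemma \ref{FDCT} passes this to $\det(I + K_u^N)_{L^2(C_0)} \to \det(I + K_u)_{L^2(C_0)}$.

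The main technical obstacle is the calibrated pole analysis in the deformation step: one must verify that the annular contour sits strictly between all three families of singularities, and in particular the role of the $t^{-3/4}$ clause in $\rho_0$ (which dominates when $t$ is close to $1$) is exactly what forces $\rho_0 > t^{-1/2}$. The remaining steps---the finite-$N$ formula, the Scheff\'e argument for the left-hand side, and the production of integrable dominating functions for the Fredholm determinant---are essentially bookkeeping, although care is needed to keep the bounds uniform in $N$.
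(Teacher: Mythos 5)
Your proposal is correct in outline but orders the steps differently from the paper, and it contains two small bookkeeping slips in the pole analysis that happen not to affect the conclusion. The paper first takes $N \to \infty$ with the contour fixed at the circle of radius $t^{-1}$ (where Proposition \ref{finlength} applies): convergence of the left-hand side is obtained by dominated convergence using $Z^N \nearrow Z^\infty$, $P^N_\lambda Q^N_\lambda \nearrow P^\infty_\lambda Q^\infty_\lambda$ and the uniform bound on $|1/((1-t)ut^{-c};t)_\infty|$ (your Scheff\'e argument is an equivalent way of handling this), while convergence of the determinants follows from a uniform bound on $g^N_{w,w'}$ together with the exponential decay of the Gamma factors. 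Only after the limit does the paper deform the contour, and it does so with no pole-chasing at all: $g_{w,w'}(t^s)$, hence $K_u(w,w')$ and each $\det\left[K_u(w_i,w_j)\right]_{i,j=1}^n$, is analytic in $(w,w')$ on a neighborhood of $A_{\delta,t}\times A_{\delta,t}$, so Cauchy's theorem moves the circle to any $C_0 \subset A_{\delta,t}$ term by term. Your route---deform at finite $N$, then pass to the limit on $C_0$ via Lemmas \ref{FDCT} and \ref{FDKCT}---is equally viable, but it forces you to carry out the singularity analysis explicitly, which is where the slips occur; the paper's ordering buys you the freedom to never identify the poles at all.

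Concretely: the points $w = ar^{j-1}/t$ are zeros of the numerator factor $1 - ar^{j-1}(wt)^{-1}$, not singularities; the denominator family you should have listed is $1 - ar^{j-1}wt = 0$, i.e. $w = (ar^{j-1}t)^{-1}$, which has modulus at least $(at)^{-1} > t^{-1}$ and hence is never crossed when deforming inward. Likewise, for $\mathrm{Re}(s) = 1/2$ one has $|t^s| = \sqrt{t}$, so the pole of $g_{w,w'}(t^s)$ in its first argument sits at $|w| = |w'|\sqrt{t} \leq t^{-1/2}$, i.e. strictly inside the inner circle of the annulus; it is the pole in the second argument, at $w' = wt^{-s}$ with modulus $|w|/\sqrt{t} \geq \rho_0 t^{-1/2} > t^{-1}$, that lies outside. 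Your key structural point survives both corrections: everything hinges on $\rho_0 > t^{-1/2}$, which is exactly what the $t^{-3/4}$ clause in the definition of the annulus guarantees, and with that the deformation crosses no singularities in either argument of the kernel, so the argument goes through.
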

\begin{remark}
Proposition \ref{prelimit} will be the starting point for our asymptotic analysis in both the GUE and CDRP cases. In the different limiting regimes, we will encounter different contours, which will be suitably picked contours contained in $A_{\delta, t}$. 
\end{remark}

\begin{proof}
We first prove the proposition when $C_0$ is the positively oriented circle of radius $t^{-1}$. The starting point is Proposition \ref{finlength}, from which we see that whenever $u \not \in \mathbb{R}^{+}$ one has for every $N \in \mathbb{N}$
$$\mathbb{E}^N_{a,r,t} \left[ \frac{1}{((1-t)ut^{-\lambda'_1}; t)_\infty} \right] = \det (I + K^N_u)_{L^2(C_0)}.$$
Here $\mathbb{E}^N_{a,r,t}$ stands for the expectation with respect to the Macdonald measure on partitions, corresponding to $q = 0$ and $x_i = y_i = ar^{i-1}$ for $i = 1,...,N$ and $x_i = y_i = 0$ for $i > N$. The result would thus follow once we show that 
\begin{enumerate}[label = \arabic{enumi}., leftmargin=1.0cm]
\item $\lim_{N \rightarrow \infty} \mathbb{E}^N_{a,r,t} \left[ \frac{1}{((1-t)ut^{-\lambda'_1}; t)} \right] = \mathbb{E}_{a,r,t} \left[ \frac{1}{((1-t)ut^{-\lambda'_1}; t)} \right]$
\item $\lim_{N \rightarrow \infty}\det (I + K^N_u)_{L^2(C_0)} = \det (I + K_u)_{L^2(C_0)}$.
\end{enumerate}
Before we prove the above two statements let us remark that the two limiting quantities are indeed well-defined. The fact that $K_u$ is a trace-class operator on $L^2(C_0)$ follows from Lemma \ref{tracel}. Next, we observe that if $u\not \in \mathbb{R}^{+}$ then for any $n$ we have that $\frac{1}{(ut^{-n};t)_\infty}$ is well defined and moreover there exists a constant $M(u)$ such that $\left|\frac{1}{(ut^{-n};t)_\infty}\right| \leq M, $ for all $n$. Consequently, we can define unambiguously the expectation $ \mathbb{E}_{a,r,t} \left[ \frac{1}{((1-t)ut^{-\lambda'_1}; t)} \right]$ and it is a finite quantity.\\

We start with $1.$ Denote by $P_\lambda^N$ and $Q_\lambda^N$ the $N$-length specialization of the the Hall-Littlewood symmetric functions with $x_i = y_i = ar^{i-1}$ for $i = 1,...,N$ and $x_i = y_i = 0$ for $i > N$ (here $N$ is a positive integer or $\infty$). Also let $Z^N$ be the normalization constant, which in the above case equals
$$Z^N = \prod_{i,j = 1}^N \frac{1 - tar^{i-1}ar^{j-1}}{1 - ar^{i-1}ar^{j-1}} \mbox{ - this is the Cauchy identity in (\ref{CauchyI})}.$$
We obtain 
$$\mathbb{E}^N_{a,r,t} \left[ \frac{1}{((1-t)ut^{-\lambda_1'}; t)_\infty} \right]  = \frac{1}{Z^N}\sum_{\lambda \in \mathbb{Y}} P^N_\lambda Q^N_\lambda\frac{1}{((1-t)ut^{-\lambda'_1}; t)}_\infty.$$
One readily verifies that $ Z^N \nearrow Z^\infty$, $ P_\lambda^N \nearrow P_\lambda^\infty$ and $Q_\lambda^N \nearrow Q_\lambda^\infty$ as $N \rightarrow \infty$. Thus from the Dominated Convergence Theorem (with dominating function $M P^\infty_\lambda Q^\infty_\lambda$) we get
$$\lim _{N\rightarrow \infty} \sum_{\lambda \in \mathbb{Y}} P^N_\lambda Q^N_\lambda\frac{1}{((1-t)ut^{-\lambda'_1}; t)}_\infty =  \sum_{\lambda \in \mathbb{Y}} P^\infty_\lambda Q^\infty_\lambda\frac{1}{((1-t)ut^{-\lambda'_1}; t)}_\infty.$$
The latter implies that
$$\lim _{N\rightarrow \infty} \frac{1}{Z^N}\sum_{\lambda \in \mathbb{Y}} P^N_\lambda Q^N_\lambda\frac{1}{((1-t)ut^{-\lambda'_1}; t)}_\infty =\frac{1}{Z^\infty} \sum_{\lambda \in \mathbb{Y}} P^\infty_\lambda Q^\infty_\lambda\frac{1}{((1-t)ut^{-\lambda'_1}; t)}_\infty,$$
which concludes the proof of $1.$\\

Next we turn to $2.$ Firstly, we one readily observes that 
$$g^N_{w,w'} (t^s) \rightarrow g_{w,w'}(t^s), \mbox{ as } N \rightarrow \infty$$
and moreover we have
$$|g^N_{w,w'}(t^s)| \leq \frac{1}{t^{-3/2} - t^{-1}} \prod_{j = 0}^\infty \frac{(1 + ar^{j})(1 + ar^{j} t^{-1/2})}{(1 -ar^{j}t^{1/2})(1-ar^{j})} = M < \infty,$$
independently of $N, w, w'$.  Recall from (\ref{gzest}) that
$$\left|  \Gamma(-s)\Gamma(1 + s)(-(t^{-1} - 1)u)^s \right| \leq C\exp((|\theta|-\pi)|y|) r^{1/2},$$
where $-(t^{-1} - 1)u = re^{\iota\theta}$ and $s = 1/2 + \iota y$. It follows by the Dominated Convergence Theorem (with dominating function $MC\exp((|\theta|-\pi)|y|) r^{1/2}$)  that 
$$\lim_{N \rightarrow \infty} K^N_u(w,w') = K_u(w,w'),$$
and moreover there exists a finite constant $M_2$ (depending on $u$) such that $|K^N_u(w,w')| \leq M_2$ for all $N, w, w'$. Next we have from the Bounded Convergence Theorem that for every $n$ 
$$\lim_{N \rightarrow \infty} \frac{1}{n!}\int_{C_{0}}\cdots \int_{C_{0}} \det \left[ K_u^N(w_i, w_j)\right] _{i,j = 1}^n \prod_{i = 1}^{n}  \frac{dw_i}{2 \pi \iota } = \frac{1}{n!}\int_{C_{0}}\cdots \int_{C_{0}} \det \left[ K_u(w_i, w_j)\right] _{i,j = 1}^n \prod_{i = 1}^{n}  \frac{dw_i}{2 \pi \iota }.$$
By Hadamard's inequality we have that for each $n$ the above is bounded (in absolute value) by $\frac{n^{n/2}t^{-n}M_2^n}{n!}$. Consequently, by the Dominated Convergence Theorem we have that
$$\lim_{N \rightarrow \infty}\sum_{n = 1}^{\infty} \frac{1}{n!}\int_{C_{0}}\hspace{-3mm}\cdots \int_{C_{0}} \det \left[ K_u^N(w_i, w_j)\right] _{i,j = 1}^n \prod_{i = 1}^{n}  \frac{dw_i}{2 \pi \iota } =\sum_{n = 1}^{\infty} \frac{1}{n!}\int_{C_{0}}\hspace{-3mm}\cdots \int_{C_{0}} \det \left[ K_u(w_i, w_j)\right] _{i,j = 1}^n \prod_{i = 1}^{n}  \frac{dw_i}{2 \pi \iota }.$$
This concludes the proof of $2.$\\

We next wish to extend the result to a more general class of contours. Let $C$ be a positively oriented piecewise smooth simple contour contained in the annulus, described in the statement of the proposition. What we have proved so far is that 
\begin{equation}\label{deformal}
\mathbb{E}_{a,r,t} \left[ \frac{1}{((1-t)ut^{-\lambda_1'}; t)_\infty} \right] = 1 + \sum_{n = 1}^{\infty} \frac{1}{n!}\int_{C_0}\hspace{-3mm}\cdots \int_{C_0} \det \left[ K_u(w_i, w_j)\right] _{i,j = 1}^n \prod_{i = 1}^{n}  \frac{dw_i}{2 \pi \iota },
\end{equation}
where the latter sum is absolutely convergent. One readily verifies that $g_{w,w'}(t^s)$ is analytic in $w,w'$ on a neighborhood of $A_{\delta ,t}\times A_{\delta,t}$ and by the exponential decay of $\Gamma(-s)\Gamma(1 + s)(-(t^{-1} - 1)u)^s$ near $1/2 \pm \iota \infty$ the same is true for $K_u(w,w')$. It follows that $\det \left[ K_u(w_i, w_j)\right] _{i,j = 1}^n $ is analytic on a neighborhood of $A_{\delta ,t}^n$ and by Cauchy's theorem we may deform the contours $C_0$ in (\ref{deformal}) to $C$, without changing the value of the integrals. This is the result we wanted.
\end{proof}

%
\subsection{A formula suitable for asymptotics: GUE case}\hspace{2mm} \\

In this section we use Proposition \ref{prelimit} to derive an alternative $t$-Laplace transform, which is more suitable for asymptotic analysis in the GUE case. The following result makes references to two contours $\gamma_W(A)$ and $\gamma_Z(A)$, which depend on a real parameter $A\geq 0 $, as well as a function $S_{a,r}(\cdot)$, which we define below.
\begin{definition}\label{gammacont} For a parameter $A \geq 0$ define
$$\gamma_W(A) = \{ -A|y| + \iota y: y \in I\} \mbox{ and } \gamma_Z(A) = \{A|y| + \iota y: y \in I\}, \mbox{ where }I = \left[ -\pi,\pi\right].$$
The orientation is determined from $y$ increasing in $I$.
\end{definition}

\begin{definition}\label{S5DefFun}
For $a,r \in (0,1)$ define
$$S_{a,r}(z) :=\sum_{j = 0}^{\infty}  \log ( 1 + ar^j e^z) - \sum_{j = 0}^{\infty}  \log ( 1 + ar^j e^{-z}).$$
\end{definition}
The function $S_{a,r}$ plays a central role in our arguments and the properties that we will need are summarized in Section \ref{SSart}. We isolate the most basic facts about $S_{a,r}$ in a lemma below. The lemma appears again in Section \ref{SSart} as Lemma \ref{sartbasic}, where it is proved.
\begin{lemma}\label{sartbasicPrev}
Suppose that $\delta \in (0,1)$. Consider $r \in (0,1)$ and $a \in (0, 1-\delta]$. Then there exists $\Delta'(\delta) > 0$ such that $S_{a,r}(z)$ is well-defined and analytic on $D_\delta = \{z \in\mathbb{C} : |Re(z)| < \Delta'\}$ and satisfies
\begin{equation}
\exp( S_{a,r}(z))  =\prod_{j = 0}^{\infty}\frac{1 + ar^je^z}{1 + ar^je^{-z}} .
\end{equation}
\end{lemma}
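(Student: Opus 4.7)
The plan is to choose $\Delta'=\Delta'(\delta)$ small enough that, for every $j\geq 0$ and every $z\in D_\delta$, the quantities $ar^j e^{\pm z}$ stay uniformly inside a disk of radius strictly less than one. Since $ar^j \leq 1-\delta$, setting $\Delta'(\delta):=-\tfrac{1}{2}\log(1-\delta)>0$ gives $|ar^j e^{\pm z}|\leq (1-\delta)^{1/2}<1$ whenever $|\mathrm{Re}(z)|<\Delta'$. Under this bound the points $1+ar^j e^{\pm z}$ all lie in the open disk of radius $(1-\delta)^{1/2}$ about $1$, which sits strictly inside the right half-plane, so the principal logarithm $\log(1+ar^j e^{\pm z})$ is well-defined.

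Next I would establish analyticity of $S_{a,r}$ on $D_\delta$ by showing the two series defining it converge absolutely and locally uniformly. Using the elementary inequality $|\log(1+w)|\leq C_\delta|w|$ valid on $|w|\leq (1-\delta)^{1/2}$ (with $C_\delta$ an explicit constant coming from the Taylor series of $\log(1+w)$), one obtains
\[
|\log(1+ar^j e^{\pm z})|\leq C_\delta\cdot a r^j e^{\pm \mathrm{Re}(z)}\leq C_\delta(1-\delta)^{1/2}\,r^j
\]
uniformly on $D_\delta$. The Weierstrass M-test (against $\sum_j r^j<\infty$) then gives uniform convergence of the partial sums of each of the two series on $D_\delta$. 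Since each partial sum is holomorphic, the Weierstrass convergence theorem yields that $S_{a,r}$ is holomorphic on $D_\delta$.

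To verify the exponential identity, note that by the defining property of the principal logarithm applied to arguments in the right half-plane, $\exp(\log(1+ar^je^{\pm z}))=1+ar^j e^{\pm z}$ for every $j$ and every $z\in D_\delta$. Applying $\exp$ to the partial sum
\[
S_{a,r}^{(N)}(z):=\sum_{j=0}^{N}\bigl[\log(1+ar^j e^{z})-\log(1+ar^j e^{-z})\bigr]
\]
and using the homomorphism property of $\exp$ then gives $\exp\bigl(S_{a,r}^{(N)}(z)\bigr)=\prod_{j=0}^{N}\frac{1+ar^j e^{z}}{1+ar^j e^{-z}}$. Letting $N\to\infty$, the left side tends to $\exp(S_{a,r}(z))$ by continuity of $\exp$ together with convergence of $S_{a,r}^{(N)}(z)\to S_{a,r}(z)$; the right side converges to the infinite product in the statement, since absolute convergence of $\sum_j ar^j$ implies absolute convergence of the product $\prod_j(1+ar^j e^{\pm z})$ to a nonzero limit. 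Equating the two limits establishes the identity.

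There is no real obstacle here — the entire argument is a standard verification that a telescoping logarithmic sum exponentiates to an infinite product — and the only thing to be careful about is the initial quantitative choice of $\Delta'$ guaranteeing $|ar^j e^{\pm z}|<1$, which secures both the branch issue for $\log$ and the geometric decay needed for absolute and uniform convergence.
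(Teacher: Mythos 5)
Your proposal is correct and follows essentially the same route as the paper: pick $\Delta'$ so that $(1-\delta)e^{\Delta'}<1$ (your explicit choice $\Delta'=-\tfrac12\log(1-\delta)$), bound $|\log(1+ar^je^{\pm z})|$ by a constant times $r^j$ to get absolute (locally) uniform convergence and hence analyticity, and obtain the product identity by exponentiating the partial sums and passing to the limit by continuity of $\exp$. The only cosmetic differences are that you make $\Delta'$ explicit and phrase the convergence via the Weierstrass M-test on all of $D_\delta$, whereas the paper works on compact subsets; nothing substantive is missing.
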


\begin{proposition}\label{goodprelimitTW}
Suppose $a,r,t\in(0,1)$ and let $\delta > 0$ be such that $a < ( 1 - \delta)$. If $A > 0$ is sufficiently small (depending on $\delta$ and $t$) and $\gamma_W(A)$ and $\gamma_Z(A)$ are as in Definition \ref{gammacont}, then for $\zeta \in \mathbb{C}\backslash \mathbb{R}^+$ one has
$$\mathbb{E}_{a,r,t} \left[ \frac{1}{(\zeta t^{1-\lambda'_1}; t)_\infty} \right] = \det (I - \tilde K_\zeta)_{L^2(\gamma_W)}.$$
The kernel $\tilde K(W,W')$ has the integral representation
\begin{equation}\label{kernelF}
\tilde K_\zeta(W,W') = \frac{e^{W}}{2\pi \iota} \int_{\gamma_Z(A)}\frac{dZ  (-\zeta)^{f_t(Z,W) }}{e^{W'}- e^{Z}}G_{\zeta,t}(W,Z) \exp\left(S_{a,r}(Z) - S_{a,r}(W)\right).
\end{equation}
In the above formula, $S_{a,r}$ is as in Definition \ref{S5DefFun} and we have
\begin{equation}\label{Gandf}
G_{\zeta,t}(W,Z) := \sum_{ k \in \mathbb{Z}}\frac{\pi (-\log t)^{-1} (-\zeta)^{2\pi k \iota /(-\log t)}}{\sin (- \pi f_t(Z + 2\pi k\iota ,W))} \mbox{ and } f_t(Z,W) := \frac{Z-W}{-\log t}.
\end{equation}
\end{proposition}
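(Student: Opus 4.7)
My starting point is Proposition \ref{prelimit} with the identification $\zeta = u(t^{-1}-1)$, so that $(1-t)ut^{-\lambda_1'} = \zeta t^{1-\lambda_1'}$ and $u \notin \mathbb{R}^+$ iff $\zeta \notin \mathbb{R}^+$. I would apply the change of variables $w = -t^{-1}e^W$, $w' = -t^{-1}e^{W'}$ in the outer Fredholm integration, and $s = f_t(Z,W) = (Z-W)/(-\log t)$ in the inner Mellin--Barnes integral. The map $W \mapsto -t^{-1}e^W$ sends $\iota[-\pi,\pi]$ once counter-clockwise around the circle $|w|=t^{-1}$, and $s \mapsto Z$ converts the line $Re(s)=1/2$ into the vertical line $Re(Z)=Re(W)+(-\log t)/2$.

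A direct computation shows that under this substitution $wt^{-s}-w' = t^{-1}(e^{W'}-e^Z)$ and
\[
\prod_{j\geq 0}\frac{(1-ar^j(wt)^{-1})(1-ar^j(wt)t^{-s})}{(1-ar^j(wt)^{-1}t^s)(1-ar^j(wt))} = \prod_{j\geq 0}\frac{(1+ar^je^{-W})(1+ar^je^Z)}{(1+ar^je^{-Z})(1+ar^je^W)} = e^{S_{a,r}(Z)-S_{a,r}(W)},
\]
the last equality coming from Lemma \ref{sartbasicPrev}. Combined with $\Gamma(-s)\Gamma(1+s)=\pi/\sin(-\pi f_t(Z,W))$, $(-u(t^{-1}-1))^s = (-\zeta)^{f_t(Z,W)}$, the Jacobians $dw = -t^{-1}e^W\, dW$ and $ds = dZ/(-\log t)$, and absorbing the resulting overall minus sign via $\det(I+K_u) = \det(I-\tilde K_\zeta)$, one arrives at the desired formula, except with $Z$ running along the full vertical line $Re(Z) = Re(W)+(-\log t)/2$. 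To fold this infinite contour into one of length $2\pi$, I would split it into pieces $Im(Z) \in [(2k-1)\pi,(2k+1)\pi]$ and shift each by $-2\pi\iota k$. Since $e^Z$ and $S_{a,r}(Z)$ are $2\pi\iota$-periodic, only $(-\zeta)^{f_t}$ and $1/\sin(-\pi f_t)$ feel the shift, producing respectively the phase $(-\zeta)^{2\pi\iota k/(-\log t)}$ and the translate $1/\sin(-\pi f_t(Z+2\pi\iota k,W))$, and summing over $k$ yields exactly $G_{\zeta,t}(W,Z)$. Absolute convergence of this sum (and the swap of sum and integral via Fubini) follows from $|\sin(-\pi f_t(Z+2\pi\iota k,W))|^{-1} = O(e^{-2\pi^2|k|/(-\log t)})$ for $Z$ bounded, which dominates the growth $|(-\zeta)^{2\pi\iota k/(-\log t)}| = e^{-2\pi k\arg(-\zeta)/(-\log t)}$ since $|\arg(-\zeta)|<\pi$.

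The principal obstacle is the final contour deformation from the length-$2\pi$ vertical segment in $Z$ to $\gamma_Z(A)$ and from the imaginary segment in $W$ to $\gamma_W(A)$. One checks directly that the combined $Z$-integrand $(-\zeta)^{f_t(Z,W)}G_{\zeta,t}(W,Z)e^{S_{a,r}(Z)}/(e^{W'}-e^Z)$ is $2\pi\iota$-periodic in $Z$ (the $k$-shift in $G_{\zeta,t}$ exactly cancels the phase of $(-\zeta)^{f_t}$), so the contributions from the horizontal connecting segments at $Im(Z)=\pm\pi$ cancel. I would then choose $A$ small enough in terms of $\delta$ and $t$ so that the rectangular regions swept by the two deformations avoid (i) the zeros of $e^{W'}-e^Z$, (ii) the lattice $W-Z \in 2\pi\iota\mathbb{Z}+(-\log t)\mathbb{Z}$ where the sine factors defining $G_{\zeta,t}$ vanish, and (iii) the singularities of $e^{\pm S_{a,r}}$, which by Lemma \ref{sartbasicPrev} lie outside the strip $|Re(\cdot)|<\Delta'(\delta)$. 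Since all of these loci are at positive distance from the imaginary axis, a sufficiently small $A$ suffices and Cauchy's theorem gives the result.
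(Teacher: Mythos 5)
Your proposal is correct and is, at its core, the same computation as the paper's proof: the substitutions $w=-t^{-1}e^{W}$ and $Z=W+(-\log t)s$, the identity $wt^{-s}-w'=t^{-1}(e^{W'}-e^{Z})$, the recognition of the infinite product as $\exp(S_{a,r}(Z)-S_{a,r}(W))$ via Lemma \ref{sartbasicPrev}, and the folding of the Mellin--Barnes line by $2\pi\iota$-periodicity to produce $G_{\zeta,t}$ (with the same exponential decay bound justifying the sum/integral exchange) all appear in the paper. The differences are organizational. The paper first shifts the infinite $Z$-line to $Re(Z)=A\pi$ using decay at $\pm\iota\infty$, and only then folds, so the resulting segment already shares its endpoints $A\pi\pm\iota\pi$ with $\gamma_Z(A)$ and the last deformation is immediate; your fold-first order also works, and your observation that the folded integrand is genuinely $2\pi\iota$-periodic (the index shift in $G_{\zeta,t}$ cancels the phase of $(-\zeta)^{f_t}$) correctly disposes of the horizontal connecting segments. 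The one step you should treat more carefully is the final deformation of the $W$-contour: moving the contour of a Fredholm determinant is not a single invocation of Cauchy's theorem, but a term-by-term argument requiring analyticity of $\det\left[\tilde K_\zeta(W_i,W_j)\right]_{i,j=1}^n$ in each variable throughout the swept region together with control of the absolutely convergent series, exactly as is carried out at the end of the proof of Proposition \ref{prelimit}. In fact you can avoid this step entirely, which is what the paper does: Proposition \ref{prelimit} is stated for an arbitrary positively oriented piecewise smooth contour in the annulus $A_{\delta,t}$, so you may apply it directly to $C_A=\{-t^{-1}e^{\iota\theta-A|\theta|}:\theta\in[-\pi,\pi]\}$, the image of $\gamma_W(A)$ under $w=-t^{-1}e^{W}$, and then no $W$-deformation is ever needed. (Also, your claim that all singular loci stay at positive distance from the imaginary axis is slightly too strong: the $k=0$ sine factor vanishes at $Z=W$, and $\gamma_W(A)$, $\gamma_Z(A)$ touch at the origin; this boundary point is present in the paper's formula as well and is harmless, but it is not removed by shrinking $A$.)
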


\begin{proof}

We consider the contour $C_A : = \{ - t^{-1}e^{\iota \theta -A|\theta|} : \theta \in [-\pi, \pi]\}$, which is a positively oriented piecewise smooth contour. For $A > 0$ sufficiently small we know that $C_A$ is contained in the annulus $A_{\delta, t}$ in the statement of Proposition \ref{prelimit}. 
Consequently, from (\ref{prelimform}) we know that
$$\mathbb{E}_{a,r,t} \left[ \frac{1}{( (1-t) ut^{-\lambda'_1}; t)_\infty} \right] = 1 + \sum_{n = 1}^{\infty} \frac{1}{n!}\int_{C_{A}}\hspace{-3mm}\cdots \int_{C_{A}} \det \left[ K_u(w_i, w_j)\right] _{i,j = 1}^n \prod_{i = 1}^{n}  \frac{dw_i}{2 \pi \iota },$$
where $K_u(w,w')$ is as in (\ref{Ku}) and the above sum is absolutely convergent.  The $n$-th summand equals
$$\frac{1}{n!}\int_{-\pi}^{\pi}\hspace{-3mm}\cdots \int_{-\pi}^\pi \det \left[ K_u \left(-t^{-1}e^{\iota \theta_i -A|\theta_i|}, -t^{-1}e^{\iota \theta_j -A|\theta_j|}\right)\right] _{i,j = 1}^n \prod_{i = 1}^{n}  \frac{-t^{-1}e^{\iota\theta_i - A|\theta_i|}( \iota - Asign(\theta_i)) d\theta_i}{2 \pi \iota }.$$
Setting $y_i = \iota \theta_i - A|\theta_i|$ the above becomes
$$\frac{(-1)^n}{n!}\int_{\gamma_W(A)}\hspace{-3mm}\cdots \int_{\gamma_W(A)} \det \left[t^{-1}e^{y_i} K_u \left(-t^{-1}e^{y_i}, -t^{-1}e^{y_i} \right)\right] _{i,j = 1}^n \prod_{i = 1}^{n}  \frac{ dy_i}{2 \pi \iota }.$$
To conclude the proof it suffices to show that for $W,W' \in \gamma_W(A)$ and $\zeta = (t^{-1} - 1)u$ one has
\begin{equation}\label{S4kerneleq}
t^{-1}e^{W} K_u \left(-t^{-1}e^{W}, -t^{-1}e^{W'} \right) = \tilde K_\zeta(W,W').
\end{equation}
Setting $Z = (-\log t) s + W$, using the Euler Gamma reflection formula from (\ref{Euler}) and recalling $f_t(Z,W) = \frac{Z - W}{-\log t}$, we see that the LHS of (\ref{S4kerneleq}) equals
$$\frac{e^{W}}{2\pi \iota} \int_{\frac{-\log t}{2} + W -\iota\infty}^{\frac{-\log t}{2} + W + \iota\infty} \frac{(- \log t)^{-1}\pi dZ}{\sin (- \pi f_t(Z,W))}(-\zeta)^{f_t(Z,W)}  \frac{1}{e^{W'}- e^{Z}}\prod_{j = 0}^\infty \frac{(1+ ar^{j}e^{-W})(1 + ar^{j}e^{Z})}{(1 + ar^{j}e^{-Z})(1 + ar^{j}e^W)}.$$

If $W \in \gamma_W(A)$ we know that $Re \left[ \frac{-\log t}{2} + W\right] \in \left[\frac{-\log t}{2} - \pi A, \frac{-\log t}{2} \right]$. In addition, the only poles of the integrand for $Re(Z) > 0$ come from $  \frac{1 }{\sin (- \pi f_t(Z,W))}$ and are located at $W+ (-\log t)\mathbb{Z}.$ This implies that if $A$ is sufficiently small we may shift the $Z$- contour so that it passes through the point $A\pi$, without crossing any poles of the integrand (see Figure \ref{S4_3}).
\begin{figure}[h]
\centering
\begin{minipage}{.5\textwidth}
  \centering
  \includegraphics[width=0.9\linewidth]{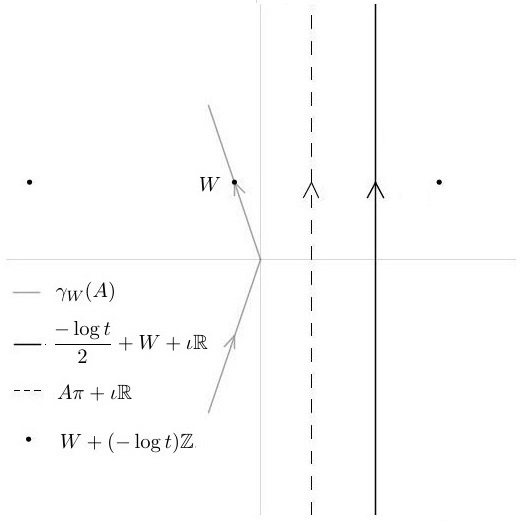}
\captionsetup{width=.9\linewidth}
  \caption{If $A$ is very small, no points of $W+ (-\log t)\mathbb{Z}$ fall between $A\pi + \iota \mathbb{R}$ and $\frac{-\log t}{2} + W + \iota \mathbb{R}$, when $W \in \gamma_W(A)$.}
  \label{S4_3}
\end{minipage}%
\begin{minipage}{.5\textwidth}
  \centering
  \includegraphics[width=0.9\linewidth]{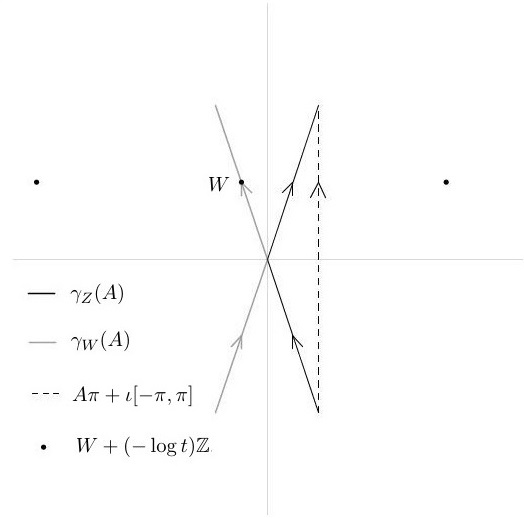}
\captionsetup{width=.9\linewidth}
  \caption{If $A$ is very small, no points of $W+ (-\log t)\mathbb{Z}$ fall between $A\pi + \iota [-\pi,\pi]$ and $\gamma_Z(A)$, when $W \in \gamma_W(A)$. }
  \label{S4_4}
\end{minipage}
\end{figure}
 The shift does not change the value of the integral by Cauchy's Theorem and the exponential decay of the integrand near $\pm \iota\infty$. Thus we get that the LHS of (\ref{S4kerneleq}) equals
$$\frac{e^{W}}{2\pi \iota} \int_{A\pi -\iota\infty}^{A\pi + \iota\infty} \frac{(- \log t)^{-1}\pi dZ}{\sin (- \pi f_t(Z,W))}(-\zeta)^{f_t(Z,W)}  \frac{1}{e^{W'}- e^{Z}}\prod_{j = 0}^\infty \frac{(1+ ar^{j}e^{-W})(1 + ar^{j}e^{Z})}{(1 + ar^{j}e^{-Z})(1 + ar^{j}e^W)}.$$

The next observation is that $e^{A\pi + \iota y}$ is periodic in $y$ with period $T = 2\pi$. Using this we see that the LHS of (\ref{S4kerneleq}) equals
$$ \frac{e^{W}}{2\pi \iota}  \sum_{ k \in \mathbb{Z}}\int_{A\pi -\iota T/2 + \iota k T }^{A\pi + \iota T/2 + \iota k T} \frac{(- \log t)^{-1}\pi dZ}{\sin (- \pi f_t(Z,W))}(-\zeta)^{f_t(Z,W)}  \frac{1}{e^{W'}- e^{Z}}\prod_{j = 0}^\infty \frac{(1+ ar^{j}e^{-W})(1 + ar^{j}e^{Z})}{(1 + ar^{j}e^{-Z})(1 + ar^{j}e^W)} = $$
$$ =\frac{e^{W}}{2\pi \iota}  \sum_{ k \in \mathbb{Z}}\int_{A\pi -\iota T/2 }^{A\pi + \iota T/2 } dZ\frac{(-\zeta)^{\iota k T/(-\log t)}(- \log t)^{-1}\pi }{\sin (- \pi f_t(Z + \iota k T,W))} \frac{(-\zeta)^{f_t(Z,W)} }{e^{W'}- e^{Z}}\prod_{j = 0}^\infty \frac{(1+ ar^{j}e^{-W})(1 + ar^{j}e^{Z})}{(1 + ar^{j}e^{-Z})(1 + ar^{j}e^W)}.$$
Let $(-\zeta) = re^{\iota \theta}$ with $|\theta| < \pi$. Then, using a similar argument as in (\ref{gzest}), we have for $|k| \geq 1$ 
\begin{equation}\label{Gest}
\left|\frac{(-\zeta)^{\iota k T/(-\log t)}}{\sin (- \pi f_t(Z + \iota k T,W))}\right| = \left|\frac{ e^{-\theta k T/(-\log t)}}{\sin (- \pi f_t(Z + \iota k T,W))}\right| \leq Ce^{|k| T(|\theta| - \pi)/(-\log t)},
\end{equation}
where $C$ is some positive constant, independent of $Z$ and $W$, provided $W \in \gamma_W(A)$, $|Im(Z)| \leq \pi$ and $Re(Z) = A\pi$. We observe the latter is summable over $k$. Additionally, we have 
$$\left|\frac{(-\zeta)^{f_t(Z,W) }}{e^{W'}- e^{Z}}\prod_{j = 0}^\infty \frac{(1+ ar^{j}e^{-W})(1 + ar^{j}e^{Z})}{(1 + ar^{j}e^{-Z})(1 + ar^{j}e^W)}\right| \leq \frac{1}{ e^{A\pi} - 1}\left| (-\zeta)^{f_t(Z,W) }\prod_{j = 0}^\infty \frac{(1+ ar^{j}e^{-W})(1 + ar^{j}e^{Z})}{(1 + ar^{j}e^{-Z})(1 + ar^{j}e^W)}\right|,$$
and the latter is bounded by some constant $M(\zeta, B)$, provided $Re(Z) = A\pi$ and $W \in \gamma_W(A)$. By Fubini's theorem, we may change the order of the sum and the integral and get that LHS of (\ref{S4kerneleq}) equals
$$\frac{e^{W}}{2\pi \iota} \int_{A\pi -\iota T/2}^{A\pi + \iota T/2}\frac{dZ  (-\zeta)^{f_t(Z,W) }}{e^{W'}- e^{Z}}\left[ \sum_{ k \in \mathbb{Z}}\frac{\pi (-\log t)^{-1} (-\zeta)^{\iota k T/(-\log t)}}{\sin (- \pi f_t(Z + \iota k T,W))}\right] \prod_{j = 0}^\infty \frac{(1+ ar^{j}e^{-W})(1 + ar^{j}e^{Z})}{(1 + ar^{j}e^{-Z})(1 + ar^{j}e^W)}.$$

From (\ref{Gest}) we see that $G_{\zeta,t}(W,Z)$, which is given by
$$\frac{\pi (-\log t)^{-1}}{\sin (- \pi f_t(Z,W))} + \sum_{|k| \geq 1}\frac{\pi (-\log t)^{-1}(-\zeta)^{\iota k T/(-\log t)}}{\sin (- \pi f_t(Z + \iota k T,W))},$$
is the sum of $\frac{\pi (-\log t)^{-1}}{\sin (- \pi f_t(Z,W))}$ and an analytic function in $Z$ in the region $D = \{Z \in \mathbb{C}: |Im(Z)| \leq \pi \mbox{ and }Re(Z) \geq 0\}$. In particular, the poles of $G_{\zeta,t}(W,Z)$ in $D$ are exactly at $W + (-\log t)\mathbb{N}$. If we now deform the contour $[A\pi - \iota\pi, A\pi + \iota\pi]$ to $\gamma_Z(A)$ (see Figure \ref{S4_4}) we will not cross any poles and from Cauchy's Theorem we will obtain that the LHS of (\ref{S4kerneleq}) equals
$$\frac{e^{W}}{2\pi \iota} \int_{\gamma_Z(A)}\frac{dZ  (-\zeta)^{f_t(Z,W) }}{e^{W'}- e^{Z}}G_{\zeta,t}(W,Z) \prod_{j = 0}^\infty \frac{(1+ ar^{j}e^{-W})(1 + ar^{j}e^{Z})}{(1 + ar^{j}e^{-Z})(1 + ar^{j}e^W)}.$$

From Lemma \ref{sartbasicPrev} (provided $A$ is sufficiently small so that $\gamma_Z(A), \gamma_W(A) \subset D_\delta$), we have that 
$$\prod_{j = 0}^\infty \frac{(1+ ar^{j}e^{-W})(1 + ar^{j}e^{Z})}{(1 + ar^{j}e^{-Z})(1 + ar^{j}e^W)} = \exp\left(S_{a,r}(Z) - S_{a,r}(W)\right).$$
Substituting this above we recognize the RHS of (\ref{S4kerneleq}).
\end{proof}

%
\subsection{Convergence of the $t$-Laplace transform (GUE case) and proof of Theorem \ref{TW} }\label{S43}\hspace{2mm}\\

Here we state the regime, in which we scale parameters and obtain an asymptotic formula for $\mathbb{E}_{a,r,t}\left[  \frac{1}{(\zeta t^{1-\lambda_1'}; t)_\infty}\right]$. The formula is analyzed below and used to prove Theorem \ref{TW}. One key reason we are considering the $t$-Laplace transform is that it asymptotically behaves like the expectation of an indicator function. The latter (as will be shown carefully below) allows one to obtain the limiting CDF of the properly scaled first column of a partition distributed according to the Hall-Littlewood measure with parameters  $a,r,t$ and match it with $F_{GUE}$ (see Definition \ref{TWDef}).\\

We summarize the limiting regime and some relevant expressions.
\begin{enumerate}[label = \arabic{enumi}., leftmargin=1.0cm]
\item We will let $r \rightarrow 1^{-}$ and keep $t \in (0,1)$ fixed.
\item We assume that $a$ depends on $r$ and for some $\delta > 0$ we have $\lim_{r \rightarrow 1^-} a(r) = a(1) \in (0,1-\delta]$.
\item We denote by $N(r) = \frac{1}{1-r}$, $M(r) = 2\sum_{k = 1}^\infty a(r)^k  \frac{(-1)^{k+1}}{1-r^k}$ and $\alpha = \left[ \frac{a(1)}{(1 + a(1))^2}\right]^{-1/3}$.
\end{enumerate}

\begin{equation}\label{zetadef}
\mbox{ For a given $x \in \mathbb{R}$ set $\zeta_x = - t^{M(r) + x\alpha^{-1} N(r)^{1/3}}$}.
\end{equation}

The following result is the key fact for the Tracy-Widom limit of the fluctuations of the first column of a partition distributed according to $\mathbb{P}_{a,r,t}$ in the GUE case. It shows that under the scaling regime described above the Fredholm determinant (and hence the $t$-Laplace transform) appearing in Proposition \ref{goodprelimitTW} converges to $F_{GUE}$. 
\begin{theorem} \label{mainThm}
Let $x \in \mathbb{R}$ be given and let $\zeta_x$ be given as in (\ref{zetadef}). If $A > 0$ is sufficiently small (depending on $\delta$ and $t$) then
\begin{equation}\label{mainLimit}
\lim_{r \rightarrow 1^{-}} \det( I - \tilde K_{\zeta_x})_{L^2(\gamma_W(A))} = F_{GUE}(x),
\end{equation}
where $F_{GUE}$ is the GUE Tracy-Widom distribution (see Definition \ref{TWDef}), $\gamma_W(A)$ is defined in Definition \ref{gammacont} and $\tilde K_{\zeta_x}$ is as in (\ref{kernelF}).
\end{theorem}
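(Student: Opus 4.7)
The approach is a steepest descent analysis of the double contour integral defining $\tilde K_{\zeta_x}(W,W')$, combined with the Fredholm determinant convergence Lemmas \ref{FDCT}--\ref{FDKCT}. Writing $(-\zeta_x)^{f_t(Z,W)} = \exp(-(Z-W)(M(r)+x\alpha^{-1}N(r)^{1/3}))$ and grouping with the $S_{a,r}$ factors, the exponential part of the integrand reads $\exp(\Psi_r(Z)-\Psi_r(W))\cdot \exp(-(Z-W)x\alpha^{-1}N(r)^{1/3})$, where $\Psi_r(U) := S_{a,r}(U) - U\cdot M(r)$ is the effective phase. Using the representation $S_{a,r}(U) = 2\sum_{k\ge 1}\frac{(-1)^{k+1}a^k}{k(1-r^k)}\sinh(kU)$, one checks $\Psi_r'(0)=0$, $\Psi_r''(0)=0$, and $\Psi_r'''(0) = 2\sum_{k\ge 1}\frac{(-1)^{k+1}k^2 a^k}{1-r^k} \sim 2N(r)\alpha^{-3}$ as $r\to 1^-$. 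Thus $U=0$ is a triple critical point of $\Psi_r$, which both selects the scale $N(r)^{-1/3}$ and explains the coefficient $\alpha$ appearing in the definition of $\zeta_x$.

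The next step is to rescale $W = \alpha w/N(r)^{1/3}$ and $Z = \alpha z/N(r)^{1/3}$. A Taylor expansion of $\Psi_r$ to third order at the origin converts the exponential part of the integrand into $\exp(z^3/3 - xz - w^3/3 + xw)$, up to lower-order terms. The prefactor $e^W/(e^{W'}-e^Z)$ contributes $1/(w'-z)$ after absorbing one Jacobian of size $N(r)^{1/3}/\alpha$, while the second Jacobian is absorbed in the change-of-variables at the level of the outer Fredholm expansion. For the periodic sum $G_{\zeta_x,t}(W,Z)$, only the $k=0$ term survives to leading order, yielding $\pi/[(-\log t)\sin(-\pi f_t(Z,W))]\to 1/(w-z)$; the $k\ne 0$ terms carry an oscillatory factor $(-\zeta_x)^{2\pi k\iota/(-\log t)}$ of modulus one, the sum over $k$ is geometrically convergent since $t$ is fixed, and each such term is killed in the limit by the extra $\alpha/N(r)^{1/3}$ coming from the inner integration. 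The pointwise limit of the rescaled kernel is then the Airy kernel $K_{Ai}$ of Definition \ref{TWDef} shifted by $x$, and formally $\lim_{r\to 1^-}\det(I-\tilde K_{\zeta_x})_{L^2(\gamma_W(A))} = \det(I-K_{Ai})_{L^2(x,\infty)} = F_{GUE}(x)$.

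The technical heart of the argument, and its main obstacle, is producing $r$-uniform exponential bounds on the integrand that upgrade pointwise convergence to convergence of the Fredholm determinant. This reduces to showing that the cone contours of Definition \ref{gammacont} are steep descent for $\Psi_r$: there exist constants $c, C>0$ independent of $r$ near $1$ with
\begin{equation*}
\mathrm{Re}(\Psi_r(W) - \Psi_r(0)) \le -cN(r)\min(|W|^3, C) \mbox{ on } \gamma_W(A),
\end{equation*}
together with the analogous lower bound on $\gamma_Z(A)$. Combined with the local cubic behavior at the origin, this yields integrable dominators of the form $\exp(-c|w|^3 + c'|w|)$ on the rescaled contours, which is exactly what is required to invoke Lemma \ref{FDKCT} (for the inner $Z$-integral) and then Lemma \ref{FDCT} (for the Fredholm expansion in $W, W'$). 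The aperture $A$ of the cones is taken small enough so that $\gamma_W(A)$ and $\gamma_Z(A)$ lie in the descending/ascending sectors of $\Psi_r$ emanating from the triple critical point at the origin; the nontrivial $r$-dependence of $\Psi_r$ through the factors $1/(1-r^k)$ is what requires genuine work and motivates the general machinery assembled in Section \ref{SSart}.
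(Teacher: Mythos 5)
Your outline follows essentially the same route as the paper's proof (Steps 1--4 of Section \ref{S44}): you correctly identify $\Psi_r(U)=S_{a,r}(U)-M(r)U$ as the phase, note that $M(r)=S_{a,r}'(0)$ makes the origin a triple critical point with $\Psi_r'''(0)\sim 2N(r)\alpha^{-3}$ (which fixes both the $N(r)^{-1/3}$ scale and the constant $\alpha$ in $\zeta_x$), rescale, keep only the $k=0$ term of $G_{\zeta_x,t}$, and pass to the limit of the Fredholm determinant via dominating functions and Lemmas \ref{FDKCT} and \ref{FDCT}. The last identification of the limiting contour-integral kernel on the cone with $\det(I-K_{Ai})_{L^2(x,\infty)}$, which you treat as ``formal,'' is exactly the step the paper delegates to Lemma 8.6 of \cite{BCF}; the uniform control of the sine sum and of $1/(e^{W'}-e^Z)$ is the content of Lemma \ref{techies}.

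There is, however, one concrete error: your key descent estimate is stated with the inequalities oriented the wrong way. Since the exponent of the integrand is $\Psi_r(Z)-\Psi_r(W)$ (plus the linear $x$-terms), decay requires $\mathrm{Re}\,\Psi_r(Z)\le -cN(r)|Z|^3$ on the right cone $\gamma_Z(A)$ and $\mathrm{Re}\,\Psi_r(W)\ge +cN(r)|W|^3$ on the left cone $\gamma_W(A)$, because $\Psi_r(W)$ enters with a minus sign; this is precisely Lemma \ref{cubicEst}, inequalities (\ref{cubicZ1}) and (\ref{cubicW1}). Your displayed claim asserts $\mathrm{Re}(\Psi_r(W)-\Psi_r(0))\le -cN(r)\min(|W|^3,C)$ on $\gamma_W(A)$ with the ``analogous lower bound'' on $\gamma_Z(A)$; that statement is false as written (near the origin $\Psi_r(z)\approx \tfrac{1}{3}N(r)\alpha^{-3}z^3$, and $\mathrm{Re}(z^3)>0$ on $\gamma_W(A)$ while $\mathrm{Re}(z^3)<0$ on $\gamma_Z(A)$), and if used literally it would produce exponential growth rather than the dominators $\exp(-c|w|^3+c'|w|)$ that you correctly demand two lines later. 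With the two orientations swapped back, your argument coincides with the paper's, including the requirement that the aperture $A$ be small enough for the cones to stay in the correct sectors of the cubic and for the estimates of Lemmas \ref{cubicEst} and \ref{techies} to apply uniformly in $r$.
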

{\raggedleft In what follows we prove Theorem \ref{TW}, assuming the validity of Theorem \ref{mainThm}, whose proof is postponed until the next section.}\\

We begin by summarizing the key results from our previous work as well as recalling a couple of lemmas from the literature. From Proposition \ref{goodprelimitTW} and Theorem \ref{mainThm} we have that under the scaling described in the beginning of the section and any $x \in \mathbb{R}$ 
\begin{equation}\label{WCTW1}
\lim_{r \rightarrow 1^-} \mathbb{E}_{a,r,t} \left[ \frac{1}{(- t^{M(r) + \alpha^{-1}x N(r)^{1/3}} t^{1-\lambda'_1}; t)_\infty} \right] = F_{GUE}(x).
\end{equation}
Set $\xi_r := \alpha N(r)^{-1/3}\left ( \lambda'_1 - M(r)\right)$ and observe that (\ref{WCTW1}) is equivalent to
\begin{equation}\label{WCTW3}
\lim_{r \rightarrow 1^-} \mathbb{E}_{a,r,t} \left[ \frac{1}{((- t) \cdot t^{ - \left[ N(r)^{1/3} \alpha ^{-1} (\xi_r - x) \right]}; t)_\infty} \right] = F_{GUE}(x).
\end{equation}

The function that appears on the LHS under the expectation in (\ref{WCTW3}) has the following asymptotic property.
\begin{lemma}\label{seqf}
Fix a parameter $t\in (0,1)$. Then
\begin{equation}\label{seqfeq}
f_q(y) := \frac{1}{((- t)\cdot t^{qy}; t)_\infty} = \prod_{k = 1}^\infty \frac{1}{1 + t^{qy + k}}
\end{equation}
is increasing for all $q > 0$ and decreasing for all $q < 0$. For each $\delta > 0$ one has $f_q(y) \rightarrow {1}_{\{y > 0\}}$ uniformly on $\mathbb{R} \backslash [-\delta, \delta]$ as $q \rightarrow \infty$.
\end{lemma}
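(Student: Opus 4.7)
The plan is to verify the product identity from the definition of the $t$-Pochhammer symbol, establish monotonicity via a direct logarithmic derivative, and then use the monotonicity to reduce uniform convergence on $\mathbb{R}\setminus[-\delta,\delta]$ to checking the two endpoint limits $f_q(\pm\delta)$ as $q\to\infty$.

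First I would unfold the definition $(a;t)_\infty = \prod_{k=0}^\infty(1-at^k)$, which gives
$$((-t)\cdot t^{qy};t)_\infty = \prod_{k=0}^\infty(1+t^{qy+k+1}) = \prod_{k=1}^\infty(1+t^{qy+k}),$$
matching the stated formula and showing in particular that $0 < f_q(y) \leq 1$ for all real $y$. Since $\sum_{k\geq 1} t^{qy+k}$ converges absolutely and uniformly on compact subsets of $\mathbb{R}$, the same is true of $\sum_{k\geq 1} \log(1+t^{qy+k})$ and of its term-by-term derivative, so differentiation under the sum is justified and
$$\frac{d}{dy}\log f_q(y) \;=\; -\sum_{k=1}^\infty \frac{q(\log t)\, t^{qy+k}}{1+t^{qy+k}}.$$
Because $\log t < 0$, this is strictly positive for $q>0$ and strictly negative for $q<0$, which yields both monotonicity statements.

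For the uniform convergence, fix $\delta>0$ and let $q\to\infty$ through positive values (the case $q\to-\infty$ is symmetric or can be reduced by replacing $y$ with $-y$). By the monotonicity just proved,
$$f_q(\delta) \leq f_q(y) \leq 1 \quad \text{for } y\geq\delta, \qquad 0 < f_q(y) \leq f_q(-\delta) \quad \text{for } y\leq -\delta,$$
so it suffices to show $f_q(\delta)\to 1$ and $f_q(-\delta)\to 0$. For the first, using $\log(1+x)\leq x$ for $x\geq 0$,
$$0 \leq -\log f_q(\delta) = \sum_{k=1}^\infty \log(1+t^{q\delta+k}) \leq \sum_{k=1}^\infty t^{q\delta+k} = \frac{t^{q\delta+1}}{1-t} \longrightarrow 0.$$
For the second, keeping only the $k=1$ factor in the product gives
$$f_q(-\delta) \leq \frac{1}{1+t^{-q\delta+1}} \longrightarrow 0$$
since $t\in(0,1)$ forces $t^{-q\delta+1}\to\infty$. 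Combining these two bounds gives $f_q(y)\to \mathbf{1}_{\{y>0\}}$ uniformly on $\mathbb{R}\setminus[-\delta,\delta]$.

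The argument is essentially bookkeeping with monotone functions, and there is no substantial obstacle; the only point to be careful about is justifying the term-by-term differentiation for the monotonicity claim, which however follows at once from absolute and locally uniform convergence of the series $\sum_k t^{qy+k}$.
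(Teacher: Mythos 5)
Your proof is correct and follows essentially the same route as the paper: monotonicity of each factor (you via the logarithmic derivative, the paper factor-by-factor) and then the same two endpoint-type bounds, keeping only the first factor for $y<-\delta$ and bounding $-\log f_q$ by the geometric tail for $y>\delta$ (the paper uses dominated convergence where you sum the geometric series explicitly). These are only cosmetic differences, so no further comment is needed.
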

\begin{proof}
This is essentially Lemma 5.1 in \cite{FerVet}, but we present the proof for completeness. Each factor in the $t$-Pochhammer symbol $\frac{1}{1 + t^{qy + k}}$ is positive, increases in $y$ when $q > 0$ and decreases in $y$ when $q < 0$. This proves monotonicity.\\

Let $\delta > 0$ be given. If $y  < -\delta $ we have
\begin{equation}\label{seqf1}
0 \leq f_q(y) \leq \frac{1}{1 + t^{1 + qy}} \leq \frac{1}{1 + t^{1 - q\delta}} \rightarrow 0 \mbox{ as } q\rightarrow \infty.
\end{equation}
If $y > \delta$ we have
\begin{equation}\label{seqf2}
0 \geq \log f_q(y) \geq -\sum_{k = 1}^{\infty}\log \left[ 1 + t^{q \delta + k} \right] \rightarrow 0 \mbox{ as } q \rightarrow \infty,
\end{equation}
where the latter statement follows from the Dominated Convergence Theorem with dominating function $\log \left[ 1 + t^k \right]$. Exponentiating (\ref{seqf2}) and combining it with (\ref{seqf1}) proves the second part of the lemma.

\end{proof}

We will use the following elementary probability lemma (Lemma 4.1.39 of \cite{BorCor}).
\begin{lemma}\label{prob}
Suppose that $f_n$ is a sequence of functions $f_n: \mathbb{R} \rightarrow [0,1]$, such that for each $n$, $f_n(y)$ is strictly decreasing in $y$ with a limit of $1$ at $y = -\infty$ and $0$ at $y = \infty$. Assume that for each $\delta > 0$ one has on $\mathbb{R}\backslash[-\delta,\delta]$, $f_n \rightarrow {\bf 1}_{\{y < 0\}}$ uniformly. Let $X_n$ be a sequence of random variables such that for each $x \in \mathbb{R}$ 
$$\mathbb{E}[f_n(X_n - x)] \rightarrow p(x),$$
and assume that $p(x)$ is a continuous probability distribution function. Then $X_n$ converges in distribution to a random variable $X$, such that $\mathbb{P}(X < x) = p(x)$.
\end{lemma}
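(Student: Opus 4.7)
The plan is to sandwich $\mathbb{E}[f_n(X_n - x)]$ between probabilities of the form $\mathbb{P}(X_n < x \pm \delta)$, pass to the limit in $n$ using the hypothesized pointwise convergence of the expectations, and then close the $\delta$-gap with the continuity of $p$. To begin, fix $x \in \mathbb{R}$ and $\delta > 0$. Since each $f_n$ is decreasing with values in $[0,1]$, the almost-sure inequalities
$$f_n(-\delta)\, \mathbf{1}_{\{X_n < x - \delta\}} \;\leq\; f_n(X_n - x) \;\leq\; \mathbf{1}_{\{X_n < x + \delta\}} + f_n(\delta)\, \mathbf{1}_{\{X_n \geq x + \delta\}}$$
hold, and integrating yields
$$f_n(-\delta)\, \mathbb{P}(X_n < x - \delta) \;\leq\; \mathbb{E}[f_n(X_n - x)] \;\leq\; \mathbb{P}(X_n < x + \delta) + f_n(\delta).$$

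Next I would send $n \to \infty$. The assumed uniform convergence of $f_n$ to $\mathbf{1}_{\{y < 0\}}$ on $\mathbb{R} \setminus [-\delta, \delta]$ gives $f_n(-\delta) \to 1$ and $f_n(\delta) \to 0$, while by hypothesis $\mathbb{E}[f_n(X_n - x)] \to p(x)$. The two displayed bounds therefore collapse to
$$\limsup_{n \to \infty} \mathbb{P}(X_n < x - \delta) \;\leq\; p(x) \;\leq\; \liminf_{n \to \infty} \mathbb{P}(X_n < x + \delta).$$
Applying this at $x + \delta$ in the first inequality and at $x - \delta$ in the second, I obtain the two-sided estimate
$$p(x - \delta) \;\leq\; \liminf_{n \to \infty} \mathbb{P}(X_n < x) \;\leq\; \limsup_{n \to \infty} \mathbb{P}(X_n < x) \;\leq\; p(x + \delta).$$
Letting $\delta \to 0^{+}$ and invoking the continuity of $p$ at $x$ squeezes both sides to $p(x)$, so $\lim_n \mathbb{P}(X_n < x) = p(x)$.

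Finally, because $p$ is a continuous probability distribution function on $\mathbb{R}$, there exists a random variable $X$ with $\mathbb{P}(X \leq x) = p(x)$, and continuity of $p$ also gives $\mathbb{P}(X < x) = p(x)$. Combined with the previous step, this produces $\mathbb{P}(X_n < x) \to \mathbb{P}(X < x)$ for every $x \in \mathbb{R}$, which, since $p$ is continuous, is exactly weak convergence $X_n \Rightarrow X$. There is essentially no obstacle here: the lemma is a routine sandwich argument. The one point deserving a moment's thought is that continuity of $p$ makes every $x$ a continuity point of the limiting CDF, so the distinction between $\mathbb{P}(X_n < x)$ and $\mathbb{P}(X_n \leq x)$ is immaterial and the $\limsup/\liminf$ squeeze closes cleanly.
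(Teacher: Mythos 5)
Your sandwich argument is correct: the bounds $f_n(-\delta)\,\mathbf{1}_{\{X_n<x-\delta\}}\leq f_n(X_n-x)\leq \mathbf{1}_{\{X_n<x+\delta\}}+f_n(\delta)$, the passage to $\limsup/\liminf$, and the squeeze via continuity of $p$ are exactly the standard proof of this lemma, which the paper does not reprove but quotes from Lemma 4.1.39 of \cite{BorCor}. The only microscopic gloss is that $\pm\delta$ lie on the boundary of the excluded interval $[-\delta,\delta]$, so to get $f_n(-\delta)\to 1$ and $f_n(\delta)\to 0$ you should invoke the uniform convergence hypothesis with a smaller parameter (say $\delta/2$); with that one-line remark the proof is complete.
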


\begin{proof}(Theorem \ref{TW})
Let $r_n$ be a sequence converging to $1^-$ and set
$$f_n(y) = \frac{1}{((- t) \cdot t^{ - \left[ N(r_n)^{1/3} \alpha ^{-1} y\right]}; t)_\infty} \mbox{ and } X_n = \xi_{r_n}.$$

Lemma \ref{seqf} shows that $f_n$ satisfy the conditions of Lemma \ref{prob}. Consequently, Lemma \ref{prob} and (\ref{WCTW3}) show that $\xi_{r_n}$ converges weakly to the Tracy-Widom distribution. In particular, for each $x \in \mathbb{R}$ we have
\begin{equation}\label{WCTW4}
\lim_{r \rightarrow 1^-} \mathbb{P}_{a,r,t}  (\xi_r \leq x) = F_{GUE}(x).
\end{equation}

Consider $a(r) =  r^{(1 + |\lfloor \tau N(r) \rfloor|)/2}$. Since, $\lim_{r \rightarrow 1^-} r^{N} = e^{-1},$ we see that $\lim_{r \rightarrow 1^-} a(r) = a(1) = e^{-|\tau|/2} < 1$ ( whenever $\tau \neq 0$). This means that $\alpha^{-1} := \left[ \frac{a(1)}{(1 + a(1))^2}\right]^{1/3} =  \left[ \frac{e^{-|\tau|/2}}{(1 + e^{-|\tau|/2})^2}\right]^{1/3} =: \chi^{-1}$. From Section \ref{HL} we conclude that 
\begin{equation}\label{WCTW5}
\mathbb{P}_{HL}^{r,t} \left( \frac{ \lambda'_1( \lfloor \tau N(r) \rfloor)  - M(r) }{\chi^{-1} N^{1/3}} \leq x\right) = \mathbb{P}_{a,r,t} \left( \frac{ \lambda'_1 - M(r)}{\alpha^{-1} N^{1/3}} \leq x\right) = \mathbb{P}_{a,r,t}  (\xi_r \leq x),
\end{equation}
Combining (\ref{WCTW4}) and (\ref{WCTW5}) shows that if $\tau \neq 0$ one has
$$\lim_{r \rightarrow 1^-} \mathbb{P}_{HL}^{r,t} \left( \frac{ \lambda'_1( \lfloor \tau N(r) \rfloor)  - M(r) }{\chi^{-1} N^{1/3}} \leq x\right)= F_{GUE}(x).$$
In (\ref{delM}) we will show that $ M(r) = 2 N(r)\log (1 + a(1)) + O(1) = 2N(r) \log (1 +e^{-|\tau|/2}) + O(1).$ Substituting this above concludes the proof of the theorem.
\end{proof}

%
\subsection{Proof of Theorem \ref{mainThm}}\label{S44}\hspace{2mm}\\

We split the proof of Theorem \ref{mainThm} into four steps. In the first step we rewrite the LHS of (\ref{mainLimit}) in a suitable form for the application of Lemmas \ref{FDCT} and \ref{FDKCT}. In the second step we verify the pointwise convergence and in the third step we provide dominating functions, which are necessary to apply the lemmas. In the fourth step we obtain a limit for the LHS of (\ref{mainLimit}), subsequently we use a result from \cite{BCF}, to show that the limit we obtained is in fact $F_{GUE}$.

In Steps 2 and 3 we will require some estimates, which we summarize in Lemmas \ref{cubicEst} and \ref{techies} below. The proofs are postponed until Section \ref{SSart}.
\begin{lemma}\label{cubicEst}
Let $A > 0$ be sufficiently small. Then for all large $N$ we have
\begin{equation}\label{cubicZ1}
Re(S_{a,r} (z) - M(r)z) \leq -cN|z|^3 \mbox{ for all }  z \in \gamma_Z(A) \mbox{ and }
\end{equation}
\begin{equation}\label{cubicW1}
Re(S_{a,r} (z) - M(r)z) \geq cN|z|^3 \mbox{ for all } z \in \gamma_W(A). \\
\end{equation}
In the above $c > 0$ depends on $A$ and $\delta$. In addition, we have
\begin{equation}\label{cubicZ2}
 Re(S_{a,r} (z) - M(r)z ) = O(1) \mbox{ if } |z| = O(N^{-1/3}) \mbox{ and } \\
\end{equation}
\begin{equation}\label{cubicW2}
\lim_{N \rightarrow \infty}S_{a,r}(N^{-1/3}u) - M(r)N^{-1/3}u = u^3\alpha^{-3}/3  \mbox{ for all } u \in \mathbb{C}. \\
\end{equation}
\end{lemma}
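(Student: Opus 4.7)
The proof hinges on a Taylor-expansion-plus-Riemann-sum analysis of the function $H_{a,r}(z) := S_{a,r}(z) - M(r)z$. First, I would observe that $S_{a,r}$ is odd in $z$, so $H_{a,r}$ is also odd provided $S'_{a,r}(0) = M(r)$. A direct computation gives
\[
S'_{a,r}(0) \;=\; 2\sum_{j\geq 0}\frac{ar^j}{1+ar^j} \;=\; 2\sum_{j\geq 0}\sum_{k\geq 1}(-1)^{k+1}(ar^j)^k \;=\; 2\sum_{k\geq 1}(-1)^{k+1}\frac{a^k}{1-r^k} \;=\; M(r),
\]
so $H_{a,r}(z) = \tfrac{1}{6}H'''_{a,r}(0)\,z^3 + O(z^5)$ near the origin. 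A similar term-by-term differentiation yields $H'''_{a,r}(0) = 2\sum_{j\geq 0}\frac{ar^j(1-ar^j)}{(1+ar^j)^3}$.

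Next I would compute the bulk scaling: setting $r = 1 - 1/N$, the sum defining $H_{a,r}(z)/N$ converges to a Riemann integral, and one identifies
\[
\lim_{N\to\infty}\frac{H_{a,r}(z)}{N} \;=\; \Psi_a(z) \;:=\; -\mathrm{Li}_2(-a e^z) + \mathrm{Li}_2(-a e^{-z}) - 2\log(1+a)\,z.
\]
In particular $\Psi_a$ is odd and $\Psi'''_a(0) = \frac{2a}{(1+a)^2} = 2\alpha^{-3}$ at $a = a(1)$, so $\Psi_a(z) = \tfrac{\alpha^{-3}}{3}z^3 + O(|z|^5)$. Likewise $H'''_{a,r}(0)/N \to 2\alpha^{-3}$. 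With quantitative control, a standard sum-versus-integral argument (Euler--Maclaurin applied term by term, using the explicit $j$-dependence through $ar^j = ae^{-j/N + O(j/N^2)}$) shows that the Riemann-sum error is bounded uniformly: $H_{a,r}(z) = N\Psi_a(z) + O(1)$ uniformly for $z$ in a complex neighborhood of $\gamma_W(A)\cup\gamma_Z(A)$ (which is compact inside $D_\delta$).

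To get \eqref{cubicZ1}--\eqref{cubicW1}, I would then establish the corresponding cubic bounds on $\Psi_a$. On $\gamma_Z(A)$ with $z = A|y|+\iota y$ one computes $\mathrm{Re}(z^3) = A(A^2-3)|y|^3 < 0$ for small $A$, so $\mathrm{Re}(z^3) \leq -c_A |z|^3$; combined with the Taylor expansion this gives $\mathrm{Re}(\Psi_a(z)) \leq -c|z|^3$ for $|z| \leq \epsilon$, for some $\epsilon > 0$ independent of $N$. On the compact remainder $\gamma_Z(A) \cap \{|z|\geq \epsilon\}$ one verifies directly (using that $\Psi_a$ has a strict cubic critical point at $0$ and has no other critical points in a neighborhood of $\gamma_Z(A)$ for small $A$, so by continuity $\mathrm{Re}\Psi_a$ stays strictly negative) that $\mathrm{Re}(\Psi_a(z)) \leq -c'$; since $|z|$ is bounded there, this amounts to a cubic bound with a possibly smaller $c > 0$. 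The same analysis on $\gamma_W(A)$, where $\mathrm{Re}(z^3) = A(3-A^2)|y|^3 > 0$, gives the opposite inequality. Multiplying by $N$ and absorbing the $O(1)$ error into the constant $c$ (for large $N$) yields \eqref{cubicZ1} and \eqref{cubicW1}.

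Finally, \eqref{cubicZ2} is immediate: if $|z| = O(N^{-1/3})$, then $N\Psi_a(z) = N[\tfrac{\alpha^{-3}}{3}z^3 + O(|z|^5)] = O(1) + O(N^{-2/3})$, and combined with the uniform $O(1)$ sum-to-integral error this gives $\mathrm{Re}(H_{a,r}(z)) = O(1)$. For \eqref{cubicW2}, setting $z = N^{-1/3}u$ and using the odd Taylor expansion of $H_{a,r}$ together with the already established asymptotics $H'''_{a,r}(0)/N \to 2\alpha^{-3}$ and a uniform bound on the fifth-derivative term (which contributes $O(N \cdot N^{-5/3}) = O(N^{-2/3})$) yields
\[
H_{a,r}(N^{-1/3}u) \;=\; \frac{H'''_{a,r}(0)}{6N}\,u^3 + O(N^{-2/3}) \;\longrightarrow\; \frac{\alpha^{-3}}{3}u^3.
\]
The main technical obstacle is the uniform Riemann-sum approximation $H_{a,r}(z) - N\Psi_a(z) = O(1)$ on the full contour (not just a fixed neighborhood of $0$), since the contour includes points of modulus of order one where naive term-by-term Taylor bounds are insufficient; this is what forces the sum/integral comparison to be carried out with care, treating the small-$j$ terms and the tail $j \gtrsim N$ separately.
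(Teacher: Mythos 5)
Your plan for \eqref{cubicZ2} and \eqref{cubicW2} (direct odd Taylor expansion of $H_{a,r}=S_{a,r}-M(r)z$ at $0$, with $S'_{a,r}(0)=M(r)$ exactly and $S'''_{a,r}(0)/N\to 2\alpha^{-3}$) is sound and matches the paper, which implements it through the explicit coefficients of Lemma \ref{sartcoeff}, the uniform bound (\ref{coeffest}) on the tail, and Lemma \ref{Lascoeff}. For the contour bounds \eqref{cubicZ1}--\eqref{cubicW1}, however, your route through the Riemann-sum limit $H_{a,r}(z)=N\Psi_a(z)+O(1)$ has two genuine gaps. First, an additive $O(1)$ error cannot be ``absorbed into $c$'': the inequalities must hold for \emph{all} $z\in\gamma_Z(A)$, including $|z|\lesssim N^{-1/3}$, where $N|z|^3=O(1)$ or smaller and the error term swamps the main term. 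The paper avoids this entirely by never passing to the integral: it keeps the exact expansion $S_{a,r}(z)-M(r)z=c_3z^3+\cdots$ with the remainder bounded by $\mathrm{const}\cdot N|z|^4\le \mathrm{const}\cdot\epsilon N|z|^3$ via (\ref{coeffest}), a \emph{relative} error that stays small down to $z=0$. To rescue your approach you would need a refined comparison of the form $|H_{a,r}(z)-N\Psi_a(z)|\le C|z|^3$ with $C$ independent of $N$ (or simply revert to the direct Taylor bound for $|z|<\epsilon$, which is what the paper does), not just a uniform $O(1)$.

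Second, your justification that $\mathrm{Re}\,\Psi_a\le -c'<0$ on $\gamma_Z(A)\cap\{|z|\ge\epsilon\}$ (``no other critical points, so by continuity'') is not a proof: absence of critical points of $\Psi_a$ does not control the sign of $\mathrm{Re}\,\Psi_a$ along a curve, and here the situation is genuinely delicate because $\mathrm{Re}\,\Psi_a\equiv 0$ on the imaginary axis (since $\Psi_a(\iota y)$ is purely imaginary), which lies within distance $O(A)$ of $\gamma_Z(A)$. The desired strict negativity is therefore a first-order-in-$A$ boundary-layer effect and requires a sign analysis of the derivative of $\mathrm{Re}\,\Psi_a$ (or of $\mathrm{Re}(S_{a,r}-c_1z)$) along the contour. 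This is exactly the content of the paper's Lemma \ref{descent}, which proves monotone descent/ascent of $\mathrm{Re}(S_{a,r}(z)-c_1z)$ along $\gamma_Z(A)$ and $\gamma_W(A)$ at finite $r$; the paper then propagates the cubic bound from the disk $|z|<\epsilon$ to the whole contour by comparing $z$ with $Kz$, $K=\epsilon/(2\pi)$, and finishes by oddness of $S_{a,r}$ and $\gamma_W=-\gamma_Z$. You would need to prove the analogous monotonicity (for $\Psi_a$, uniformly for $a$ near $a(1)$), in addition to actually carrying out the Euler--Maclaurin estimate you flag as the main obstacle; as written, both steps are missing.
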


\begin{lemma}\label{techies}
Let $t, u, U \in (0,1)$ be given such that $0 < u < U < \min ( 1 , -\log t/10)$. Suppose that $z,w \in \mathbb{C}$ are such that $Re(w) \in [- U, 0]$, $Re(z) \in [u, U]$. Then there exists a constant $C > 0$, depending on $t$ such that the following hold
\begin{equation}\label{yellow1}
\left| \frac{1}{e^z - e^w}\right| \leq Cu^{-1} \mbox{ and } \sum_{k \in \mathbb Z} \left| \frac{1}{\sin(-\pi f_t(z + 2\pi\iota k,w))}\right| \leq Cu^{-1}, \mbox{ where $f_t(z,w) = \frac{z -w}{-\log t}.$}
\end{equation}

\end{lemma}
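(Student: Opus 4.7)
For the first bound I would factor $e^z - e^w = e^w\bigl(e^{z-w}-1\bigr)$ and estimate the two factors separately. Since $\mathrm{Re}(w)\in[-U,0]$ and $U<1$, we have $|e^w|=e^{\mathrm{Re}(w)}\geq e^{-U}\geq e^{-1}$. For the second factor, setting $\zeta = z-w$ with $\mathrm{Re}(\zeta)\geq u>0$, we have
\[
\bigl|e^\zeta - 1\bigr| \;\geq\; |e^\zeta| - 1 \;=\; e^{\mathrm{Re}(\zeta)}-1 \;\geq\; e^u - 1 \;\geq\; u,
\]
using $e^x-1\geq x$ for $x\geq 0$. Combining these gives $|e^z-e^w|\geq e^{-1}u$, so $|1/(e^z-e^w)|\leq e\cdot u^{-1}$, which handles the first bound with $C=e$.

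For the second bound I would write $L=-\log t>0$ and set $\theta_k := -\pi f_t(z+2\pi\iota k,w) = -\pi(z-w)/L - 2\pi^2 \iota k/L$. Decomposing $\theta_k = a_k + \iota b_k$ into real and imaginary parts, one has $a_k\equiv -\pi\,\mathrm{Re}(z-w)/L$ (independent of $k$) and $b_k = -\pi\,\mathrm{Im}(z-w)/L - 2\pi^2 k/L$. The hypotheses $\mathrm{Re}(z-w)\in[u,2U]$ and $U<L/10$ force $|a_k|\in[\pi u/L,\,2\pi U/L]\subset(0,\pi/5)$, a fixed sub-interval of $(0,\pi/2)$ bounded away from integer multiples of $\pi$. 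The standard identity $|\sin(a+\iota b)|^2 = \sin^2 a + \sinh^2 b$ then gives the key inequality $|\sin\theta_k|\geq\max(|\sin a_k|,|\sinh b_k|)$.

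I would then split the sum by choosing $k_0\in\mathbb{Z}$ that minimizes $|b_k|$, so $|b_{k_0}|\leq \pi^2/L$ and $|b_{k_0+j}|\geq (2|j|-1)\pi^2/L$ for all $j\in\mathbb{Z}\setminus\{0\}$. For the distinguished index $k_0$, using $|\sin x|\geq 2|x|/\pi$ on $[-\pi/2,\pi/2]$,
\[
\bigl|\sin\theta_{k_0}\bigr| \;\geq\; |\sin a_{k_0}| \;\geq\; \tfrac{2}{\pi}\cdot\tfrac{\pi u}{L} \;=\; \tfrac{2u}{L},
\]
so that term contributes at most $L/(2u)\leq C_t u^{-1}$. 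For the remaining indices I use $|\sin\theta_{k_0+j}|\geq\sinh\bigl((2|j|-1)\pi^2/L\bigr)$, and the resulting sum $\sum_{j\neq 0}\sinh\bigl((2|j|-1)\pi^2/L\bigr)^{-1}$ converges to a constant $C'_t$ depending only on $t$ by exponential decay of the summands. Since $u<1$, this constant can be absorbed into a uniform bound of the form $Cu^{-1}$, completing the proof.

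The only mildly subtle point is choosing $k_0$ correctly and handling the case where the minimum $|b_k|$ is already of order $1/L$; the inequality $|\sin\theta|\geq|\sin(\mathrm{Re}\theta)|$ circumvents this cleanly, so no real obstacle arises.
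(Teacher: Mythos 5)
Your proof is correct, and for the sine sum it is organized differently from the paper's argument, so a brief comparison is worthwhile. For the first bound the paper simply writes $|e^z-e^w|\geq e^{Re(z)}-e^{Re(w)}\geq e^{Re(z)}-1\geq Re(z)\geq u$ (using $Re(w)\leq 0$), while you factor out $e^w$; both are elementary and equivalent in substance, yours costing an extra harmless factor $e$. For the second bound the paper works with the exponential representation of $\sin$, does a case analysis on the sign of $Im(z-w)$, and obtains for \emph{every} index $k$ the multiplicative estimate $\bigl|\sin(-\pi f_t(z+2\pi\iota k,w))\bigr|^{-1}\leq e^{-\pi\sigma|Im(z-w)+2\pi k|}\cdot 2/|\sin(2\pi\sigma\,Re(z-w))|$ with $\sigma=(-\log t)^{-1}$, then bounds the last factor by $(\sigma u)^{-1}$ and sums the geometric-type series in $k$; thus each term carries both the $u^{-1}$ factor and the exponential decay. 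You instead use the identity $|\sin(a+\iota b)|^2=\sin^2 a+\sinh^2 b$ to get $|\sin\theta_k|\geq\max(|\sin a_k|,|\sinh b_k|)$, extract a single distinguished index $k_0$ (the one with smallest $|b_k|$) which you control by the real part alone (giving the $L/(2u)$ contribution), and sum the remaining terms via $1/\sinh$ decay to a $t$-dependent constant absorbed into $Cu^{-1}$ because $u<1$. Your per-term bound is weaker (the far terms lose the $u^{-1}$ gain and the near term loses the decay), but the aggregate bound $Cu^{-1}$ with $C=C(t)$ is exactly what the lemma asserts and what the asymptotic analysis in Section 4 uses, so nothing is lost; the max-splitting via $\sin^2+\sinh^2$ is arguably cleaner than the paper's case analysis, at the mild price of the bookkeeping around $k_0$. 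One caveat: this max-splitting would need the paper's multiplicative style (or a uniform-in-$t$ treatment as in Lemma \ref{techiesC}) if one wanted constants uniform as $t\rightarrow 1^-$, but for the present lemma, where $C$ may depend on $t$, your argument is complete.
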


{\raggedleft {\bf Step 1.}} For $A > 0$ define $\gamma'_W(A) = \{ -A|y| + \iota y: y \in \mathbb{R}\} \mbox{ and } \gamma'_Z(A) = \{A|y| + \iota y: y \in \mathbb{R}\}.$ Suppose $A > 0$ is sufficiently small, so that Proposition \ref{goodprelimitTW} holds. We consider the change of variables $z_i = N^{1/3}Z_i$ and $w_i = N^{1/3}W_i$ and observe that the LHS of (\ref{mainLimit}) can be rewritten as $\det (I -  \tilde K_x^N)_{L^2(\gamma_W'(A))}$, where
\begin{equation}\label{GUEeq1}
\begin{split} &\tilde K_x^N(w,w') =   \int_{\gamma_Z'(A)} g^{N,x}_{w,w'}(z)\frac{dz}{2\pi \iota}, \mbox{ and }
 g^{N,x}_{w,w'}(z) ={\bf 1}_{\{\max(|Im(w)|, |Im(w')|, |Im(z)|) \leq N^{1/3}\pi\}} \times 
\\ &\frac{e^{N^{-1/3}w}N^{-2/3} }{e^{N^{-1/3}w'}- e^{N^{-1/3}z}}G_{\zeta_x,t}(N^{-1/3}w,N^{-1/3}z)\frac{\exp(S_{a,r}(N^{-1/3}z) - MN^{-1/3}z - x\alpha^{-1} z)}{\exp(S_{a,r}(N^{-1/3}w) - MN^{-1/3}w - x\alpha^{-1} w)}  .
\end{split}
\end{equation}
We deform the contour $\gamma'_Z(A)$ inside the disc of radius $A^{-1}$ so that it is still piecewise smooth and contained in $\{z \in \mathbb{C}: Re(z) \geq 1/2\}$. Observe that the poles of $g^{N,x}_{w,w'}(z)$ in the right complex half-plane come from $G_{\zeta_x,t}$ and are thus located at least a distance of order $N^{1/3}$ from the imaginary axis. The later implies that if we perform, a deformation inside a disc of radius $O(1)$ we will not cross any poles provided $N$ is sufficiently large. In particular, our deformation does not change the value of $g^{N,x}_{w,w'}$ for all large $N$ by Cauchy's Theorem. We will continue to call the new contour by $\gamma'_Z(A)$. Deforming the contour has the advantage of shifting integration away from the singularity point $0$.\\

{\raggedleft {\bf Step 2.}} Let us now fix $w,w' \in \gamma'_W(A)$ and $z \in \gamma'_Z(A)$ and show that 
\begin{equation}\label{blue0}
\lim_{N \rightarrow \infty} g^{N,x}_{w,w'}(z)  =  g^{\infty,x}_{w,w'}(z) \mbox{, where } g^{\infty,x}_{w,w'}(z): = \frac{\exp(\alpha^{-3} z^3/3 - \alpha^{-3}w^3/3 - x\alpha^{-1}z + x\alpha^{-1}w)}{(w-z)(w'-z)}.
\end{equation}
One readily observes that
\begin{equation}\label{blue1}
\lim_{N \rightarrow \infty } e^{N^{-1/3}w}\frac{ {\bf 1}_{\{\max(|Im(w)|, |Im(w')|, |Im(z)|) \leq N^{1/3}\pi\}}} {N^{1/3}\left( e^{N^{-1/3}w'}- e^{N^{-1/3}z}\right)} = \frac{1}{w' - z}
\end{equation}
Using (\ref{cubicW2}) we get
\begin{equation}\label{blue2}
\lim_{N \rightarrow \infty}\frac{\exp(S_{a,r}(N^{-1/3} z) - MN^{-1/3}z- x\alpha^{-1} z)}{\exp(S_{a,r}(N^{-1/3}w) - MN^{-1/3}w - x\alpha^{-1} w)} = \exp(\alpha^{-3}(z^3/3 - w^3/3) - x\alpha^{-1}z+ x\alpha^{-1}w).
\end{equation}
From (\ref{Gandf})  we have
\begin{equation}\label{blue3}
 N^{-1/3}G_{\zeta_x,t}(N^{-1/3} w,N^{-1/3} z) =N^{-1/3} \sum_{ k \in \mathbb{Z}}\frac{\pi (-\log t)^{-1} (-\zeta_x)^{2\pi k \iota /(-\log t)}}{\sin (- \pi f_t(N^{-1/3}z + 2\pi k\iota ,N^{-1/3}w))}.
\end{equation}
Using a similar argument as in (\ref{gzest}) we see that for $|k| \geq 1$ and all large $N$ one has
$$\left| \frac{\pi (-\log t)^{-1} (-\zeta_x)^{2\pi k \iota /(-\log t)}}{\sin (- \pi f_t(N^{-1/3}z + 2\pi k\iota ,N^{-1/3}w))} \right| \leq C e^{-2|k|\pi/(-\log (t))}.$$
The latter is summable over $|k| \geq 1$ and killed by $N^{-1/3}$ in (\ref{blue3}). We see that the only non-trivial contribution in (\ref{blue3}) comes from $k = 0$ and so
\begin{equation}\label{blue4}
\lim_{N \rightarrow \infty}N^{-1/3}G_{\zeta_x,t}(N^{-1/3} w,N^{-1/3} z)  = \lim_{N \rightarrow \infty}N^{-1/3}\frac{\pi (-\log t)^{-1}}{\sin \left( \frac{\pi N^{-1/3}}{- \log t}(w - z) \right)} =\frac{1}{w - z}.
\end{equation}
Equations  (\ref{blue1}), (\ref{blue2}) and (\ref{blue4}) imply (\ref{blue0}).\\

{\raggedleft {\bf Step 3.}} We now proceed to find estimates of the type necessary in Lemma \ref{FDKCT} for the functions $g^{N,x}_{w,w'}(z)$. If $z$ is outside of the disc of radius $A^{-1}$ (so lies on the undeformed portion of $\gamma'_Z(A)$) and $|Im(z)| \leq \pi N^{1/3}$ the estimates of (\ref{cubicZ1}) are applicable (provided $A$ is small enough) and so we obtain
\begin{equation}\label{cutoff1}
|\exp(S_{a,r}(N^{-1/3} z) - MN^{-1/3}z- x\alpha^{-1} z)| \leq C\exp(-c|z|^3 + |x\alpha^{-1}z|),
\end{equation}
where $C,c$ are positive constants. Next suppose $z$ is contained the disc of radius $A^{-1}$ around the origin (i.e. lies on the portion of $\gamma'_Z(A)$ we deformed). From (\ref{cubicW2}) we know that $ S_{a,r}(N^{-1/3} z) - MN^{-1/3}z$ is $O(1)$. This implies that $ |\exp(S_{a,r}(N^{-1/3} z) - MN^{-1/3}z- x\alpha^{-1} z)|$ is bounded and the estimate (\ref{cutoff1}) continues to hold with possibly a bigger $C$. 

If $w \in \gamma_W'(A)$ and $|Im(w)| \leq \pi N^{1/3}$ the estimates of (\ref{cubicW1}) are applicable (provided $A$ is small enough) and we obtain
\begin{equation}\label{cutoff2}
|\exp(-S_{a,r}(N^{-1/3} w) +MN^{-1/3}w+ x\alpha^{-1} w)| \leq C\exp(-c|w|^3 + |x\alpha^{-1}w|),
\end{equation}
for some $C,c >0$. 

If $A$ is sufficiently small so that $A\pi < \min(1, -\log t/ 10)$, then the estimates in Lemma \ref{techies} hold (with $u = (1/2)N^{-1/3}$ and $U = A\pi$), provided  $\max(|Im(w)|, |Im(w')|, |Im(z)|) \leq N^{1/3}\pi $, $z \in \gamma'_Z(A)$ and $w', w\in \gamma'_W(A)$. Consequently, for some positive constant $C$ we have\\
\begin{equation}\label{cutoff3}
\left| \frac{N^{-1/3}}{e^{N^{-1/3}w'}- e^{N^{-1/3}z}}N^{-1/3} G_{\zeta_x,t}(N^{-1/3}w,N^{-1/3}z) \right| \leq C.
\end{equation}

Observe that $e^{N^{-1/3}w} = O(1)$ when $|Im(w)| \leq \pi N^{1/3}$ and $w \in \gamma_W'(A)$. Combining the latter with (\ref{cutoff1}), (\ref{cutoff2}) and (\ref{cutoff3}) we see that whenever $\max(|Im(w)|, |Im(w')|, |Im(z)|) \leq N^{1/3}\pi$, $z \in \gamma'_Z(A)$ and $w', w\in \gamma'_W(A)$  we have
\begin{equation}\label{cutoff4}
|g^{N,x}_{w,w'}(z)| \leq C\exp(-c|w|^3 + |x\alpha^{-1}w|)\exp(-c|z|^3 + |x\alpha^{-1}z|),
\end{equation}
where $C,c$ are positive constants. Since $g^{N,x}_{w,w'}(z) = 0$ when $\max(|Im(w)|, |Im(w')|, |Im(z)|) > N^{1/3}\pi$ we see that (\ref{cutoff4}) holds for all $z \in \gamma'_Z(A)$ and $w', w\in \gamma'_W(A)$. \\

{\raggedleft{\bf Step 4.}} We may now apply Lemma \ref{FDKCT} to the functions $g^{N,x}_{w,w'}(z)$ with $F_1(w) = C\exp(-c|w|^3 + |x\alpha^{-1}w|) = F_2(w)$ and $\Gamma_1 = \gamma_W'(A)$, $\Gamma_2 = \gamma_Z'(A)$. Notice that the functions $F_i$ are integrable on $\Gamma_i$ by the cube in the exponential. As a consequence we see that if we set $\tilde K^{\infty}_x(w,w') :=  \int_{\gamma_Z'(A)} g^{\infty,x}_{w,w'}(z)\frac{dz}{2\pi \iota}$, then $\tilde K^N_x$ and $\tilde K^\infty_x$ satisfy the conditions of Lemma \ref{FDCT}, from which we conclude that 
\begin{equation}\label{cutoff5}
\lim_{r \rightarrow 1^{-}} \det( I - \tilde K_{\zeta_x})_{L^2(\gamma_W(A))} = \det( I - \tilde K^\infty_x)_{L^2(\gamma'_W(A))}.
\end{equation}
What remains to be seen is that $\det( I - \tilde K^\infty_x)_{L^2(\gamma'_W(A))} = F_{GUE}(x)$.\\

We have that $\det(I - \tilde K^\infty_{x})_{L^2(\gamma'_W)} = 1 + \sum_{n = 1}^\infty \frac{(-1)^n}{n!}H(n) ,$ where
$$H(n) = \sum_{\sigma \in S_n} sign(\sigma) \int_{\gamma'_W} ... \int_{\gamma'_W} \int_{\gamma'_Z} ... \int_{\gamma'_Z}  \prod_{i = 1}^n\frac{\exp(\alpha^{-3} Z_i^3/3 - \alpha^{-3}W_i^3/3 - x\alpha^{-1}Z_i + x\alpha^{-1}W_i)}{(W_i-Z_i)(W_{\sigma(i)} - Z_i)} \frac{dW_i}{2\pi \iota} \frac{dZ_i}{2\pi \iota}.$$
Consider the change of variables $z_i = \alpha^{-1} Z_i$, $w_i =  \alpha^{-1}  W_i$. Then we have
$$H(n) = \sum_{\sigma \in S_n} sign(\sigma) \int_{\gamma'_W} ... \int_{\gamma'_W} \int_{\gamma'_Z} ... \int_{\gamma'_Z}  \prod_{i = 1}^n\frac{\exp( z_i^3/3 - w_i^3/3 - xz_i + xw_i)}{(w_i-z_i)(w_{\sigma(i)} - z_i)} \frac{dw_i}{2\pi \iota} \frac{dz_i}{2\pi \iota}.$$
Consequently, we see that
$$\det(I - \tilde K^{\infty}_{x})_{L^2(\gamma'_W)} = \det (I + \tilde K_{Ai})_{L^2(\gamma'_W)},$$
where 
\begin{equation}\label{almostAiry}
\tilde K_{Ai}(w,w') =  \int_{\gamma'_Z} \frac{\exp( z^3/3 - w^3/3 - xz + xw)}{(w-z)(z - w')} \frac{dz}{2\pi \iota}.
\end{equation}
The proof of Lemma 8.6 in \cite{BCF} can now be repeated verbatim to show that
$$\det (I + \tilde K_{Ai})_{L^2(\gamma'_W)} = \det (I - K_{Ai})_{L^2(x,\infty)} = F_{GUE}(x).$$ 
This suffices for the proof.

\section{CDRP asymptotics }\label{Section7}
In this section, we obtain alternative formulas for the $t$-Laplace transform of $t^{1-\lambda_1'}$, with $\lambda$ distributed according to the Hall-Littlewood measure with parameters $a,r,t \in (0,1)$ (see Section \ref{HL}), which are more suitable for asymptotics in the CDRP case. Subsequently, we analyze the formulas that we get in the limiting regime $r,t \rightarrow 1^-$, and prove Theorem \ref{TCDRP}. In what follows, we will denote by $\mathbb{P}_{a,r,t}$ and $\mathbb{E}_{a,r,t}$ the probability distribution and expectation with respect to the Hall-Littlewood measure with parameters $a,r,t \in (0,1)$.

%
\subsection{A formula suitable for asymptotics: CDRP case}\hspace{2mm}\\ 

In this section we use Proposition \ref{prelimit} to derive an alternative representation for \\
$\mathbb{E}_{a,r,t} \left[ \frac{1}{(\zeta  t^{1-\lambda'_1}; t)_\infty} \right]$. In what follows we will make reference to the following contours
\begin{definition}\label{newcontours}
For $t \in (0,1)$ define
$$\gamma^t_- = \{-1/4 + \iota y: y \in [ - \pi (-\log t)^{-1}, \pi (-\log t)^{-1}] \},  \hspace{2mm} \gamma^t_+ = \{1/4 + \iota y: y \in [ - \pi (-\log t)^{-1}, \pi (-\log t)^{-1}] \},$$
$$ \gamma_- = \{-1/4 + \iota y: y \in \mathbb{R}\} \mbox{ and } \gamma_+ = \{1/4 + \iota y: y \in \mathbb{R}\}.$$
All contours are oriented upward.
\end{definition}

The following proposition is very similar to Proposition \ref{goodprelimitTW} and will be the starting point of our proof of Theorem \ref{TCDRP} the same way Proposition \ref{goodprelimitTW} was the starting point of the proof of Theorem \ref{TW}.
\begin{proposition}\label{goodprelimitCDRP}
Suppose $a,r,t\in(0,1)$ and let $\delta > 0$ be such that $a < ( 1 - \delta)$. If $t$ is sufficiently close to $1^-$ then for $\zeta \in \mathbb{C}\backslash \mathbb{R}^+$ one has
$$\mathbb{E}_{a,r,t} \left[ \frac{1}{(\zeta t^{1-\lambda'_1}; t)_\infty} \right] = \det (I - \hat K_\zeta)_{L^2( \gamma_-^t)}.$$
The kernel $\hat K_\zeta(W,W')$ has the integral representation
\begin{equation}\label{kernelart}
\hat K_\zeta(W,W') = \frac{t^{-W}}{2\pi \iota}\int_{\gamma^t_+}G_\zeta(W,Z) \frac{(-\log t)(-\zeta)^{Z-W} dZ}{ t^{-W'}- t^{-Z}}\frac{\exp\left(S_{a,r}((-\log t)Z)\right)}{ \exp\left( S_{a,r}((-\log t)W)\right)},
\end{equation}
where $G_{\zeta}(W,Z) = \sum_{k \in \mathbb{Z}}\frac{\pi(-\zeta)^{-2\pi k\iota /\log t}}{\sin(\pi(W - Z) + 2\pi k\iota /\log t)}$,  and the contours $\gamma_-^t$ and $\gamma^t_+$ are as in Definition \ref{newcontours}.
\end{proposition}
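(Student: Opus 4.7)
The plan is to mirror the derivation of Proposition \ref{goodprelimitTW}, with a contour and change of variables adapted to the CDRP regime. Starting from Proposition \ref{prelimit}, I would take $C_0$ to be the positively oriented circle of radius $t^{-3/4}$ centered at the origin; for $t$ close enough to $1^-$ one has $t^{-3/4}=\max(t^{-1}(1-\delta/2),t^{-3/4})$, so $C_0$ lies on the inner boundary of the annulus $A_{\delta,t}$. Next, I would introduce the change of variables $w = -t^{-W-1}$, so that $wt = -t^{-W}$. Parametrizing $W = -1/4 + \iota y$ with $y\in[-\pi/(-\log t),\pi/(-\log t)]$, the image $w = -t^{-3/4}e^{\iota y(-\log t)}$ traverses $C_0$ counterclockwise as $W$ traverses $\gamma^t_-$ upward, so the positive orientation is preserved, and $dw = -(\log t)\,w\,dW$.

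For the inner $s$-integral in the kernel $K_u$ of Proposition \ref{prelimit}, I would substitute $s = Z - W$, so the $s$-contour $1/2+\iota\mathbb{R}$ becomes the $Z$-contour $1/4+\iota\mathbb{R}$. A direct computation gives
\begin{equation*}
wt^{-s} - w' = t^{-1}\bigl(t^{-W'} - t^{-Z}\bigr), \qquad \frac{\pi}{\sin(-\pi s)} = \frac{\pi}{\sin(\pi(W-Z))},
\end{equation*}
and, using Lemma \ref{sartbasicPrev} for $t$ close enough to $1$ that $(-\log t)W,(-\log t)Z\in D_\delta$,
\begin{equation*}
\prod_{j=0}^{\infty}\frac{(1-ar^{j}(wt)^{-1})(1-ar^{j}(wt)t^{-s})}{(1-ar^{j}(wt)^{-1}t^{s})(1-ar^{j}(wt))} = \exp\!\bigl(S_{a,r}((-\log t)Z)-S_{a,r}((-\log t)W)\bigr).
\end{equation*}

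The key step is then to periodize the $Z$-contour. Setting $T = 2\pi/(-\log t)$, everything in the integrand apart from $(-\zeta)^{Z-W}$ and $1/\sin(\pi(W-Z))$ is invariant under $Z \mapsto Z + \iota T$. I would break the line integral $\int_{1/4-\iota\infty}^{1/4+\iota\infty}dZ$ into the sum $\sum_{k\in\mathbb{Z}}\int_{1/4+\iota T(k-1/2)}^{1/4+\iota T(k+1/2)}dZ$, translate each piece by $-\iota kT$ to land on $\gamma_+^t$, and recombine by the definition of $G_\zeta(W,Z)$. The interchange of sum and integral is justified exactly as in the proof of Proposition \ref{goodprelimitTW} (compare (\ref{Gest})): writing $-\zeta = re^{\iota\theta}$ with $|\theta|<\pi$, one has $|(-\zeta)^{\iota kT}| = e^{-\theta kT}$, while $|\sin(\pi(W-Z-\iota kT))|^{-1}$ decays like $e^{-\pi|k|T}$ for $Z\in\gamma_+^t$ and $W\in\gamma_-^t$, yielding uniform absolute summability.

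Assembling these computations produces the identity $-\hat K_\zeta(W,W') = (-\log t)\,w\,K_u(w,w')$, and after factoring $(-\log t)w_i$ out of each row of the determinant in the Fredholm expansion one obtains
\begin{equation*}
\det(I+K_u)_{L^2(C_0)} = \det(I-\hat K_\zeta)_{L^2(\gamma_-^t)},
\end{equation*}
which combined with Proposition \ref{prelimit} gives the claim. The main obstacle, beyond careful bookkeeping with the various factors of $t$ and signs, is the uniform $\sin$-denominator bound along $\gamma_+^t$ required to justify the periodization; all the remaining ingredients are direct analogues of the GUE computation carried out for Proposition \ref{goodprelimitTW}.
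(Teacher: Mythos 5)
Your proposal is correct and follows essentially the same route as the paper's proof: the circle of radius $t^{-3/4}$, the substitution $w=-t^{-W-1}$ onto $\gamma_-^t$ with the Jacobian $(-\log t)w$, the shift $Z=s+W$ sending $1/2+\iota\mathbb{R}$ to $1/4+\iota\mathbb{R}$, the periodization of the $Z$-line into $\gamma_+^t$ justified by the $(-\zeta)^{\iota kT}/\sin$ decay estimate, and the identification of the infinite product with $\exp(S_{a,r}((-\log t)Z)-S_{a,r}((-\log t)W))$ via Lemma \ref{sartbasicPrev}. The sign bookkeeping $(-\log t)w\,K_u(w,w')=-\hat K_\zeta(W,W')$ and the resulting passage from $\det(I+K_u)$ to $\det(I-\hat K_\zeta)$ also match the paper.
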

\begin{proof}
We consider the contour $C : = \{ - t^{-3/4}e^{\iota \theta} : \theta \in [-\pi, \pi]\}$, which is a positively oriented smooth contour, contained in the annulus $A_{\delta, t}$ in the statement of Proposition \ref{prelimit} for $t$ sufficiently close to $1^-$.  Consequently, from (\ref{prelimform}) we know that
$$\mathbb{E}_{a,r,t} \left[ \frac{1}{( (1-t) ut^{-\lambda'_1}; t)_\infty} \right] = 1 + \sum_{n = 1}^{\infty} \frac{1}{n!}\int_{C}\cdots \int_{C} \det \left[ K_u(w_i, w_j)\right] _{i,j = 1}^n \prod_{i = 1}^{n}  \frac{dw_i}{2 \pi \iota },$$
where $K_u(w,w')$ is as in (\ref{Ku}) and the above sum is absolutely convergent. The $n$-th summand equals
$$\frac{1}{n!}\int_{-\pi}^{\pi}\hspace{-3mm}\cdots \int_{-\pi}^\pi \det \left[ K_u \left(-t^{-3/4}e^{\iota \theta_i }, -t^{-3/4}e^{\iota \theta_j }\right)\right] _{i,j = 1}^n \prod_{i = 1}^{n}  \frac{-t^{-3/4}\iota e^{\iota\theta_i }d\theta_i}{2 \pi \iota }.$$
Setting $y_i = (-1/4) + \iota \theta_i/ (-\log t) $, the above becomes
$$\frac{(-1)^n}{n!}\int_{\gamma_-^t}\cdots \int_{\gamma_-^t} \det \left[ K_u \left(-t^{-3/4}t^{-y_i-1/4}, -t^{-3/4}t^{-y_j-1/4} \right)\right] _{i,j = 1}^n \prod_{i = 1}^{n}  \frac{t^{-3/4}t^{-y_i - 1/4}(- \log t) dy_i}{2 \pi \iota },$$
which can be rewritten as
$$\frac{(-1)^n}{n!}\int_{\gamma_-^t}\cdots \int_{\gamma_-^t} \det \left[(- \log t)t^{-1}t^{-y_i} K_u \left(-t^{-1}t^{-y_i}, -t^{-1}t^{-y_j} \right)\right] _{i,j = 1}^n \prod_{i = 1}^{n}  \frac{ dy_i}{2 \pi \iota },$$
and the latter is still absolutely summable over $n$. \\

To conclude the proof it suffices to show that for $W,W' \in \gamma_-^t$ and $\zeta = (t^{-1} - 1)u$ one has
\begin{equation}\label{kerneleq}
(- \log t)t^{-1}t^{-W} K_u \left(-t^{-1}t^{-W}, -t^{-1}t^{-W'} \right) = \hat K_\zeta(W,W').
\end{equation}
We observe that the LHS of (\ref{kerneleq}) equals
$$\frac{(- \log t)t^{-1}t^{-W}}{2\pi \iota} \int_{1/2 -\iota\infty}^{1/2 + \iota\infty}ds   \frac{\Gamma(-s)\Gamma(1+s) (-\zeta)^s}{ t^{-1}t^{-W'}-t^{-1}t^{-W}t^{-s}}\prod_{j = 0}^\infty \frac{(1+ ar^{j}t^{W})(1 + ar^{j}t^{-W}t^{-s })}{(1 + ar^{j}t^{W}t^{s})(1 + ar^{j}t^{-W})}.$$
We set $Z = s + W$,  and use that $Re(W) = -\frac{1}{4}$ for $W \in \gamma_-^t$ together with Euler's Gamma reflection formula (\ref{Euler}) to see that the above equals
$$\frac{t^{-W}}{2\pi \iota}  \int_{\gamma_+}\frac{\pi dZ}{\sin ( \pi(W-Z))} \frac{(-\log t)(-\zeta)^{Z-W}}{ t^{-W'}- t^{-Z}}\prod_{j = 0}^\infty \frac{(1+ ar^{j}t^{W})(1 + ar^{j}t^{-Z})}{(1 + ar^{j}t^{Z})(1 + ar^{j}t^{-W})}.$$

We observe that $t^{\iota s}$ is periodic in $s$ with period $T = \frac{2\pi}{-\log t}$. This allows us to rewrite the above formula as
$$\sum_{k \in \mathbb{Z}} \frac{t^{-W}}{2\pi \iota} \int_{\gamma_+^t}  \frac{\pi (-\zeta)^{\iota kT} }{\sin ( \pi(W- \iota kT- Z))} \frac{(-\log t)(-\zeta)^{Z-W}dZ}{ t^{-W'}- t^{-Z}}\prod_{j = 0}^\infty \frac{(1+ ar^{j}t^{W})(1 + ar^{j}t^{-Z})}{(1 + ar^{j}t^{Z})(1 + ar^{j}t^{-W})}.$$
Let $(-\zeta) = re^{\iota \theta}$ with $|\theta| < \pi$. Then, using a similar argument as in (\ref{gzest}), we have for $|k| \geq 1$ 
\begin{equation}\label{Gest2}
\left|\frac{\pi(-\zeta)^{\iota k T}}{\sin ( \pi(W+ \iota kT- Z)}\right| = \left|\frac{ \pi e^{-\theta k T}}{\sin (\pi(W+ \iota kT- Z))}\right| \leq Ce^{|k| T(|\theta| - \pi)},
\end{equation}
where $C$ is some positive constant, independent of $Z$ and $W$, provided $Z \in \gamma_+^t$ and $W\in \gamma_-^t$. The latter is clearly summable over $k$, which allows us to change the order of the sum and the integrals above and conclude that the LHS of (\ref{kerneleq}) equals
$$\frac{t^{-W}}{2\pi \iota}\int_{\gamma_+^t} \left[\sum_{k \in \mathbb{Z}} \frac{\pi(-\zeta)^{\iota k T}}{\sin ( \pi(W+ \iota kT- Z)}\right] \frac{(-\log t)(-\zeta)^{Z-W}dZ}{ t^{-W'}- t^{-Z}}\prod_{j = 0}^\infty \frac{(1+ ar^{j}t^{W})(1 + ar^{j}t^{-Z})}{(1 + ar^{j}t^{Z})(1 + ar^{j}t^{-W})}.$$
From Lemma \ref{sartbasicPrev} we have that if $t$ is sufficiently close to $1$ (so that $(-\log t)z \in D_\delta$ when $|Re(z)| = 1/4$) we have 
$$\prod_{j = 0}^\infty \frac{(1+ ar^{j}t^{W})(1 + ar^{j}t^{-Z})}{(1 + ar^{j}t^{Z})(1 + ar^{j}t^{-W})}= \frac{\exp\left(S_{a,r}((-\log t)Z)\right)}{ \exp\left( S_{a,r}((-\log t)W)\right)}.$$
Substituting this above we see that the LHS of (\ref{kerneleq}) equals
$$\frac{t^{-W}}{2\pi \iota}\int_{\gamma_+^t} \left[\sum_{k \in \mathbb{Z}} \frac{\pi(-\zeta)^{\iota k T}}{\sin ( \pi(W+ \iota kT- Z)}\right] \frac{(-\log t)(-\zeta)^{Z-W}dZ}{ t^{-W'}- t^{-Z}} \frac{\exp\left(S_{a,r}((-\log t)Z)\right)}{ \exp\left( S_{a,r}((-\log t)W)\right)},$$
which equals the RHS of (\ref{kerneleq}) once we identify the sum in the square brackets with $G_\zeta(W,Z)$.
\end{proof}

%
\subsection{Convergence of the $t$-Laplace transform (CDRP case) and proof of Theorem \ref{TCDRP}} \label{S72} \hspace{2mm}\\

Here we state the regime, in which we scale parameters and obtain an asymptotic formula for $\mathbb{E}_{a,r,t}\left[  \frac{1}{(\zeta t^{1-\lambda_1'}; t)_\infty}\right]$ in the CDRP case. The formula is analyzed below and used to prove Theorem \ref{TCDRP}. In the CDRP case the $t$-Laplace transform asymptotically behaves like the usual Laplace transform. The latter (as will be shown carefully below) allows one to obtain the limiting CDF of the properly scaled first column of of a partition distributed according to the Hall-Littlewood measure with parameters  $a,r,t$ and match it with $F_{CDRP}$ (see Definition \ref{TWDef}).\\

We summarize the limiting regime and some relevant expressions.
\begin{enumerate}[label = \arabic{enumi}., leftmargin=1.0cm]
\item We fix a positive parameter $\kappa$ and let $r \rightarrow 1^-$ and $t$ $\rightarrow 1^{-}$ so that $\kappa = \frac{- \log t}{(1-r)^{1/3}}$.
\item We assume that $a$ depends on $r$ and for some $\delta > 0$ we have $\lim_{r \rightarrow 1^-} a(r) = a(1) \in (0,1-\delta]$.
\item We denote by $N(r) = \frac{1}{1-r}$, $M(r) = 2\sum_{k = 1}^\infty (-1)^{k+1}a(r)^k\frac{1}{1 - r^k} $ and $\alpha = \left[ \frac{a(1)}{(1 + a(1))^2}\right]^{-1/3}$.
\end{enumerate}

\begin{equation}\label{zetadefC}
\mbox{ For a given $x \in \mathbb{R}$ set $\zeta_x = - t^{M(r) - x\kappa^{-1} N(r)^{1/3}}$} .
\end{equation}

The following result is the key fact for the limiting fluctuations of the first column of a partition distributed according to the Hall-Littlewood measure with parameters $a,r,t$ in the CDRP case. It shows that under the scaling regime described above the Fredholm determinant (and hence the $t$-Laplace transform) appearing in Proposition \ref{goodprelimitCDRP} converges to the Laplace transform of $\mathcal{F}(T,0) + T/24$ (see Definition \ref{TWDef} and equation (\ref{KCDRP})). The latter, as demonstrated below, implies convergence of the usual Laplace transforms and leads to a weak convergence necessary for the proof of Theorem \ref{TCDRP}.
\begin{theorem} \label{mainThm2}
Let $x \in \mathbb{R}$ be given and let $\zeta_x$ be given as in (\ref{zetadefC}). Then we have
\begin{equation}\label{mainLimitC}
\lim_{r \rightarrow 1^{-}} \det( I - \hat K_{\zeta_x})_{L^2(\gamma^t_-)} = \det( I -  K_{CDRP})_{L^2(\mathbb{R}^+)},
\end{equation}
where $K_{CDRP}$ is given in (\ref{KCDRPDef}) with $T = 2\kappa^3 \alpha^{-3}$, $\gamma^t_-$ is as in Definition \ref{newcontours}, and $\hat K_{\zeta_x}$ is as in (\ref{kernelart}).
\end{theorem}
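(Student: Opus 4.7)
The plan is to mirror the four-step strategy used in the proof of Theorem \ref{mainThm}, but with contours tailored to the CDRP scaling. I would begin with the change of variables $W = w/\kappa$, $W' = w'/\kappa$, $Z = z/\kappa$, which converts $\gamma^t_\pm$ (of imaginary half-length $\pi/(-\log t) = \pi N^{1/3}/\kappa$) into contours $\tilde\gamma^N_\pm = \{\pm\kappa/4 + \iota y : |y| \leq \pi N^{1/3}\}$ exhausting the infinite vertical lines $\tilde\gamma_\pm = \{\pm\kappa/4 + \iota\mathbb{R}\}$. Absorbing the Jacobian, the Fredholm determinant becomes $\det(I - \tilde K^N_x)_{L^2(\tilde\gamma^N_-)}$ for a rescaled kernel $\tilde K^N_x(w,w') := \kappa^{-1}\hat K_{\zeta_x}(w/\kappa, w'/\kappa)$, which I would extend to $L^2(\tilde\gamma_-)$ by multiplying the $z$-integrand by the indicator $\mathbf{1}_{\{|\mathrm{Im}\,w|,|\mathrm{Im}\,w'|,|\mathrm{Im}\,z| \leq \pi N^{1/3}\}}$.

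The heart of the pointwise analysis is a triple cancellation. Writing out $(-\zeta_x)^{Z-W}$ using $\log(-\zeta_x) = (M - x\kappa^{-1}N^{1/3})\log t$ and $(-\log t) = \kappa N^{-1/3}$ gives $(-\zeta_x)^{(z-w)/\kappa} = \exp\bigl(-MN^{-1/3}(z-w) + x\kappa^{-1}(z-w)\bigr)$, and the $M$-linear term cancels against the leading piece in the expansion $S_{a,r}(N^{-1/3}\xi) - MN^{-1/3}\xi \to \alpha^{-3}\xi^3/3$ supplied by Lemma \ref{cubicEst}, part (\ref{cubicW2}); the Jacobian $(-\log t)/(t^{-w'/\kappa} - t^{-z/\kappa})$ converges to $\kappa/(w'-z)$; and the theta-like sum $G_{\zeta_x}(w/\kappa, z/\kappa)$ collapses to its $k=0$ term $\pi/\sin(\pi(w-z)/\kappa)$ because each $k\neq 0$ summand is of order $\exp(-2\pi|k|N^{1/3}/\kappa)$. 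The pointwise limit is
\begin{equation*}
\tilde K^\infty_x(w,w') = \frac{1}{\kappa}\int_{\tilde\gamma_+}\frac{\pi/\sin(\pi(w-z)/\kappa)}{w'-z}\exp\!\Bigl(\tfrac{x(z-w)}{\kappa} + \tfrac{\alpha^{-3}(z^3-w^3)}{3}\Bigr)\frac{dz}{2\pi\iota}.
\end{equation*}

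To pass to the limit at the level of Fredholm determinants via Lemmas \ref{FDKCT} and \ref{FDCT}, I would need a dominating function on $\tilde\gamma_\pm$. Cubic decay of the exponential factors comes from the CDRP analog of the estimates (\ref{cubicZ1})--(\ref{cubicW1}) in Lemma \ref{cubicEst} (to be supplied in Section \ref{SSart}), while bounds on the sine reciprocal $G_{\zeta_x}/\kappa$ and on $(-\log t)/(t^{-w'/\kappa}-t^{-z/\kappa})$ come from CDRP versions of Lemma \ref{techies} with the parameter $u = 1/2$ independent of $N$. I expect this to be the principal analytic obstacle: because the contours $\tilde\gamma_\pm$ remain at a fixed lateral separation $\kappa/2$ and grow to infinity, the bound on $G_{\zeta_x}/\kappa$ must be uniform in $N$ in both the real and imaginary directions, and the decay of the sine reciprocal as $|\mathrm{Im}\,z|$ grows must be quantified carefully enough to survive the entirety of the contour.

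The final step is to identify $\det(I - \tilde K^\infty_x)_{L^2(\tilde\gamma_-)}$ with $\det(I - K_{CDRP})_{L^2(\mathbb{R}_+)}$. I would apply the Mellin--Barnes identity $\pi/\sin(\pi b) = \int_\mathbb{R}e^{-rb}/(1+e^{-r})\,dr$, valid since $\mathrm{Re}((z-w)/\kappa) = 1/2$, and then substitute $r = x + t/\sigma$ with $\sigma = \alpha/\kappa = (2/T)^{1/3}$ so that the Fermi-type factor becomes $e^x/(e^x+e^{-t/\sigma})$. After the further rescaling $u = w/\alpha$, $u' = w'/\alpha$, $v = z/\alpha$, the $v$-integral assumes the form $\int_{\alpha^{-1}\tilde\gamma_+}e^{v^3/3-vt}(u'-v)^{-1}dv/(2\pi\iota)$, which by Cauchy deformation to the standard Airy contour and the Laplace-transform identity equals $-\int_0^\infty e^{\eta u'}\mathrm{Ai}(\eta+t)\,d\eta$. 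This yields the factorization $\tilde K^\infty_x(w,w') = \int_0^\infty A(w,\eta)B(\eta,w')\,d\eta$ with $B(\eta,w') = e^{\eta w'/\alpha}$ and $A$ incorporating the Airy and Fermi factors; the cyclic identity $\det(I - AB)_{L^2(\tilde\gamma_-)} = \det(I - BA)_{L^2(\mathbb{R}_+)}$ then reduces matters to a second contour integral $\int_{\tilde\gamma_-}e^{w(\eta+t)/\alpha - w^3/(3\alpha^3)}dw/(2\pi\iota) = \alpha\,\mathrm{Ai}(\eta+t)$, which converts $BA$ into $K_{CDRP}$ and finishes the identification.
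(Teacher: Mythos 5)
Your proposal is correct and follows essentially the same route as the paper: the same indicator-function extension of the truncated contours, the same triple cancellation (the $M$-linear term against $S_{a,r}$ via the expansion in Lemma \ref{cubicEstC}, the Jacobian factor, and the collapse of the theta-like sum $G_{\zeta_x}$ to its $k=0$ term), the same use of Lemmas \ref{FDKCT} and \ref{FDCT} with Gaussian-type dominating functions from Lemmas \ref{cubicEstC} and \ref{techiesC}, and a final identification that is precisely the content of Lemma 8.8 of \cite{BCF}, which the paper invokes verbatim after the $\sigma$-rescaling while you inline its Mellin--Barnes/Airy-factorization proof and perform the $\kappa$-rescaling at the outset instead of at the end. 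The only small correction: along the fixed vertical contours $\mathrm{Re}(z^3)$ decays only quadratically in $|\mathrm{Im}(z)|$, so the CDRP analog of the GUE estimates (Lemma \ref{cubicEstC}) yields bounds of the form $C-c|z|^2$ rather than cubic decay, which is still amply sufficient for the dominating functions you require.
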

{\raggedleft In what follows we prove Theorem \ref{TCDRP}, assuming the validity of Theorem \ref{mainThm2}, whose proof is postponed until the next section.}\\

We begin by summarizing the key results from our previous work that we will use as well as stating a couple of lemmas.
From Proposition \ref{goodprelimitCDRP} and Theorem \ref{mainThm2} we have that under the scaling described in the beginning of this section and any $x \in \mathbb{R}$ 
\begin{equation}\label{WCC1}
\lim_{r \rightarrow 1^-} \mathbb{E}_{a,r,t} \left[ \frac{1}{((-t)\cdot  t^{M(r) - \kappa^{-1}x N(r)^{1/3}}t^{-\lambda'_1}; t)_\infty} \right] = \det( I - K_{CDRP})_{L^2(\mathbb{R}^+)}.
\end{equation}
Set $\hat \xi_r := (-\log t)\left( \lambda_1' - M(r)\right) - \log( 1-t)$ and observe that (\ref{WCC1}) is equivalent to
\begin{equation}\label{WCC3}
\lim_{r \rightarrow 1^-} \mathbb{E}_{a,r,t} \left[ \frac{1}{((- t)(1-t) \cdot e^{\hat\xi_r + x}; t)_\infty} \right]  = \det( I - K_{CDRP})_{L^2(\mathbb{R}^+)}.
\end{equation}

The function that appears on the LHS under the expectation in (\ref{WCC3}) has the following asymptotic property.
\begin{lemma}\label{seqfC}
For $ t\in (0,1)$ and $x \geq 0$ let 
\begin{equation}\label{seqfCeq}
g_t(x) := \frac{1}{((- t)(1-t) x; t)_\infty} = \prod_{k = 1}^\infty \frac{1}{1 + (1-t)x t^ k}.
\end{equation}
Then $g_t (x) \rightarrow e^{-x} $ uniformly on $\mathbb{R}_{\geq 0}$ as $t \rightarrow 1^-$.
\end{lemma}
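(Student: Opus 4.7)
The plan is to take logarithms, treat a moderate range of $x$ by Taylor expansion, and use the monotonicity of $g_t$ to handle the tail. Set $y = (1-t)x$ and write
\begin{equation*}
\phi_t(x) := -\log g_t(x) = \sum_{k=1}^\infty \log\bigl(1 + yt^k\bigr).
\end{equation*}
From $\log(1+u) \leq u$ one obtains immediately $\phi_t(x) \leq \sum_{k\geq 1} yt^k = tx$, and the second-order Taylor estimate $|\log(1+u)-u| \leq u^2$ (valid on $[0,1]$) yields, whenever $y \leq 1$,
\begin{equation*}
|\phi_t(x) - tx| \;\leq\; \sum_{k=1}^\infty (yt^k)^2 \;=\; \frac{y^2 t^2}{1-t^2} \;\leq\; (1-t)x^2.
\end{equation*}
Combined with $|tx - x| = (1-t)x$, this gives the master estimate $|\phi_t(x) - x| \leq (1-t)x + (1-t)x^2$.

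Next I will introduce the cutoff $L(t) := (1-t)^{-1/3}$. For $x \in [0, L(t)]$ the master estimate bounds $|\phi_t(x) - x|$ by $(1-t)^{2/3} + (1-t)^{1/3}$, which tends to $0$ uniformly in $x$ as $t \to 1^-$. Since $|g_t(x) - e^{-x}| = e^{-x}|e^{x-\phi_t(x)} - 1|$ and $e^{-x} \leq 1$, this delivers uniform convergence of $g_t(x)$ to $e^{-x}$ on the interval $[0, L(t)]$.

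For the tail $x \geq L(t)$ I will exploit the fact that each factor $(1 + (1-t)xt^k)^{-1}$ in the product defining $g_t$ is decreasing in $x \geq 0$, so $g_t$ itself is decreasing. Consequently,
\begin{equation*}
\sup_{x \geq L(t)} |g_t(x) - e^{-x}| \;\leq\; g_t(L(t)) + e^{-L(t)}.
\end{equation*}
The previous step gives $g_t(L(t)) - e^{-L(t)} \to 0$, and since $L(t) \to \infty$ both $g_t(L(t))$ and $e^{-L(t)}$ tend to $0$. Merging the two ranges yields uniform convergence on $\mathbb{R}_{\geq 0}$.

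The only delicate point is calibrating $L(t)$: it must grow fast enough that $e^{-L(t)} \to 0$, yet slowly enough that the Taylor remainder $(1-t)L(t)^2$ still vanishes. Any $L(t) = (1-t)^{-\alpha}$ with $\alpha \in (0, 1/2)$ works, and $\alpha = 1/3$ is a convenient choice.
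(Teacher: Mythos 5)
Your proof is correct. The master estimate is sound: for $y=(1-t)x\leq 1$ the bounds $\log(1+u)\leq u$ and $u-\log(1+u)\leq u^2$ on $[0,1]$ give $|\phi_t(x)-x|\leq (1-t)x+(1-t)x^2$, the window $[0,(1-t)^{-1/3}]$ then yields the uniform bound $(1-t)^{2/3}+(1-t)^{1/3}$, and the monotonicity of $g_t$ and $e^{-x}$ correctly controls the tail $x\geq L(t)$ (any exponent $\alpha\in(0,1/2)$ indeed works). The route differs from the paper's: there, one first observes that monotonicity of $g_t$ and $e^{-x}$ reduces the claim to uniform convergence on compact subsets of $\mathbb{R}_{\geq 0}$, and then the compact-set statement is not proved from scratch but imported from the standard $q$-exponential limit $(-(1-t)x;t)_\infty^{-1}\to e^{-x}$ (equation (10.2.7) in Andrews), after rewriting $g_t(x)=(1+(1-t)x)\,(-(1-t)x;t)_\infty^{-1}$ to account for the missing $k=0$ factor. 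Your tail argument is essentially the same monotonicity device, just with a $t$-dependent cutoff $L(t)\to\infty$ instead of a fixed large $L$; the genuinely different part is the core estimate, where you replace the citation by a direct logarithmic Taylor expansion. What your version buys is self-containedness and an explicit rate, namely uniform convergence at speed $O((1-t)^{1/3})$ on a window growing like $(1-t)^{-1/3}$; what the paper's version buys is brevity, by leaning on a classical $q$-series fact rather than re-deriving it.
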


\begin{proof}
From the monotonicity of $g_t(x)$ and $e^{-x}$ it suffices to show the result only for compact subsets of $\mathbb{R}_{\geq 0}$. Using (10.2.7) in \cite{Andrews} one has that $\frac{1}{(-(1-t) x; t)_\infty} \rightarrow e^{-x} $ uniformly on compact subsets of $\mathbb{R}_{\geq 0}$ as $t \rightarrow 1^-$. Consequently,
$$g_t(x) = \frac{1 + (1-t)x}{(-(1-t) x; t)_\infty} =  \frac{1 }{(-(1-t) x; t)_\infty} + \frac{ (1-t)x}{(-(1-t) x; t)_\infty}$$
also converges uniformly to $e^{-x}$ on compact subsets $\mathbb{R}_{\geq 0}$ as $t \rightarrow 1^-$.
\end{proof}

We will use the following elementary probability lemma.
\begin{lemma}\label{probC}
Suppose that $f_n$ is a sequence of functions, $f_n: \mathbb{R}_{\geq 0} \rightarrow [0,1]$, such that $f_n(x) \rightarrow e^{-x}$ uniformly on $\mathbb{R}_{\geq 0}$. Let $X_n$ be a sequence of non-negagive random variables such that for each  $c > 0$ one has
$$\lim_{n \rightarrow \infty}\mathbb{E}[f_n(cX_n)] = p(c),$$
and assume that $p(c) = \mathbb{E} \left[ e^{-cX}\right]$ for some non-negative random variable $X$. Then we have
$$\lim_{n \rightarrow \infty} \mathbb{E}\left[e^{-cX_n}\right] = \mathbb{E} \left[ e^{-cX}\right].$$
In particular, $X_n$ converges in distribution to $X$ as $n \rightarrow \infty$.
\end{lemma}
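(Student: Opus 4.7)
The plan has two steps, corresponding to the two assertions: convergence of the usual Laplace transforms of $X_n$, and then weak convergence of $X_n$ to $X$.

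\textbf{Step 1: Convergence of Laplace transforms.} Since $f_n \to e^{-x}$ uniformly on $\mathbb{R}_{\geq 0}$, there exists a sequence $\epsilon_n \to 0$ with $\sup_{x \geq 0}|f_n(x) - e^{-x}| \leq \epsilon_n$. For any fixed $c > 0$, the random variable $cX_n$ is almost surely non-negative, so $|f_n(cX_n) - e^{-cX_n}| \leq \epsilon_n$ pointwise. Since both $f_n(cX_n)$ and $e^{-cX_n}$ lie in $[0,1]$, they are integrable, and taking expectations yields $|\mathbb{E}[f_n(cX_n)] - \mathbb{E}[e^{-cX_n}]| \leq \epsilon_n$. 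Combining with the hypothesis $\mathbb{E}[f_n(cX_n)] \to p(c) = \mathbb{E}[e^{-cX}]$ gives
\begin{equation*}
\lim_{n \rightarrow \infty} \mathbb{E}[e^{-cX_n}] = \mathbb{E}[e^{-cX}] \qquad \text{for every } c > 0.
\end{equation*}

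\textbf{Step 2: Weak convergence.} This is a direct application of the continuity theorem for Laplace transforms on $[0,\infty)$: if $Y_n$ are non-negative random variables whose Laplace transforms $\mathcal{L}_n(c) = \mathbb{E}[e^{-cY_n}]$ converge pointwise on $(0,\infty)$ to a function $\mathcal{L}(c)$ that is the Laplace transform of a probability distribution, then $Y_n$ converges weakly to that distribution. In our setting the pointwise limit $\mathbb{E}[e^{-cX}]$ is by assumption the Laplace transform of the law of $X$, so $X_n \Rightarrow X$. (Equivalently, one can argue by tightness: $\mathbb{P}(X_n > R) \leq (1-e^{-cR})^{-1}(1 - \mathbb{E}[e^{-cX_n}]) \to (1-e^{-cR})^{-1}(1 - \mathbb{E}[e^{-cX}])$, which is small uniformly in $n$ for $R$ large, and then uniqueness of Laplace transforms identifies every subsequential limit with the law of $X$.)

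\textbf{Anticipated difficulty.} There is no real obstacle: both steps are essentially one-line invocations of standard facts. The only mildly delicate point is being careful in Step 2 about whether one can invoke convergence of Laplace transforms directly or needs to argue tightness by hand; since $\mathbb{E}[e^{-cX}]$ is a bona fide Laplace transform of a probability measure, the continuity theorem applies without additional effort. The hypothesis $c > 0$ (rather than $c \geq 0$) is harmless because the Laplace transform of a probability measure is continuous at $0$, and convergence on any set with an accumulation point suffices for the continuity theorem.
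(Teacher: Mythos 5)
Your proof is correct and follows essentially the same route as the paper: the same uniform-convergence estimate $|\mathbb{E}[e^{-cX_n}] - \mathbb{E}[f_n(cX_n)]| \leq \sup_{x\geq 0}|e^{-x}-f_n(x)|$ for Step 1, and the standard continuity theorem for Laplace transforms of non-negative random variables (the paper cites Theorem 4.3 of Kallenberg) for Step 2. The only minor quibble is the parenthetical tightness bound, where smallness requires taking $c$ small as well as $R$ large, but this does not affect your main argument.
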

\begin{proof}
Let $\epsilon > 0$ be given. We observe that
$$ \left| \mathbb{E} \left[ e^{-cX_n}\right] - \mathbb{E} \left[ f_n(cX_n)\right] \right| \leq \mathbb{E} \left[ \left|e^{-cX_n} - f_n(cX_n) \right|\right] \leq \sup_{x \in \mathbb{R}_{\geq 0}}\left|e^{-x} - f_n(x) \right| \rightarrow 0 \mbox{ as } n \rightarrow \infty.$$
In the second inequality we used that $X_n$ are non-negative and the last statement holds by assumption.

It follows that for every $c > 0$ (and clearly also when $c = 0$) 
$$\lim_{n \rightarrow \infty} \mathbb{E}\left[ e^{-cX_n}\right] = \mathbb{E} \left[ e^{-cX}\right].$$
The above statement implies $X_n$ converges to $X$ in distribution by Theorem 4.3 in \cite{Kallen}.
\end{proof}

\begin{proof}(Theorem \ref{TCDRP}) 
Let $r_n$ be a sequence converging to $1^-$ and set $t_n$ so that $(-\log t_n) = \kappa (1-r_n)^{1/3}$. Define
$$f_n(x) =\frac{1}{((- t_n)(1-t_n) \cdot x; t_n)_\infty} \mbox{ and } X_n = e^{\hat\xi_{r_n }}.$$

Lemma \ref{seqfC} shows that $f_n$ satisfy the conditions of Lemma \ref{probC}. In addition, recall that by (\ref{KCDRP}) we have
$$\det (I - K_{CDRP})_{L^2(\mathbb{R}_+)} = \mathbb{E} \left[ e^{-e^x\exp (\mathcal{F}(T,0) + T/24)} \right].$$
where $\mathcal{F}$ is as in Definition \ref{TWDef} and $T = 2\kappa^3\alpha^{-3}$. Consequently, Lemma \ref{probC} and (\ref{WCC3}) show that for $x \in \mathbb{R}$ one has
\begin{equation}\label{WCC4}
\lim_{n \rightarrow \infty} \mathbb{E}_{a,r_n,t_n}\left[ e^{-e^x\exp( \hat\xi_{r_n} )}\right] = \mathbb{E} \left[ e^{-e^x\exp (\mathcal{F}(T,0) + T/24)}. \right]
\end{equation}

In particular, $\exp(\hat\xi_{r} )$ converges weakly to $\exp (\mathcal{F}(T,0) + T/24) =e^{T/24} \mathcal{Z}(T,0) $. In \cite{MN} it was shown that $\mathcal{Z}(T,0)$ is a.s. positive and has a smooth density, thus we conclude that for each $x \in \mathbb{R}_+$ we have
\begin{equation*}
\lim_{r \rightarrow 1^-} \mathbb{P}_{a,r,t} (\exp(\hat \xi_{r}  ) \leq x) = \mathbb{P} ( \exp (\mathcal{F}(T,0) + T/24 )\leq x) .
\end{equation*}
Taking logarithms we see that for each $x \in \mathbb{R}$ we have
\begin{equation}\label{WCC7}
\lim_{r \rightarrow 1^-} \mathbb{P}_{a,r,t} (\hat \xi_{r}  \leq x) = \mathbb{P} ( \mathcal{F}(T,0) + T/24 \leq x) .
\end{equation}

Consider $a(r) =  r^{(1 + |\lfloor \tau N(r) \rfloor|)/2}$. Since, $\lim_{r \rightarrow 1^-} r^{N(r)} = e^{-1},$ we see that $\lim_{r \rightarrow 1^-} a(r) = a(1) = e^{-|\tau|/2} < 1$ (whenever $\tau \neq 0$). This means that $\alpha^{-1} := \left[ \frac{a(1)}{(1 + a(1))^2}\right]^{1/3} =  \left[ \frac{e^{-|\tau|/2}}{(1 + e^{-|\tau|/2})^2}\right]^{1/3} =: \chi$. From Section \ref{HL} we conclude that 
\begin{equation*}
\begin{split}
\mathbb{P}_{HL}^{r,t} \left( \frac{ \lambda'_1( \lfloor \tau N(r) \rfloor)  - M(r) }{ \chi^{-1} N(r)^{1/3} (T/2)^{-1/3}} + \log(N(r)^{1/3}\chi^{-1}(T/2)^{-1/3}) \leq x\right) =\\
 \mathbb{P}_{a,r,t} \left( \frac{ \lambda'_1 - M(r)}{ \alpha^{-1} N(r)^{1/3} (T/2)^{-1/3}} + \log(N(r)^{1/3}\alpha^{-1}(T/2)^{-1/3}) \leq x\right) \\
\end{split}
\end{equation*}
The latter implies that if we set $\kappa = (T/2)^{1/3}\alpha$ we will get
\begin{equation*}\label{WCC5}
\mathbb{P}_{HL}^{r,t} \left( \frac{ \lambda'_1( \lfloor \tau N(r) \rfloor)  - M(r) }{ \chi^{-1} N(r)^{1/3} (T/2)^{-1/3}} + \log(N(r)^{1/3}\chi^{-1}(T/2)^{-1/3}) \leq x\right)  = \mathbb{P}_{a,r,t}( \hat \xi_r + \log((1-t)\kappa^{-1}N(r)^{1/3})  \leq x).
\end{equation*}
One observes that $(1-t)\kappa^{-1}N(r)^{1/3} = \frac{1-t}{-\log t} \rightarrow 1$ as $r \rightarrow 1^-$ and so from (\ref{WCC7}) we conclude that
\begin{equation*}\label{WCC5}
\lim_{r \rightarrow 1^-}\mathbb{P}_{HL}^{r,t} \left( \frac{ \lambda'_1( \lfloor \tau N(r) \rfloor)  - M(r) }{ \chi^{-1} N(r)^{1/3} (T/2)^{-1/3}} + \log(N(r)^{1/3}\chi^{-1}(T/2)^{1/3}) \leq x\right)  = \mathbb{P} ( \mathcal{F}(T,0) + T/24 \leq x) .
\end{equation*}

From (\ref{delM}) we have $c_1= M(r) = 2 N(r)\log (1 + a(1)) + O(1) = 2N(r) \log (1 +e^{-|\tau|/2}) + O(1).$ Substituting this above concludes the proof of the theorem.
\end{proof}

%
\subsection{Proof of Theorem \ref{mainThm2}}\label{S73}\hspace{2mm}\\

We split the proof of Theorem \ref{mainThm2} into three steps. In the first step we rewrite the LHS of (\ref{mainLimitC}) in a suitable form for the application of Lemmas \ref{FDCT} and \ref{FDKCT} and identify the pointwise limit of the integrands. In the second step we provide dominating functions, which are necessary to apply the lemmas. In the third step we obtain a limit for the LHS of (\ref{mainLimitC}), subsequently we use a result from \cite{BCF}, to show that the limit we obtained is in fact $\det( I -  K_{CDRP})_{L^2(\mathbb{R}^+)}$.

In Steps 1 and 2 we will require some estimates, which we summarize in Lemmas \ref{cubicEstC} and \ref{techiesC} below. The proofs are postponed until Section \ref{SSart}.
\begin{lemma}\label{cubicEstC}
Let $t$ be sufficiently close to $1^-$. Then for all large $N$ we have
\begin{equation}\label{cubicZ1C}
Re(S_{a,r} ((-\log t)z) - M(r)(-\log t)z) \leq C -c|z|^2 \mbox{ for all }  z \in \gamma_+^t\mbox{ and }
\end{equation}
\begin{equation}\label{cubicW1C}
Re(S_{a,r} ((-\log t)z) - M(r)(-\log t)z) \geq  c|z|^2 - C \mbox{ for all } z \in \gamma_-^t. \\
\end{equation}
In the above $C, c > 0$ depends on $\delta$. In addition, we have
\begin{equation}\label{cubicW2C}
\lim_{N \rightarrow \infty}S_{a,r}((-\log t)u) - M(r)(-\log t)u = u^3\kappa^3\alpha^{-3}/3  \mbox{ for all } u \in \mathbb{C}. \\
\end{equation}
\end{lemma}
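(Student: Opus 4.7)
All three assertions follow from a single identity. Expanding each logarithm in the definition $S_{a,r}(w) = \sum_{j\ge 0}[\log(1+ar^je^w) - \log(1+ar^je^{-w})]$ as a power series in $ar^je^{\pm w}$ (legitimate once $ae^{|\Re w|}<1$, which holds on both contours for $t$ close enough to $1$ since $a<1-\delta$) and interchanging the $j$- and $l$-summations gives
\[
  S_{a,r}(w)\;=\;2\sum_{l=1}^{\infty}\frac{(-1)^{l+1}a^l}{l(1-r^l)}\sinh(lw),
\]
so that isolating the coefficient of $w$ produces
\[
  S_{a,r}(w)-M(r)w\;=\;2\sum_{l=1}^{\infty}\frac{(-1)^{l+1}a^l}{l(1-r^l)}\bigl(\sinh(lw)-lw\bigr).
\]
I will also use the uniform estimate $(1-r^l)^{-1} = N/l + O(1)$, verified by splitting $l\le N$ (where $1-r^l\ge c\,l(1-r)$ since $r^{l-1}\ge r^N\to e^{-1}$) and $l>N$ (where $1-r^l\ge 1-e^{-1}$ and $N/l\le 1$); the extremal values of $1/(1-e^{-x})-1/x$ on $x\in(0,\infty)$ give an explicit uniform constant.

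\textbf{Pointwise limit (\ref{cubicW2C}).} Substitute $w=(-\log t)u = \kappa u N^{-1/3}$ and Taylor expand $\sinh(lw)-lw = (lw)^3/6 + O((lw)^5 e^{l|w|})$. The $w^3$-coefficient equals $\tfrac{1}{3}\sum_l(-1)^{l+1} a^l l^2/(1-r^l)$, which by the estimate above equals $\tfrac{N}{3}\sum_l(-1)^{l+1}a^l l + O(1) \to \tfrac{N}{3}\cdot \tfrac{a(1)}{(1+a(1))^2} = \tfrac{N}{3}\alpha^{-3}$ as $r\to 1^-$. Combining with $w^3=\kappa^3 u^3/N$ yields the stated limit $\tfrac{1}{3}\kappa^3\alpha^{-3}u^3$. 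The higher Taylor terms in $\sinh$ contribute $O(N^{-2/3})$ because the geometric factor $a^l$ dominates the polynomial blow-up, using $ae^{|w|}<1$ for large $N$.

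\textbf{Uniform bounds (\ref{cubicZ1C})-(\ref{cubicW1C}).} Parametrize $z\in\gamma_\pm^t$ as $z=\pm\tfrac14+iy$ and put $\epsilon=(-\log t)/4$, $\phi=(-\log t)y\in[-\pi,\pi]$, so that $w=(-\log t)z = \pm\epsilon+i\phi$. Then $\Re\sinh(lw) = \pm\sinh(l\epsilon)\cos(l\phi)$, and the algebraic decomposition $\sinh(l\epsilon)\cos(l\phi)-l\epsilon = l\epsilon(\cos(l\phi)-1) + (\sinh(l\epsilon)-l\epsilon)\cos(l\phi)$ splits $\Re[S_{a,r}(w)-M(r)w]$ on $\gamma_+^t$ into a principal part and a cubic-in-$\epsilon$ remainder. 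The remainder is bounded by $C\epsilon^3\sum_l a^l l^2 e^{l\epsilon}/(1-r^l)$, which is $O(1)$ since $\epsilon^3 N=\kappa^3/64$ and $ae^\epsilon<1$ for large $N$. In the principal part, replacing $(1-r^l)^{-1}$ by $N/l$ costs only $O(1)$ (the $O(1)$ discrepancy is summed against an absolutely convergent series in $a^l$), identifying the leading behaviour as
\[
  \frac{\kappa N^{2/3}}{2}\sum_{l=1}^\infty\frac{(-1)^{l+1}a^l(\cos(l\phi)-1)}{l}\;=\;\frac{\kappa N^{2/3}}{4}\,\log\frac{1+2a\cos\phi+a^2}{(1+a)^2}.
\]
The inequalities $\log(1-x)\le -x$ and $1-\cos\phi\ge 2\phi^2/\pi^2$ make this $\le -c_0\kappa N^{2/3}\phi^2$, and the substitution $\phi^2 N^{2/3}=\kappa^2 y^2$ together with $|z|^2=1/16+y^2$ yields (\ref{cubicZ1C}) with any $c$ strictly smaller than $2a(1)\kappa^3/(\pi^2(1+a(1))^2)$ for $N$ large. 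Inequality (\ref{cubicW1C}) follows from (\ref{cubicZ1C}) by noting that $S_{a,r}$ is odd, so $\Re[S_{a,r}(w)-M(r)w]$ at $w=-\epsilon+i\phi\in(-\log t)\gamma_-^t$ equals the negative of the same quantity at $-\bar w=\epsilon+i\phi\in(-\log t)\gamma_+^t$.

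\textbf{Main obstacle.} The delicate point is the uniform control of the two error sources---the tail $\sinh(l\epsilon)-l\epsilon$ and the discrepancy $(1-r^l)^{-1}-N/l$---when summed against the alternating Hall--Littlewood series, \emph{uniformly} in $z$ over a contour whose imaginary extent grows like $N^{1/3}$. Both errors are ultimately tamed by the exponential factor $a^l$ with $a<1-\delta$, which is where the hypothesis on $a$ enters essentially; without it neither the $\sinh$-remainder nor the $(1-r^l)^{-1}\rightsquigarrow N/l$ replacement can be absorbed into a single $O(1)$ constant.
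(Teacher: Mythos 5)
Your proof is correct, and it reaches (\ref{cubicZ1C})--(\ref{cubicW2C}) by a genuinely different route than the paper. The paper's argument (Section \ref{AD}) is local-plus-monotone: it uses the coefficient bounds of Lemma \ref{sartcoeff} to control the Taylor tail of $S_{a,r}$ only on a small disc $|(-\log t)z|<\epsilon$, computes $\mathrm{Re}\bigl(c_3(-\log t)^3z^3\bigr)$ exactly on $\mathrm{Re}\,z=1/4$ to extract the $-c|z|^2$ decay there, and then transports the bound to the full contour (whose imaginary extent is of order $N^{1/3}$) by the vertical-line monotonicity of $\mathrm{Re}\,S_{a,r}$ established in Lemma \ref{monotoneC}, comparing $z=1/4+\iota s$ with $\tilde z=1/4+\iota Ks$. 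You instead use the exact resummation $S_{a,r}(w)=2\sum_{l\ge1}\frac{(-1)^{l+1}a^l}{l(1-r^l)}\sinh(lw)$ (a repackaging of the coefficient formula in Lemma \ref{sartcoeff}), evaluate the leading term on the contour in closed form as $\frac{\kappa N^{2/3}}{4}\log\frac{1+2a\cos\phi+a^2}{(1+a)^2}$, and obtain the quadratic decay uniformly over the whole contour from $1-\cos\phi\ge 2\phi^2/\pi^2$, with the $\sinh$ tail and the replacement of $(1-r^l)^{-1}$ by $N/l$ absorbed into $O(1)$ via $a\le 1-\delta$ and $\epsilon^3N=\kappa^3/64$; your treatment of (\ref{cubicW2C}) and the deduction of (\ref{cubicW1C}) from (\ref{cubicZ1C}) by oddness coincide with the paper's. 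Your route makes Lemma \ref{monotoneC} unnecessary for the CDRP bounds and gives explicit constants; the paper's route buys reuse of machinery (coefficient bounds, descent/monotonicity lemmas) already set up for the GUE case. Two small points to tighten: the cleanest justification of the uniform claim $(1-r^l)^{-1}=N/l+O(1)$ is $l(1-r)\ge 1-r^l\ge 1-e^{-l/N}$ (since $r\le e^{-1/N}$), which gives $0\le (1-r^l)^{-1}-N/l\le 1$, whereas the inequality $1-r^l\ge c\,l(1-r)$ for $l\le N$ by itself only controls the ratio, not the difference; and your chain of inequalities justifies the decay constant only up to $a(1)\kappa^3/(\pi^2(1+a(1))^2)$, half the value you quote, which is immaterial since the lemma requires only some $c>0$.
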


\begin{lemma}\label{techiesC}
Let $t \in (1/2,1)$. Then we can find a universal constant $C$ such that
\begin{equation}\label{yellow1C}
\left| \frac{1}{e^z - e^w}\right| \leq C \mbox{ and } \sum_{k \in \mathbb Z} \left| \frac{1}{\sin(\pi (w - \frac{2\pi k\iota}{-\log t} - z))}\right| \leq C \mbox{ when $Re(z) = 1/4$ and $Re(w) = -1/4$}.
\end{equation}
\end{lemma}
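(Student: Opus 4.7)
The plan is to handle the two estimates separately, both by elementary means. The key observation is that we are evaluating very explicit functions on vertical lines $\mathrm{Re}(z)=1/4$, $\mathrm{Re}(w)=-1/4$, so the moduli behave in a simple way.

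For the first bound, I would use the reverse triangle inequality. Since $\mathrm{Re}(z)=1/4$ and $\mathrm{Re}(w)=-1/4$ we have $|e^z|=e^{1/4}$ and $|e^w|=e^{-1/4}$, so
\[
|e^z - e^w|\;\ge\; |e^z| - |e^w| \;=\; e^{1/4} - e^{-1/4},
\]
which is a positive universal constant. Taking reciprocals gives $|1/(e^z-e^w)|\le (e^{1/4}-e^{-1/4})^{-1}$. No use of the hypothesis $t\in(1/2,1)$ is needed here.

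For the second bound, set $v_k := w - \tfrac{2\pi k\iota}{-\log t} - z$, so $\mathrm{Re}(\pi v_k)=-\pi/2$ and $\mathrm{Im}(\pi v_k)=\beta_k$, where
\[
\beta_k \;=\; \pi\,\mathrm{Im}(w-z) \;-\; \frac{2\pi^2 k}{-\log t}.
\]
Writing $\sin(-\pi/2+\iota\beta_k)=-\cos(\iota\beta_k)=-\cosh(\beta_k)$ gives $|\sin(\pi v_k)|=\cosh(\beta_k)\ge \tfrac12 e^{|\beta_k|}$. The remaining task is to lower-bound $|\beta_k|$ in terms of $|k|$ uniformly in $t$. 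By the definition of $\gamma^t_\pm$, $|\mathrm{Im}(w)|$ and $|\mathrm{Im}(z)|$ are each at most $\pi/(-\log t)$, so $|\pi\,\mathrm{Im}(w-z)|\le B$, where $B:= 2\pi^2/(-\log t)$. Since $\beta_k$ differs from $-Bk$ by at most $B$, one gets $|\beta_k|\ge (|k|-1)B$ for every $|k|\ge 1$.

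Assembling the estimate,
\[
\sum_{k\in\mathbb{Z}} \frac{1}{|\sin(\pi v_k)|} \;\le\; 2 + 4\sum_{k\ge 1} e^{-(k-1)B} \;=\; 2 + \frac{4}{1-e^{-B}}.
\]
The hypothesis $t\in(1/2,1)$ enters exactly here: it gives $-\log t<\log 2$, hence $B>2\pi^2/\log 2$, which is a fixed positive number bounded away from $0$. Therefore $1-e^{-B}$ is bounded below by a universal constant, and the sum is bounded by a universal constant $C$, completing the proof. There is no real obstacle; the only care required is in the bookkeeping of $|\beta_k|$ to make the bound explicit and independent of $t$ in the allowed range.
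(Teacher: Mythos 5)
Your first bound is fine: the reverse triangle inequality gives $|e^z-e^w|\ge e^{1/4}-e^{-1/4}$, a universal positive constant (the paper argues the same way, via $|e^z-e^w|\ge e^{1/4}-1\ge 1/4$). Your treatment of each sine term is also sound, and in fact a bit cleaner than the paper's: since $\mathrm{Re}(\pi v_k)=-\pi/2$ exactly, you get the identity $|\sin(\pi v_k)|=\cosh(\beta_k)\ge \frac{1}{2}e^{|\beta_k|}$, whereas the paper invokes the generic estimate $|1/\sin(\pi s)|\le c'e^{-\pi|\mathrm{Im}(s)|}$ valid when $\mathrm{dist}(s,\mathbb{Z})$ is bounded away from zero.

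There is, however, a gap in how you sum over $k$. The lemma assumes only $\mathrm{Re}(z)=1/4$ and $\mathrm{Re}(w)=-1/4$; it places no restriction on the imaginary parts, and the contours $\gamma^t_\pm$ appear nowhere in its hypotheses. Your step ``by the definition of $\gamma^t_\pm$, $|\pi\,\mathrm{Im}(w-z)|\le B$'' therefore imports an assumption that is not available, and without it the inequality $|\beta_k|\ge(|k|-1)B$ fails for the one or two values of $k$ closest to $\pi\,\mathrm{Im}(w-z)/B$: for instance, if $\pi\,\mathrm{Im}(w-z)=100B$ then $\beta_{100}=0$, while your bound would require $|\beta_{100}|\ge 99B$. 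As written you have proved the lemma only for $w\in\gamma^t_-$, $z\in\gamma^t_+$ (which happens to be all that is needed downstream, since the kernel carries an indicator restricting the imaginary parts), but not the statement as given. The repair is one line, and it is what the paper's proof does: for any real $a$ and spacing $B>0$, the numbers $|a-Bk|$, $k\in\mathbb{Z}$, listed in increasing order are at least $0,0,B,B,2B,2B,\dots$, so $\sum_{k\in\mathbb{Z}}e^{-|a-Bk|}\le 2\sum_{j\ge 0}e^{-jB}=\frac{2}{1-e^{-B}}$ uniformly in $a$. Combining this with your $\cosh$ estimate and $B=2\pi^2/(-\log t)>2\pi^2/\log 2$ for $t\in(1/2,1)$ gives the universal constant for arbitrary imaginary parts, i.e.\ the lemma in full.
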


{\raggedleft {\bf Step 1.}} Observe that the LHS of (\ref{mainLimitC}) can be rewritten as $\det (I -  \hat K_x^N)_{L^2(\gamma_-)}$, where
\begin{equation}\label{CDRPeq1}
\begin{split} &\hat K_x^N(w,w') =   \int_{\gamma_+} g^{N,x}_{w,w'}(z)\frac{dz}{2\pi \iota}, \mbox{ and }
 g^{N,x}_{w,w'}(z) ={\bf 1}_{\{\max(|Im(w)|, |Im(w')|, |Im(z)|) \leq (-\log t)^{-1}\pi\}} \times 
\\ &t^{-w}G_{\zeta_x}(w,z) \frac{(-\log t)}{ t^{-w'}- t^{-z}}\frac{\exp\left(S_{a,r}((-\log t)z) + M(\log t)z+ xz\right)}{ \exp\left( S_{a,r}((-\log t)w) + M(\log t)w + xw\right)} .
\end{split}
\end{equation}

Let us now fix $w,w' \in \gamma'_W(A)$ and $z \in \gamma'_Z(A)$ and show that 
\begin{equation}\label{blue0C}
\lim_{N \rightarrow \infty} g^{N,x}_{w,w'}(z)  =  g^{\infty,x}_{w,w'}(z) \mbox{, where } g^{\infty,x}_{w,w'}(z): = \frac{\pi }{\sin (\pi(z-w))} \frac{1}{z-w'}\frac{\exp(\alpha^{-3} \kappa^3 z^3/3 +xz ) }{\exp(\alpha^{-3} \kappa^3 w^3/3  +xw) }.
\end{equation}
One readily observes that
\begin{equation}\label{blue1C}
\lim_{N \rightarrow \infty } t^{-w} {\bf 1}_{\{\max(|Im(w)|, |Im(w')|, |Im(z)|) \leq(-\log t)^{-1}\pi\}}\frac{(-\log t)}{ t^{-w'}- t^{-z}}= \frac{1}{w' - z}
\end{equation}
Using (\ref{cubicW2C}) we get
\begin{equation}\label{blue2C}
\lim_{N \rightarrow \infty}\frac{\exp\left(S_{a,r}((-\log t)z) + M(\log t)z+ xz\right)}{ \exp\left( S_{a,r}((-\log t)w) + M(\log t)w + xw\right)} =\frac{\exp(\alpha^{-3} \kappa^3 z^3/3 +xz ) }{\exp(\alpha^{-3} \kappa^3 w^3/3  +xw) }.
\end{equation}
From the definition of $G_{\zeta_x}$ we have
\begin{equation}\label{blue3C}
G_{\zeta_x}( w, z) =\sum_{ k \in \mathbb{Z}}\frac{\pi  (-\zeta_x)^{2\pi k \iota /(-\log t)}}{\sin (\pi (w-z) + 2\pi k\iota/\log t ))}.
\end{equation}

Using a similar argument as in (\ref{gzest}) we see that for $|k| \geq 1$ and all large $N$ one has
$$\left| \frac{\pi  (-\zeta_x)^{2\pi k \iota /(-\log t)}}{\sin (\pi (w-z) + 2\pi k\iota/\log t )} \right| \leq C e^{-2|k|\pi/(-\log t)}.$$
The latter is summable over $|k| \geq 1$ and since $1/(-\log t)$ goes to infinity the sum goes to $0$. We see that the only non-trivial contribution in (\ref{blue3C}) comes from $k = 0$ and so
\begin{equation}\label{blue4C}
\lim_{N \rightarrow \infty}G_{\zeta_x}( w, z) = \lim_{N \rightarrow \infty}\frac{\pi }{\sin (\pi (w-z) )} =\frac{\pi }{\sin (\pi (w-z) )}.
\end{equation}
Equations  (\ref{blue1C}), (\ref{blue2C}) and (\ref{blue4C}) imply (\ref{blue0C}).\\

{\raggedleft {\bf Step 2.}} We now proceed to find estimates of the type necessary in Lemma \ref{FDKCT} for the functions $g^{N,x}_{w,w'}(z)$. If $z\in \gamma_+$ and $|Im(z)| \leq \pi (-\log t)^{-1}$ the estimates of (\ref{cubicZ1C}) are applicable and so we obtain
\begin{equation}\label{cutoff1C}
|\exp(S_{a,r}((-\log t)z) + M(\log t)z + xz)| \leq C\exp(-c|z|^2+ |xz|),
\end{equation}
where $C,c$ are positive constants.

If $w \in \gamma_-$ and $|Im(w)| \leq \pi (-\log t)^{-1}$ the estimates of (\ref{cubicW1C}) are applicable and we obtain
\begin{equation}\label{cutoff2C}
|\exp(-S_{a,r}((-\log t)w) - M(\log t)w - xw)| \leq C\exp(-c|w|^2+ |xw|),
\end{equation}
for some $C,c >0$. 

From Lemma \ref{techiesC} we have for some $C > 0$ that
\begin{equation}\label{cutoff3C}
\left| G_{\zeta_x}(w,z) \frac{(-\log t)}{ t^{-w'}- t^{-z}}\right| \leq C.
\end{equation}

Observe that $t^{-w} = O(1)$ when $|Im(w)| \leq \pi (-\log t)^{-1}$ and $w \in \gamma_-$. Combining the latter with (\ref{cutoff1C}), (\ref{cutoff2C}) and (\ref{cutoff3C}) we see that whenever $\max(|Im(w)|, |Im(w')|, |Im(z)|) \leq (-\log t)^{-1}\pi$, $z \in \gamma_+$ and $w', w\in \gamma_-$  we have
\begin{equation}\label{cutoff4C}
|g^{N,x}_{w,w'}(z)| \leq C\exp(-c|w|^2 + |xw|)\exp(-c|z|^2 + |xz|),
\end{equation}
where $C,c$ are positive constants. Since $g^{N,x}_{w,w'}(z) = 0$ when $\max(|Im(w)|, |Im(w')|, |Im(z)|) >(-\log t)^{-1}\pi$ we see that (\ref{cutoff4C}) holds for all $z \in \gamma_+$ and $w', w\in \gamma_+$. \\

{\raggedleft{\bf Step 3.}} We may now apply Lemma \ref{FDKCT} to the functions $g^{N,x}_{w,w'}(z)$ with $F_1(w) = C\exp(-c|w|^2+ |xw|) = F_2(w)$ and $\Gamma_1 = \gamma_-$, $\Gamma_2 = \gamma_+$. Notice that the functions $F_i$ are integrable on $\Gamma_i$ by the square in the exponential. As a consence we see that if we set $\hat K^{\infty}_x(w,w') :=  \int_{\gamma_-} g^{\infty,x}_{w,w'}(z)\frac{dz}{2\pi \iota}$, then $\hat K^N_x$ and $\tilde K^\infty_x$ satisfy the conditions of Lemma \ref{FDCT}, from which we conclude that 
\begin{equation}\label{cutoff5C}
\lim_{r \rightarrow 1^{-}} \det( I - \hat K_{\zeta_x})_{L^2(\gamma_-^t)} = \det( I - \hat K^\infty_x)_{L^2(\gamma_-)}.
\end{equation}
What remains to be seen is that $\det( I - \tilde K^\infty_x)_{L^2(\gamma_-)} = \det( I -  K_{CDRP})_{L^2(\mathbb{R}^+)}$.\\

We have that $\det(I - \tilde K^\infty_{x})_{L^2(\gamma_-)} = 1 + \sum_{n = 1}^\infty \frac{(-1)^n}{n!}H(n) ,$ where 
$$H(n) = \sum_{\rho \in S_n} sign(\rho) \int_{\gamma_-} \hspace{-2mm}\cdots \int_{\gamma_-}\int_{\gamma_+}\hspace{-2mm}\cdots \int_{\gamma_+} \prod_{i = 1}^n\frac{\pi e^{ \alpha^{-3} \kappa^3 Z_i^3/3 - \kappa^3\alpha^{-3}W_i^3/3 + xZ_i - xW_i} }{\sin (\pi(Z_i-W_i))(Z_i-W_{\rho(i)})}\frac{dW_i}{2\pi \iota} \frac{dZ_i}{2\pi \iota},$$
Put $\sigma = \alpha\kappa^{-1}$ and consider the change of variables $z_i = \sigma^{-1} Z_i$, $w_i =  \sigma^{-1}  W_i$. Then we have
$$H(n) = \sum_{\rho \in S_n} sign(\rho) \int_{\frac{-1}{4\sigma} + \iota \mathbb{R}} \cdots \int_{\frac{-1}{4\sigma} + \iota \mathbb{R}}\int_{\frac{1}{4\sigma} + \iota \mathbb{R}} \cdots  \int_{\frac{1}{4\sigma} + \iota \mathbb{R}} \prod_{i = 1}^n\frac{\sigma\pi e^{ z_i^3/3 - w_i^3/3 + \sigma x z_i - \sigma xw_i} }{\sin (\sigma \pi(z_i-w_i))(z_i-w_{\rho(i)})}\frac{dw_i}{2\pi \iota} \frac{dz_i}{2\pi \iota}.$$
Consequently, we see that
$$\det(I - \hat K^\infty_{x})_{L^2(\gamma_-)} = \det (I + \hat K_{CDRP})_{L^2(\frac{-1}{4} + \iota\mathbb{R})},$$
where 
\begin{equation}\label{almostCDRP}
\hat K_{CDRP}(w,w') = \frac{-1}{2\pi \iota} \int_{\frac{1}{4\sigma} + \iota \mathbb{R}} dz\frac{\sigma \pi e^{ \sigma x (z- w)} }{\sin (\sigma \pi(z-w))}\frac{e^{ z^3/3 - w^3/3}}{z-w'}
\end{equation}

The proof of Lemma 8.8 in \cite{BCF} can now be repeated verbatim to show that 
$$\det (I + \hat K_{CDRP})_{L^2(\frac{-1}{4} + \iota\mathbb{R})} = \det (I - K_{CDRP})_{L^2(\mathbb{R}_+)}.$$ 
This suffices for the proof.

\section{The function $S_{a,r}$} \label{SSart}
In this section we isolate some of the more technical results that were implicitly used in the proofs of Theorems \ref{TW} and \ref{TCDRP}. We start by summarizing some of the analytic properties of the function $S_{a,r}$ (see Definition \ref{S5DefFun}). Subsequently, we identify different ascent/descent contours and analyze the real part of the function along them. We finish with several estimates that played a central role in the proofs of Theorems \ref{mainThm} and \ref{mainThm2}.

%
\subsection{Analytic properties} \hspace{2mm}\\

We summarize some of the properties of $S_{a,r}$ in a sequence of lemmas. For the reader's convenience we recall the definition of $S_{a,r}$.
$$S_{a,r}(z) :=\sum_{j = 0}^{\infty}  \log ( 1 + ar^j e^z) - \sum_{j = 0}^{\infty}  \log ( 1 + ar^j e^{-z}),$$
where $a,r \in (0,1)$.

\begin{lemma}\label{sartbasic}
Suppose that $\delta \in (0,1)$. Consider $r \in (0,1)$ and $a \in (0, 1-\delta]$. Then there exists $\Delta'(\delta) > 0$ such that $S_{a,r}(z)$ is well-defined and analytic on $D_\delta = \{z \in\mathbb{C} : |Re(z)| < \Delta'\}$ and satisfies
\begin{equation}\label{S5expoS}
\exp( S_{a,r}(z))  =\prod_{j = 0}^{\infty}\frac{1 + ar^je^z}{1 + ar^je^{-z}} .
\end{equation}
\end{lemma}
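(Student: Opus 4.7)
The plan is to choose $\Delta'(\delta)$ small enough that every term $ar^je^{\pm z}$ has modulus strictly less than $1$ on $D_\delta$. Since $a\le 1-\delta$ and $r\in(0,1)$, for $z=x+\iota y$ with $|x|<\Delta'$ we have $|ar^je^{\pm z}|\le(1-\delta)r^je^{\Delta'}\le(1-\delta)e^{\Delta'}$. So if I pick, say, $\Delta'=\Delta'(\delta):=\tfrac12|\log(1-\delta)|$, then $\eta:=(1-\delta)e^{\Delta'}<1$, and $|ar^je^{\pm z}|\le\eta<1$ uniformly in $j\ge0$ and $z\in D_\delta$.

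With this choice, each factor $1+ar^je^{\pm z}$ lies in the open disk of radius $\eta<1$ centered at $1$, hence has strictly positive real part and in particular avoids the cut $(-\infty,0]$ of the principal logarithm. Therefore every summand $\log(1+ar^je^{\pm z})$ is a well-defined analytic function of $z$ on $D_\delta$. Moreover, the standard bound $|\log(1+w)|\le |w|/(1-|w|)$ for $|w|<1$ gives
\begin{equation*}
\bigl|\log(1+ar^je^{\pm z})\bigr|\le \frac{ar^je^{\Delta'}}{1-\eta}\le \frac{e^{\Delta'}}{1-\eta}\,r^j,
\end{equation*}
which is summable in $j$ and independent of $z\in D_\delta$. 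Hence both series in the definition of $S_{a,r}(z)$ converge absolutely and uniformly on $D_\delta$. By the Weierstrass theorem on uniform limits of holomorphic functions (each partial sum is analytic on $D_\delta$), $S_{a,r}$ is analytic on $D_\delta$.

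For the exponential identity, I would work at the level of partial sums. Because each logarithm is taken in the principal branch with argument in the disk $\{|w-1|<1\}\subset\mathbb{C}\setminus(-\infty,0]$, the identity $\exp(\log(1+w))=1+w$ holds pointwise, so
\begin{equation*}
\exp\!\left(\sum_{j=0}^{N}\log(1+ar^je^{z})-\sum_{j=0}^{N}\log(1+ar^je^{-z})\right)=\prod_{j=0}^{N}\frac{1+ar^je^{z}}{1+ar^je^{-z}}.
\end{equation*}
Letting $N\to\infty$, the left-hand side tends to $\exp(S_{a,r}(z))$ by continuity of $\exp$ and the uniform convergence established above, while the right-hand side converges (the log of the partial product is the partial sum, which has a limit, so the partial product converges to its exponential). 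This gives (\ref{S5expoS}) and completes the proof.

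There is essentially no serious obstacle here; the only thing to watch is that the principal branch of $\log$ really is applicable to each factor, which is why the dominant estimate $(1-\delta)e^{\Delta'}<1$ is chosen to force the arguments into the disk $|w-1|<1$, well away from the branch cut. Everything else (convergence, analyticity, exponentiation) follows by standard arguments.
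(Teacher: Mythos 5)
Your proof is correct and follows essentially the same route as the paper's: choose $\Delta'$ so that $(1-\delta)e^{\Delta'}<1$, note each factor then avoids the branch cut so the principal logarithms are analytic, sum the geometric-type bounds to get absolute convergence and analyticity, and exponentiate partial sums using continuity of $\exp$. The only (harmless) difference is that you obtain uniform convergence on all of $D_\delta$ directly, whereas the paper argues on compact subsets, which is equally sufficient.
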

\begin{proof}
We let $\Delta' > 0$ be such that $(1-\delta) e^{\Delta'} < 1$. Since $r \in (0,1)$, we have that $|ar^je^{\pm z}| < 1$ for $z \in D_\delta$ and $j \geq 0$. Consequently, $\log (1 + ar^j e^{\pm z})$ is a well-defined analytic function on $D_\delta$ for each $j \geq 0$.\\

Let $K \subset D_\delta$ be compact. Then there exists a constant $C(K) > 0$ such that $\left| e^{\pm z}\right| \leq C$ for all $z \in K$. It follows, that for all large $j$ one has $\left| e^{\pm z}ar^j\right| < 1/2$. Using that $| \log( 1 + w)| \leq 2|w|$ when $|w| < 1/2$ we see that $|\left (1 + ar^j e^{\pm z}) \right| \leq 2 C ar^j$ for all large $j$, which are summable. This implies that the sums $\sum_{j = 0}^{\infty}  \log ( 1 + ar^j e^{\pm z})$ are absolutely convergent on $K$. This in particular shows $S_{a,r}$ is well-defined, but also, since the absolutely convergent sum of analytic functions is analytic, we conclude that $S_{a,r}(z)$ is analytic on $D_\delta$.\\

Next let $z \in D_\delta$. From our work above 
$$S_{a,r}(z) = \lim_{M \rightarrow \infty}\left[ \sum_{j = 0}^{M}  \log ( 1 + ar^j e^{z}) - \sum_{j = 0}^{M}  \log ( 1 + ar^j e^{-z})\right].$$
By continuity of the exponential we see that
$$\exp(S_{a,r}(z)) = \lim_{M \rightarrow \infty} \exp\left[ \sum_{j = 0}^{M}  \log ( 1 + ar^j e^{z}) - \sum_{j = 0}^{M}  \log ( 1 + ar^j e^{-z})\right] = \lim_{M \rightarrow \infty}\prod_{j = 0}^{M}\frac{1 + ar^je^{z}}{1 + ar^je^{-z}} ,$$
which equals the RHS of (\ref{S5expoS}).
\end{proof}

\begin{lemma}\label{sartcoeff}
Assume the notation in Lemma \ref{sartbasic}. Then $S_{a,r}(z)$ is an odd function on $D_\delta$ and the power series expansion of $S_{a,r}(z)$ near zero has the form
\begin{equation}\label{Sanal}
S_{a,r} = c_1z + c_3z^3 + \cdots, \mbox{ where } c_{2l+1} = \frac{2}{(1-r)(2l+1)!}\sum_{k = 1}^{\infty} k^{2l}(-1)^{k+1}a^k\frac{1-r}{1 - r^k} \in \mathbb{R}.
\end{equation}
Moreover, for each $l \geq 1$ one has that
\begin{equation}\label{coeffest}
c_{2l + 1} \leq \frac{1}{(1-r)\delta^{2l + 1}}.
\end{equation}
\end{lemma}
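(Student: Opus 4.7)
The oddness is essentially tautological: swapping the two summands in the definition shows $S_{a,r}(-z)=-S_{a,r}(z)$, so the power series of $S_{a,r}$ at the origin contains only odd powers of $z$. The remaining content is (i) to identify the odd coefficients and (ii) to bound them.

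For the identification, expand each logarithm via the Taylor series $\log(1+w)=\sum_{k\ge 1}(-1)^{k+1}w^{k}/k$, valid because on $D_{\delta}$ one has $|ar^{j}e^{\pm z}|\le(1-\delta)e^{\Delta'}<1$ by the choice of $\Delta'(\delta)$ in Lemma~\ref{sartbasic}. This yields
\[
\log(1+ar^{j}e^{z})-\log(1+ar^{j}e^{-z})=\sum_{k\ge 1}\frac{(-1)^{k+1}(ar^{j})^{k}}{k}\bigl(e^{kz}-e^{-kz}\bigr).
\]
The double sum $\sum_{j,k}$ is absolutely convergent on compacta of $D_{\delta}$ (the same geometric bound dominates it), so by Fubini one may sum in $j$ first to obtain the factor $1/(1-r^{k})$ and then re-expand $e^{kz}-e^{-kz}=2\sinh(kz)=2\sum_{l\ge 0}(kz)^{2l+1}/(2l+1)!$. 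A second interchange, which is again routine on a neighborhood of $0$, gives
\[
S_{a,r}(z)=\sum_{l\ge 0}\frac{2z^{2l+1}}{(2l+1)!}\sum_{k\ge 1}\frac{(-1)^{k+1}a^{k}k^{2l}}{1-r^{k}},
\]
which is exactly \eqref{Sanal} after the bookkeeping $\frac{1}{1-r^{k}}=\frac{1}{1-r}\cdot\frac{1-r}{1-r^{k}}$.

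For the bound \eqref{coeffest} the plan is to estimate the inner sum in two elementary steps. First, writing $\frac{1-r}{1-r^{k}}=\frac{1}{1+r+\cdots+r^{k-1}}\le 1$ gives
\[
|c_{2l+1}|\le\frac{2}{(1-r)(2l+1)!}\sum_{k\ge 1}k^{2l}a^{k}.
\]
Second, comparing with the negative binomial series,
\[
\frac{1}{(1-a)^{2l+1}}=\sum_{k\ge 0}\binom{k+2l}{2l}a^{k}\ge\sum_{k\ge 1}\frac{k^{2l}}{(2l)!}a^{k},
\]
together with $1-a\ge\delta$, yields $\sum_{k\ge 1}k^{2l}a^{k}\le(2l)!/\delta^{2l+1}$. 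Combining these,
\[
|c_{2l+1}|\le\frac{2(2l)!}{(1-r)(2l+1)!\,\delta^{2l+1}}=\frac{2}{(1-r)(2l+1)\delta^{2l+1}}\le\frac{1}{(1-r)\delta^{2l+1}},
\]
where the last inequality uses $2l+1\ge 3$ (which is where the hypothesis $l\ge 1$ enters).

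The only non-mechanical step is justifying the two series interchanges; the reality and odd parity of $c_{2l+1}$ are immediate from the definition, and the estimate above avoids any Cauchy-type contour estimate (which, after a quick calculation, does not give a clean constant-free bound because $\log\frac{1-ae^{-\delta}}{1-ae^{\delta}}$ blows up like $\log(1/\delta)$).
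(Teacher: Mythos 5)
Your proposal is correct and follows essentially the same route as the paper: expand each logarithm into its Taylor series, justify the series interchanges by absolute convergence on a neighborhood of $0$, read off the odd coefficients, and bound them via $\frac{1-r}{1-r^k}\le 1$ together with the comparison $\sum_{k\ge 1}k^{2l}a^k\le (2l)!/(1-a)^{2l+1}$ (the paper phrases this through derivatives of the geometric series rather than the negative binomial expansion, and keeps a triple sum in $j,k,m$ instead of summing in $j$ first and using $\sinh$, but these are cosmetic differences). The final step, using $2/(2l+1)\le 1$ for $l\ge 1$ and $1-a\ge\delta$, matches the paper exactly.
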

\begin{proof}
The fact that $S_{a,r}$ is odd follows from its definition and Lemma \ref{sartbasic}. Next we consider $G(z) = \sum_{j = 0}^{\infty}  \log ( 1 + ar^j e^{z})$. On $D_\delta$ we have that $\left| ar^j e^z\right| < 1$ so we can use the power-series expansion for $\log (1 + x)$ to get
$$\sum_{j = 0}^{\infty}  \log ( 1 + ar^j e^z) = \sum_{j = 0}^{\infty}  \sum_{k = 1}^\infty \frac{(-1)^{k+1}}{k}(ar^j)^k e^{kz}.$$
Power-expanding the exponential, the above becomes
\begin{equation}\label{realsum}
\sum_{j = 0}^{\infty}  \sum_{k = 1}^\infty  \sum_{m = 0}^\infty \frac{1}{m!}\frac{(-1)^{k+1}}{k}(ar^j)^k k^m z^m.
\end{equation}
We will show that the above sum is absolutely convergent (provided $|z|$ is sufficiently small), which would allow us to freely rearrange the sum.\\

Consider $f(x) = \frac{1}{1-x} = \sum_{ j \geq 0 }x^j$ for $|x| < 1$. We know that for $|x| < 1$ and $m \geq 0$ we have
$$f^{(m)}(x) = \sum_{ j \geq 0 }(j+m)(j+m-1)\cdots( j+1)x^{j}, \mbox{ and } f^{(m)}(x) = \frac{m!}{(1-x)^{m+1}}.$$
Putting $x = a$ we see that 
\begin{equation}\label{trick1}
\sum_{k = 1}^\infty a^k k^{m-1} \leq \sum_{k = 1}^\infty a^k k^{m} \leq \sum_{k \geq 1 } (k+m)\cdots (k+1) a^{k} < \frac{m!}{(1-a)^{m+1}}.
\end{equation}
The latter shows that
$$  \sum_{m = 0}^\infty \sum_{k = 1}^\infty  \sum_{j = 0}^{\infty}  \frac{(ar^j)^k k^m |z|^m}{k m!} \leq \frac{1}{1 - r} \sum_{m = 0}^\infty \sum_{k = 1}^\infty \frac{k^{m-1}|z|^m}{m!}  a^k <  \frac{1}{1 - r} \sum_{m = 0}^\infty \frac{|z|^m}{(1-a)^{m+1}},$$
and the leftmost expression is finite for small enough $|z|$.\\

Rearranging (\ref{realsum}) we see that the coefficient in front of $z^m$ in $G(z)$ is $ \frac{1}{m!} \sum_{k = 1}^\infty \sum_{j = 0}^{\infty}  \frac{(-1)^{k+1}}{k}(ar^j)^k k^m $. 
Since $S_{a,r}(z) = G(z) - G(-z)$ we see that the even coefficients of $S_{a,r}(z)$ are zero, while the odd ones equal
$$c_{2l+1} =   \frac{2}{(2l + 1)!} \sum_{k = 1}^\infty \sum_{j = 0}^{\infty}    \frac{(-1)^{k+1}}{k}(ar^j)^k k^{2l + 1} = \frac{2}{(1-r)(2l + 1)!} \sum_{k = 1}^{\infty} k^{2l}(-1)^{k+1}a^k\frac{1-r}{1 - r^k},$$
as desired.

For the second part of the lemma observe that
$$ \left|\sum_{k = 1}^{\infty} k^{2l}(-1)^{k+1}a^k\frac{1-r}{1 - r^k} \right| \leq   \sum_{k = 1}^{\infty} k^{2l} a^k  < \frac{(2l)!}{(1-a)^{2l+1}},$$
where in the last inequaity we used (\ref{trick1}). If $l \geq 1$ and $a \in (0, 1- \delta]$ we conclude that
$$|c_{2l+1}| \leq \frac{2}{(1-r)(2l + 1)!}\frac{(2l)!}{(1-a)^{2l+1}} \leq \frac{1}{(1-r)\delta ^{2l+1}}.$$
\end{proof}

\begin{lemma}\label{Lascoeff}Let $c_1$ and $c_3$ be as in Lemma \ref{sartcoeff}. Also suppose that $a$, depends on $r$ and $\lim_{r \rightarrow 1^{-}}a(r) = a(1) \in (0, 1-\delta]$. Then
\begin{equation}\label{ascoeff}
\lim_{r \rightarrow 1^-}(1-r)c_1 = 2 \log (1 + a(1)) \mbox{ and } \lim_{r \rightarrow 1^-}(1-r)c_3 = \frac{1}{3} \frac{a(1)}{(1 + a(1))^2}.
\end{equation}
\end{lemma}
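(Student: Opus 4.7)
The plan is to rearrange the formula for $c_{2l+1}$ from Lemma \ref{sartcoeff}, identify the pointwise limit of each term in the series, and then justify interchanging the limit with the infinite sum via dominated convergence. From Lemma \ref{sartcoeff} we have
\begin{equation*}
(1-r)c_{2l+1} \;=\; \frac{2}{(2l+1)!}\sum_{k = 1}^{\infty} (-1)^{k+1}k^{2l}\,a(r)^k\,\frac{1-r}{1-r^k}.
\end{equation*}
Writing $\tfrac{1-r^k}{1-r} = 1 + r + \cdots + r^{k-1}$ one sees that $\tfrac{1-r}{1-r^k}\in (0,1]$ for all $r\in(0,1)$ and $k\geq 1$, and that $\lim_{r\to 1^-}\tfrac{1-r}{1-r^k} = \tfrac{1}{k}$. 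Since $a(r)\to a(1)\le 1-\delta$, we may fix $r_0\in(0,1)$ so that $a(r)\le 1-\delta/2$ for all $r\in[r_0,1)$, and then bound each summand by
\begin{equation*}
\left| (-1)^{k+1}k^{2l}\,a(r)^k\,\tfrac{1-r}{1-r^k}\right| \;\le\; k^{2l}(1-\delta/2)^k,
\end{equation*}
which is summable in $k$ for each fixed $l$. Dominated convergence then gives
\begin{equation*}
\lim_{r\to 1^-}(1-r)c_{2l+1} \;=\; \frac{2}{(2l+1)!}\sum_{k=1}^{\infty}(-1)^{k+1}k^{2l-1}a(1)^k.
\end{equation*}

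Specializing to $l=0$ and $l=1$ yields the two claims. For $l=0$, the series is the Taylor expansion of $\log(1+x)$ at $x=a(1)$, giving $(1-r)c_1 \to 2\log(1+a(1))$. For $l=1$, differentiating $\sum_{k\geq 1}(-1)^{k+1}x^k = x/(1+x)$ term by term produces $\sum_{k\geq 1}(-1)^{k+1}kx^{k-1} = 1/(1+x)^2$, and multiplying by $x$ gives $\sum_{k\geq 1}(-1)^{k+1}k\,a(1)^k = a(1)/(1+a(1))^2$. Combining with the prefactor $2/3!=1/3$ yields $(1-r)c_3 \to \tfrac{1}{3}\tfrac{a(1)}{(1+a(1))^2}$.

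There is no real obstacle: the only thing that requires care is the dominated convergence step, and the uniform bound $\tfrac{1-r}{1-r^k}\le 1$ together with the assumption $a(1)<1-\delta$ makes this immediate. The identifications of the resulting series as $\log(1+a(1))$ and $a(1)/(1+a(1))^2$ are elementary power-series manipulations.
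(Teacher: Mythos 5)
Your proof is correct and follows essentially the same route as the paper: rewrite $(1-r)c_{2l+1}$ using the series from Lemma \ref{sartcoeff}, pass to the limit term by term via dominated convergence with a dominating function of the form $k^{2l}(1-\delta/2)^k$ (using $\tfrac{1-r}{1-r^k}\le 1$ and $a(r)\le 1-\delta/2$ near $r=1$), and identify the resulting alternating series as $\log(1+a(1))$ and $a(1)/(1+a(1))^2$. The only cosmetic difference is that you handle the two cases $l=0,1$ in a single unified computation, whereas the paper treats $c_1$ and $c_3$ separately.
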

\begin{proof}
From Lemma \ref{sartcoeff} we know that $c_1 = \frac{2}{1-r}\sum_{k = 1}^{\infty} (-1)^{k+1}a(r)^k\frac{1-r}{1 - r^k}$. Consequently,
$$\lim_{r \rightarrow 1^-}(1-r)c_1 = 2\lim_{r \rightarrow 1^-} \sum_{k = 1}^{\infty} (-1)^{k+1}a(r)^k\frac{1-r}{1 - r^k} = 2 \sum_{k = 1}^{\infty} (-1)^{k+1}\frac{a(1)^k}{k} = 2\log (1 + a(1)),$$
where the middle equality follows from the Dominated Convergence Theorem with dominating function $(1 - \delta/2)^k$.\\

Similarly, we have $c_{3} = \frac{1}{3(1-r)}\sum_{k = 1}^{\infty} k^{2}(-1)^{k+1}a(r)^k\frac{1-r}{1 - r^k}$. Consequently,
$$\lim_{r \rightarrow 1^-}(1-r)c_3 = \frac{1}{3}\lim_{r \rightarrow 1^-}\sum_{k = 1}^{\infty} k^{2}(-1)^{k+1}a(r)^k\frac{1-r}{1 - r^k} =  \frac{1}{3}\sum_{k = 1}^{\infty}k (-1)^{k+1}a(1)^k =  \frac{1}{3}\frac{a(1)}{(1 + a(1))^2},$$
where the middle equality follows from the Dominated Convergence Theorem with dominating function $k^2(1-\delta/2)^k$.\\
\end{proof}

\begin{lemma}\label{LdelM}
Let $c_1$ and $c_3$ be as in Lemma \ref{sartcoeff}. Let $\tau \in \mathbb{R}\backslash \{0\}$ and suppose \\$a(r) = \exp\left(\log r \left(1/2 + \frac{1}{2}\left|\lfloor\frac{\tau }{1-r} \rfloor \right|\right)\right)$, then
\begin{equation}\label{delM1}
\lim_{r \rightarrow 1^-}(1-r)c_1  = 2 \log (1 +e^{-|\tau|/2}) \mbox{ and }   \lim_{r \rightarrow 1^-}(1-r)c_3 = \frac{1}{3} \frac{e^{-|\tau|/2}}{(1 + e^{-|\tau|/2})^2}.
\end{equation}
Moreover, one has
\begin{equation}\label{delM}
c_1- \frac{ 2\log (1 +e^{-|\tau|/2})}{1-r} = O(1), \mbox{ where  the constant depends on $\tau$}.
\end{equation}
\end{lemma}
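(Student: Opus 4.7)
The two limits in (\ref{delM1}) will follow directly from Lemma \ref{Lascoeff} once I verify that the specific choice $a(r) = r^{(1+|N(r)|)/2}$ with $N(r) = \lfloor\tau/(1-r)\rfloor$ satisfies $\lim_{r\to 1^-} a(r) = e^{-|\tau|/2}$ and that this limit lies in $(0, 1-\delta]$ for some $\delta>0$. The latter is automatic for $\tau\neq 0$. For the former, $|N(r)|(1-r) = |\tau| + O(1-r)$ (the floor contributes at most one) and $\log r = -(1-r) + O((1-r)^2)$, so
\begin{equation*}
\log a(r) \;=\; \tfrac{1+|N(r)|}{2}\log r \;=\; -|\tau|/2 + O(1-r),
\end{equation*}
which exponentiates to $a(r) = e^{-|\tau|/2} + O(1-r)$.

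The $O(1)$ refinement in (\ref{delM}) is more delicate: a term-by-term comparison of the alternating series for $c_1$ in Lemma \ref{sartcoeff} is awkward. I will instead use the product form of Lemma \ref{sartbasic}: differentiating $\exp(S_{a,r}(z))$ at $z=0$ and comparing with the power-series expansion yields
\begin{equation*}
c_1 \;=\; S_{a,r}'(0) \;=\; 2\sum_{j=0}^\infty \frac{a(r)r^j}{1+a(r)r^j}.
\end{equation*}
Writing $g(t) := \frac{a(r)r^t}{1+a(r)r^t}$, one computes $g'(t) = \frac{a(r)r^t\log r}{(1+a(r)r^t)^2} < 0$, so $g$ is positive and strictly decreasing. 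Standard integral comparison then gives $\left|\sum_{j=0}^\infty g(j) - \int_0^\infty g(t)\,dt\right| \leq g(0) \leq 1$, and the substitution $u = a(r)r^t$ evaluates the integral in closed form:
\begin{equation*}
\int_0^\infty g(t)\,dt \;=\; \frac{1}{\log r}\int_{a(r)}^{0}\frac{du}{1+u} \;=\; \frac{\log(1+a(r))}{-\log r}.
\end{equation*}
Hence $c_1 = \dfrac{2\log(1+a(r))}{-\log r} + O(1)$.

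It remains to replace $a(r)$ by $b := e^{-|\tau|/2}$ and $-\log r$ by $1-r$. The smoothness of $\log(1+\cdot)$ together with $a(r) = b + O(1-r)$ gives $\log(1+a(r)) = \log(1+b) + O(1-r)$, while $\frac{1}{-\log r} = \frac{1}{1-r} + O(1)$ follows from the Taylor expansion of $-\log r$. Multiplying these asymptotics and absorbing the error terms into $O(1)$ yields
\begin{equation*}
\frac{2\log(1+a(r))}{-\log r} \;=\; \frac{2\log(1+e^{-|\tau|/2})}{1-r} + O(1),
\end{equation*}
which combined with the previous display completes the proof of (\ref{delM}).

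The main obstacle is precisely the step of selecting the right representation for $c_1$. In the alternating-series form of Lemma \ref{sartcoeff}, the terms $(1-r)a(r)^k/(1-r^k)$ and $b^k/k$ do not line up cleanly because for $k\gg 1/(1-r)$ the factor $\frac{1-r}{1-r^k}$ behaves like $1-r$ rather than $1/k$, so a naive bound on the tail requires care to remain $O(1)$ after dividing by $1-r$. Passing to the monotone product form $c_1 = 2\sum_j g(j)$ bypasses this entirely: the problem reduces to an integral approximation for a single positive decreasing summand, which can be computed exactly.
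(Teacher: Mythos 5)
Your proof is correct. The limits in (\ref{delM1}) are handled exactly as in the paper (verify $a(r)\to e^{-|\tau|/2}\in(0,1-\delta]$ and invoke Lemma \ref{Lascoeff}), and your logarithmic computation $\log a(r)=-|\tau|/2+O(1-r)$ is a cleaner version of the paper's estimate $a(1)-a(r)=O(1-r)$, which the paper proves via explicit bracketing bounds $A(r),B(r)$.

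For the $O(1)$ statement (\ref{delM}), however, you take a genuinely different route. The paper stays inside the alternating-series representation of Lemma \ref{sartcoeff}: it writes $c_1-\frac{2\log(1+a(1))}{1-r}=I_1+I_2$, where $I_1$ compares $\frac{1-r}{1-r^k}$ with $\frac1k$ term by term (using the bound $\left|\frac{1-r}{1-r^k}-\frac1k\right|\le\frac{k}{2}(1-r)$, so $|I_1|\le\sum_k k\,a(r)^k=O(1)$), and $I_2=\frac{2}{1-r}\bigl[\log(1+a(r))-\log(1+a(1))\bigr]$ is controlled by $|a(r)-a(1)|=O(1-r)$. You instead resum $c_1$ into the monotone form $2\sum_{j\ge0}\frac{a(r)r^j}{1+a(r)r^j}$ (which is legitimate: it is Lemma \ref{dersart} evaluated at $z=0$ together with $c_1=S_{a,r}'(0)$, or equivalently a resummation of the double series, justified by absolute convergence), compare with the exactly computable integral $\frac{\log(1+a(r))}{-\log r}$ with error at most $g(0)\le 1$, and then trade $-\log r$ for $1-r$ and $a(r)$ for $e^{-|\tau|/2}$ by elementary Taylor estimates. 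Your route buys an explicit intermediate identity $c_1=\frac{2\log(1+a(r))}{-\log r}+O(1)$ with a uniform error constant and avoids any delicate cancellation in the alternating series (indeed, the paper's $I_1$ bound shows the term-by-term comparison does work, so your cautionary remark about it is overstated, but that is commentary rather than a flaw); the paper's route has the virtue of using only the coefficient formula of Lemma \ref{sartcoeff} and no derivative identity. Both arguments ultimately rest on the same two ingredients: $a(r)=e^{-|\tau|/2}+O(1-r)$ and the Lipschitz behavior of $\log(1+x)$ near the limit point.
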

\begin{proof}
Using that $r^{\frac{1}{1-r}} \rightarrow e^{-1}$ as $r \rightarrow 1^-$ we see that $a(1) = \lim_{r \rightarrow 1^-} a(r) = e^{-|\tau|/2}$. (\ref{delM1}) now follows from Lemma \ref{Lascoeff}.\\

 We can rewrite
$$c_1 - \frac{2\log (1 + a(1))}{1-r} =  I_1 + I_2, \mbox{ where } I_1 = \frac{2}{1-r}\sum_{k = 1}^{\infty}b_k \mbox{ and } I_2= \frac{2}{1-r}\sum_{k = 1}^{\infty} c_k,$$
$\mbox{with } b_k:=   (-1)^{k+1}\left[ a(r)^k\frac{1-r}{1 - r^k} - a(r)^k\frac{1}{k}\right]$ and $c_k : = (-1)^{k+1}\left[ a(r)^k\frac{1}{k} -  a(1)^k\frac{1}{k} \right].$ We will show that $I_1 = O(1) = I_2$. \\

We begin with $I_1$. One observes that 
$$\frac{1-r}{1 - r^k} - \frac{1}{k} = \frac{1}{1 + \cdots + r^{k-1}} - \frac{1}{k}  = \frac{k - 1 - r - \cdots - r^{k-1}}{k (1 + r + \cdots + r^{k-1})} = (1-r) \frac{r^{k-2} + 2r^{k-3} + \cdots + (k-1)r^0}{k (1 + r + \cdots + r^{k-1})}.$$
Consequently,
$$|b_k| \leq (1-r)a(r)^k \frac{1 + 2 + \cdots + (k-1)}{k} \leq \frac{k}{2}(1-r)a(r)^k.$$
It follows that 
$$\left| I_1 \right| \leq \frac{1}{1-r} \sum_{k = 1}^\infty (1-r)ka(r)^k \leq \frac{2}{(1 - a(r))^3} \leq \frac{2}{(1 - e^{-|\tau|/4})^3} = O(1),$$
where in the second inequality we used (\ref{trick1}) and the last inequality holds for all $r$ close to $1^-$.\\

Next we turn to $I_2 = \frac{2}{1-r}\left[ \log(1 + a(r)) -  \log (1 + a(1))\right]$. Since $\log (1 + x)$ is $C^1$ on $\mathbb{R}^+$, we see that $|I_2| \leq \frac{2C}{1-r}|a(r) - a(1)|$ for some constant $C$, independent of $r$ (provided it is sufficiently close to $1^-$, so that $|a(1) - a(r)| \leq 1/2$). Hence it suffices to show that $a(1) - a(r) = O(1-r)$. We know that 
$$  a(1) - a(r) = e^{-|\tau|/2} - \exp\left(\log r \left(1/2 + \frac{1}{2}\left|\lfloor\frac{\tau }{1-r} \rfloor \right|\right)\right)\in [A(r), B(r)],$$
where $A(r) = e^{-|\tau|/2}  - \exp\left(\log r/2 + \frac{\log r|\tau|}{2(1-r)}\right)$ and $B(r) = e^{-|\tau|/2}  - \exp\left(\log r + \frac{\log r|\tau|}{2(1-r)}\right).$
Thus it suffices to show that $A(r) = O(1-r) = B(r)$. We know that $r^{1/2}e^{-|\tau|/2} - e^{-|\tau|/2} = O(1-r) = r^{1}e^{-|\tau|/2} - e^{-|\tau|/2} $, thus it remains to show that $ e^{-|\tau|/2} - \exp\left( - \frac{- \log r|\tau|}{2(1-r)}\right)  = O(1-r)$. Using that $e^{-|\tau|u/2}$ is $C^1$ in $u$, we see that 
$$\left| e^{-|\tau|/2} - \exp\left( - \frac{- \log r|\tau|}{2(1-r)}\right) \right| \leq C\left| 1 -  \frac{- \log r}{1-r}\right|,$$
and the latter is clearly $O(1-r)$ by power expanding the logarithm near $1$.
\end{proof}

\begin{lemma}\label{dersart}
Assume the notation in Lemma \ref{sartbasic}. On $D_\delta$ one has
\begin{equation}
S'_{a,r}(z) = \sum_{j = 0}^{\infty} \frac{ar^j e^z}{1 + ar^{j}e^z} + \sum_{j = 0}^{\infty}  \frac{ar^j e^{-z}}{1 + ar^{j}e^{-z}} = \sum_{j = 0}^{\infty}ar^j \left[\frac{ e^z}{1 + ar^{j}e^z} + \frac{e^{-z}}{1 + ar^{j}e^{-z}} \right].
\end{equation}
\end{lemma}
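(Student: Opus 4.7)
The plan is to prove this by term-by-term differentiation, with the justification coming from the analytic framework already set up in Lemma \ref{sartbasic}. Recall that in the proof of Lemma \ref{sartbasic} we established that for $z$ in any compact subset $K \subset D_\delta$, the series $\sum_{j=0}^\infty \log(1 + ar^j e^{\pm z})$ converge absolutely and uniformly on $K$, and each summand is analytic on $D_\delta$. Therefore by the standard Weierstrass theorem on uniformly convergent series of holomorphic functions, the sums are analytic and may be differentiated term by term, with the differentiated series converging uniformly on compact subsets of $D_\delta$.

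Next I would carry out the termwise computation using the chain rule. For each $j \geq 0$,
\begin{equation*}
\frac{d}{dz}\log(1 + ar^j e^z) = \frac{ar^j e^z}{1 + ar^j e^z}, \qquad \frac{d}{dz}\bigl[-\log(1 + ar^j e^{-z})\bigr] = \frac{ar^j e^{-z}}{1 + ar^j e^{-z}},
\end{equation*}
where in the second equality the minus sign from $\log(1+ar^je^{-z})$ cancels the minus sign from the chain rule applied to $e^{-z}$. Summing over $j \geq 0$ and using the termwise differentiation justified above yields the first equality in the lemma. The second equality is simply factoring $ar^j$ out of each summand.

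There is essentially no obstacle here: the only thing to verify is that the differentiated series themselves converge, but on a compact $K \subset D_\delta$ we have $|e^{\pm z}| \leq C(K)$ and $|1 + ar^j e^{\pm z}|$ bounded away from zero (using $a \leq 1-\delta$ and the choice of $\Delta'$), so each summand $\frac{ar^j e^{\pm z}}{1 + ar^j e^{\pm z}}$ is bounded by a constant multiple of $r^j$, giving absolute uniform convergence. This also re-justifies the Weierstrass step independently of any black box.
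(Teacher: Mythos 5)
Your proposal is correct and follows essentially the same route as the paper: the paper also differentiates term by term, invoking the absolute/uniform convergence on compact subsets of $D_\delta$ established in Lemma \ref{sartbasic} together with a standard theorem on series of holomorphic functions (Theorem 5.2 in Chapter 2 of \cite{Stein}, i.e.\ the Weierstrass argument you use). Your extra verification that the differentiated series converges is a harmless addition but not a departure from the paper's argument.
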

\begin{proof}
In the proof of Lemma \ref{sartbasic} we showed that on $D_\delta$ 
$$S_{a,r}(z) =\sum_{j = 0}^{\infty}  \log ( 1 + ar^j e^z) - \sum_{j = 0}^{\infty}  \log ( 1 + ar^j e^{-z}),$$
the latter sum being absolutely convergent over compact subsets of $D_\delta$. From Theorem 5.2 in Chapter 2 of \cite{Stein} it follows that 
$$S'_{a,r}(z)  = \sum_{j = 0}^{\infty}  \frac{d}{dz} \log ( 1 + ar^j e^z)  - \sum_{j = 0}^{\infty}  \frac{d}{dz} \log ( 1 + ar^j e^{-z}) = \sum_{j = 0}^{\infty} \frac{ar^j e^z}{1 + ar^{j}e^z} + \sum_{j = 0}^{\infty}  \frac{ar^j e^{-z}}{1 + ar^{j}e^{-z}}.$$
\end{proof}

%
\subsection{Descent contours} \hspace{2mm}\\

In the following lemmas we demonstrate contours, along which the real part of $S_{a,r}(z)- zS'_{a,r}(0)$ varies monotonically. This monotonicity plays an important role in obtaining the estimates of Lemmas \ref{cubicEst} and \ref{cubicEstC}.

\begin{lemma}\label{descent}
Assume the notation in Lemma \ref{sartbasic}. Set $\epsilon = \pm 1$ and $c_1 = S'_{a,r}(0)$. Then there exists an $A_0 > 0$ such that if $0 < A \leq A_0$, one has
$$\frac{d}{dy}Re\left(S_{a,r}(Ay + \epsilon \iota y)-  c_1(Ay+\epsilon \iota y)\right) \leq 0 \mbox{ for all } y\in \left[ 0,\pi \right].$$
$$\frac{d}{dy}Re\left(S_{a,r}(-Ay + \epsilon \iota y)-  c_1(-Ay+\epsilon \iota y)\right) \geq 0 \mbox{ for all } y\in \left[0,\pi \right].$$
\end{lemma}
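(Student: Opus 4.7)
The plan is first to collapse the two inequalities to one via symmetries of $S_{a,r}$, then to analyze the $y$-derivative as a first-order perturbation in $A$ around the baseline $A=0$. Since $S_{a,r}$ is odd with real coefficients (Lemmas \ref{sartbasic}, \ref{sartcoeff}), conjugation gives $Re[S_{a,r}(Ay+\iota y) - c_1(Ay+\iota y)] = Re[S_{a,r}(Ay-\iota y) - c_1(Ay-\iota y)]$, which removes the dependence on $\epsilon$. For the second inequality, setting $\tilde z(y) = -Ay+\epsilon\iota y = -\overline{z(y)}$ with $z(y)=Ay+\epsilon\iota y$, oddness combined with reality of coefficients gives $S_{a,r}(\tilde z) - c_1\tilde z = -\overline{S_{a,r}(z) - c_1 z}$, hence $Re[S_{a,r}(\tilde z) - c_1 \tilde z] = -Re[S_{a,r}(z) - c_1 z]$, so the second inequality follows from the first by negation.

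I then compute $u(y,A) := \frac{d}{dy}Re[S_{a,r}(z(y)) - c_1 z(y)] = Re[(A+\epsilon\iota)(S_{a,r}'(z(y)) - c_1)]$. Combining the two summands in Lemma \ref{dersart} over a common denominator yields the factorization
\[ S_{a,r}'(z) - c_1 = 2(\cosh z - 1)\,G(z), \qquad G(z) = \sum_{j \geq 0} \frac{ar^j(1-ar^j)}{(1+ar^j)(1+ar^je^z)(1+ar^je^{-z})}, \]
in which $G$ is even in $z$ and termwise positive on $\iota\mathbb{R}$. Writing $2(\cosh z - 1) = a_1 + \epsilon\iota b_1$ with $a_1 = 2(\cosh(Ay)\cos y - 1)$, $b_1 = 2\sinh(Ay)\sin y$, and $G(z) = a_2 + \epsilon\iota \tilde b_2$ (the factor $\epsilon$ in the imaginary part comes from $\overline{G(z)} = G(\bar z)$ with real $a_2, \tilde b_2$), a short multiplication gives the $\epsilon$-free expression $u(y,A) = A(a_1 a_2 - b_1 \tilde b_2) - (a_1 \tilde b_2 + a_2 b_1)$. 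At $A=0$ both $b_1$ and $\tilde b_2$ vanish ($\sinh(Ay)=0$ and $G(\iota y) \in \mathbb{R}$), so $u(y,0) \equiv 0$; a direct check using $a_1(0,A) = b_1(0,A) = \tilde b_2(0,A) = 0$ and $\partial_y a_1|_{y=0} = \partial_y b_1|_{y=0} = 0$ shows $u(0,A) \equiv 0$ and $\partial_y u(0,A) \equiv 0$. Thus $u$ has a double zero in $y$ at the origin and a simple zero in $A$, and smoothness produces the factorization $u(y,A) = A y^2 V(y,A)$.

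The key step is to show $V(y,0) \leq -c < 0$ uniformly on $[0,\pi]$. Differentiating in $A$ at $A=0$ using $\partial_A \tilde b_2(y,0) = y H(y)$ with
\[ H(y) = -\sum_{j \geq 0}\frac{2(ar^j)^2(1-ar^j)\sin y}{(1+ar^j)\bigl(1+(ar^j)^2+2ar^j\cos y\bigr)^2} \leq 0 \text{ on } [0,\pi] \]
(obtained from the explicit formula for $G'(\iota y)$) yields
\[ \partial_A u(y,0) = -2 a_2(y,0)\bigl[(1-\cos y) + y\sin y\bigr] + 2(1-\cos y)\,yH(y). \]
Both summands are non-positive on $[0,\pi]$ (the first because $a_2 > 0$, the second because $H \leq 0$), and Taylor expansion near $y=0$ gives $\partial_A u(y,0) = -3G(0)y^2 + O(y^4)$. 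Hence $V(\cdot,0)$ extends continuously to $V(0,0) = -3G(0) < 0$, is strictly negative on $(0,\pi]$ from the first summand alone, and so by compactness $V(y,0) \leq -c$ for some $c > 0$. Joint continuity of $V$ on $[0,\pi]\times[0,A_0']$ then furnishes an $A_0$ with $V(y,A) \leq -c/2$ throughout, whence $u(y,A) = Ay^2 V(y,A) \leq 0$. The main obstacle is the double degeneracy at $y=0$: the dominant term $-2a_2[(1-\cos y)+y\sin y]$ vanishes precisely to order $y^2$, and the factorization $S_{a,r}'(z) - c_1 = 2(\cosh z-1)G(z)$ with $G > 0$ on $\iota\mathbb{R}$ is what guarantees the leading coefficient $-3G(0)$ is strictly negative rather than being cancelled by the subdominant $O(y^4)$ contribution from the $H$-term; a brute-force Taylor expansion of $S_{a,r}$ in $z$ would not transparently reveal this sign structure.
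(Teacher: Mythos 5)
Your argument is correct, but it takes a genuinely different route from the paper's, so a comparison is in order. The paper never forms your factorization $S'_{a,r}(z)-c_1 = 2(\cosh z - 1)\,G(z)$; it works summand by summand in $b = ar^j$, writes the real part of the derivative explicitly with denominators $|1+be^{\pm(Ay+\epsilon\iota y)}|^2$, and splits each summand into an $A$-proportional piece $I_1$ and an $A$-free piece $I_2$ (in your notation, per summand, $A(a_1a_2-b_1\tilde b_2)$ and $-(a_1\tilde b_2+a_2b_1)$). It then shows $I_2\leq 0$ exactly, for every $A>0$, via the identity $(u^{-1}-u)(1-b^2)\geq 0$, and $I_1\leq 0$ by reducing to an elementary inequality $f(y)\leq 0$ treated by case analysis on $[0,\pi/2]$ and $[\pi/2,\pi]$ with explicit small-$A$ bounds; the second display of the lemma is deduced from the first by oddness, exactly as you do. Your perturbative scheme --- noting that the expression vanishes identically at $A=0$, factoring $u(y,A)=Ay^2V(y,A)$, and checking $V(\cdot,0)<0$ with $V(0,0)=-3G(0)$ --- is cleaner and makes the sign mechanism transparent, and the computations you report (the factorization, $\partial_A u(y,0) = -2a_2[(1-\cos y)+y\sin y]+2(1-\cos y)yH(y)$, termwise positivity of $G$ on $\iota\mathbb{R}$, $H\leq 0$ on $[0,\pi]$) all check out. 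What the paper's hands-on estimates buy is uniformity: its smallness condition on $A$ depends only on $\delta$, uniformly over $b\in(0,1-\delta]$, and this is what is implicitly needed in Section \ref{AD}, where Lemma \ref{descent} is invoked with $A$ fixed while $r\to 1^-$ and $a=a(r)$ varies. Your compactness/continuity step, applied to the summed $V$, a priori yields $A_0=A_0(a,r)$ --- sufficient for the lemma as literally stated, but not directly for that later application. The repair is immediate in your framework: since the factorization is termwise and $u$ is linear in the summands of $G$, run the same argument for a single $b$, normalize by $b$, and include $b\in[0,1-\delta]$ as an extra parameter in the compact set on which $V$ is continuous; this recovers an $A_0$ depending only on $\delta$, matching the paper.
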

\begin{proof}
Choose $A_0 > 0$ sufficiently small so that $\left\{ \pm Ay + \iota y : y \in \left[-\pi,\pi \right]\right\} \subset D_\delta$, whenever $0 < A \leq A_0$.

Set $b_j = ar^j$. We will focus on the first statement. We have (using Lemma \ref{dersart}) that
$$\frac{d}{dy}Re\left(S_{a,r}(Ay + \epsilon \iota y)- c_1(Ay+ \iota y)\right) = \sum_{j = 0}^{\infty} Re\left[ b_j \left[\frac{e^{Ay + \epsilon \iota y}}{1 + b_je^{Ay + \epsilon \iota y}} + \frac{e^{-(Ay + \epsilon \iota y)}}{1 + b_je^{-(Ay + \epsilon \iota y)}} - \frac{2}{1 + b_j}\right](A + \epsilon \iota)\right].$$
We will show that each summand is $\leq 0$, provided $A$ is small enough. The latter would follow provided we know that for every $b \in (0, 1-\delta]$ one has
$$Re\left[ \left[\frac{e^{Ay + \epsilon \iota y}}{1 + be^{Ay + \epsilon \iota y}} + \frac{e^{-(Ay + \epsilon \iota y)}}{1 + be^{-(Ay + \epsilon \iota y)}} - \frac{2}{1 + b}\right](A + \epsilon \iota )\right] \leq 0.$$
Multiplying denominators by their complex conjugates and extracting the real part, we see that the above is equivalent to $I_1 + I_2 \leq 0$, where

$$I_1 := A\left[\frac{be^{2Ay} + e^{Ay}\cos(y)}{|1 + be^{Ay + \epsilon\iota y}|^2} +  \frac{be^{-2Ay} + e^{-Ay}\cos(y)}{|1 + be^{-Ay - \epsilon\iota y}|^2} - \frac{2}{1+b}\right] \mbox{ and }$$
$$I_2 := \frac{- e^{Ay}\epsilon\sin(\epsilon y)}{|1 + be^{Ay + \epsilon\iota y}|^2} + \frac{e^{-Ay} \epsilon\sin(\epsilon y)}{|1 + be^{-Ay - \epsilon\iota y}|^2}.$$
We show that $I_1 \leq 0$ and $I_2 \leq 0$, provided $A$ is small enough. \\

We start with $I_2$, which can be rewritten as
$$I_2 = \frac{- e^{Ay}\sin(y)}{1 + b^2e^{2Ay} +2 \cos(y)be^{Ay}} + \frac{e^{-Ay} \sin( y)}{1 + b^2e^{-2Ay} +2 \cos(y)be^{-Ay}}.$$
Since $y \in [0, \pi]$, we have that $\sin(y) \geq 0$. Hence it suffices to show that 
$$0 \geq \frac{- e^{Ay}}{1 + b^2e^{2Ay} +2 \cos(y)be^{Ay}} + \frac{e^{-Ay} }{1 + b^2e^{-2Ay} +2 \cos(y)be^{-Ay}} \iff$$
$$u^{-1} + b^2u + 2b\cos(y) \geq u + b^2u^{-1} + 2b\cos(y)$$
where $u = e^{-Ay} \in (0,1]$. The above now is equivalent to $(u^{-1} - u)(1 -b^2) \geq 0,$
which clearly holds if $u \in(0,1]$ and $b \in (0,1]$, as is the case. Hence $I_2 \leq 0$ without any restrictions on $A$ except that it is positive.\\

Next we analyze $I_1$, which can be rewritten as 
$$I_1 = A\left[\frac{be^{2Ay} + e^{Ay}\cos(y)}{1 + b^2e^{2Ay} + 2b\cos(y)e^{Ay}} +  \frac{be^{-2Ay} + e^{-Ay}\cos(y)}{1 + b^2e^{-2Ay} + 2b\cos(y)e^{-Ay}} - \frac{2}{1+b}\right].$$
We see that (since $A > 0$) $I_1 \leq 0 \iff$
$$(1 + b^2e^{-2Ay} + 2b\cos(y)e^{-Ay})(be^{2Ay} + e^{Ay}\cos(y))(1+b) + (1 + b^2e^{2Ay} + 2b\cos(y)e^{Ay})(be^{-2Ay} + e^{-Ay}\cos(y))(1+b) - $$
$$ - 2(1 + b^2e^{-2Ay} + 2b\cos(y)e^{-Ay})(1 + b^2e^{2Ay} + 2b\cos(y)e^{Ay}) \leq 0 \iff$$
$$\left(1 + b^2e^{-2Ay} + 2b\cos(y)e^{-Ay}\right)\left(be^{2Ay} + e^{Ay}\cos(y) - 1 - be^{Ay}\cos(y)\right)+ $$
$$ + \left(1 + b^2e^{2Ay} + 2b\cos(y)e^{Ay}\right)\left(be^{-2Ay} + e^{-Ay}\cos(y) - 1 - be^{-Ay}\cos(y)\right) \leq 0 \iff$$
$$f(y) = u(y)^2(b - b^2) + u(y)\cos(y)(1-b)^3 + [-2b - 2 + 2b^3 + 2b^2 + 4b\cos(y)^2 - 4b^2\cos(y)^2] \leq 0,$$
where $u(y) = e^{Ay} + e^{-Ay}$. We want to show that $f(y) \leq 0$ on $[0,\pi]$, provided $A$ is small enough.\\

First consider $y \in [0, \pi/2]$. We have
$$f'(y) = 2uu'(b-b^2) + u'\cos(y)(1-b)^3 - u\sin(y)(1-b)^3 + [-8b\cos(y)\sin(y) + 8b^2\cos(y)\sin(y)].$$
The last summand equals $8b\sin(y)\cos(y)(b - 1)$ and is clearly non-positive, when $y \in  [0, \pi/2]$. Thus
$$f'(y) \leq 2uu'(b-b^2) + u'\cos(y)(1-b)^3 - u\sin(y)(1-b)^3.$$
For $A$ sufficiently small we have $u' \leq 4Ay$, $u \leq 3$ and $\sin(y) > y/5$ on $[0,\pi/2]$. Thus we see
$$f'(y) \leq 24A(b-b^2)y + 4(1-b)^3A y - \frac{2}{5}(1-b)^3y.$$
For $A$ sufficiently small $f'(y) < 0$ on $(0,\pi/2)$ so $f$ is decreasing on $(0,\pi/2)$. But $f(0) = 0$ so we see $f(y) \leq 0$ when $y \in [0,\pi/2]$.

 Next we consider the case when $y \in [\pi/2, \pi]$. In that case $\cos(y) \leq 0$ and we see
$$f(y) \leq u(y)^2b(1-b) - 2(1-b)(1+b)^2 + 4b\cos(y)^2 (1-b).$$
The latter expression is non-positive exactly when
$$bu(y)^2 - 2(1+b)^2 + 4b\cos(y)^2 \leq 0.$$
For $A$ sufficiently small we have $u^2 \in [4,4+\epsilon_0)$ for all $y\in [\pi/2, \pi]$. Thus it suffices to show that we can find $\epsilon_0 > 0$ such that 
$$4b + b\epsilon_0 - 2(1+b)^2 +4b \leq 0 \iff b\epsilon_0 \leq 2(1-b)^2,$$
which is clearly possible as $b \in [0, 1- \delta]$. Thus we conclude that there exists $A > 0$ small enough so that the first statement of the lemma holds. Using that $S_{a,r}(z)$ is an odd function, the second statement of the lemma follows from the first and the same $A$ may be chosen.
\end{proof}

\begin{lemma} \label{monotoneC}
Assume the notation in Lemma \ref{sartbasic}. Suppose $t$ is sufficiently close to $1^-$. If $\beta \geq 0$ and $z =(-\log t) (\beta + \iota s)$ then 
$$\frac{d}{ds} Re(S_{a,r}(z)) \leq 0 \mbox{ when } s \in [0, \pi (-\log t)^{-1}] \mbox{ and } \frac{d}{ds} Re(S_{a,r}(z)) \geq 0 \mbox{ when }s \in [- \pi (-\log t)^{-1}, 0].$$

If $\beta \leq 0$ and $z = (-\log t) (\beta + \iota s)$ then
$$\frac{d}{ds} Re(S_{a,r}(z)) \geq 0 \mbox{ when } s \in [0, \pi (-\log t)^{-1}] \mbox{ and } \frac{d}{ds} Re(S_{a,r}(z)) \leq 0 \mbox{ when }s \in  [- \pi (-\log t)^{-1}, 0].$$

\end{lemma}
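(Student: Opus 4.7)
The plan is to reduce the monotonicity statement to a sign computation for $\mathrm{Im}(S'_{a,r}(z))$ along the vertical segment, and then show that after clearing denominators in each term of the series representation of $S'_{a,r}$ provided by Lemma \ref{dersart}, the relevant sign factorizes nicely.

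First, since $\frac{dz}{ds} = \iota(-\log t)$, the chain rule gives
\begin{equation*}
\frac{d}{ds}\,\mathrm{Re}\bigl(S_{a,r}(z)\bigr) \;=\; \mathrm{Re}\bigl(\iota(-\log t)\,S'_{a,r}(z)\bigr) \;=\; -(-\log t)\,\mathrm{Im}\bigl(S'_{a,r}(z)\bigr).
\end{equation*}
Since $-\log t > 0$, it suffices to determine the sign of $\mathrm{Im}(S'_{a,r}(z))$ as a function of $(\beta,s)$. By Lemma \ref{dersart}, $S'_{a,r}(z)$ is a termwise-absolutely-convergent sum of the expressions $b_j\bigl[e^z/(1+b_je^z) + e^{-z}/(1+b_je^{-z})\bigr]$ with $b_j = ar^j$, so I only need to analyze the imaginary part of a single such bracket.

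Next I would write $z = x + \iota y$ with $x = (-\log t)\beta$ and $y = (-\log t)s$, and set $u := e^x > 0$. Multiplying each of the two fractions by the complex conjugate of its denominator converts them into
\begin{equation*}
\frac{e^z + b_je^{2x}}{|1+b_je^z|^2} \quad\text{and}\quad \frac{e^{-z} + b_je^{-2x}}{|1+b_je^{-z}|^2},
\end{equation*}
whose imaginary parts are $e^x\sin y / |1+b_je^z|^2$ and $-e^{-x}\sin y/|1+b_je^{-z}|^2$ respectively. Combining them over a common denominator after using $|1+b_je^z|^2 = 1 + b_j^2u^2 + 2b_ju\cos y$ and $|1+b_je^{-z}|^2 = 1 + b_j^2/u^2 + 2(b_j/u)\cos y$, a straightforward simplification yields
\begin{equation*}
\mathrm{Im}\!\left[\frac{e^z}{1+b_je^z}+\frac{e^{-z}}{1+b_je^{-z}}\right] = \sin y \cdot \frac{u\,(u^2-1)(1-b_j^2)}{(1+b_j^2u^2+2b_ju\cos y)(u^2+b_j^2+2b_ju\cos y)}.
\end{equation*}
Since $b_j \leq a \leq 1-\delta$ and, for $t$ close enough to $1^-$, $u = e^{(-\log t)\beta}$ is close to $1$ (for any $\beta$ lying in the relevant region where $S_{a,r}$ is analytic), each of the two quadratic denominators is bounded away from zero even when $y = \pm\pi$; here one uses $(1-b_ju)^2 > 0$ at $\cos y = -1$, which holds because $b_ju \neq 1$.

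Putting these ingredients together, each summand contributes a factor $\sin y \cdot (u^2-1)$ multiplied by a strictly positive quantity (the positive factor $ar^j \cdot u \cdot (1-b_j^2) / (\text{positive denominator})$). Thus $\mathrm{Im}(S'_{a,r}(z))$ has the same sign as $(u^2-1)\sin y$, equivalently the same sign as $\beta \cdot s$ (restricted to $s \in [-\pi/(-\log t),\pi/(-\log t)]$ so that $y \in [-\pi,\pi]$). Combining with the minus sign from the chain rule gives $\mathrm{sgn}\!\left(\frac{d}{ds}\mathrm{Re}(S_{a,r}(z))\right) = -\mathrm{sgn}(\beta \cdot s)$, which is precisely the four inequalities claimed in the lemma. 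There is no genuine obstacle here: the only subtlety is the positivity of the denominators near $y=\pm \pi$, handled by taking $t$ close enough to $1^-$.
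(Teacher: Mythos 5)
Your proposal is correct and follows essentially the same route as the paper: reduce to a termwise sign analysis of the series for $S'_{a,r}$ from Lemma \ref{dersart}, clear the denominators $|1+b_je^{\pm z}|^2$, and observe that each summand's sign factors as $\sin y\,(u^2-1)(1-b_j^2)$ times a positive quantity, with $u=e^{(-\log t)\beta}$. The paper phrases this as comparing the two fractions directly (reducing to $(u^{-1}-u)(1-b^2)\le 0$) rather than via $\mathrm{Im}(S'_{a,r})$, but the computation is the same.
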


\begin{proof}
The dependence on $t$ comes from our desire to make $|\beta|(-\log t) < \Delta'$ in the statement of Lemma \ref{sartbasic}. We assume this for the remainder of the proof. \\

Setting $z =(-\log t) (\beta +  \iota s)$ we see from Lemma \ref{dersart}
$$\frac{d}{ds} S_{a,r}(z) =  \sum_{j = 0}^{\infty}\iota b_j (-\log t)\left[ \frac{e^{(- \log t)(\beta + \iota s)}}{1 + b_je^{(-\log t)(\beta + \iota s)}}  -  \frac{e^{(-\log t)(-\beta - \iota s)}}{1 + b_j e^{(- \log t)(-\beta - \iota s)}}\right],$$
where $b_j = ar^j$. Thus we see that 
$$\frac{d}{ds} Re (S_{a,r}(z) )= \sum_{j = 0}^{\infty} \left[ -\frac{b_j(- \log t)\sin(\theta)t^{-\beta}}{1 +2\cos(\theta)b_j t^{-\beta} + b_j^2 t^{-2\beta} }  +  \frac{b_j (-\log t)\sin(\theta)t^{\beta}}{1 +2\cos(\theta)b_j t^{\beta}+ b_j^2 t^{2\beta} }\right],$$
where $\theta = s(-\log t)$. \\

We now check that each summand has the right sign for the ranges of $s$ and $\beta$ in the statement in the lemma. We focus on $\beta \geq 0$ and $s \in [0, \pi (-\log t)^{-1}]$, all other cases can be handled similarly.

We want to show that 
$$ -\frac{b_j (- \log t)\sin(\theta)t^{-\beta}}{1 +2\cos(\theta)b_j t^{-\beta} + b_j^2 t^{-2\beta} }  +  \frac{b_j (-\log t)\sin(\theta)t^{\beta}}{1 +2\cos(\theta) b_j t^{\beta}+b_j^2 t^{2\beta} } \leq 0 \mbox{ for each } j.$$
Put $u = t^{-\beta}$ and $b_j = b$. Observe that for $s\in [0, \pi(-\log t)^{-1}]$, $\theta \in [0, \pi]$ so the above would follow from
$$-\frac{u}{1 +2\cos(\theta)b u + b^2 u^2}   +  \frac{u^{-1}}{1 +2\cos(\theta) bu^{-1}+b^2 u^{-2} }  \leq 0 \iff$$
$$\iff  u^{-1}(1 +2\cos(\theta)b u +b^2 u^2) \leq u(1 +2\cos(\theta) bu^{-1}+b^2 u^{-2})  \iff $$
$$ u^{-1} + 2\cos(\theta) b + b^2 u \leq  u + 2\cos(\theta)b + b^2u^{-1}  \iff  (u^{-1} - u)( 1 - b^2) \leq 0. $$
The latter is true since $u \geq 1$ and $b \in (0,1)$.
\end{proof}

%
\subsection{Proof of Lemmas \ref{cubicEst} and \ref{cubicEstC}}\label{AD} \hspace{2mm} \\

Suppose that $\delta > \epsilon > 0$ is sufficiently small so that $S_{a,r}$ has an analytic expansion in the disc of radius $\epsilon$ for $r \in (0,1)$ and $a \in (0,1-\delta]$. 
From (\ref{coeffest}) we know that when $|z| < \epsilon$ one has
\begin{equation}\label{S6eq1}
|S_{a,r}(z) - c_1z - c_3z^3| \leq \frac{|z|^4}{1 - r}\sum_{l \geq 2} \epsilon^{2l - 3}\delta^{-2l - 1},
\end{equation}
and the latter sum is finite by comparison with the geometric series. Suppose that $z = N^{-1/3} w$ where $N = \frac{1}{1-r}$. Clearly, the RHS of (\ref{S6eq1}) is $O(N^{-1/3})$ and so
$$\lim_{N \rightarrow \infty} |S_{a,r}(N^{-1/3}w) - c_1N^{-1/3}w - c_3N^{-1}w^3| = 0.$$
Using that $\lim_{N\rightarrow \infty}c_3N^{-1} =  \frac{1}{3} \frac{a(1)}{(1 + a(1))^2}$ (this is (\ref{ascoeff})) and the above we conclude that
$$S_{a,r}(N^{-1/3}w) - c_1N^{-1/3}w = O(1) \mbox{ if }w = O(1) \mbox{ and } \lim_{N \rightarrow \infty}S_{a,r}(N^{-1/3}w) - c_1N^{-1/3}w = \frac{1}{3} \frac{a(1)}{(1 + a(1))^2}w^3.$$
This proves (\ref{cubicZ2}), (\ref{cubicW2}) and once we set $(-\log t) = \kappa N^{-1/3}$ also (\ref{cubicW2C}).\\

Suppose $A$ sufficiently small so that the statement of Proposition \ref{descent} holds and so that $\phi = \arctan(A)$ is less than $10^{\circ}$. By choosing a smaller $\epsilon$ than the one we had before we may assume that $\sum_{l \geq 2} \epsilon^{2l - 3}\delta^{2l + 1} \leq \frac{a(1)\sin(3\phi)}{12(1 + a(1))^2} = c'$. In view of (\ref{S6eq1}) and (\ref{ascoeff}) we know that for all large $N$ and $|z| < \epsilon$
$$Re\left(S_{a,r}(z) - c_1z\right) \geq c_3Re(z^3) - c'N|z|^4 \geq N|z|^3 \frac{a(1)\sin(3\phi)}{6(1 + a(1))^2} - c'N|z|^3 \geq c'N|z|^3 \mbox{ if } z \in \gamma_W.$$
This proves (\ref{cubicW1}) when $|z| < \epsilon$. Put $K = \frac{\epsilon}{2\pi}$ and observe that if $z \in \gamma_W$ then $Kz \in \gamma_W$ and $K|z| < \epsilon$. The latter suggests that if $z \in \gamma_W$ we have
$$Re\left(S_{a,r}(z) -c_1z\right) \geq Re(S_{a,r}(Kz) - M(r)Kz) \geq c'NK^3|z|^3,$$
where in the first inequality we used the first statement of Lemma \ref{descent}, and in the second one we used that  $K|z| < \epsilon$ and our earlier estimate. This proves (\ref{cubicW1}) and using that $S_{a,r}(-z) =  - S_{a,r}(z)$, while $\gamma_W = - \gamma_Z$ it also proves (\ref{cubicZ1}). \\

Let $z = 1/4 + \iota s$ and set $(-\log t) = \kappa N^{-1/3}$ for some positive $\kappa$. Suppose $|(-\log t)z| < \epsilon$ with $\epsilon$ as in the beginning of the section. We have the following equality
$$Re(c_{2l + 1}(-\log t)^{2l + 1}z^{2l + 1}) = c_{2l + 1}(-\log t)^{2l + 1} \sum_{k  = 0}^l \binom{2l + 1}{2k}s^{2k}(-1)^k \frac{1}{4^{2l -2k + 1}}.$$
In particular, we see that 
\begin{equation}\label{S6eq2}
\begin{split}
&\left| Re(c_{2l + 1}(-\log t)^{2l + 1}z^{2l + 1}) \right| \leq c_{2l + 1}(-\log t)^{2l + 1} \left( ( |s| + 1/4)^{2l + 1} - |s|^{2l + 1}\right)  \leq \\
 & c_{2l + 1}(-\log t)^{2l + 1}\frac{1}{4} \sum_{k = 0}^{2l}|s|^k(1/4)^{2l - k} \leq (2l + 1) c_{2l + 1}(-\log t)^{2l + 1}|z|^{2l}.
\end{split}
\end{equation}
Using (\ref{S6eq2}) and (\ref{coeffest}) we have for  $|(-\log t)z| < \epsilon$ that
\begin{equation}\label{S6eq3}
\left| Re\left( \sum_{l \geq 2}  c_{2l + 1}(-\log t)^{2l + 1}z^{2l + 1} \right) \right| \leq  \kappa^3  |z|^2\sum_{l \geq 2} (2l + 1)\delta^{-2l - 1} \epsilon^{2l -2}.
\end{equation}

On the other hand, we have that
\begin{equation}\label{S6eq4}
Re(c_3(-\log t)^3z^3) = -(3c_3/4)(-\log t)^3|z|^2 + (-\log t)^3/64 + (3c_3/64)(-\log t)^3 .
\end{equation}
Combining equations (\ref{S6eq3}) and (\ref{S6eq4}) we see that if $|(-\log t)z| < \epsilon$ then 
$$Re(S_{a,r}((-\log t)z) - c_1 (-\log t) z) \leq -(3c_3/4)(-\log t)^3|z|^2 + (-\log t)^3/64 + (3c_3/64)(-\log t)^3  +$$
$$ +   \kappa^3 |z|^2\sum_{l \geq 2} (2l + 1)\delta^{-2l - 1} \epsilon^{2l -2}.$$
Notice that $(3c_3/4)(-\log t)^3 \rightarrow \kappa^3 \frac{a(1)}{4(1 + a(1))^2} = : \rho$ as $N \rightarrow \infty$ from (\ref{ascoeff}). Moreover if we pick $\epsilon$ small enough we can make $ \kappa^3\sum_{l \geq 2} (2l + 1)\delta^{-2l - 1} \epsilon^{2l -2} \leq (\rho /4)$. It follows that for all large $N$ we have
$$Re(S_{a,r}((-\log t)z) - c_1 (-\log t) z) \leq -(\rho/2)|z|^2 +(\rho/8).$$
This proves (\ref{cubicZ1C}) whenever  $|(-\log t)z| < \epsilon$.

Suppose now that $z = 1/4 + \iota s$ and $s \in [-\pi(-\log t)^{-1}, \pi(-\log t)^{-1}]$. Put $K = \frac{\epsilon}{ 2\pi}$ and notice that for all large $N$ we have $\tilde z : = 1/4 + \iota Ks$ satisfies $|\tilde z (-\log t)| < \epsilon$. It follows from the first result of Lemma \ref{monotoneC} and our estimate above that
$$Re(S_{a,r}((-\log t)z) - c_1 (-\log t) z) \leq Re(S_{a,r}((-\log t)\tilde z) - c_1 (-\log t) \tilde z) \leq -(\rho/2)|\tilde z|^2 +(\rho/8).$$
Observing that $|\tilde z|^2 \geq K^{-2}|z|^2$ we conclude (\ref{cubicZ1C}) for all $z \in \gamma^t_+$. The result of (\ref{cubicW1C}) now follows from  (\ref{cubicZ1C})  once we use that $S_{a,r}(-z) = -S_{a,r}(z)$ and that $\gamma_-^t = - \gamma_+^t$.

%
\subsection{Proof of Lemmas \ref{techies} and \ref{techiesC}} \hspace{2mm} \\

Let $z= x+ \iota p$ and $w = y + \iota q$ so that $x > 0$ and $y\leq 0$ . Then we have
$$\left| \frac{1}{e^z - e^w}\right| = \left| \frac{1}{e^x - e^y e^{\iota (q - p)}}\right| \leq \left| \frac{1}{e^x - e^y }\right| \leq \frac{1}{e^x - 1} \leq x^{-1},$$
where in the last inequality we used $e^c \geq c + 1$ for $c \geq 0$. This proves the first parts of (\ref{yellow1}) and  (\ref{yellow1C}).\\

Let $\sigma = (-\log t)^{-1}$. Then we have
$$ \left| \frac{1}{\sin(-\pi\sigma (x - y + \iota (p - q))}\right| = \left| \frac{2}{e^{-\iota \pi\sigma(x-y)}e^{\pi\sigma(p-q)} -  e^{\iota \pi\sigma(x-y)}e^{\pi\sigma(q-p)}}\right| $$
If $q \geq p$ we see 
$$\left| e^{-\iota \pi\sigma(x-y)}e^{\pi\sigma(p-q)} -  e^{\iota \pi\sigma(x-y)}e^{\pi\sigma(q-p)}\right| = \left| e^{\pi\sigma(p-q)} -  e^{2\iota \pi\sigma(x-y)}e^{\pi\sigma(q-p)}\right| \geq e^{\pi\sigma(q-p)}|\sin(2 \pi\sigma(x-y))|.$$
Conversely, if $q < p$ we see
$$\left| e^{-\iota \pi\sigma(x-y)}e^{\pi\sigma(p-q)} -  e^{\iota \pi\sigma(x-y)}e^{\pi\sigma(q-p)}\right| = \left| e^{-2\iota \pi\sigma(x-y)}e^{\pi\sigma(p-q)} -  e^{\pi\sigma(q-p)}\right| \geq e^{\pi\sigma(p-q)}|\sin(2 \pi\sigma(x-y))|.$$
We thus conclude that
\begin{equation}\label{yellow2}
\left| \frac{1}{\sin(-\pi\sigma (x - y + \iota (p - q))}\right| \leq e^{-\pi\sigma|p-q|}\frac{2}{|\sin(2 \pi\sigma(x-y))|}.
\end{equation}
In the assumption of Lemma \ref{techies} we have $x-y \in [u, 2U]$ and $2U \leq \sigma^{-1}/5$. Thus $2\pi\sigma(x-y) \in [2\pi \sigma u , 2\pi/5]$. This implies that
\begin{equation}\label{yellow3}
\left| \frac{1}{|\sin(2 \pi\sigma(x-y))|}\right| \leq e^{-\pi\sigma|p-q|}\frac{1}{\sigma u},
\end{equation}
where we used that $\sin x$ is increasing on $[0,\pi/2]$ and satisfies $\pi \sin x \geq x$ there.
In addition, we have from the above
$$
\sum_{k \in \mathbb Z} \left|\frac{1}{\sin(-\pi\sigma (x - y + \iota (p + 2\pi k - q))}\right| \leq \sum_{k \in \mathbb Z}e^{-\pi\sigma|p+ 2\pi k - q|}\sigma^{-1}u^{-1} \leq 2\sigma^{-1}u^{-1} \sum_{k \geq 0}e^{-2k\pi^2 \sigma}.$$
This proves the second part of (\ref{yellow1}).\\

Finally, suppose that $x = 1/4$ and $y = -1/4$. Notice that if dist$(s, \mathbb{Z}) > c$ for some constant $c > 0$ then $\left|\frac{1}{\sin(\pi s)}\right| \leq c'e^{-\pi|Im(s)|}$ for some $c'$, depending on $c$. Using this we get 
$$\sum_{k \in \mathbb Z} \left| \frac{1}{\sin(\pi (w - \frac{2\pi k\iota}{-\log t} - z))}\right|=\sum_{k \in \mathbb Z} \left| \frac{1}{\sin(\pi/2 - \frac{2\pi^2 k\iota }{-\log t} +  \pi \iota(q - p)))}\right| \leq $$
$$\leq  c'  \sum_{k \in \mathbb Z} \exp\left(- \left|  - \frac{2\pi^2 k }{-\log t} +  \pi (q - p)\right| \right) \leq 2c' \sum_{k \geq 0}\exp\left(- \frac{2\pi^2 k }{-\log t}\right). $$
The latter is uniformly bounded for $t \in (1/2,1)$, by $\frac{2c'}{1 - v}$ with $v = \exp\left( -\frac{2\pi^2 }{-\log (1/2)}\right)$. This concludes the proof of the second part of (\ref{yellow1C}). 

\section{Sampling of plane partitions}\label{Section9}

In this section, we describe a sampler of random plane partitions, based on {\em Glauber dynamics} and obtain some empirical evidence supporting the results of this paper. Subsequently, we formulate several conjectures about the convergence of the measure $\mathbb{P}^{r,t}_{HL}$ and provide some evidence about their validity.

%
\subsection{Glauber dynamics}\label{GD}\hspace{2mm}\\

We start with a brief recollection of the (single-site) {\em Glauber dynamics} for probability measures on labelled graphs. In what follows, we will use Section 3.3 in \cite{Peres} as a main reference and recommend the latter for more details.

Let $V$ and $S$ be finite sets and suppose that $\Omega$ is a subset of $S^{V}$. The elements of $S^V$, called {\em configurations}, are the functions from $V$ to $S$. One visualizes a configuration as a labeling of the vertex set $V$ of some graph by elements in $S$. Let $\pi$ be a probability distribution, whose support is $\Omega$. The (single-site) Glauber dynamics for $\pi$ is a reversible Markov chain with state space $\Omega$, stationary distribution $\pi$ and transition probabilities as described below.

For $x \in \Omega$ and $v \in V$, let
$$\Omega(x,v ) := \{ y\in \Omega : y(w) = x(w) \mbox{ for all } w\neq v\} \mbox{ and } \pi^{x,v}(y) :=  \begin{cases} \frac{\pi(y)}{\pi (\Omega(x,v))} &\mbox{ if } y \in \Omega(x,v), \\ 0 &\mbox{ if } y \not\in \Omega(x,v). \end{cases}$$
With the above notation, the Glauber chain moves from state $x$ as follows: a vertex $v$ is chosen uniformly at random from $V$, then one chooses a new configuration according to $\pi^{x,v}$. 

One can show that $\pi$ is a stationary measure for the Glauber dynamics and that the chain is {\em ergodic}. This implies that if the chain is run for $T$ steps, started from any initial state, then the distribution of the state at step $T$ will converge to the stationary distribution $\pi$ as $T \rightarrow \infty$. The latter observation explains how one can use the Glauber dynamics to numerically (approximately) sample arbitrary distributions $\pi$ on $\Omega.$ Namely, one constructs the Glauber dynamics and runs it for a very long time $T$, so that the distribution is close to the stationary distribution of the chain. This sampling method is called a {\em Gibbs sampler} and it belongs to a more general class of methods called {\em Markov chain Monte Carlo}. The time one has to wait for the chain to converge, is typically referred to as a {\em mixing time}; and finding estimates for mixing times is in general very hard.\\

In our case, we consider the measure $\mathbb{P}_{r,t}$ (here $r \in (0,1)$ and $t \in [-1,1]$ ) on plane partitions, which are contained in a big box $N \times N \times N$, satisfying 
\begin{equation}\label{sampledist}
\mathbb{P}_{r,t} (\pi) \propto r^{|\pi|} B_{\pi} (t),
\end{equation}
where $|\pi|$ is the volume of the partition and $B_\pi(t)$ is as in Section \ref{HL}. Specifically, $\mathbb{P}_{r,t}$ is the same as the distribution $\mathbb{P}_{HL}^{r,t,N}$ of Section \ref{HL}, conditioned on plane partitions not exceeding height $N$. We now describe a Gibbs sampler for the above measure.

Set $V = \left\{ (x,y,z) : x,y,z \in \{ 1,...,N\} \right\}$ and $S = \{0,1 \}$. A configuration $\omega \in S^V$ is interpreted as a placements of unit cubes inside the box $N \times N \times N$, so that $\omega((x,y,z)) = 1$ if an only if there is a cube at position $(x,y,z)$. We next let $\Omega$ be the subset of cube placements, corresponding to plane partitions. This describes the state space of our Glauber dynamics. Since $|S| = 2$, we see that if $\pi_a \in \Omega$ we have $|\Omega(\pi_a,v)| = 1$ or $2$; hence, $\mathbb{P}_{r,t}^{\pi_a,v}$ is either a point mass at $\pi_a$ or a Bernoulli measure, whose support lies on $\pi_a$ and the partition $\pi_b$, which is obtained from $\pi_a$ by changing the value of $\pi_a$ at $v$ from $1$ to $0$ or vice versa.

At this time we introduce some terminology. Given a plane partition $\pi$, we call a cube {\em addable} if the the cube does not belong to $\pi$ and by placing the cube in the box we obtain a valid plane partition. Similarly, we call a cube {\em removable} if it belongs to $\pi$ and removing the cube from the box results in a valid plane partition. Denote by $Add_\pi$ and $Rem_\pi$ the (disjoint) sets of addable and removable cubes respectively. Some of these concepts are illustrated in Figure \ref{S9_11}. We observe that  $|\Omega(\pi,v)| = 2$ precisely when there is an element of $Add_\pi$ or $Rem_\pi$ at position $v$. \\

\begin{figure}[h]
\centering
\scalebox{0.6}{\includegraphics{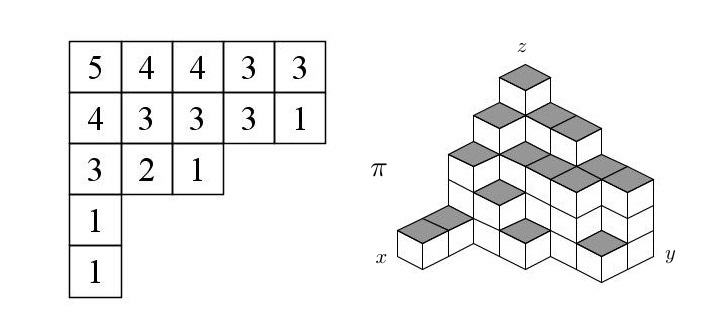}}
\caption{If $N = 5$, then the addable cubes in this example are at positions: $(4,1,2), (3,1,4), (2,1,5), (3,2,3), (2,2,4), (1,2,5), (3,3,2), (1,4,4), (2,5,2)$. The removable cubes are at positions: $(5,1,1), (3,1,3), (2,1,4), (1,1,5), (3,2,2), (3,3,1), \\
(1,3,4), (2,4,3), (2,5,1), (1,5,3)$.}
\label{S9_11}
\end{figure}

We now turn to finding $\mathbb{P}_{r,t}^{\pi,v}$ when $|\Omega(\pi,v)| = 2$. Let $\hat \pi$ be the plane partition obtained from $\pi$ by adding a cube at position $v$ if one is not already present there, otherwise $\hat \pi = \pi$. In addition, let $\check \pi$ be the plane partition obtained from $\pi$ by removing the cube at position $v$ if there is one, otherwise $\check \pi = \pi$. Observe that if $|\Omega(\pi,v)| = 2$, we have either $\hat \pi = \pi$, or $\check \pi = \pi$.

From our earlier discussion, $\mathbb{P}_{r,t}^{\pi,v}$ is a Bernoulli measure supported on $\hat \pi$ and  $\check \pi$. Using results from Section \ref{HL} we have that if $\hat\lambda^k$ and $\check\lambda^k$ denote the diagonal slices of $\hat\pi$ and $\check \pi$ respectively, we have
\begin{equation}
\begin{split}
\mathbb{P}_{r,t}(\hat\pi) \propto  r^{|\pi|} \hspace{-3mm}\prod_{n = -N + 1}^0\hspace{-3mm}\psi_{\hat\lambda^n/\hat\lambda^{n-1}}(0,t) \times \prod_{n = 1}^N \phi_{\hat\lambda^{n-1}/ \hat\lambda^{n}}(0,t), \\
 \mathbb{P}_{r,t}(\check\pi) \propto  r^{|\pi|} \hspace{-3mm}\prod_{n = -N + 1}^0\hspace{-3mm}\psi_{\check\lambda^n/\check\lambda^{n-1}}(0,t) \times \prod_{n = 1}^N \phi_{\check\lambda^{n-1}/ \check\lambda^{n}}(0,t).
\end{split}
\end{equation}
We recall that $\hat\lambda^{-N} = \hat\lambda^N = \varnothing = \check\lambda^{-N} = \check\lambda^N$ and
$$\phi_{\lambda / \mu}(0,t) = \prod_{i \in I}(1 - t^{m_i(\lambda)}) \hspace{3mm} \mbox{ and }  \hspace{3mm} \psi_{\lambda/\mu}(0,t) = \prod_{j \in J}(1 - t^{m_j(\mu)}).$$
In the above formula we assume $\lambda \succ \mu$ otherwise both expressions equal $0$. The sets $I,J$ are:
$$I(\lambda,\mu) = \{ i \in \mathbb{N}: \lambda'_{i+1} = \mu'_{i+1} \mbox{ and }\lambda'_{i} > \mu'_{i}\} \mbox{ and }J(\lambda,\mu) = \{ j \in \mathbb{N}: \lambda'_{j+1} > \mu'_{j+1} \mbox{ and }\lambda'_{j} = \mu'_{j}\}.$$

Set $k = x - y$ and observe that $\check\lambda_i = \hat\lambda_i = \lambda_i$ whenever $i \neq k$. By combining common factors this gives
\begin{enumerate}[label = \arabic{enumi}., leftmargin=1.0cm]
\item $k = 0$: $\mathbb{P}_{r,t}^{\pi,v}(\hat \pi) \propto r\psi_{\hat\lambda^0/\lambda^{-1}}(0,t) \phi_{\hat\lambda^0/\lambda^{1}}(0,t) $ and  $\mathbb{P}_{r,t}^{\pi,v}(\check\pi) \propto \psi_{\check\lambda^0/\lambda^{-1}}(0,t) \phi_{\check\lambda^0/\lambda^{1}}(0,t).$
\item $k > 0$: $\mathbb{P}_{r,t}^{\pi,v}(\hat\pi) \propto r\phi_{\lambda^{k-1}/\hat\lambda^{k}}(0,t) \phi_{\hat\lambda^{k}/\lambda^{k+1}}(0,t) $ and  $\mathbb{P}_{r,t}^{\pi,v}(\check\pi) \propto \phi_{\lambda^{k-1}/\check\lambda^{k}}(0,t) \phi_{\check\lambda^{k}/\lambda^{k+1}}(0,t)$.
\item $k < 0$: $\mathbb{P}_{r,t}^{\pi,v}(\hat\pi) \propto r\psi_{\hat\lambda^k/\lambda^{k-1}}(0,t) \psi_{\lambda^{k+1}/\hat\lambda^{k}}(0,t) $ and  $\mathbb{P}_{r,t}^{\pi,v}(\check\pi) \propto \psi_{\check\lambda^k/\lambda^{k-1}}(0,t) \psi_{\lambda^{k+1}/\check\lambda^{k}}(0,t) $.
\end{enumerate}
In the above $\check\lambda^k$ is obtained from $\hat\lambda^k$, by removing $1$ box from row $\min (x,y)$. The above weights, while explicit, are difficult to calculate efficiently on a computer. Thus we will search for simpler formulas, utilizing that $\check\lambda^k$ is structurally similar to $\hat\lambda^k$. \\

For a partition $\lambda$ we introduce the following notation. Let $S(\lambda)$ be the multiset of positive row-lengths of $\lambda$, counted with multiplicities. One observes that if $\lambda \succ \mu$ one has $I(\lambda, \mu) = S(\lambda) \backslash S(\mu)$ and $J(\lambda, \mu) = S(\mu)\backslash S(\lambda)$ as multisets, in particular $S(\lambda) \backslash S(\mu)$ and $S(\mu)\backslash S(\lambda)$ are honest sets. Let us prove this briefly. 

Since $\lambda \succ \mu$ we have $\lambda_k' = \mu_k'$ or $\mu_{k}' +1$. Consequently, we have $i \in I(\lambda,\mu) \iff \lambda'_i > \mu'_i \mbox{ and } \lambda'_{i + 1} = \mu'_{i + 1} \iff \lambda'_i = \mu'_i + 1 \mbox{ and } \lambda'_{i + 1} = \mu'_{i + 1} \iff  \lambda'_i- \lambda'_{i+1}= \mu'_i - \mu'_{i+1} + 1 \mbox{ and } \lambda'_{i + 1} = \mu'_{i + 1} \iff \lambda'_i- \lambda'_{i+1}= \mu'_i - \mu'_{i+1} + 1  \iff m_i(\lambda) = m_i(\mu) + 1 \iff i \in S(\lambda)/S(\mu)$ and has multiplicity $1$. 

Similarly, $j \in J(\lambda, \mu) \iff \lambda_{j+1}' > \mu_{j+1}' \mbox{ and } \lambda'_{j} = \mu'_{j} \iff \lambda_{j+1}' = \mu_{j+1}' + 1 \mbox{ and } \lambda'_{j} = \mu'_{j} \iff \lambda_{j+1}'  - \lambda_j'= \mu_{j+1}' - \mu_j' + 1 \mbox{ and } \lambda'_{j} = \mu'_{j} \iff \lambda_{j+1}'  - \lambda_j'= \mu_{j+1}' - \mu_j' + 1 \iff m_j(\mu) = m_j(\lambda) + 1 \iff j \in S(\mu) \backslash S(\lambda)$ and has multiplicity $1$. 

The above arguments show that 
$$\phi_{\lambda / \mu}(0,t) = \prod_{i \in S(\lambda) \backslash S(\mu)}(1 - t^{m_i(\lambda)}) \mbox{ and } \psi_{\lambda / \mu}(0,t) = \prod_{i \in S(\mu) \backslash S(\lambda)}(1 - t^{m_i(\mu)}).$$

Suppose that $\lambda, \mu, \nu$ are plane partitions, such that $\lambda \succ \nu$, $\mu \succ \nu$ and $\mu$ is obtained from $\lambda$ by removing a single box from row $k$. In addtition, set $c = \mu_k$. Then we have $S(\lambda) = S(\mu) - \{ c\} + \{c+1\}$ as multisets. Put  $M = \left[ S(\lambda) \backslash S(\nu) \right] \cap  \left[ S(\mu) \backslash S(\nu) \right]$ and observe that $m_i(\lambda) = m_i(\mu)$, whenever $i \in M$. Indeed, we have from our earlier work that $i \in M \iff $  $i \in S(\lambda) \backslash S(\nu)$ and $i \in S(\mu) \backslash S(\nu)  \iff $ $ m_i(\lambda) = 1 + m_i(\nu)$ and $ m_i(\mu) = 1 + m_i(\nu)$ $\implies  m_i(\lambda) = m_i(\mu)$. Then we have
\begin{equation}\label{S9blue1}
\begin{split}
\phi_{\lambda / \nu}(0,t) = \left( 1 - {\bf 1}_{c \in S(\lambda) \backslash S(\nu)}t^{m_c(\lambda)}\right) \left(1 - {\bf 1}_{c + 1 \in S(\lambda) \backslash S(\nu)}t^{m_{c+1}(\lambda)}\right)  \prod_{i \in M }(1 - t^{m_i(\lambda)}),  \\
\phi_{\mu / \nu}(0,t) = \left( 1 - {\bf 1}_{c \in S(\mu) \backslash S(\nu)}t^{m_c(\mu)}\right) \left(1 - {\bf 1}_{c + 1 \in S(\mu) \backslash S(\nu)}t^{m_{c+1}(\mu)}\right)  \prod_{i \in M }(1 - t^{m_i(\mu)}).
\end{split}
\end{equation}
A similar argument shows that if $L =  \left[ S(\nu) \backslash S(\lambda) \right] \cap  \left[ S(\nu) \backslash S(\mu) \right]$, then we have
\begin{equation}\label{S9blue2}
\begin{split}
\psi_{\lambda / \nu}(0,t) = \left( 1 - {\bf 1}_{c \in S(\nu) \backslash S(\lambda)}t^{m_c(\nu)}\right) \left(1 - {\bf 1}_{c + 1 \in S(\nu) \backslash S(\lambda)}t^{m_{c+1}(\nu)}\right)  \prod_{i \in L }(1 - t^{m_i(\nu)}),  \\
\psi_{\mu / \nu}(0,t) = \left( 1 - {\bf 1}_{c \in S(\nu) \backslash S(\mu)}t^{m_c(\nu)}\right) \left(1 - {\bf 1}_{c + 1 \in S(\nu) \backslash S(\mu)}t^{m_{c+1}(\nu)}\right)  \prod_{i \in L }(1 - t^{m_i(\nu)}).
\end{split}
\end{equation}

Set 
\begin{equation}\label{S9G}
G(\lambda, \nu, c) := \begin{cases} 1 - {\bf 1}_{\{m_c(\nu) > m_c(\lambda)\}}t^{m_c(\nu)}&\mbox{ if } c > 0,\\ 
                                          1 &\mbox{ otherwise.} \end{cases}
\end{equation}
 Then the above work implies that when $v = (x,y,z)$ and $k = x - y$ we get
\begin{equation}\label{S9probs}
\begin{split}
\mathbb{P}_{r,t}^{\pi,v}(\hat\pi) \propto r G(\hat\lambda^k, \lambda^{k-1},z-1)G(\hat\lambda^k, \lambda^{k-1},z)G( \lambda^{k+1}, \hat\lambda^k,z-1)G( \lambda^{k+1},\hat \lambda^k,z)  \\
 \mathbb{P}_{r,t}^{\pi,v}(\check\pi) \propto  G(\check\lambda^k, \lambda^{k-1},z-1)G(\check\lambda^k, \lambda^{k-1},z)G( \lambda^{k+1}, \check\lambda^k,z-1)G( \lambda^{k+1}, \check\lambda^k,z).
\end{split}
\end{equation}
In obtaining the above formulas we used (\ref{S9blue1}) and (\ref{S9blue2}) for the three different cases $k <0$, $k > 0$ and $k = 0$. Some special care is needed when $k = N$ and in this case the terms in (\ref{S9probs}) involving $\lambda^{k+1}$ are replaced with $1$'s.\\

Summarizing our results, we see that the transition from $\pi$ is as follows: pick a position $v = (x,y,z)$ in the box $N \times N \times N$ uniformly at random; if the position $v$ does not correspond to an element in the sets $Add_\pi$ or $Rem_\pi$ then leave $\pi$ unchanged with probability $1$; if the position $v \in Add_\pi \sqcup Rem_\pi$, then $\pi$ is goes to $\hat \pi$ with probability $p$ and to $\check \pi$ with probability $1-p$, where
\begin{equation}\label{TP}
p:= \frac{ r }{r  +  \frac{ G(\check\lambda^k, \lambda^{k-1},z-1)G( \check\lambda^k, \lambda^{k-1},z)G( \lambda^{k+1}, \check\lambda^k,z-1)G( \lambda^{k+1},  \check\lambda^k,z) }{  G(\hat\lambda^k, \lambda^{k-1},z-1)G(\hat\lambda^k, \lambda^{k-1},z)G( \lambda^{k+1}, \hat\lambda^k,z-1)G( \lambda^{k+1}, \hat\lambda^k,z) }}.
\end{equation}
As before if $k = N$ we replace the terms in the above formula involving $\lambda^{k+1}$ with $1$'s.

%
\subsection{Gibbs sampler algorithm and simulations}\label{Algorithm}\hspace{2mm}\\

In Section \ref{GD} we described a Gibbs sampler for the measure $\mathbb{P}_{r,t}$ and gave exact formulas for the transition probabilities in (\ref{TP}). Our goal now is to give an outline for an algorithm implementing the sampler and present some simulations of random plane partitions. The main difficulty in constructing Gibbs samplers for distributions involving symmetric functions is finding computationally efficient ways to calculate the transition probabilities, which we did in (\ref{TP}). Beyond this formula there are no particularly novel ideas in the algorithm below; however, as we could not find an adequate reference in the literature, we believe that an outline is in order. It is quite possible that different methods can be used to {\em exactly} sample the distribution $\mathbb{P}_{HL}^{r,t}$ or some variant of it, using ideas like those in \cite{BorGorSam}, \cite{VulSam} or \cite{Bodini}. Unfortunately, we were unable to implement exact sampling algorithms efficiently, which is why we resort to the Gibbs sampler and leave the development of better samplers for future work.  \\

One of the difficulties in making simulations is that the number of iterations necessary to obtain convergence is very large. In the cases described below we will need about $2\times 10^{15}$ iterations to see a limit shape emerge. Part of the reason for needing so many iterations is that most of the time the uniformly sampled position $v$ in the $N\times N \times N$ box will not belong to the sets $Add_\pi$ and $Rem_\pi$ and thus the chain will stay in one place for extended periods of time. Let us call steps of the chain, where $v$ was not chosen inside $Add_\pi$ or $Rem_\pi$ {\em empty}; if $v \in Add_\pi \cup Rem_\pi$ we call the step {\em successful}. Empty steps, although individually computationally cheap, add up and significantly increase the runtime of a simulation. It is thus very important to come up with ways to circumvent spending so much time in empty steps.

We will now describe a neat idea that allows us to group together empty steps and thus greatly reduce the runtime of simulations. Let $add_\pi = |Add_\pi|$ and $rem_\pi = |Rem_\pi|$ and observe that the probability of making an empty step, starting from the plane partition $\pi$, is 
$$\mathbb{P}_\pi(v \not \in Add_\pi \cup Rem_\pi) = 1- \frac{add_\pi + rem_\pi}{N^3} =:x_\pi.$$
Consequently, the number of empty steps $E_\pi$, before a successful one, is distributed according to the geometric distribution
\begin{equation}\label{sampleDist}
\mathbb{P}_\pi( E_\pi = k) = x_\pi ^k(1- x_\pi) \mbox{ for } k \geq 0.
\end{equation}

Using the latter observation, instead of sampling $v$ uniformly from the $N\times N \times N$ box, updating our chain and increasing the number of iterations by $1$, we may sample a geometric random variable $X$ with the above distribution, sample $v$ uniformly from $Add_\pi \cup Rem_\pi$ update our chain and increase the number of iterations by $1 + X$. What we have done is calculate beforehand how many empty moves we need to make before we make a successful one and then do all of them together, which by definition means to just do the successful move.

Typically, the cost of drawing an integer-valued random variable $K$ according to some prescribed distribution is of the order of the value $k$ that is finally assigned to $K$ (see the discussion at the end of Section 3 in \cite{Bodini}). An exception is the geometric law, which is simpler. Indeed, to draw $X$ according to (\ref{sampleDist}) it is enough to set $X = \lfloor \log U / \log (x_\pi)\rfloor$, where $U$ is uniform $(0,1)$. Hence, the cost of drawing a geometric law is $O(1)$.

If $N$ is very large, one observes that $x_\pi$ is very close to $1$. Indeed, $add_\pi$ and $rem_\pi$ are both bounded from above by $N^2$, since there can be at most one addable and removable cube in every column $(x,y, \cdot)$. Consequently, one expects to make on average at most $1$ successful step every $N$ steps of the iteration. The upshot of our idea now is that we have replaced sampling a large number of uniform random variables, with sampling a single geometric random variable at cost $O(1)$. Moreover, we have reduced the number of jump commands in our loop, improving runtime further.\\

With the above discussion we are now prepared to describe our algorithm for the Gibbs sampler. We begin with a brief description of random number generators. 
{\tt Bernoulli}$(p)$ samples a Bernoulli random variable $X$ with parameter $p$, i.e. $\mathbb{P}(X = 1) = p$ and $\mathbb{P}(X = 0) =1- p$. {\tt Geom}$(p)$ samples a geometric random variable $X$ with parameter $p$, i.e. $\mathbb{P}(X = k) = p^k(1-p)$ for $k \geq 0$. {\tt Uniform}$(n)$ samples a uniform random variable $X$ on $\{1,...,n\}$, i.e. $\mathbb{P}(X = k) = 1/n$ for $k = 1,...,n$. The random number generator algorthms are described below.

\vspace{2mm}
\begin{center}
\begin{tabular}{l}
\hline
{\tt Bernoulli}$(p)$\\
\hline
$U$ := uniform(0,1);\\
{\bf if } $U < $p {\bf return} $1$;\\
{\bf else return} $0$; \\
\hline
\end{tabular}
\quad\hspace{10mm}
\begin{tabular}{l}
\hline
 {\tt Geom}$(p)$ \\
\hline
$U$ := uniform(0,1); \\
  {\bf return} $\lfloor \log (U) / \log p \rfloor$;\\
\hspace{4mm}\\
\hline
\end{tabular}
\quad\hspace{10mm}
\begin{tabular}{l}
\hline
{\tt Uniform}$(n)$\\
\hline
 $U$ := uniform(0,1);\\
 {\bf return} $1 + \lfloor nU\rfloor$;\\
\hspace{4mm}\\
\hline
\end{tabular}
\end{center}
\vspace{2mm}

Next we consider the following functions, which perform the basic operations on plane partitions necessary for running the Glauber dynamics. In the functions below we recall that for a plane partition $\pi$, $add_\pi$ and $rem_\pi$ are the number of cubes that can be added to and removed from $\pi$ respectively, so that the result is a plane partition contained in $N \times N \times N$.
\vspace{2mm}

\begin{center}
 \begin{tabular}{l}
\hline
{\tt AddCube}$(\pi, k)$ \\
\hline
 {\bf Input:} $\pi$; index $k\in \{1,...,add_\pi\}$.\\
\hspace{5mm} Add the $k$-th addable cube to $\pi$.\\
\hline
\end{tabular}
\quad\hspace{10mm}
\begin{tabular}{l}
\hline
 {\tt RemCube}$(\pi, k)$\\
\hline
 {\bf Input:} $\pi$; index $k\in \{1,...,rem_\pi\}$.\\
\hspace{5mm} Remove the $k$-th removable cube from $\pi$.\\
\hline
\end{tabular}
\end{center}

\vspace{5mm}

\begin{center}
 \begin{tabular}{l}
\hline
{\tt GetAdd}$(\pi, k)$ \\
\hline
 {\bf Input:} $\pi$; index $k\in \{1,...,add_\pi\}$.\\
 {\bf Output:} The position $(x,y,z)$ of the \\
\hspace{15mm} $k$-th addable cube.\\
\hline
\end{tabular}
\quad\hspace{10mm}
\begin{tabular}{l}
\hline
 {\tt GetRem}$(\pi, k)$\\
\hline
 {\bf Input:} $\pi$; index $k\in \{1,...,rem_\pi\}$.\\
{\bf Output:} The position $(x,y,z)$ of the \\
\hspace{15mm} $k$-th removable cube.\\
\hline
\end{tabular}
\end{center}

\vspace{5mm}

\begin{tabular}{l}
\hline
{\tt GetMult}$(\pi, k, c)$ \\
\hline
 {\bf Input:} $\pi$, $k$ - slice index, $c \geq 0$.\\
{\bf Output:} $m_c(\lambda^k) $ -  multiplicity of $c$ in the $k$-th slice of $\pi$.\\
\hspace{16mm} If $c = 0$ the output is $-2$.\\
\hline
\end{tabular}
\quad\begin{tabular}{l}
\hline
{\tt WeightG}$(m,n,t)$ \\
\hline
{\bf if } (($n < 0$) {\bf or} ($m < 0$)) {\bf return } $1$;\\
 {\bf if } $m >n$ {\bf return} $(1-t^m)$;\\
{\bf return} $1$;\\
\hline
\end{tabular}

\vspace{2mm}

With the above functions we now write an algorithm, which runs the Glauber dynamics for some predescribed number of iterations.

\vspace{2mm}

\begin{tabular}{l}
\hline
{\bf Algorithm} {\tt GibbsSampler}$(\pi, N, T,r,t)$ \\
\hline
 {\bf Input:} $\pi$ - initial plane partition, $N$ - size of box, $T$ - total number of iterations,\\
\hspace{13mm} $r\in (0,1)$ and $t \in [-1,1]$ - parameters of the distribution.\\
$iter := 0$; \\
{\bf while} ($iter < T$) {\bf do}\\
\hspace{5mm} $X:=$ {\tt Geom}$\left(1 -  \frac{add_\pi + rem_\pi}{N^3}\right)$;\\
\hspace{5mm} $iter = iter + X$;\\
\hspace{5mm} {\bf if } $(iter \geq T)$ {\bf break};\\
\hspace{5mm} $u :=$ {\tt Uniform}($add_\pi + rem_\pi$);\\
\hspace{5mm} {\bf if } ($u < add_\pi$) \\
\hspace{12mm}  $(x,y,z): =$ {\tt GetAdd}($\pi,u$); \\
\hspace{12mm} $ k: = x - y$; \\
\hspace{12mm} $w_1: = r *$ {\tt WeightG}({\tt GetMult}($\pi, k-1,z$), {\tt GetMult}($\pi, k,z) +1,t) $; \\
\hspace{12mm} $w_1 = w_1 *$ {\tt WeightG}({\tt GetMult}($\pi, k-1,z-1$), {\tt GetMult}($\pi, k,z-1) -1,t) $; \\
\hspace{12mm} $w_2: =$ {\tt WeightG}({\tt GetMult}($\pi, k-1,z$), {\tt GetMult}($\pi, k,z),t)$; \\
\hspace{12mm} $w_2 = w_2 *$ {\tt WeightG}({\tt GetMult}($\pi, k-1,z-1$), {\tt GetMult}($\pi, k,z-1),t)$; \\
\hspace{12mm} {\bf if } ($k < N$)\\
\hspace{19mm} $w_1 = w_1 *$ {\tt WeightG}({\tt GetMult}($\pi, k,z) +1$, {\tt GetMult}($\pi, k+1,z),t)$; \\
\hspace{19mm} $w_1 = w_1 *$ {\tt WeightG}({\tt GetMult}($\pi, k,z-1$)$-1$, {\tt GetMult}($\pi, k+1,z-1),t)$; \\
\hspace{19mm} $w_2 = w_2 *$ {\tt WeightG}({\tt GetMult}($\pi, k,z$), {\tt GetMult}($\pi, k+1,z),t)$; \\
\hspace{19mm} $w_2 = w_2 *$ {\tt WeightG}({\tt GetMult}($\pi, k,z-1$), {\tt GetMult}($\pi, k+1,z-1),t)$; \\
\hspace{12mm} {\bf end}\\
\hspace{12mm} $p:= w_1/(w_1 + w_2)$;\\
\hspace{12mm} $B:=$ {\tt Bernoulli}($p$);\\
\hspace{12mm} {\bf if} ($B == 1$) {\tt AddCube}($\pi, u$);\\
\hspace{5mm} {\bf else}\\
\hspace{12mm}  $(x,y,z): =$ {\tt GetRem}($\pi,u - add_\pi$); \\
\hspace{12mm} $ k: = x - y$; \\
\hspace{12mm} $w_1: = $ {\tt WeightG}({\tt GetMult}($\pi, k-1,z$), {\tt GetMult}($\pi, k,z) -1,t) $; \\
\hspace{12mm} $w_1 = w_1 *$ {\tt WeightG}({\tt GetMult}($\pi, k-1,z-1$), {\tt GetMult}($\pi, k,z-1) +1,t) $; \\
\hspace{12mm} $w_2: = r*$ {\tt WeightG}({\tt GetMult}($\pi, k-1,z$), {\tt GetMult}($\pi, k,z),t)$; \\
\hspace{12mm} $w_2 = w_2 *$ {\tt WeightG}({\tt GetMult}($\pi, k-1,z-1$), {\tt GetMult}($\pi, k,z-1),t)$; \\
\hspace{12mm} {\bf if } ($k < N$)\\
\hspace{19mm} $w_1 = w_1 *$ {\tt WeightG}({\tt GetMult}($\pi, k,z) -1$, {\tt GetMult}($\pi, k+1,z),t)$;  \\
\hspace{19mm}  $w_1 = w_1 *$ {\tt WeightG}({\tt GetMult}($\pi, k,z-1)+1$, {\tt GetMult}($\pi, k+1,z-1),t)$; \\
\hspace{19mm}  $w_2 = w_2 *$ {\tt WeightG}({\tt GetMult}($\pi, k,z$), {\tt GetMult}($\pi, k+1,z),t)$; \\
\hspace{19mm}  $w_2 =  w_2 *$ {\tt WeightG}({\tt GetMult}($\pi, k,z-1$), {\tt GetMult}($\pi, k+1,z-1),t)$\\
\hspace{12mm} {\bf end}\\
\hspace{12mm} $p:= w_1/(w_1 + w_2);$\\
\hspace{12mm} $B:=$ {\tt Bernoulli}($p$);\\
\hspace{12mm} {\bf if} ($B == 1$) {\tt RemCube}($\pi, u - add_\pi$);\\
\hspace{5mm}{\bf end}\\
\hspace{5mm} $iter = iter + 1;$\\
{\bf end}\\
{\bf Output:} $\pi$.\\
\hline
\end{tabular}

\vspace{2mm}

\begin{remark} In the above algorithm, an expression of the form \\
{\tt WeightG}({\tt GetMult}($\pi, \cdot,\cdot$), {\tt GetMult}($\pi, \cdot ,\cdot),t)$ simulates the function $G$, given in (\ref{S9G}). The case $z = 1$ is special, since $G$ is defined differently depending on $c > 0$ and $c = 0$. In order to make the algorithm more concise, and exclude additional checks of whether $z = 1$, we have rigged the functions {\tt GetMult} and {\tt WeightG} so that the end results agree with (\ref{S9probs}).
\end{remark}

Using {\tt GibbsSampler} we can run different simulations, to verify empirically some of the results from this paper. In particular, we will be interested in showing that the bottom slice of a plane partition, distributed according to $\mathbb{P}_{r,t}$ with $N$ taken very large, does indeed converge to the desired limit shape. At this time we remark that we have not done any analysis to estimate the mixing time of the chain we have constructed, hence our choice of number of iterations below will be somewhat arbitrary. The major point to be made here is that we are only interested in qualitative information about the distribution, such as a limit-shape phenomenon, and the purpose of the iterations is to pictorially support statements for which we have analytic proofs.\\

In the simulations below, the sampler is started from $\pi = \varnothing$, the size of the box $N = 2000$, the number of iterations is $T = 2\times 10^{15}$ and $r = 0.99$. The only parameter we will vary is $t$. Results are summarized in Figures \ref{S9_2} - \ref{S9_5}, where the red curve indicates the limit shape from our results. 

\begin{figure}[h]
\centering
\begin{minipage}{.5\textwidth}
  \centering
  \includegraphics[width=0.9\linewidth]{LimitShape0.jpg}
\captionsetup{width=.9\linewidth}
  \caption{$ t= 0$.}
  \label{S9_2}
\end{minipage}%
\begin{minipage}{.5\textwidth}
  \centering
  \includegraphics[width=0.9\linewidth]{LimitShape2.jpg}
\captionsetup{width=.9\linewidth}
  \caption{$t = 0.2$. }
  \label{S9_3}
\end{minipage}
\end{figure}

\begin{figure}[h]
\centering
\begin{minipage}{.5\textwidth}
  \centering
  \includegraphics[width=0.9\linewidth]{LimitShape4.jpg}
\captionsetup{width=.9\linewidth}
  \caption{$t = 0.4$.}
  \label{S9_4}
\end{minipage}%
\begin{minipage}{.5\textwidth}
  \centering
  \includegraphics[width=0.9\linewidth]{LimitShape6.jpg}
\captionsetup{width=.9\linewidth}
  \caption{$t = 0.6$. }
  \label{S9_5}
\end{minipage}
\end{figure}
\FloatBarrier

As can be seen from the above figures, the behavior of the bottom slice asymptotically does not depend on the parameter $t$, and the behavior nicely agrees with the predictions from Theorem \ref{TW}. 

%
\subsection{Conjectural covergence to the Airy and KPZ line enesembles} \hspace{2mm}\\

In this section we state a couple of conjectures about the convergence of $\mathbb{P}_{HL}^{r,t}$ that go beyond the results of this paper. At this time we do not have any clear strategy on how they can be proved, however, we will provide some evidence for their validity. We start by rather informally recalling the definitions and properties of the Airy and KPZ line ensembles. For more details about these objects the reader is encouraged to look at \cite{CorHamA} and \cite{CorHamK}, where they were introduced and analyzed. \\

Let $B_1^N,...,B_N^N$ be $N$ independent standard Brownian bridges on $[-N,N]$, $B_i^N(-N) = B_i^N(N) = 0$, conditioned on not intersecting in $(-N,N)$ and set $\Sigma_N = \{1,...,N\}$. The latter object can be viewed as a {\em line ensemble}, i.e. a random variable with values in the space $X$ of continuous functions $f: \Sigma_N \times [-N,N] \rightarrow \mathbb{R}$ endowed with the topology of uniform convergence on compact subsets of $\Sigma_N \times [-N,N]$. In \cite{CorHamA} these line ensembles are called {\em Dyson line ensembles} and it is shown that under suitable shifts and scaling they converge (in the sense of line ensembles - see the discussion at the beginning of Section 2.1 in \cite{CorHamA}) to a continuous non-intersecting $\mathbb{N} \times \mathbb{R}$-indexed line ensemble. The limit is called the {\em Airy line ensemble} and is denoted by$\mathcal{A} : \mathbb{N} \times \mathbb{R} \rightarrow \mathbb{R}.$
The two properties of $\mathcal{A}$ that we will focus on are that $\mathcal{A}_1(t)$ is distributed according to the {Airy process} and that the $\mathbb{N}$-indexed line ensemble $\mathcal{L}: \mathbb{N}\times \mathbb{R} \rightarrow \mathbb{R}$, given by $\mathcal{L}_i(x):=2^{-1/2} (\mathcal{A}_i(x) - x^2)$ for each $i \in \mathbb{N}$ satisfies a certain {\em Brownian Gibbs property} that we describe below.

The Airy process first appeared in the paper of Pr{\" a}hofer and Spohn \cite{Spohn}, as the scaling limit of the fluctuations of the PNG droplet and it is believed to be the universal scaling limit of a large class of stochastic growth models. It's single time distribution is given by the GUE Tracy-Widom distribution.

We now describe an instance of the Brownian Gibbs property, satisfied by $\mathcal{L}_i$. Let $k \geq 2$, and consider the curves $\mathcal{L}_{k-1}, \mathcal{L}_{k}$ and $\mathcal{L}_{k+1}$. Let $a, b \in \mathbb{R}$ and $a < b$ be given and put $x = \mathcal{L}_k(a),$ $y = \mathcal{L}_k(b)$. Then if we erase $\mathcal{L}_k([a,b])$ and sample an independent Brownian bridge on $[a,b]$ between the points $x$ and $y$, conditional on not intersecting $\mathcal{L}_{k-1}$ and $\mathcal{L}_{k+1}$, then the new line ensemble has the same distribution as the old one.\\

We shift our attention to the KPZ line ensemble. Let $N \in \mathbb{N}$ and $s > 0$ be given. For each sequence $0 < s_1 < \cdots < s_{N-1} < s$ we can associate an up/right path $\phi$ in $[0,s] \times \{1,...,N\}$ that is the range of the unique non-decreasing surjective map $[0,s] \rightarrow \{1,...,N\}$ whose set of jump times is $\{s_i\}_{i=1}^{N-1}$. Let $B_1, ..., B_N$ be independent standard Brownian motions and define
$$E(\phi) = B_1(s_1) + (B_2(s_2) - B_2(s_1)) + \cdots + (B_N(s) - B_{N}(s_{N-1})).$$ 
The {\em O'Connel-Yor polymer partition function line ensemble} is a $\{1,...,N\} \times \mathbb{R}_+$-indexed line ensemble $\{Z_n^N(s): n\in \{1,...,N\}, s > 0\}$, defined by
$$Z_n^N(s):= \int_{D_n(s)} \exp \left( \sum_{i = 1}^n E(\phi_i) \right)d\phi_1 \cdots d\phi_n, $$
where the integral is with respect to Lebesgue measure on the Euclidean set $D_n(s)$ of all $n$-tuples of non-intersecting (disjoint) up/right paths $\phi_1,...,\phi_n$ with initial points $(0,1), ..., (0,n)$ and endpoints $(s,N-n+1), ..., (s,N)$. Setting $Z_0^N(s) \equiv 1$ we define the {\em O'Connel-Yor polymer free energy line ensemble} as the $\{1,...,N\} \times \mathbb{R}_+$-indexed line ensemble $\{X_n^N(s): n \in \{1,...,N\}, s> 0\}$ defined by
$$X_n^N(s) = \log \left( \frac{Z_n^N(s)}{Z_{n-1}^N(s)}\right).$$
In \cite{CorHamK} it was shown that under suitable shifts and scaling the line ensembles $X_n^N(\sqrt{tN} + \cdot)$ are sequentially compact and hence have at least one weak limit, called the {\em KPZ$_t$ line ensemble} and denoted by $\mathcal{H}^t: \mathbb{N} \times \mathbb{R} \rightarrow \mathbb{R}$. The uniqueness of this limit is an open problem, however any weak limit has to satisfy the following two properties. The lowest index curve $\mathcal{H}^t_1: \mathbb{R} \rightarrow \mathbb{R}$ is equal in distribution to $\mathcal{F}(t, \cdot)$ - the time $t$ Hopf-Cole solution to the narrow wedge initial data KPZ equation (see Definition \ref{freeE}). In addition, the ensemble $\mathcal{H}^t$ satisfies a certain ${\bf H}_1$- {\em Brownian Gibbs property}, an instance of which we now describe. 

 Let $k \geq 2$, and consider the curves $\mathcal{H}^t_{k-1}, \mathcal{H}^t_{k}$ and $\mathcal{H}^t_{k+1}$. Let $a, b \in \mathbb{R}$ and $a < b$ be given and put $x = \mathcal{H}^t_k(a),$ $y = \mathcal{H}^t_k(b)$. We erase $\mathcal{H}^t_k([a,b])$ and sample an independent Brownian bridge on $[a,b]$ between the points $x$ and $y$. The new path is accepted with probability
$$\exp \left[ - \int_a^b {\bf H}_1\left( \mathcal{H}^t_{k+1}(u) - \mathcal{H}^t_k(u)\right)du - \int_a^b {\bf H}_1\left( \mathcal{H}^t_{k}(u) - \mathcal{H}^t_{k-1}(u)\right)du\right], \hspace{5mm} {\bf H}_t(x) = e^{t^{1/3}x},$$
and if the path is not accepted we sample a new Brownian bridge and repeat. This procedure yields a new line ensemble and it  has the same distribution as the old one.

The {\em Hamiltonian} ${\bf H}_t$ acts as a potential in which the Brownian paths evolve, assigning more weight to certain path configurations. Formally, setting $t = \infty$ we have $H_{+\infty}(x) = \infty$ if $x > 0$ and $0$ if $x < 0$. This Hamiltonian corresponds to conditioning consecutively labeled curves to not touch and hence reduces the ${\bf H}$- Brownian Gibbs property to the Brownian Gibbs property we had earlier.\\

 For $\tau > 0$ let $f(\tau) = 2\log (1 + e^{-\tau/2})$, $f'(\tau) = -\frac{e^{-\tau/2}}{1 + e^{-\tau/2}}$ and $f''(\tau) = \frac{1}{2} \frac{e^{-\tau/2}}{(1 + e^{-\tau/2})^2}$. Also set $N(r) = \frac{1}{1-r}$. With this notation we have the following conjectures.

\begin{conjecture}\label{Conj1}
Consider the measure $\mathbb{P}_{HL}^{r,t}$ on plane partitions, given in (\ref{PDEF}), with $t \in (0,1)$ fixed. For $\tau \in \mathbb{R}$ define the random $\mathbb{N} \times \mathbb{R}$-indexed  line ensemble $\Lambda^\tau$ as
\begin{equation}\label{conj1E}
\Lambda^\tau_k(s) = \frac{\lambda_k'( \lfloor\tau N + sN^{2/3} \rfloor) - Nf(|\tau|) - sN^{2/3}f'(|\tau|) - (1/2)s^2N^{1/3}f''(|\tau|)}{\sqrt[3]{2f''(|\tau|) N}}.
\end{equation}
 Then as $r \rightarrow 1^-$ we have $\Lambda^\tau \implies \mathcal{A}^\tau$ (weak convergence in the sense of line ensembles), where $\mathcal{A}^\tau$ is defined as $\mathcal{A}^\tau_k(s) = \mathcal{A}_k(s\sqrt[3]{2f''(|\tau|)}/2)$ and $(\mathcal{A}_k)_{k \in \mathbb{N}}$ is the Airy line ensemble.
\end{conjecture}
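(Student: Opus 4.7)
The plan is to establish weak convergence of line ensembles by combining (i) convergence of the finite-dimensional distributions $(\Lambda^\tau_{k_1}(s_1),\dots,\Lambda^\tau_{k_m}(s_m))$ and (ii) tightness of the rescaled ensemble $\Lambda^\tau$ on compact subsets of $\mathbb{N}\times\mathbb{R}$, together with an identification of any subsequential limit as the Airy line ensemble via its characterization through the Brownian Gibbs property and marginals, following the strategy pioneered in \cite{CorHamA}.

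First I would extend the one-point result Theorem \ref{TW} to multi-spatial-point convergence of $\lambda_1'$ at the bottom level. The joint distribution of $\lambda_1'(k_1),\ldots,\lambda_1'(k_m)$ is determined by the joint law of the diagonal slices $\lambda^{k_1},\ldots,\lambda^{k_m}$, which together form a Hall--Littlewood process in the sense of Section \ref{MacProc}. To obtain joint observables, I would apply the Macdonald difference operator $\mathcal{D}_N$ from Section \ref{ObsHL} separately to different variable sets in a multi-fold Cauchy-type identity derived from the Hall--Littlewood process. This is in direct analogy with Section 4 of \cite{BorCor} and the multi-point $q$-TASEP analysis, and should yield an $m$-fold nested contour integral formula whose Fredholm determinant reformulation can be analyzed by the same saddle-point method as in Section \ref{S44}, with $m$ critical points located at positions depending on $\tau$ and $s_i$. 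Identification of the finite-dimensional limit with the Airy process marginals would then follow by direct comparison with the standard extended Airy kernel.

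For the multi-line statement ($k\geq 2$) the natural route is through a Gibbs-type property for the Hall--Littlewood process. The rows $\lambda_k'$ of each diagonal slice satisfy an interlacing structure weighted by $\phi_{\lambda/\mu}(0,t)$ and $\psi_{\lambda/\mu}(0,t)$; these skew weights induce an explicit Markov-chain law on the columns. One expects that, under the rescaling of (\ref{conj1E}), the microscopic transition kernel between $\lambda_k'$ and $\lambda_{k+1}'$ converges to nonintersecting Brownian bridges (i.e.\ the Brownian Gibbs property with hard-core interaction inherited by $\mathcal{L}_k = 2^{-1/2}(\mathcal{A}_k - x^2)$), since asymptotically $t^{m_i}\to 0$ and only the indicator of strict interlacing survives in the weights. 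Combined with finite-dimensional convergence of the bottom curve and a monotonicity argument from the Hall--Littlewood Gibbs structure, one can bootstrap tightness and convergence to higher curves, using the $t=0$ determinantal asymptotics of \cite{FSpohn} as an a priori comparison to bound $\Lambda^\tau_2$ above and below.

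The hard part will be making the Gibbs argument rigorous in the regime $t\in(0,1)$ fixed. While for $t=0$ the measure is a Schur process and full line-ensemble control comes from determinantal kernel asymptotics, for $t\in(0,1)$ no such structure exists (as emphasized in Section \ref{intro}), and the non-trivial skew weights $\prod(1-t^{m_i})$ must be shown not to destroy the emergence of the nonintersection-only Brownian Gibbs limit. An alternative path -- obtaining an honest multi-line Fredholm determinant formula for the joint law of $(\lambda_1',\ldots,\lambda_k')$ -- would bypass the Gibbs analysis entirely, but is presently out of reach: the Macdonald operator approach in the Hall--Littlewood specialization yields only the single observable $t^{-k\lambda_1'}$, and extracting information about $\lambda_k'$ for $k\geq 2$ would require a genuinely new algebraic input, e.g.\ suitable higher Hall--Littlewood difference operators or a non-trivial duality. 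Developing either of these tools is, in my view, the decisive obstacle between the present one-point result and a proof of Conjecture \ref{Conj1}.
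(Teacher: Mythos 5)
The statement you are addressing is a \emph{conjecture} in the paper, not a theorem: the author explicitly states that no proof strategy is known and offers only supporting evidence in Section \ref{Section9}. That evidence consists of (i) the one-point marginal convergence of $\Lambda^\tau_1(s)$, obtained from Theorem \ref{TW} together with the expansion $M(r) = Nf(|\tau|) + sN^{2/3}f'(|\tau|) + (1/2)s^2N^{1/3}f''(|\tau|) + O(1)$ for $a(r)=r^{\lfloor \tau N + sN^{2/3}\rfloor}$, and (ii) a heuristic matching of the Gibbs structures: conditionally on neighboring slices, the law of $\lambda_k'$ on an interval factors into an entropy term $r^P$ and a potential term built from factors $(1-t^{|\lambda_k'(m)-\lambda_{k\pm1}'(m)|})$, and since the conjectured separation of consecutive curves is of order $N^{1/3}$ with $t$ fixed, the potential is bounded below by $(1-t^{cN^{1/3}})^{CN^{2/3}}\to 1$, leaving only non-intersecting Brownian bridges, i.e.\ the Brownian Gibbs property. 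Your outline (finite-dimensional convergence plus tightness plus identification via the Brownian Gibbs characterization of \cite{CorHamA}) is the natural program and is consistent with this motivation, but it does not constitute a proof, and the gaps are exactly the ones the paper flags.

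Concretely: your second paragraph asserts that applying $\mathcal{D}_N$ to different variable sets in a multi-fold Cauchy identity yields multi-point formulas for $\lambda_1'$ ``in direct analogy'' with the $q$-Whittaker case, but the paper emphasizes (and you concede in your final paragraph, creating an internal inconsistency) that in the Hall--Littlewood degeneration the operator approach gives access to the single observable $t^{-k\lambda_1'}$ at one slice; no joint Fredholm determinant formula across slices, let alone an extended-Airy-kernel limit, is available, so step (i) of your program is not in reach by the methods of this paper. For the multi-line part, the claim that the skew weights degenerate to ``only the indicator of strict interlacing'' is precisely the paper's heuristic; to make it a proof one needs uniform control of the conditional laws on the $N^{2/3}$ spatial scale, a rigorous coupling or resampling argument, and above all tightness of the curves $\Lambda^\tau_k$ for $k\geq 2$, for which neither a monotonicity argument nor a comparison with the determinantal $t=0$ asymptotics of \cite{FSpohn} is substantiated (no stochastic domination between $\mathbb{P}^{r,t}_{HL}$ for different $t$ is established anywhere in the paper). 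So your proposal correctly identifies the decisive obstacles, but it leaves the conjecture exactly where the paper leaves it: supported by one-point asymptotics, simulations, and a heuristic Gibbs-property matching, without a proof.
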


\begin{conjecture}\label{Conj2}
Consider the measure $\mathbb{P}_{HL}^{r,t}$ on plane partitions, given in (\ref{PDEF}). Suppose $T > 0$ is fixed and $\frac{- \log t}{(1-r)^{1/3}} =\frac{(T/2)^{1/3}}{ \sqrt[3]{2f''(|\tau|)}}$. For $\tau \in \mathbb{R}$ define the random $\mathbb{N} \times \mathbb{R}$-indexed  line ensemble $\Xi^\tau$ as
\begin{equation}\label{conj2E}
\begin{split}
\Xi^\tau_k(s) =  \frac{ \lambda_k'( \lfloor\tau N + sN^{2/3} \rfloor)  - Nf(|\tau|) - sN^{2/3}f'(|\tau|) - (1/2)s^2N^{1/3}f''(|\tau|)}{(T/2)^{-1/3}\sqrt[3]{2f''(|\tau|) N} } + \\
 + \log ((T/2)^{-1/3}\sqrt[3]{2f''(|\tau|) N} ) + (k-1)\log\left( \frac{ NT^{-1}(2f''(|\tau|))^{-3/2}}{2\sqrt{2}}\right) - \frac{s^2T^{1/3} (2f''(|\tau|))^{2/3}}{8} - T/24.
\end{split}
\end{equation}
Then as $r \rightarrow 1^-$ we have $\Xi^{\tau} \implies \mathcal{H}^{\tau, T}$ (weak convergence in the sense of line ensembles), where $\mathcal{H}^{\tau, T}$ is defined as $\mathcal{H}^{\tau, T}_k(s) = \mathcal{H}^T_k(sT^{2/3}\sqrt[3]{2f''(|\tau|)}/2)$ and $(\mathcal{H}_k^T)_{k \in \mathbb{N}}$ is the KPZ line ensemble.
\end{conjecture}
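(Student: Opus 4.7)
The plan is to prove Conjecture \ref{Conj2} by combining a discrete Gibbs-type structure for the Hall-Littlewood plane partition measure with multi-point extensions of the one-point asymptotics established in Theorem \ref{TCDRP}. First I would identify a finite-$N$ resampling property for the collection of columns $\{\lambda'_k(m)\}_{k \geq 1, m \in \mathbb{Z}}$. Starting from the Macdonald-process description of $\mathbb{P}^{r,t,N}_{HL}$ given in Section \ref{HL}, the conditional law of a single column $\lambda'_k$ given its neighbors $\lambda'_{k-1}$, $\lambda'_{k+1}$ and fixed boundary values factors through the skew Hall-Littlewood factors $\phi_{\cdot/\cdot}(0,t)$ and $\psi_{\cdot/\cdot}(0,t)$, which by the formulas $\phi_{\lambda/\mu}(0,t)=\prod_{i\in I}(1-t^{m_i(\lambda)})$ and $\psi_{\lambda/\mu}(0,t)=\prod_{j\in J}(1-t^{m_j(\mu)})$ assign each admissible (interlacing) configuration a Boltzmann weight of the form $\exp(\sum\log(1-t^{m_i}))$ relative to a uniform interlacing reference measure. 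This is the discrete analogue of the ${\bf H}$-Brownian Gibbs property, and under the KPZ scaling $-\log t = \kappa (1-r)^{1/3}$ of Theorem \ref{TCDRP} the multiplicities $m_i$ naturally pick up the $N^{1/3}\log N$ shift between consecutive rescaled curves. After carefully tracking the spacing $(k-1)\log(NT^{-1}(2f''(|\tau|))^{-3/2}/(2\sqrt{2}))$ that appears in \eqref{conj2E}, the weights $\prod(1-t^{m_i})$ should Taylor-expand to $\exp\bigl(-\int {\bf H}_1(\Xi^\tau_{k+1}-\Xi^\tau_k)\,du\bigr)$, mirroring the mechanism by which the O'Connell--Yor weights produce the KPZ line ensemble Hamiltonian in \cite{CorHamK}.

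Second, I would establish tightness of the rescaled ensembles $\Xi^\tau$ in the line-ensemble topology. For the top curve, Theorem \ref{TCDRP} provides one-point tightness; to upgrade this to joint tightness of the top curve as a process in $s$, I would extend Proposition \ref{finlength} to a multi-point $t$-Laplace transform by inserting several Macdonald difference operators $\mathcal{D}_N^{k_1}\cdots\mathcal{D}_N^{k_m}$ into the derivation of Section \ref{ObsHL}, and then repeat the contour surgery of Sections \ref{Section7} and \ref{S73} to extract a multi-point CDRP formula analogous to the multi-time formulas appearing for the O'Connell--Yor polymer in \cite{BCF}. To propagate tightness from the top curve to all $\Xi^\tau_k$, one would invoke the strong monotone coupling methodology of \cite{CorHamA, CorHamK}: the ${\bf H}_1$-Brownian Gibbs property (in its discrete form) allows one to sandwich lower curves between free bridges, so tail bounds for $\Xi^\tau_1$ imply tail bounds and modulus-of-continuity estimates for all $\Xi^\tau_k$.

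Third, given tightness, any subsequential limit $\Xi^{\tau,\infty}$ is a continuous line ensemble whose finite-dimensional distributions on the top curve match those of $\mathcal{F}(T,\cdot\,T^{2/3}\sqrt[3]{2f''(|\tau|)}/2)+T/24$ by the multi-point extension of Theorem \ref{TCDRP}, and which inherits the ${\bf H}_1$-Brownian Gibbs property as the scaling limit of the discrete Gibbs property from the first step. By the characterization result of \cite{CorHamK}, $\Xi^{\tau,\infty}$ must then coincide with the KPZ$_T$ line ensemble $\mathcal{H}^{\tau,T}$, upgraded by the fact that the spatial argument is dilated by the factor $T^{2/3}\sqrt[3]{2f''(|\tau|)}/2$ arising from the local quadratic Taylor expansion of the limit shape $f(|\tau|)+sf'(|\tau|)+(s^2/2)f''(|\tau|)$.

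The main obstacle is that the KPZ$_T$ line ensemble is not known to be unique: as explicitly flagged in \cite{CorHamK} and in the discussion preceding the conjecture, characterization by the top-curve law and the ${\bf H}_1$-Brownian Gibbs property is an open problem. Until this is resolved, the strongest conclusion available is that every subsequential limit of $\Xi^\tau$ is \emph{some} KPZ$_T$ line ensemble, which is exactly what the conjecture asserts. A secondary, more technical obstacle is verifying that the Boltzmann weights $\prod_i(1-t^{m_i})$ genuinely converge, after the precise vertical shifts of \eqref{conj2E}, to $\exp(-\int {\bf H}_1(\Xi^\tau_{k+1}-\Xi^\tau_k)\,du)$ uniformly on compacts; the $S_{a,r}$ asymptotics developed in Section \ref{SSart}, particularly the cubic expansion \eqref{cubicW2C}, suggest that this reduction is explicit but will require some careful bookkeeping analogous to, but more involved than, the analysis of Sections \ref{S72}--\ref{S73}.
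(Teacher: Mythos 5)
The statement you are proving is Conjecture \ref{Conj2}: the paper does not prove it. What the paper supplies is (i) a rigorous one-point check, namely that Theorem \ref{TCDRP} together with the expansion $M(r)=Nf(|\tau|)+sN^{2/3}f'(|\tau|)+\tfrac12 s^2N^{1/3}f''(|\tau|)+O(1)$ identifies the one-dimensional marginals of $\Xi^\tau_1$ with those of $\mathcal{H}^{\tau,T}_1$, and (ii) a heuristic matching, in Section \ref{Section9}, of the discrete resampling weights (the entropy term $r^{|\pi|}$ and the potential term coming from $A_\pi(t)$) with the acceptance probability in the ${\bf H}_1$-Brownian Gibbs property. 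Your outline follows the same philosophy as that heuristic discussion, but it presents as routine several steps that are genuinely open, so it is not a proof and cannot be compared to one.

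The concrete gaps are these. First, your proposed multi-point extension of Proposition \ref{finlength} by ``inserting several Macdonald difference operators $\mathcal{D}_N^{k_1}\cdots\mathcal{D}_N^{k_m}$'' does not exist in the Hall--Littlewood setting: composing $\mathcal{D}_N$ only produces the moments $t^{-k\lambda_1'}$ of the \emph{same} single observable, not joint observables across several diagonal slices or across the lower rows $\lambda_k'$, $k\geq 2$. The paper states explicitly (Section \ref{intro0}) that in the Hall--Littlewood case the operator approach gives access to a single observable, insufficient to study the three-dimensional diagram; this is precisely why only one-point results are proven, and why even finite-dimensional convergence of the top curve $\Xi^\tau_1$ is open. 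Second, your identification step invokes ``the characterization result of \cite{CorHamK}'', but no such characterization exists: uniqueness of the KPZ$_T$ line ensemble given the law of its top curve and the ${\bf H}_1$-Brownian Gibbs property is an open problem, as you yourself concede two sentences later; moreover the weaker conclusion you fall back on (every subsequential limit is \emph{some} KPZ$_T$ line ensemble) is not what the conjecture asserts, which is weak convergence to the specific ensemble $\mathcal{H}^{\tau,T}$. Third, even granting an exact discrete Gibbs structure for the curves $m\mapsto\lambda_k'(m)$, the convergence of the potential $\prod_i(1-t^{m_i})$ to $\exp(-\int{\bf H}_1)$ after the $(k-1)N^{1/3}\log N$ shifts, together with tightness of the lower curves, is treated in the paper only at the level of a constant-profile back-of-the-envelope computation (Section \ref{Section9}); turning this into uniform-on-compacts estimates and a tightness argument in the line-ensemble topology is exactly the missing analytic content, not ``careful bookkeeping''. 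In short, your proposal reproduces the paper's motivating heuristics but leaves open the same obstructions that led the author to state the result as a conjecture.
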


\begin{remark} 
We provide some motivation behind our choice of scaling in Conjecture \ref{Conj1}. Since the lines in the Airy line ensemble a.s. do not intersect as do the lines $\lambda_k'( \lfloor\tau N + sN^{2/3}\rfloor)$ we expect that all lines undergo the same scaling and translation. This allows us to only concern ourselves with $\lambda_1'( \lfloor\tau N + sN^{2/3}\rfloor)$, whose limit should be some rescaled version of the Airy process (the distribution of $\mathcal{A}_1$). Arguments in the proof of Theorem \ref{TW} can be used to show that in distribution the expression on the RHS in (\ref{conj1E}) converges to the GUE Tracy-Widom distribution for each $s$. The latter still leaves the question of possible argument scaling since $\mathcal{A}_1(\kappa s)$ has the same one-point marginal distribution for all values of $\kappa$. In \cite{FSpohn} an expression similar to $\Lambda_1^{\tau}(s)$ (related to setting $t = 0$ in $\mathbb{P}^{r,t}_{HL}$), was shown to converge to the Airy process, with a rescaled argument. Consequently, we have chosen to rescale the argument so that it matches this result. 
\end{remark}

\begin{remark}
The choice for scaling in Conjecture \ref{Conj2} is somewhat more involved. When $k = 1$ in equation (\ref{conj2E}) we run into the same argument scaling issue as in Conjecture \ref{Conj1}; however, we no longer have results in the literature that we can use as a guide. Nevertheless, in \cite{CorQ} it was conjectured that $(T/2)^{-1/3}\left(\mathcal{F}(T, T^{2/3}X) + \frac{T^{4/3}X^2}{2T} + \frac{T}{24}\right)$ converges to the Airy process as $T \rightarrow \infty$. Consequently, we have picked a scaling of the argument in Conjecture \ref{Conj2} in such a way that under the scaling by $(T/2)^{-1/3}$ we would obtain the (argument rescaled) Airy process in Conjecture \ref{Conj1}. Since the lines in the KPZ line ensemble are allowed to cross, we no longer expect that all lines $ \lambda_k'( \lfloor\tau N + sN^{2/3} \rfloor) $ undergo the same translation and scaling and in equation (\ref{conj2E}) we see that each line is deterministically shifted by a $N^{1/3}\log(N)$ factor compared to the previously indexed line. The precise choice of this shift is explained below and it is related to the ${\bf H}_1$-  Brownian Gibbs property, enjoyed by the KPZ line ensemble.
\end{remark}

We will now present some evidence that supports the validity of the above conjectures, starting from the results of this paper. Theorems \ref{TW} and \ref{TCDRP} only deal with $\lambda_1'$ and can be understood as one-point convergence results about the bottom slice of the partition $\pi$ as follows. The proof of Theorem \ref{TW} shows that 
$$\lim_{r \rightarrow 1^-}\mathbb{P}_{HL}^{r,t}\left( \frac{\lambda_1'( \lfloor\tau N + sN^{2/3} \rfloor) - M(r)}{\sqrt[3]{2f''(|\tau|) N}} \leq x \right) = F_{GUE}(x) = \mathbb{P}(\mathcal{A}^{\tau}_1(s) \leq x).$$
In the last equality we used that the one-point distribution of the Airy process is given by the Tracy-Widom GUE distribution \cite{Spohn}. In the above formula we have
$M(r) = 2\sum_{k = 1}^\infty a(r)^k  \frac{(-1)^{k+1}}{1-r^k}$, where $a(r) = r^{\lfloor N(r)\tau +  sN(r)^{2/3}\rfloor}$. Using ideas that are similar to those in Lemma \ref{LdelM} one obtains $M(r) = Nf(|\tau|) + sN^{2/3}f'(|\tau|) +(1/2)s^2N^{1/3}f''(|\tau|) + O(1)$. Consequently, Theorem \ref{TW} implies that the one-point distribution of $\Lambda^{\tau}_1$ converges to that of $\mathcal{A}^\tau_1$. 

Similarly, the proof of Theorem \ref{TCDRP} shows that  
$$\lim_{r \rightarrow 1^-}\mathbb{P}_{HL}^{r,t}\left( \frac{\lambda_1'( \lfloor\tau N + sN^{2/3} \rfloor) - M(r)}{(T/2)^{-1/3}\sqrt[3]{2f''(|\tau|) N}}+ \log ((T/2)^{-1/3}\sqrt[3]{2f''(|\tau|) N} ) \leq x \right) = F_{CDRP}(x) = $$
$$ = \mathbb{P}\left(\mathcal{H}^T_1(sT^{2/3}\sqrt[3]{2f''(|\tau|)}/2) +\frac{s^2T^{1/3} (2f''(|\tau|))^{2/3}}{8} + T/24\leq x\right).$$
In the last equality we used that $\mathcal{F}(T,X) + \frac{X^2}{2T}$ is a stationary process in $X$ and hence $\mathcal{F}(T,0) + T/24$ has the same distribution as $\mathcal{H}^T_1(sT^{2/3}\sqrt[3]{2f''(|\tau|)}/2) +\frac{s^2T^{1/3} (2f''(|\tau|))^{2/3}}{8} + T/24$. In the above formula we have
$M(r) = 2\sum_{k = 1}^\infty a(r)^k  \frac{(-1)^{k+1}}{1-r^k}$, where $a(r) = r^{\lfloor N(r)\tau +  sN(r)^{2/3}\rfloor}$. Using $M(r) = Nf(|\tau|) + sN^{2/3}f'(|\tau|) +(1/2)s^2N^{1/3}f''(|\tau|) + O(1)$ we see that Theorem \ref{TCDRP} implies that the one-point distribution of $\Xi^{\tau}_1$ converges to that of $\mathcal{H}^{\tau,T}_1$. \\

The next observation that we make is that in the statement of Conjecture \ref{Conj1}, the separation between consecutive horizontal slices of $\pi$, distributed according to $\mathbb{P}_{HL}^{r,t}$ is suggested to be of order $N^{1/3}$, which is the order of the fluctuations. On the other hand, in Conjecture \ref{Conj2} there is a deterministic shift of order $N^{1/3}\log N$, while fluctuations remain of order $N^{1/3}$. The latter phenomenon can be observed in simulations, as is shown in Figures \ref{S9_7} and \ref{S9_6}. Namely, the conjectures suggest that as $t$ goes to $1$, one should observe a larger spacing between the bottom slices of $\pi$, which is clearly visible.\\
\begin{figure}[h]
\centering
\scalebox{0.57}{\includegraphics{FigureS9-6.jpg}}
\caption{Simulation with $t = 0.4$.}
\label{S9_7}
\end{figure}

\begin{figure}[h]
\centering
\scalebox{0.57}{\includegraphics{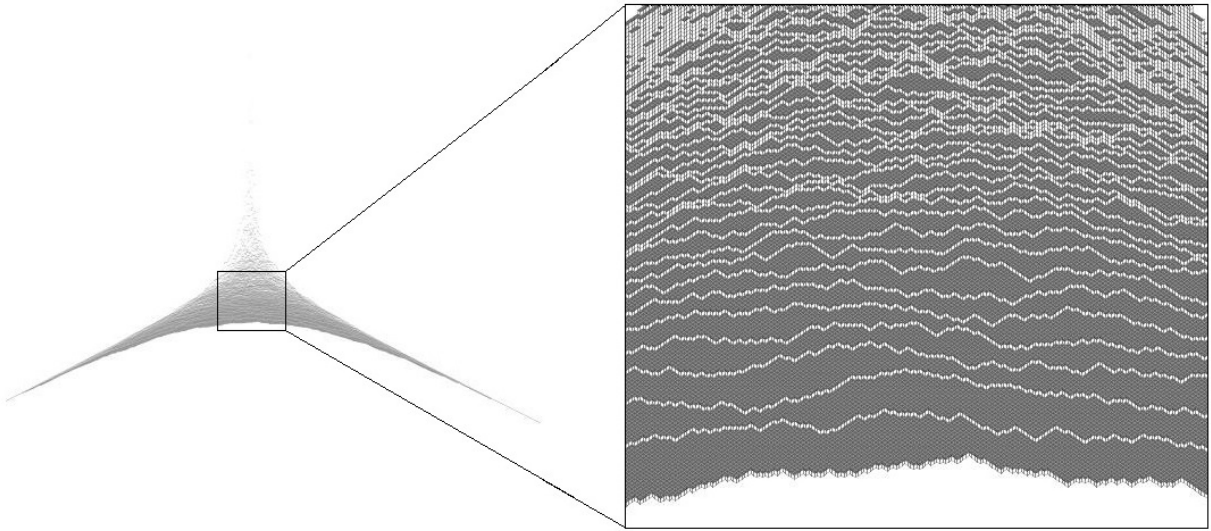}}
\caption{Simulation with $t = 0.8$.}
\label{S9_6}
\end{figure}

Finally, we match the Brownian Gibbs and ${\bf H}_1$-Brownian Gibbs properties. Suppose that we fix the slices $\lambda'_{k-1}(m)$ and $\lambda'_{k+1}(m)$, $m\in \mathbb{Z}$ and consider the conditional distribution of $\lambda_k'([A,B])$. The weight $w(\lambda_k'([A,B])$ that each path obtains consists of two terms: an {\em entropy term}, which comes from the $r^{|\pi|}$ dependence of $\mathbb{P}_{HL}^{r,t}$, and a {\em potential term}, which comes from the dependence on $A_\pi(t)$. Specifically, if the number of cells between $\lambda_k'([A,B])$ and $\lambda'_{k+1}([A,B])$ is $P$ then the entropy term is given by $r^{P}$. The potential term is a bit more involved but depends only on the local structure of the paths. It is constructed as follows: start from $A$ and move to the right towards $B$, every time the distance between $\lambda_k'(m)$ and $\lambda'_{k \pm 1}(m)$ decreases by $1$ when we increase $m$ by $1$ we obtain a factor of $(1 - t^{| \lambda_k'(m) - \lambda'_{k \pm 1}(m)|})$; the potential term is now the product of these factors. The weight $w(\lambda_k'([A,B])$ is given by the product of the entropy and potential terms and the conditional probability is the ratio of the weight and the sum of all path weights. See Figure \ref{S9_8} for a pictorial depiction of the latter construction.

\begin{figure}[h]
\centering
\begin{minipage}{.5\textwidth}
  \centering
  \includegraphics[width=0.9\linewidth]{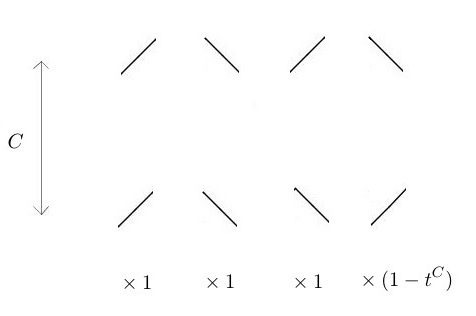}
\captionsetup{width=.9\linewidth}
\end{minipage}%
\begin{minipage}{.6\textwidth}
\hspace{-7mm}
  \includegraphics[width=0.9\linewidth]{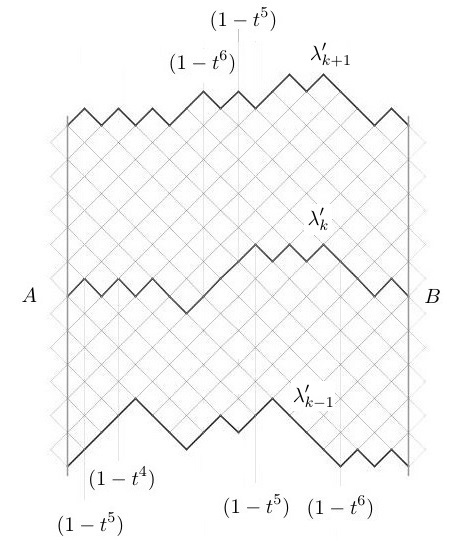}
\captionsetup{width=.9\linewidth}
\end{minipage}
  \caption{The left part of the figure shows that we get a non-trivial factor only when the distance between two slices decreases. For the path on the right we have $P = 16\times 5 + 2\times 6 + 1\times 4 = 96$, hence the entropy term is $r^{96}$. The potential term is given by $(1-t^4)\times(1-t^5)^3\times(1-t^6)^2$. The weight is the product of the entropy and potential terms and equals $w(\lambda_k'([A,B])=r^{96}(1-t^4)(1-t^5)^3(1-t^6)^2$.}
 \label{S9_8}
\end{figure}

In the limit as $r \rightarrow 1^-$, the entropy term goes to $1$ and if we ignore the potential, we see that the measure converges to the uniform measure on all paths from $A$ to $B$, which do not intersect the lines $\lambda_{k-1}'$ and $\lambda_{k+1}'$. This motivates the Brownian limit of the paths. When $t \in (0,1)$ is fixed, we have the conjectural separation of consecutive lines in $\Lambda^{\tau}$ being of order $N^{1/3}$. This implies that if $B-A$ is of order $N^{2/3}$, which is the conjectural scaling we have suggested, then the potential term is bounded from below by an expression of the form $(1 - t^{cN^{1/3}})^{CN^{2/3}}$. The latter converges to $1$ exponentially fast, and so we see that the contribution of the potential disappears in the limit. Consequently, the limit distribution of $\mathcal{A}^{\tau}_k$, at least heuristically, converges to a Brownian path, which is conditioned on not intersecting  $\mathcal{A}^{\tau}_{k\pm1 }$. This is precisely the Brownian Gibbs property.\\

When both $r$ and $t$ converge to $1^-$ as in Conjecture \ref{Conj2}, the potential term can no longer be ignored. One can understand the contribution of the potential term as an acceptance probability similarly to the KPZ line ensemble. Specifically, suppose we fix the slices $\lambda'_{k-1}(m)$ and $\lambda'_{k+1}(m)$, $m\in \mathbb{Z}$ and consider the conditional distribution of $\lambda_k'([A,B])$. One way to obtain it is to draw a random path between the points $A$ and $B$ that does not intersect the slices $\lambda'_{k-1}(m)$ and $\lambda'_{k+1}(m)$ using the entropy term alone. Then with probability equal to the potential term we accept the path and otherwise we draw again and repeat. When $r$ and $t$ go to $1^-$ we have that the paths we sample converge to a uniform sampling of all paths, suggesting the Brownian nature of the limits; and what we would like to show is that the acceptance probability in the discrete case converges to the acceptance probability in the limit. Notice that the separation between slices being of order $N^{1/3}\log (N)$, while fluctuations remaining of order $N^{1/3}$ suggests that non-intersection of the lines automatically holds with large probability and hence can be ignored.

\begin{figure}[h]
\centering
\scalebox{0.6}{\includegraphics{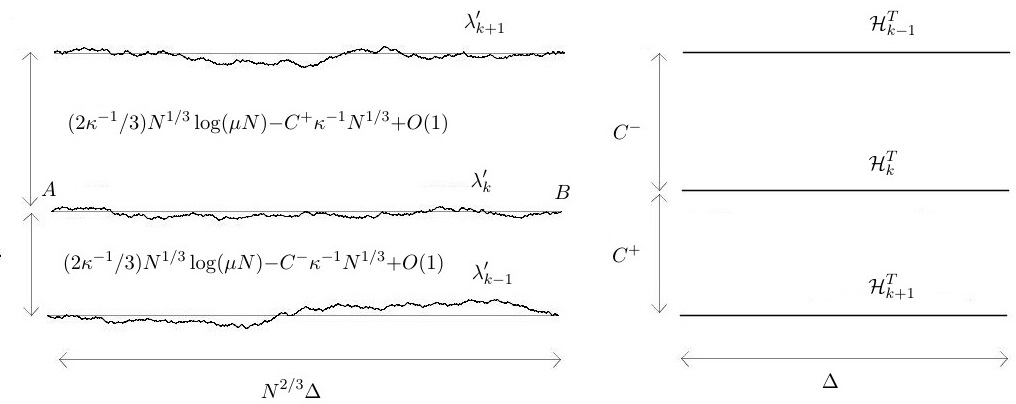}}
\caption{$\Xi^{\tau}_k$ and $\Xi^{\tau}_{k\pm 1}$ converge to constant functions. Quantities increase downwards.}
\label{S9_10}
\end{figure}

We will now proceed to match the acceptance probabilities, by considering a simple to analyze case, when the paths converge to constant lines. The situation is depicted in Figure \ref{S9_10}. To simplify notation, let $\Delta = N^{-2/3}(B - A)$, $\chi^{-1} = \sqrt[3]{2f''(|\tau|)}$, $\kappa = (T/2)^{1/3}\chi^{-1}= (-\log t)N^{1/3}$ and $\mu =\frac{ T^{-1}\chi^{3/2}}{2\sqrt{2}}$. Due to the Brownian nature of the limit of the paths, one expects roughly $\frac{\Delta N^{2/3}}{4} $ of the steps to lead to decreasing the distance between $\lambda_{k}'$ and $\lambda_{k\pm 1}'$. Suppose that $|\lambda'_k(m) - \lambda_{k\pm 1}'(m)| = (2\kappa^{-1}/3)N^{1/3} \log(\mu N) - C^{\pm}\kappa^{-1} N^{1/3} + O(1)$, for $m \in [A,B]$; then the acceptance probability is roughly equal to 
$$p_N(t) = (1 - t^{ (2\kappa^{-1}/3)N^{1/3} \log \left(\mu N\right)   - C^{+}\kappa^{-1} N^{1/3}})^{\Delta N^{2/3}/4}  (1 - t^{ (2\kappa^{-1}/3)N^{1/3} \log\left(\mu N\right)    - C^{-}\kappa^{-1} N^{1/3}})^{\Delta N^{2/3}/4}.$$
Taking logarithms we see that $\log (p_N(t)) = -\frac{\Delta N^{2/3}}{4} (  e^{ -(2/3)  \log \left(\mu N\right)   + C^{+} } + e^{ -(2/3)\log\left(\mu N\right)  + C^{-}}) + O(N^{-2/3}).$ We thus see that $\lim_{N \rightarrow \infty} \log (p_N(t))  = - (\Delta/4) e^{ -(2/3)  \log \left(\mu \right) }( e^{ C^+} + e^{C^-})$.

On the other hand, the acceptance probability for $\mathcal{H}^{\tau, T}$ is given by $\exp(- (\Delta T^{2/3} \chi^{-1}/2)(e^{C^+}  + e^{C^-})$. Equality of the latter and $\lim_{N \rightarrow \infty} p_N(t)$ is equivalent to 
$$-(\Delta/4) e^{ -(2/3)  \log \left(\mu\right) }( e^{ C^+} + e^{C^-}) = - (\Delta T^{2/3} \chi^{-1}/2)(e^{C^+}  + e^{C^-})\iff e^{ -(2/3)  \log\left( \mu\right) }= 2T^{2/3} \chi^{-1}.$$
Substituting $\mu =\frac{ T^{-1}\chi^{3/2}}{2\sqrt{2}}$  one readily verifies that the latter equality holds. This shows that the discrete acceptance probability, at least heuristically, converges to the limiting one, verifying the ${\bf H}_1$-Brownian Gibbs property.\\

\bibliographystyle{amsplain}
\bibliography{PD}

\end{document}